\newcommand{\tnorm}[1]{{\left\vert\kern-0.25ex\left\vert\kern-0.25ex\left\vert #1 
		\right\vert\kern-0.25ex\right\vert\kern-0.25ex\right\vert}}
\crefname{hypothesis}{Hypothesis}{Hypotheses}
\crefname{ALC@unique}{Line}{Lines}
\colorlet{texcscolor}{blue!50!black}
\colorlet{texemcolor}{red!70!black}
\colorlet{texpreamble}{red!70!black}
\colorlet{codebackground}{black!25!white!25}
\newcommand{\R}{{\mathbb R}}
\def\epsilon{\varepsilon}
\def\s{\mathbf{s}}
\def\t{\mathbf{t}}
\definecolor{red}{rgb}{0,0,0}
\renewcommand{\v}[1]{\underline{#1}}
\newcounter{ind}
\def\undertilde#1{{\baselineskip=0pt\vtop
		{\hbox{$#1$}\hbox{$\scriptscriptstyle\sim$}}}{}}
\def\epsilon{\varepsilon}
\def\undertilde#1{{\baselineskip=0pt\vtop
		{\hbox{$#1$}\hbox{$\scriptscriptstyle\sim$}}}{}}
\def\nabr1{\undertilde{\nabla}_{r_1}\,}
\def\nabr2{\undertilde{\nabla}_{r_2}\,}
\renewcommand{\omega}{\kappa}
\def\W{\mathrm{W}}
\def\N{\mathbb{N}}
\def\f{\mathbf{f}}
\def\u{\mathbf{u}}
\def\v{\mathbf{v}}
\def\w{\mathbf{w}}
\def\z{\mathbf{z}}
\def\P{\mathbb{P}}
\def\y{\mathbf{y}}
\def\n{\mathbf{n}}
\def\r{\mathbf{r}}
\def\V{\boldsymbol{V}}
\def\W{\boldsymbol{W}}
\def\x{x}
\lstdefinestyle{siamlatex}{%
	style=tcblatex,
	texcsstyle=*\color{texcscolor},
	texcsstyle=[2]\color{texemcolor},
	keywordstyle=[2]\color{texemcolor},
	moretexcs={cref,cref,maketitle,mathcal,text,headers,email,url},
}
\DeclareTotalTCBox{\code}{ v O{} }
{ 
	fontupper=\ttfamily\color{black},
	nobeforeafter,
	tcbox raise base,
	colback=codebackground,colframe=white,
	top=0pt,bottom=0pt,left=0mm,right=0mm,
	leftrule=0pt,rightrule=0pt,toprule=0mm,bottomrule=0mm,
	boxsep=0.5mm,
	#2}{#1}
\newcommand*{\bdiv}{%
	\nonscript\mskip-\medmuskip\mkern5mu%
	\mathbin{\operator@font div}\penalty900\mkern5mu%
	\nonscript\mskip-\medmuskip
}
\patchcmd\newpage{\vfil}{}{}{}
	\title{CONFORMING/NON-CONFORMING MIXED FINITE ELEMENT METHODS FOR OPTIMAL CONTROL OF VELOCITY-VORTICITY-PRESSURE FORMULATION FOR THE OSEEN PROBLEM WITH VARIABLE VISCOSITY}
	\title{CONFORMING/NON-CONFORMING MIXED FINITE ELEMENT METHODS FOR OPTIMAL CONTROL OF VELOCITY-VORTICITY-PRESSURE FORMULATION FOR THE OSEEN PROBLEM WITH VARIABLE VISCOSITY}
\author{Harpal Singh
	\thanks{Department of Mathematics, Indian Institute of Technology Roorkee, Roorkee 247667, India. E-mail: {\tt harpal\_s@ma.iitr.ac.in}. HS is supported by Council of Scientific and Industrial Research (CSIR), India (Award no. 09/0143(16048)/2022-EMR-I).}
	\and Arbaz Khan 
	\thanks{Department of Mathematics, Indian Institute of Technology Roorkee, Roorkee 247667, India. Corresponding author. E-mail: {\tt arbaz@ma.iitr.ac.in}.}
}
\begin{document}
	\maketitle
	\date{\today}
	\begin{abstract}
	This work examines the distributed optimal control of generalized Oseen equations with non-constant viscosity. We propose and analyze a new conforming augmented mixed finite element method and a Discontinuous Galerkin (DG) method for the velocity-vorticity-pressure formulation. The continuous formulation, which incorporates least-squares terms from both the constitutive equation and the incompressibility condition, is well-posed under certain assumptions on the viscosity parameter. The CG method is divergence-conforming and suits any Stokes inf-sup stable velocity-pressure finite element pair, while a generic discrete space approximates vorticity. The DG scheme employs a stabilization technique, and a piecewise constant discretization estimates the control variable. We establish optimal a priori and residual-based a posteriori error estimates for the proposed schemes. Finally, we provide numerical experiments to showcase the method's performance and effectiveness. 
	\end{abstract}
	\begin{keywords}   Optimal control, Oseen equations, variable viscosity, velocity-vorticity-pressure formulation, augmented mixed finite element methods, discontinuous Galerkin methods, a posteriori error analysis.
	\end{keywords}
	\begin{AMS} 
		49N10, 65N12, 65N15, 76D07, 76M10.
	\end{AMS}
\section{Introduction} \label{Introduction.} \setcounter{equation}{0}
When fluid is injected through a narrow opening into a container already filled with fluid, it creates intense shear zones within the flow. These zones can cause turbulent bursts in areas with high vorticity (the curl of the velocity). However, if the injected fluid has high viscosity, the turbulence can be reduced. One way to prevent turbulence is to dynamically control fluid injection at another boundary location. By adjusting specific flow parameters, we can lower the vorticity levels within the domain, thereby reducing the chances of turbulence forming. The incompressible flow equations in vorticity formulation are important for describing rotational flows naturally \cite{SCG}. Controlling viscous flows to achieve desired physical characteristics of the fluid is crucial for many scientific and engineering applications. This evolution has become a key focus in computational fluid dynamics and various scientific fields. Flow manipulation has many applications, such as controlling turbulence in wall flows \cite{TURB}, calculating boundary temperature in thermally convected flows, preventing flood flows through dam water gates \cite{DAMC}, environmental sciences, controlling transmission of impulses in a nerve axon \cite{SMAHAJAN2}, and reservoir simulations \cite{RESS}, etc.
	\subsection{Model problem}
	The generalized Oseen equations are obtained by simplifying the steady-state Navier–Stokes equations or by applying backward Euler time discretization for unsteady scenarios. These equations have many applications in engineering fields such as aircraft, automotive, marine engineering, and environmental fluid dynamics etc. Let $\Omega \subset \mathbb{R}^d$ ($d = 2, 3$) denote an open and bounded Lipschitz polygon, with boundary $\Gamma = \partial \Omega$. For given control cost parameter $\gamma > 0$, desired velocity field $\mathbf{y}_d \in [L^{2}(\Omega)]^d$, and desired vorticity field $\boldsymbol{\omega}_d \in [L^{2}(\Omega)]^{\frac{d(d-1)}{2}}$, we define the minimization functional as:
	\begin{align}\label{2.1.1}
		\mathcal{J}(\mathbf{y,\boldsymbol{\omega},u}) := \frac{1}{2} \int_{\Omega} |\mathbf{y}-\mathbf{y}_d|^{2} \ dx +  \frac{1}{2} \int_{\Omega} |\boldsymbol{\omega}-\boldsymbol{\omega}_d|^{2} \ dx 
		+\frac{\gamma }{2} \int_{\Omega} |\u|^{2} \ dx.
	\end{align}
Consider the distributed optimal control problem governed by generalized Oseen equations, formulated in terms of the velocity field $\y:\Omega \rightarrow \R^{d}$ and pressure $p:\Omega \rightarrow \R$, as follows:
	\begin{align}\label{2.1.2}
		\begin{cases}
			- 2\nabla \cdot (\nu(x) \boldsymbol{\varepsilon}(\y)) + (\boldsymbol{\beta} \cdot \nabla)\mathbf{y} + \sigma \mathbf{y}  + \nabla p &= \ \  \mathbf{f} + \mathbf{u} \hspace{2.9mm} \text{in} \  \Omega, \\
			\hspace{4cm}\nabla \cdot \mathbf{y} &= \ \ 0 \ \ \ \  \ \ \ \ \text{in} \  \Omega, \\
			\hspace{4.4cm} \mathbf{y} &= \ \ \mathbf{0} \ \ \ \  \ \ \ \ \text{on} \ \Gamma,\\
			\hspace{4cm} \int_{\Omega} p \ dx &= \ \ 0,
		\end{cases} 
	\end{align}
	with the control constraints 
	\begin{align}\label{2.1.3}
		\mathbf{a}(\x) \le \mathbf{u}(\x) \le \mathbf{b}(\x) \ \ \ \ \text{for} \ \text{a.e.} \  \x \in \Omega,
	\end{align}
	where 
	\begin{itemize}
		\item $\nu(\x) \in W^{1,\infty}(\Omega)$ is the variable viscosity of the fluid and $\sigma \in L^{\infty}(\Omega)$ is a scalar function such that, for positive constants $\nu_0, \sigma_{\min}, \nu_1, \sigma_{\max} \in (0,\infty)$, the following relations hold:
		\begin{align*}
			 \nu_0 < \nu(x) < \nu_1 \quad \quad \text{and} \quad \quad \sigma_{\min} < \sigma (x) < \sigma_{\max} \quad \quad \forall x \in \Omega.
		\end{align*}
		\item $\boldsymbol{\varepsilon}(\y) = \frac{1}{2} (\nabla \y + (\nabla \y)^{T})$ is the symmetric strain rate tensor.
\vspace{1mm}
		\item $\boldsymbol{\beta} \in [W^{1,\infty}(\Omega)]^{d}$ is the convective velocity field and $\mathbf{\f} \in [L^{2}(\Omega)]^d$ is given external body force. \vspace{1mm}
		\item $\mathbf{a}, \mathbf{b} \in \R^d$ with $\mathbf{a} < \mathbf{b}$ (componentwise).
	\end{itemize}
	\subsection{Literature review} 
	The system of equations \eqref{2.1.2} is crucial in many situations where the viscosity of the fluid changes due to variations in flow rate caused by temperature, concentration, or the presence of different substances in the fluid. There are numerous methods in the literature to address incompressible flow problems with both constant and variable viscosity. These methods use the velocity-vorticity-pressure formulation and include techniques like mixed finite element, stabilized, least-squares, discontinuous Galerkin, hybrid discontinuous Galerkin, and spectral methods. These approaches have been applied to problems such as Brinkman equations \cite{VAGVB, AVMD}, Stokes flows \cite{MAMARA, VAMDRR, DFS}, Oseen equations \cite{VABAM, VAPANI}, Navier-Stokes equations \cite{AMCP, AVCRRT, BVB, CJNS}, and elasticity problems \cite{AKHAN}.
	In a recent study, Anaya et al. \cite{VANAYA} introduced a new augmented mixed finite element technique for the Oseen equations with a more general friction term of the form $\nabla \cdot (\nu \boldsymbol{\varepsilon}(\y))$, where $\boldsymbol{\varepsilon}(\y)$ is the strain rate tensor and $\nu$ is variable viscosity. The velocity-vorticity-pressure formulation used is non-symmetric, and the augmentation terms arise from least-squares contributions associated with the constitutive relation and the incompressibility constraint. These terms help in deriving the Babu\^{s}ka-Brezzi property of ellipticity on the kernel, with regularity assumptions on the viscosity gradient.
	
	 \noindent In comparison to traditional conforming finite element methods, DG methods have several attractive and well-documented features. These include high-order accuracy, hp-adaptivity, ease of implementation on complex geometries, and superior robustness with rough coefficients. DG methods have been employed for tackling the Oseen problem, as evidenced in references \cite{HDGOSEEN, LDGOSEEN,  EDGOSEEN, AGO}. Anaya et al. \cite{VABAM} achieved optimal convergence rates by employing DG discretizations with a three-field formulation to solve Oseen equations with constant viscosity. An advantage of this scheme is the robustness with respect to rough coefficients and the relaxation of inter-element continuity.
	The finite element approximation of optimal control problems has been thoroughly explored in the literature. Relevant work to this paper includes studies such as \cite{ CNS4, CNS1, CNS6, CNS2, HSAK, YHK},
	 and the references cited therein. In addition to this, \cite{AFB, AAER1, MBO} specifically tackle optimal control problems governed by Oseen equations. These works employ a conforming scheme for the velocity-pressure formulation, specifically addressing scenarios with constant viscosity. In a notable study, M. Berggren \cite{VORT} utilized an optimal-control approach to minimize the vorticity field in a least-squares sense, aiming to laminarize an unsteady internal flow. Recently, Singh and Khan \cite{HSAK} developed a divergence-conforming DG finite element method for the optimal control of the Oseen equations with variable viscosity. Based on these studies, we propose a new augmented mixed finite element method and a DG method for the distributed optimal control of Oseen equations, expressed in terms of velocity, vorticity, and pressure.
	\subsection{Fundamental contributions}
	As per our knowledge, no existing literature addresses optimal control problems using a velocity-vorticity-pressure formulation. This research work is a new contribution into this unexplored domain, offering opportunities for novel insights and advancements. Here, we highlight the key contributions of our work:
\begin{itemize}
\item \textbf{Existence of an optimal control:} Our primary contribution revolves around establishing the well-posedness of both continuous and discrete optimal control problems governed by Oseen equations with variable viscosity by utilizing a velocity-vorticity-pressure formulation and suitable assumptions outlined in Lemmas~\ref{Lemma-2.2.2.3.} and \ref{Lemma 2.3.1.11}. We propose a novel DG scheme which incorporates appropriate numerical fluxes for the \textbf{curl}-\textbf{curl} and grad-div operators, taking into account variable viscosity. Some key features of the proposed schemes include the liberty to choose different Stokes inf-sup stable finite element families, direct and accurate access to vorticity (without applying postprocessing), and flexibility in handling Dirichlet boundary conditions for velocity.\vspace{1mm}

\item \textbf{A-priori error estimates:} We derive optimal a priori estimates for the control, state, and co-state variables for both conforming and non-conforming schemes, ensuring accuracy across different discretization strategies. This ensures that the estimates are applicable to a wide range of computational scenarios. \vspace{1mm}

\item \textbf{A-posteriori error estimates:} Adaptive mesh refinement strategies, guided by a posteriori error indicators, are crucial in solving flow problems numerically. These strategies ensure the convergence of finite element solutions, especially in complex geometries that might otherwise produce erroneous results. 
Another significant contribution is the development of a reliable and efficient a posteriori error estimator suitable for both conforming and non-conforming schemes. This estimator can be calculated locally with low computational cost, even on complex geometries. Additionally, we introduce a novel method for assessing error bounds, which allows for more accurate predictions of convergence rates. \vspace{1mm}

\item{\textbf{Unified analysis:}} 
The analysis presented covers optimal control problems governed by the Stokes equations (with a uniformly bounded variable viscosity) 
 and the Brinkman equations
   under the assumptions discussed in \cite[Lemma~2.2]{VAGVB}.
	\end{itemize}
	\subsection{Structure of the paper}
	The subsequent sections of the paper are structured as follows: In Section~\ref{Preliminaries and continuous formulation.}, we lay the foundational groundwork by introducing the necessary function spaces required for our analysis. This section delves into the mathematical framework, discussing the existence and uniqueness of the optimal control for the continuous formulation and the optimality conditions. Section~\ref{Discrete formulation and priori error estimates.} is dedicated to the development of a mixed conforming scheme and a DG scheme for discretizing the continuous optimality system. We provide a comprehensive analysis of the well-posednes, accompanied by detailed derivations of both a priori and a posteriori error estimates. In Section~\ref{Numerical Experiments.}, we present a series of numerical tests designed to validate our theoretical findings and showcase the efficacy and convergence of methodology across various scenarios.
	\section{Function spaces  and continuous formulation} \label{Preliminaries and continuous formulation.} \setcounter{equation}{0}
	\subsection{Preliminaries} Let $\Omega \subset \mathbb{R}^d$ ($d = 2, 3$) be an open and bounded polygonal domain with Lipschitz boundary $\Gamma = \partial \Omega$. The notation $W^{s,p}(\Omega)$ represents standard Sobolev spaces intended for scalar-valued functions, equipped with norms $\|\cdot\|_{W^{s,p}(\Omega)}$, where $s \geq 0$ and $1 \leq p \leq \infty$. When $s = 0$, we write $W^{0,p}(\Omega):= L^p(\Omega)$. For $p=2$, the notation is simplified to $H^{s}(\Omega)$ with the norm $\|\cdot\|_{s}$. Bold letters are used to represent the vector-valued counterparts of these spaces. By $(\cdot,\cdot)$, we denote standard $L^2$ inner-product. We introduce the following spaces:
	\begin{align*}
		\quad &L_{0}^{2}(\Omega) := \biggl\{v \in L^2(\Omega) \ : \int_{\Omega} v(x) \ dx = 0 \biggl\},  &\boldsymbol{H}_{0}^{1}(\Omega) := \big \{\mathbf{v} \in \boldsymbol{H}^{1}(\Omega) \ : \  \mathbf{v}|_{\Gamma} = 0 \big \}.
	\end{align*} 
	The notation $m \precsim n$ means that for a positive constant $C$, $m \leq Cn$. 
	For simplicity, we write $\boldsymbol{V} = \boldsymbol{H}_{0}^{1}(\Omega),\ \boldsymbol{W} = [L^{2}(\Omega)]^{\frac{d(d-1)}{2}}$, and $Q = L_{0}^{2}(\Omega)$.
	\subsubsection{\textbf{Useful identities}}
	\begin{itemize}
		\item  For any vector fields $\mathbf{v} = (v_1, v_2, v_3)^{T}$ and $\mathbf{w} = (w_1, w_2)^{T}$, we define the $\textbf{curl}$ operator as:
		\begin{align*}
			\textbf{curl} (\mathbf{v}) = \begin{pmatrix}
				\frac{\partial v_3}{\partial y} - \frac{\partial v_2}{\partial z} &
				\frac{\partial v_1}{\partial z} - \frac{\partial v_3}{\partial x} &
				\frac{\partial v_2}{\partial x} - \frac{\partial v_1}{\partial y}
			\end{pmatrix}^{T}, \quad \hspace{1cm}
			\textbf{curl} (\mathbf{w}) = \frac{\partial w_2}{\partial x} - \frac{\partial w_1}{\partial y}.
		\end{align*}
		\item The integration by parts formula \cite[Theorem~2.11]{VP} produces:
		\begin{align*}
		\textbf{2-D} \boldsymbol{\rightarrow}	\int_{\Omega} \textbf{curl} (\omega) \cdot  \v \ dx  &= \int_{\Omega}  \omega \ \textbf{curl} (\v) \ dx  - \int_{\Gamma} \omega \ \v \cdot \t \ ds, \\
		\textbf{3-D} \boldsymbol{\rightarrow}	\int_{\Omega} \textbf{curl} (\boldsymbol{\omega}) \cdot  \v \ dx  &= \int_{\Omega}  \boldsymbol{\omega} \cdot \textbf{curl} (\v) \ dx  + \int_{\Gamma} (\boldsymbol{\omega} \times \n) \cdot \v \ ds.
		\end{align*}
		\item We will also use the following relations:
		\begin{align}
			\nonumber - \int_{\Omega} \nabla \cdot (\boldsymbol{\beta} (\y \cdot \v)) \ dx	 &= \int_{\Omega} [(\boldsymbol{\beta} \cdot \nabla)\mathbf{y}] \cdot \v \ dx + \int_{\Omega} [(\boldsymbol{\beta} \cdot \nabla)\mathbf{v}] \cdot \y \ dx, \quad \text{\big(\cite[Lemma~2.2]{VP}\big)}\\
			\nonumber	 \textbf{curl} (\nu \y) &= \nabla \nu \times \y + \nu \ \textbf{curl} (\y),\\
			\label{2.2.1.1}	- 2\nabla \cdot (\nu(\x) \boldsymbol{\varepsilon}(\y)) &= -\nu \Delta \y - 2 \boldsymbol{\varepsilon}(\y) \nabla \nu = \nu \ \textbf{curl} (\textbf{curl} (\y)) - \nu \nabla (\nabla \cdot \y) - 2 \boldsymbol{\varepsilon}(\y) \nabla \nu.
		\end{align} 
	\end{itemize}
	By a use of \eqref{2.2.1.1}, the state system in a \textbf{velocity-vorticity-pressure formulation} is expressed as:
	\begin{align}\label{2.2.1.2}
		\begin{cases}
			- 2 \boldsymbol{\varepsilon}(\y) \nabla \nu + \nu \ \textbf{curl} (\boldsymbol{\omega}) + (\boldsymbol{\beta} \cdot \nabla)\mathbf{y} + \sigma \mathbf{y}  + \nabla p &= \mathbf{f} + \mathbf{u} \ \ \text{in} \  \Omega,\\
			\hspace{3.4cm} \boldsymbol{\omega} - \textbf{curl} (\y) &= \mathbf{0} \ \ \ \quad \ \ \text{in} \  \Omega, \\
			\hspace{4cm}\nabla \cdot \mathbf{y} &= 0 \ \ \ \  \ \ \ \ \text{in} \  \Omega, \\
			\hspace{4.4cm} \mathbf{y} &= \mathbf{0} \ \ \ \  \ \ \ \ \text{on} \ \Gamma,\hspace{1.52cm}
		\end{cases} 
	\end{align}
	where we've employed the definition of vorticity. The equations within the system \eqref{2.2.1.2} represent momentum conservation, constitutive relation, mass balance, and the boundary condition.
	\subsection{Continuous formulation and well-posedness} \label{Continuous formulation.}
	In this subsection, we propose a mixed variational formulation of the optimal control problem and discuss its well-posedness. The augmented variational formulation of the state system \eqref{2.2.1.2} is to find $(\mathbf{y}, \boldsymbol{\omega}, p) \in \boldsymbol{V} \times \boldsymbol{W} \times Q$ such that
	\begin{align}\label{2.2.2.1}
		\begin{cases}
			\boldsymbol{\mathcal{A}}((\mathbf{y},\boldsymbol{\omega}), (\v, \boldsymbol{\theta})) + \boldsymbol{\mathcal{B}}((\v, \boldsymbol{\theta}),p) &= (\mathbf{f} + \mathbf{u}, \mathbf{v}) \ \quad \hspace{3cm} \forall \ (\mathbf{v}, \boldsymbol{\theta}) \in \boldsymbol{V} \times \boldsymbol{W}, \\
			\hspace{2.87cm}\boldsymbol{\mathcal{B}}((\mathbf{y},\boldsymbol{\omega}), \phi) &= 0  \hspace{4.77cm}   \forall \ \phi \in Q,
		\end{cases} 
	\end{align}
	where the bilinear forms  $\boldsymbol{\mathcal{A}} : [\boldsymbol{V} \times \boldsymbol{W}]^{2} \rightarrow \R$ and $ \boldsymbol{\mathcal{B}} : [\boldsymbol{V} \times \boldsymbol{W}] \times Q \rightarrow \R$ are defined as:
	\begin{align*}
	\boldsymbol{\mathcal{A}}((\mathbf{y},\boldsymbol{\omega}), (\v, \boldsymbol{\theta})) &:= - \int_{\Omega} 2 \boldsymbol{\varepsilon}(\y) \nabla \nu \cdot \v \ dx + \int_{\Omega} \nu \boldsymbol{\omega} \cdot \textbf{curl}(\v) \ dx + \int_{\Omega} \boldsymbol{\omega} \cdot \nabla \nu \times \v \ dx \\
		\nonumber & \ \ \ \ + \int_{\Omega} \nu \boldsymbol{\omega} \cdot \boldsymbol{\theta} \ dx - \int_{\Omega}  \nu \boldsymbol{\theta} \cdot \textbf{curl}(\y) \ dx + \int_{\Omega} (\sigma \y + (\boldsymbol{\beta} \cdot \nabla)\mathbf{y}) \cdot \v \ dx \\
		 \nonumber & \ \ \ \ + \rho_1 \int_{\Omega} (\textbf{curl}(\y) - \boldsymbol{\omega}) \cdot \textbf{curl}(\v) \ dx + \rho_2 \int_{\Omega} (\nabla \cdot \y) \cdot (\nabla \cdot \v) \ dx,\\
		\boldsymbol{\mathcal{B}}((\mathbf{y},\boldsymbol{\omega}), \phi)  &:= -\int_{\Omega} \phi \ \nabla \cdot \y \ dx.
	\end{align*}
	The terms with positive parameters $\rho_1$ and $\rho_2$ simplify the analysis by incorporating residuals from the constitutive relation and the incompressibility condition, and satisfy the following relations:
	\begin{align*}	
		\rho_1 \int_{\Omega} (\textbf{curl}(\y)-\boldsymbol{\omega}) \cdot \textbf{curl}(\v) \ dx = 0, \quad \rho_2 \int_{\Omega} (\nabla \cdot \y) \ (\nabla \cdot \v) \ dx = 0 \quad \quad  \forall \ \v \in \boldsymbol{V},
	\end{align*}
	We define norms on the spaces $\boldsymbol{V}$ and $\boldsymbol{V} \times \boldsymbol{W}$ as follows:
	\begin{align}\label{2.2.2.5}
		|\!|\!|\mathbf{v}|\!|\!|_{1}^{2} &:= \|\mathbf{v}\|_{0}^2+ \|\textbf{curl}(\v)\|_{0}^{2} + \|\nabla \cdot \v\|_{0}^{2}, &&\|(\v, \boldsymbol{\theta})\|^{2} := |\!|\!|\mathbf{v}|\!|\!|_{1}^{2} + \|\boldsymbol{\theta}\|_{0}^{2}.
	\end{align}
	\begin{lemma} \label{Lemma-2.2.2.1.}
		For all $(\y, \boldsymbol{\omega}), (\v, \boldsymbol{\theta}) \in \boldsymbol{V} \times \boldsymbol{W}$, along with the regularity assumptions $\nu(\x) \in W^{1,\infty}(\Omega)$, $\boldsymbol{\beta} \in [W^{1,\infty}(\Omega)]^{d}$, $\sigma \in L^{\infty}(\Omega)$, and a positive constant $\tilde{C}$, the subsequent estimates are valid:
		\begin{align*}
			\biggl|\int_{\Omega} \sigma \y \cdot \v \ dx \bigg| &\le \sigma_{\max} \|\mathbf{y}\|_{0} \|\mathbf{v}\|_{0},
			&&\biggl|\int_{\Omega} \nu \boldsymbol{\omega} \cdot \boldsymbol{\theta} \ dx \bigg| \ \ \ \ \ \ \ \le \nu_{1} \|\boldsymbol{\omega}\|_{0} \|\boldsymbol{\theta}\|_{0},\\
			\biggl|\int_{\Omega} (\boldsymbol{\beta} \cdot \nabla)\mathbf{y} \cdot \v \ dx \bigg| &\le \widetilde{C} |\!|\!|\boldsymbol{\beta}|\!|\!|_{1} |\!|\!|\y|\!|\!|_{1} |\!|\!|\mathbf{v}|\!|\!|_{1},
			&&\biggl|\int_{\Omega} \nu \boldsymbol{\theta} \cdot \textbf{curl}(\v) \ dx \bigg| \le \nu_{1}  \|\boldsymbol{\theta}\|_{0} |\!|\!|\mathbf{v}|\!|\!|_{1}, \\
			\biggl|\int_{\Omega} \boldsymbol{\varepsilon}(\y) \nabla \nu \cdot \v \ dx \bigg| &\le \|\nabla \nu\|_{\infty} \|\boldsymbol{\varepsilon}(\y)\|_{0} \|\v\|_{0},
			&&\biggl|\int_{\Omega} \boldsymbol{\theta} \cdot \nabla \nu \times \v \ dx \bigg| \ \  \le 2\|\nabla \nu\|_{\infty} \|\boldsymbol{\theta}\|_{0} \|\v\|_{0}.
		\end{align*}
		\begin{proof}
			These estimates are derived straightforwardly by applying the Cauchy-Schwarz inequality and the definition of norms.
		\end{proof}
	\end{lemma}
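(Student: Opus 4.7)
The plan is to verify each of the six bounds separately; five of them reduce to Cauchy–Schwarz after extracting the coefficient in $L^{\infty}(\Omega)$, while the convective bound is the only one that requires a trilinear/Sobolev-embedding argument. Throughout, I would invoke the norm equivalence $\|\v\|_{H^1}\simeq|\!|\!|\v|\!|\!|_{1}$ valid on $\boldsymbol{V}=\boldsymbol{H}_{0}^{1}(\Omega)$ (a Friedrichs-type estimate coming from the fact that $|\nabla\v|^{2}=|\textbf{curl}(\v)|^{2}+|\nabla\cdot\v|^{2}$ modulo boundary terms that vanish on $\boldsymbol{H}_{0}^{1}$), together with the obvious consequences $\|\textbf{curl}(\v)\|_{0}\le|\!|\!|\v|\!|\!|_{1}$ and $\|\boldsymbol{\varepsilon}(\y)\|_{0}\lesssim\|\nabla\y\|_{0}\lesssim|\!|\!|\y|\!|\!|_{1}$ from the definition \eqref{2.2.2.5}.

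For the reaction-type couplings $\int_{\Omega}\sigma\y\cdot\v\,dx$ and $\int_{\Omega}\nu\boldsymbol{\omega}\cdot\boldsymbol{\theta}\,dx$, I pull the coefficients out using $\sigma(x)<\sigma_{\max}$ and $\nu(x)<\nu_{1}$, respectively, and apply Cauchy–Schwarz in $L^{2}(\Omega)$. The curl-coupling term $\int_{\Omega}\nu\boldsymbol{\theta}\cdot\textbf{curl}(\v)\,dx$ is handled identically, closing with $\|\textbf{curl}(\v)\|_{0}\le|\!|\!|\v|\!|\!|_{1}$. The two terms involving $\nabla\nu$ use $\|\nabla\nu\|_{\infty}<\infty$ (a consequence of $\nu\in W^{1,\infty}(\Omega)$): for $\int_{\Omega}\boldsymbol{\varepsilon}(\y)\nabla\nu\cdot\v\,dx$ a straightforward Cauchy–Schwarz suffices, whereas for $\int_{\Omega}\boldsymbol{\theta}\cdot\nabla\nu\times\v\,dx$ I would use the pointwise componentwise bound $|\nabla\nu\times\v|\le 2|\nabla\nu|\,|\v|$ (the factor $2$ arising from expanding the cross product componentwise and applying the triangle inequality) and then Cauchy–Schwarz.

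The only non-routine bound is the convective one $|\int_{\Omega}(\boldsymbol{\beta}\cdot\nabla)\y\cdot\v\,dx|$, which has to be placed under three separate $|\!|\!|\cdot|\!|\!|_{1}$ factors. Here I would use the generalized Hölder inequality
\[
\Big|\int_{\Omega}(\boldsymbol{\beta}\cdot\nabla)\y\cdot\v\,dx\Big|\le\|\boldsymbol{\beta}\|_{L^{4}(\Omega)}\|\nabla\y\|_{L^{2}(\Omega)}\|\v\|_{L^{4}(\Omega)},
\]
valid for $d=2,3$, and then invoke the Sobolev embedding $H^{1}(\Omega)\hookrightarrow L^{4}(\Omega)$ to bound $\|\boldsymbol{\beta}\|_{L^{4}}\lesssim\|\boldsymbol{\beta}\|_{H^{1}}$ and $\|\v\|_{L^{4}}\lesssim\|\v\|_{H^{1}}$. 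Converting each of the three factors to the triple norm via the equivalence mentioned above then yields the asserted bound with a generic constant $\widetilde{C}$ absorbing the embedding and Friedrichs constants.

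The main obstacle is really just the bookkeeping for the convective term: one must justify the $H^{1}\leftrightarrow|\!|\!|\cdot|\!|\!|_{1}$ equivalence, which applies to $\y,\v\in\boldsymbol{H}_{0}^{1}(\Omega)$ via a Nečas/Friedrichs-type inequality for div–curl systems with vanishing tangential trace, and for $\boldsymbol{\beta}$ relies directly on $W^{1,\infty}(\Omega)\hookrightarrow H^{1}(\Omega)$ on a bounded domain so that $\|\boldsymbol{\beta}\|_{H^{1}}$ is finite. Once this is noted, the remaining five estimates are immediate consequences of Cauchy–Schwarz and the hypotheses on $\nu$, $\sigma$, and $\boldsymbol{\beta}$.
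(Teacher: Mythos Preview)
Your proposal is correct and follows the same approach as the paper, which simply states that the estimates follow from the Cauchy--Schwarz inequality and the definition of the norms. You actually supply more detail than the paper does, particularly for the convective term where you correctly identify that a H\"older/Sobolev-embedding argument (rather than a bare Cauchy--Schwarz) is what produces the trilinear bound in the triple norm.
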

	\begin{lemma} \label{Lemma-2.2.2.2.}
		There exist positive constants $C_1$ and $C_2$
		  such that
		\begin{align*}
			|\boldsymbol{\mathcal{A}}((\mathbf{y},\boldsymbol{\omega}), (\v, \boldsymbol{\theta}))| &\le C_1 \|(\y, \boldsymbol{\omega})\| \|(\v, \boldsymbol{\theta})\| && \forall \ (\y, \boldsymbol{\omega}), (\v, \boldsymbol{\theta}) \in \boldsymbol{V} \times \boldsymbol{W}, \\
			|\boldsymbol{\mathcal{B}}((\v, \boldsymbol{\theta}),\phi)| &\le  C_2 \|(\v, \boldsymbol{\theta})\| \|\phi\|_{0} && \forall \ ((\v, \boldsymbol{\theta}), \phi) \in [\boldsymbol{V} \times \boldsymbol{W}] \times Q.
		\end{align*}
	\end{lemma}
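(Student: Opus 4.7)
The plan is to bound $\boldsymbol{\mathcal{A}}$ and $\boldsymbol{\mathcal{B}}$ summand by summand, using the term-wise estimates already collected in Lemma~\ref{Lemma-2.2.2.1.} together with Cauchy--Schwarz and the triangle inequality, and then to lump the constants into a single $C_1$ (respectively $C_2$). For $\boldsymbol{\mathcal{B}}$ the argument is immediate: Cauchy--Schwarz gives
\begin{align*}
|\boldsymbol{\mathcal{B}}((\v,\boldsymbol{\theta}),\phi)| \le \|\nabla\cdot\v\|_{0}\,\|\phi\|_{0} \le |\!|\!|\v|\!|\!|_{1}\,\|\phi\|_{0} \le \|(\v,\boldsymbol{\theta})\|\,\|\phi\|_{0},
\end{align*}
so $C_2 = 1$ suffices.

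For $\boldsymbol{\mathcal{A}}$ I would split the expression into its eight summands and apply Lemma~\ref{Lemma-2.2.2.1.} to the six pointwise terms: the reaction term contributes $\sigma_{\max}$, the vorticity mass term and the $\nu\boldsymbol{\theta}\cdot\textbf{curl}(\y)$ term each contribute $\nu_1$, the convective term contributes $\widetilde{C}|\!|\!|\boldsymbol{\beta}|\!|\!|_{1}$, and the $\boldsymbol{\omega}\cdot\nabla\nu\times\v$ term contributes $2\|\nabla\nu\|_{\infty}$. In each case, the monotone chain $\|\y\|_0 \le |\!|\!|\y|\!|\!|_1 \le \|(\y,\boldsymbol{\omega})\|$ and its analogues for $\boldsymbol{\omega}, \v, \boldsymbol{\theta}$ promote the resulting product to $\|(\y,\boldsymbol{\omega})\|\,\|(\v,\boldsymbol{\theta})\|$. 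The two augmentation summands are handled by Cauchy--Schwarz and the triangle inequality,
\begin{align*}
\Big|\rho_1\int_{\Omega}(\textbf{curl}(\y)-\boldsymbol{\omega})\cdot\textbf{curl}(\v)\,dx\Big| &\le \rho_1\bigl(\|\textbf{curl}(\y)\|_0 + \|\boldsymbol{\omega}\|_0\bigr)\|\textbf{curl}(\v)\|_0,\\
\Big|\rho_2\int_{\Omega}(\nabla\cdot\y)(\nabla\cdot\v)\,dx\Big| &\le \rho_2\,\|\nabla\cdot\y\|_0\,\|\nabla\cdot\v\|_0,
\end{align*}
each again dominated by a constant multiple of $\|(\y,\boldsymbol{\omega})\|\,\|(\v,\boldsymbol{\theta})\|$.

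The subtle step, and the only real obstacle, is the strain summand $-2\int_{\Omega}\boldsymbol{\varepsilon}(\y)\nabla\nu\cdot\v\,dx$: Lemma~\ref{Lemma-2.2.2.1.} bounds it by $2\|\nabla\nu\|_{\infty}\|\boldsymbol{\varepsilon}(\y)\|_{0}\|\v\|_{0}$, but $\|\boldsymbol{\varepsilon}(\y)\|_{0}$ is not literally present in the triple norm $|\!|\!|\cdot|\!|\!|_{1}$. I would bridge this gap by invoking the classical Green-type identity valid for every $\y\in\boldsymbol{H}_0^1(\Omega)$, obtained by integrating by parts the identity $-\Delta = \textbf{curl}\,\textbf{curl} - \nabla\,\nabla\cdot$ and using the homogeneous boundary condition:
\begin{align*}
\|\nabla\y\|_{0}^{2} = \|\textbf{curl}(\y)\|_{0}^{2} + \|\nabla\cdot\y\|_{0}^{2},
\end{align*}
which yields $\|\boldsymbol{\varepsilon}(\y)\|_{0} \le \|\nabla\y\|_{0} \le |\!|\!|\y|\!|\!|_{1} \le \|(\y,\boldsymbol{\omega})\|$. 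Collecting the eight contributions then produces $C_1$ as a fixed affine combination of $\sigma_{\max}$, $\nu_1$, $\|\nabla\nu\|_{\infty}$, $\widetilde{C}|\!|\!|\boldsymbol{\beta}|\!|\!|_{1}$, $\rho_1$ and $\rho_2$. Every other estimate is a mechanical re-application of Lemma~\ref{Lemma-2.2.2.1.} followed by the embedding of the component norms into the product norm, so once the strain term has been absorbed into $|\!|\!|\y|\!|\!|_{1}$ the proof is complete.
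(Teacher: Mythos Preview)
Your proposal is correct and follows the same route as the paper, which simply states that both estimates are ``directly obtained by applying Lemma~\ref{Lemma-2.2.2.1.}''. You in fact do more than the paper: you correctly identify and resolve the one genuine gap in that one-line argument, namely that $\|\boldsymbol{\varepsilon}(\y)\|_{0}$ must be controlled by $|\!|\!|\y|\!|\!|_{1}$, which you handle via the identity $\|\nabla\y\|_{0}^{2} = \|\textbf{curl}(\y)\|_{0}^{2} + \|\nabla\cdot\y\|_{0}^{2}$ for $\y\in\boldsymbol{H}_{0}^{1}(\Omega)$.
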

	\begin{proof}
		Both estimates are directly obtained by applying Lemma \ref{Lemma-2.2.2.1.}.
	\end{proof}
	\noindent We now present a result demonstrating the ellipticity and the inf-sup condition of the bilinear forms $\boldsymbol{\mathcal{A}}(\cdot, \cdot)$ and $\boldsymbol{\mathcal{B}}(\cdot, \cdot)$, respectively, as illustrated in \cite[Lemma~2, 3]{VANAYA}.
	\begin{lemma}\label{Lemma-2.2.2.3.}
		(i) Suppose that 
		\begin{align}\label{2.2.2.6}
			\sigma_{\min} \ge \frac{9 \|\nabla \nu\|_{\infty}^{2}}{\nu_{0}} \quad \text{and} \quad \widetilde{C} \|\nabla \cdot \boldsymbol{\beta}\|_{0} < \min \Biggl\{\sigma_{\min} - \frac{9 \|\nabla \nu\|_{\infty}^{2}}{\nu_{0}}, \frac{\nu_{0}}{12}\Biggr\}.
		\end{align}
		Then, if we choose $\rho_1= \frac{2}{3}\nu_{0}$ and $\rho_2 > \frac{\nu_{0}}{3}$, there exists a constant $C_{3} > 0$ such that
		\begin{align*}
			\boldsymbol{\mathcal{A}}((\v, \boldsymbol{\theta}), (\v, \boldsymbol{\theta})) &\ge C_{3} \|(\v, \boldsymbol{\theta})\|^{2} \quad \quad \forall \ (\v, \boldsymbol{\theta}) \in \boldsymbol{V} \times \boldsymbol{W}.
		\end{align*}
		(ii) There exists a constant $\alpha >0$, independent of $\nu$ such that 
		\begin{align*}
			\alpha \|\phi\|_{0} &\le  \underset{0 \neq (\v, \boldsymbol{\theta}) \in \boldsymbol{V} \times \boldsymbol{W}}{\sup} \frac{\boldsymbol{|\mathcal{B}}((\v, \boldsymbol{\theta}),\phi)|}{\|(\v, \boldsymbol{\theta})\|} \quad \quad \forall \ \phi \in Q.
		\end{align*}
	\end{lemma}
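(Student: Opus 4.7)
Substitute $(\y,\boldsymbol{\omega})=(\v,\boldsymbol{\theta})$ in the definition of $\boldsymbol{\mathcal{A}}$. The two symmetric terms $\pm\int_\Omega\nu\boldsymbol{\theta}\cdot\textbf{curl}(\v)\,dx$ (arising from the momentum-vorticity and constitutive blocks) cancel, so that what remains is the manifestly non-negative sum
\[
\nu_0\|\boldsymbol{\theta}\|_0^2 + \sigma_{\min}\|\v\|_0^2 + \rho_1\|\textbf{curl}(\v)\|_0^2 + \rho_2\|\nabla\cdot\v\|_0^2
\]
plus four indefinite contributions: the strain/viscosity coupling $-2\int_\Omega\boldsymbol{\varepsilon}(\v)\nabla\nu\cdot\v\,dx$, the vorticity/viscosity cross-term $\int_\Omega\boldsymbol{\theta}\cdot(\nabla\nu\times\v)\,dx$, the stabilization cross-term $-\rho_1\int_\Omega\boldsymbol{\theta}\cdot\textbf{curl}(\v)\,dx$, and the convective self-coupling $\int_\Omega(\boldsymbol{\beta}\cdot\nabla)\v\cdot\v\,dx$.

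I would then estimate each of these four terms. Since $\v|_\Gamma = \mathbf{0}$, integration by parts rewrites the convection term as $-\tfrac{1}{2}\int_\Omega(\nabla\cdot\boldsymbol{\beta})|\v|^2\,dx$, which is bounded by $\widetilde{C}\|\nabla\cdot\boldsymbol{\beta}\|_0 |\!|\!|\v|\!|\!|_1^2$ through the same Sobolev-embedding reasoning as in Lemma~\ref{Lemma-2.2.2.1.}. For the viscosity-gradient terms and the $\rho_1$-cross-term I would combine Cauchy-Schwarz and Young's inequality with the identity $\|\nabla\v\|_0^2 = \|\textbf{curl}(\v)\|_0^2 + \|\nabla\cdot\v\|_0^2$ valid on $\boldsymbol{H}_0^1(\Omega)$ and the bound $\|\boldsymbol{\varepsilon}(\v)\|_0\le\|\nabla\v\|_0$, so that the resulting $\textbf{curl}$, $\nabla\cdot$, $\v$, and $\boldsymbol{\theta}$ quadratic factors can be absorbed into the four non-negative terms above. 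The assignments $\rho_1 = \tfrac{2}{3}\nu_0$ and $\rho_2>\tfrac{\nu_0}{3}$ are dictated by the need to retain strictly positive coefficients in front of $\|\textbf{curl}(\v)\|_0^2$ and $\|\nabla\cdot\v\|_0^2$ after absorption, while the two bounds in \eqref{2.2.2.6} are precisely what leaves strictly positive coefficients in front of $\|\v\|_0^2$ and $\|\boldsymbol{\theta}\|_0^2$; taking $C_3$ as the minimum of these four coefficients finishes (i).

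\textbf{Proof plan for part (ii).} Since $\boldsymbol{\mathcal{B}}((\v,\boldsymbol{\theta}),\phi) = -\int_\Omega \phi\,\nabla\cdot\v\,dx$ does not involve $\boldsymbol{\theta}$, the supremum on the right-hand side is bounded below by its restriction to pairs of the form $(\v,\mathbf{0})$, where $\|(\v,\mathbf{0})\| = |\!|\!|\v|\!|\!|_1$. The classical Stokes inf-sup condition provides, for every $\phi\in Q$, a test field $\v\in\boldsymbol{V}\setminus\{\mathbf{0}\}$ with $|\int_\Omega\phi\,\nabla\cdot\v\,dx|\ge \alpha_0\|\phi\|_0\|\v\|_1$; combining it with the elementary bound $|\!|\!|\v|\!|\!|_1\le C\|\v\|_1$ (which follows from $\|\textbf{curl}(\v)\|_0+\|\nabla\cdot\v\|_0 \le C\|\nabla\v\|_0$ together with Poincaré's inequality on $\boldsymbol{H}_0^1(\Omega)$) yields the inf-sup bound with $\alpha = \alpha_0/C$, which is $\nu$-independent.

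\textbf{Main obstacle.} The delicate part is the bookkeeping in (i): one has to choose the Young's-inequality weights so that every indefinite contribution is absorbed while simultaneously producing positive residues for $\|\v\|_0^2$, $\|\boldsymbol{\theta}\|_0^2$, $\|\textbf{curl}(\v)\|_0^2$, and $\|\nabla\cdot\v\|_0^2$. The constants $\tfrac{2}{3}$, $\tfrac{1}{3}$ and the factor $9$ in the threshold $\sigma_{\min}\ge 9\|\nabla\nu\|_\infty^2/\nu_0$ are not arbitrary but track the optimal splitting needed to control the strain/viscosity and vorticity/viscosity terms simultaneously; verifying that they suffice is the only non-mechanical step in the argument.
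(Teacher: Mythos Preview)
Your proposal is correct and follows essentially the same approach the paper takes: the paper does not prove this lemma directly but cites \cite[Lemma~2, 3]{VANAYA}, and the same strategy is spelled out explicitly in the paper's proof of the DG analogue (Lemma~\ref{Lemma 2.3.1.11}), where the three indefinite viscosity/augmentation terms are handled by Young's inequality with exactly the weights you anticipate (yielding the residues $\tfrac{\nu_0}{6}$, $\tfrac{\nu_0}{3}$, $\tfrac{3\rho_1^2}{4\nu_0}$, and the $9\|\nabla\nu\|_\infty^2/\nu_0$ threshold), and the convection contribution is absorbed via the $\widetilde{C}\|\nabla\cdot\boldsymbol{\beta}\|_0$ bound. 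Your treatment of part (ii) via the classical Stokes inf-sup condition and the equivalence $|\!|\!|\v|\!|\!|_1 \lesssim \|\v\|_1$ on $\boldsymbol{H}_0^1(\Omega)$ is also the standard route.
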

	\begin{theorem}\label{Theorem-2.2.2.4.}
		Suppose the hypothesis of Lemma \ref{Lemma-2.2.2.3.} holds true. Then, there exists a unique solution $(\y, \boldsymbol{\omega}, p) \in \boldsymbol{H}_{0}^{1}(\Omega) \times [L^{2}(\Omega)]^{\frac{d(d-1)}{2}} \times L_{0}^{2}(\Omega)$ to the state system (\ref{2.2.2.1}). Furthermore, we have the estimate
		\begin{align}\label{2.2.2.7}
			\|(\y, \boldsymbol{\omega})\| + \|p\|_{0} \precsim \|\f\|_{0} + \|\u\|_{0}.
		\end{align}
	\end{theorem}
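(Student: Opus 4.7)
The plan is to recognize \eqref{2.2.2.1} as a classical saddle-point problem over the Hilbert spaces $\boldsymbol{V}\times\boldsymbol{W}$ (primal) and $Q$ (Lagrange multiplier), and to invoke the Babu\v{s}ka-Brezzi theory directly. Since all of the required structural hypotheses have already been established in Lemmas~\ref{Lemma-2.2.2.2.} and \ref{Lemma-2.2.2.3.}, the argument reduces to verifying that the right-hand side defines a continuous functional on $\boldsymbol{V}\times\boldsymbol{W}$ and then reading off the stability constants.

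First, I would introduce the linear functional $\boldsymbol{F}\colon\boldsymbol{V}\times\boldsymbol{W}\to\mathbb{R}$ defined by $\boldsymbol{F}(\v,\boldsymbol{\theta})=(\f+\u,\v)$. By the Cauchy--Schwarz inequality and the definition of the norm in \eqref{2.2.2.5},
\begin{align*}
|\boldsymbol{F}(\v,\boldsymbol{\theta})| \le (\|\f\|_0+\|\u\|_0)\,\|\v\|_0 \le (\|\f\|_0+\|\u\|_0)\,\|(\v,\boldsymbol{\theta})\|,
\end{align*}
so $\boldsymbol{F}\in(\boldsymbol{V}\times\boldsymbol{W})^{\ast}$ with $\|\boldsymbol{F}\|_{\ast}\le \|\f\|_0+\|\u\|_0$. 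Next, I would collect the four hypotheses of the generalized Babu\v{s}ka-Brezzi (Brezzi) theorem: continuity of $\boldsymbol{\mathcal{A}}$ and $\boldsymbol{\mathcal{B}}$ from Lemma~\ref{Lemma-2.2.2.2.}, the inf-sup condition for $\boldsymbol{\mathcal{B}}$ from Lemma~\ref{Lemma-2.2.2.3.}(ii), and the ellipticity of $\boldsymbol{\mathcal{A}}$ from Lemma~\ref{Lemma-2.2.2.3.}(i). Notably, the ellipticity in Lemma~\ref{Lemma-2.2.2.3.}(i) holds on the whole product space $\boldsymbol{V}\times\boldsymbol{W}$ and therefore a fortiori on the kernel $\{(\v,\boldsymbol{\theta})\in\boldsymbol{V}\times\boldsymbol{W}:\boldsymbol{\mathcal{B}}((\v,\boldsymbol{\theta}),\phi)=0\ \forall\phi\in Q\}$, so the kernel-ellipticity required by Brezzi's theorem is automatic. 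The (mild) non-symmetry of $\boldsymbol{\mathcal{A}}$ is harmless because coercivity on the kernel suffices in the non-symmetric version of the theorem.

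With these ingredients in place, the Brezzi theorem yields a unique $(\y,\boldsymbol{\omega},p)\in\boldsymbol{V}\times\boldsymbol{W}\times Q$ solving \eqref{2.2.2.1}, along with the stability bound
\begin{align*}
\|(\y,\boldsymbol{\omega})\| + \|p\|_0 \;\le\; C\,\bigl(\|\boldsymbol{F}\|_{(\boldsymbol{V}\times\boldsymbol{W})^{\ast}} + \|0\|_{Q^{\ast}}\bigr),
\end{align*}
where $C$ depends only on $C_1,C_2,C_3,\alpha$ from Lemmas~\ref{Lemma-2.2.2.2.}--\ref{Lemma-2.2.2.3.}. Combining this with the bound on $\|\boldsymbol{F}\|_{\ast}$ produced above gives \eqref{2.2.2.7}.

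The main (and in fact only) obstacle is somewhat pedagogical rather than technical: one must confirm that the hypotheses in \eqref{2.2.2.6} enter only through Lemma~\ref{Lemma-2.2.2.3.}(i), so that the entire well-posedness statement is an immediate consequence of the abstract saddle-point framework. Everything else — existence, uniqueness, and the a priori estimate — follows by a direct quotation of Brezzi's theorem together with the elementary continuity of the load functional.
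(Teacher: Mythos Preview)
Your proposal is correct and follows essentially the same approach as the paper: both invoke the Babu\v{s}ka--Brezzi theory directly, using the continuity from Lemma~\ref{Lemma-2.2.2.2.} and the ellipticity/inf-sup condition from Lemma~\ref{Lemma-2.2.2.3.}. Your write-up simply spells out in more detail the boundedness of the load functional and the structure of the stability constant, which the paper leaves implicit.
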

	\begin{proof}
		The well-posedness of the state problem is established through the application of Lemmas \ref{Lemma-2.2.2.2.} and \ref{Lemma-2.2.2.3.}, along with a direct implication of the Babu\^{s}ka-Brezzi theory \cite[Theorem~II.1.1]{BDMM}.
	\end{proof}
	\begin{remark}
		If the convective velocity field $\boldsymbol{\beta}$ satisfies the divergence-free condition,
		  then the state-system \eqref{2.2.2.1} becomes well-posed for the following:
		\begin{align*}
			\rho_1=\frac{2}{3} \nu_{0}, \quad \rho_2 > \frac{\nu_{0}}{3} \quad \text{provided that} \quad \sigma_{\min} \nu_{0} > 9 \|\nabla \nu\|_{\infty}^{2}.
		\end{align*}
	\end{remark}
	\begin{definition}
		The \textbf{admissible set} of controls is defined as 
		\begin{align*}
			\boldsymbol{\mathcal{A}_d} := \{\u = (u_1,\ldots, u_d) \in \boldsymbol{L}^{2}(\Omega) \ : \ a_i(\x) \le u_i(\x) \le b_i(\x)  \ \text{for} \ a.e. \  \x \in \Omega, \ i = 1,\ldots,d \}.
		\end{align*}
	\end{definition}
	\noindent Utilizing de Rham's Theorem \cite[Section~4.1.3 and Theorem~B73]{AJ}, an equivalent representation of (\ref{2.2.2.1}) is to find $(\mathbf{y}, \boldsymbol{\omega}) \in \boldsymbol{V}_{0} \times \boldsymbol{W}$, where $\boldsymbol{V}_{0} = \{\mathbf{v} \in \boldsymbol{V}: \nabla \cdot \mathbf{v} = 0\}$, such that
	\begin{align}\label{2.2.2.8}
		\boldsymbol{\mathcal{A}}((\mathbf{y},\boldsymbol{\omega}), (\v, \boldsymbol{\theta})) &= (\mathbf{f} + \mathbf{u}, \mathbf{v}) \hspace{1cm}  \forall  \ (\mathbf{v}, \boldsymbol{\theta}) \in \boldsymbol{V}_{0} \times \boldsymbol{W}.
	\end{align}
	This problem is well-posed by the Lax-Milgram theorem. We define the control-to-state map $\boldsymbol{\mathcal{S}}: \boldsymbol{L}^{2}(\Omega) \rightarrow \boldsymbol{L}^{2}(\Omega) \times [L^{2}(\Omega)]^{d(d-1)/2}$, which associates the velocity-vorticity pair $(\mathbf{y}, \boldsymbol{\omega})$ with a given control $\mathbf{u} \in \boldsymbol{\mathcal{A}_d}$. The set $\boldsymbol{\mathcal{A}_d}$, being a nonempty, bounded, convex, and closed subset of the reflexive Banach space $\boldsymbol{L}^{2}(\Omega)$, is weakly sequentially compact. We introduce the reduced functional $\mathcal{F}:\boldsymbol{L}^{2}(\Omega) \rightarrow \R$ as:
	\begin{align*}
		\mathcal{F}(\mathbf{u}) := \frac{1}{2} \|\boldsymbol{\mathcal{S}}_{\y}(\mathbf{u})-\mathbf{y}_d\|_{0}^{2} + \frac{1}{2} \|\boldsymbol{\mathcal{S}}_{\boldsymbol{\omega}}(\mathbf{u})-\boldsymbol{\omega}_d\|_{0}^{2} +\frac{\gamma }{2}  \|\mathbf{u}\|_{0}^{2},
	\end{align*}
	where $\boldsymbol{\mathcal{S}}_{\y}(\u)$ and $\boldsymbol{\mathcal{S}}_{\boldsymbol{\omega}}(\u)$ denote the velocity and voerticity fields corresponding to control $\u$. The weak lower semicontinuity and strict convexity of $\mathcal{F}$ imply that the following problem
	\begin{align*}
		\underset{\u \in \boldsymbol{\mathcal{A}_d}}{\min} \mathcal{F}(\u) \quad \quad \text{subject to} \ \ (\ref{2.2.2.8})
	\end{align*}
	has a unique optimal solution $\bar{\mathbf{u}}$ \cite[Theorem~2.14]{FTO} and corresponding optimal velocity $\bar{\mathbf{y}}$ and vorticity $\bar{\boldsymbol{\omega}}$. The existence of pressure state $\bar{p}$ such that $(\bar{\mathbf{y}}, \bar{\boldsymbol{\omega}}, \bar{p})$ solves (\ref{2.2.2.1}) is due to de Rham's Theorem. Hence, the optimal control problem (\ref{2.1.1}-\ref{2.1.3}) is well-posed.
	The optimal solution $\bar{\mathbf{u}}$ satisfies the variational inequality (\textbf{first-order necessary optimality condition}) \cite[Lemma~2.21, Theorem~2.25]{FTO}:
	\begin{align}
	\label{2.2.2.9}	\mathcal{F}'(\bar{\mathbf{u}})(\mathbf{u}-\bar{\mathbf{u}}) \ge \mathbf{0} \quad \forall \ \mathbf{u} \in \boldsymbol{\mathcal{A}_d} \qquad i.e. \qquad  (\bar{\mathbf{w}} + \gamma \bar{\mathbf{u}},\mathbf{u}-\bar{\mathbf{u}}) \ge \mathbf{0} \quad \forall \ \mathbf{u} \ \in \boldsymbol{\mathcal{A}_d},
	\end{align}
	where $(\bar{\mathbf{w}}, \bar{\boldsymbol{\vartheta}}, \bar{q}) \in \boldsymbol{V} \times \boldsymbol{W} \times Q$ is a unique solution to the co-state problem
	\begin{align}\label{2.2.2.10}
		\begin{cases}
			\boldsymbol{\mathcal{C}}((\mathbf{w},\boldsymbol{\vartheta}), (\z, \boldsymbol{\tau})) - \boldsymbol{\mathcal{B}}((\z, \boldsymbol{\tau}),q) &= (\mathbf{y} - \y_{d}, \mathbf{z}) + (\boldsymbol{\omega} - \boldsymbol{\omega}_{d}, \boldsymbol{\tau}) \ \quad \hspace{1.36cm} \forall \ (\z, \boldsymbol{\tau}) \in \boldsymbol{V} \times \boldsymbol{W}, \\
			\hspace{2.87cm}\boldsymbol{\mathcal{B}}((\mathbf{w},\boldsymbol{\vartheta}), \psi) &= 0  \hspace{5.516cm}   \forall \ \psi \in Q,
		\end{cases} 
	\end{align}
	with the bilinear form $\boldsymbol{\mathcal{C}} : [\boldsymbol{V} \times \boldsymbol{W}]^{2} \rightarrow \R$ defined as
	\begin{align}
	\label{2.12}	 \boldsymbol{\mathcal{C}}((\mathbf{w},\boldsymbol{\vartheta}), (\z, \boldsymbol{\tau})) &:= - \int_{\Omega} 2 \boldsymbol{\varepsilon}(\w) \nabla \nu \cdot \z \ dx + \int_{\Omega} \nu \boldsymbol{\vartheta} \cdot \textbf{curl}(\z) \ dx + \int_{\Omega} \boldsymbol{\vartheta} \cdot \nabla \nu \times \z \ dx  \\
		\nonumber & \ \ \ \ + \int_{\Omega} \nu \boldsymbol{\vartheta} \cdot \boldsymbol{\tau} \ dx - \int_{\Omega}  \nu \boldsymbol{\tau} \cdot \textbf{curl}(\w) \ dx + \int_{\Omega} (\sigma \w - (\boldsymbol{\beta} \cdot \nabla) \y - (\nabla \cdot \boldsymbol{\beta}) \w) \cdot \z \ dx \\
		\nonumber  & \ \ \ \ + \rho_1 \int_{\Omega} (\textbf{curl}(\w) - \boldsymbol{\vartheta})  \cdot \textbf{curl}(\z) \ dx + \rho_2 \int_{\Omega} (\nabla \cdot \w) \cdot (\nabla \cdot \z) \ dx.
	\end{align}
	Combining the state and co-state equations with the variational inequality, the optimality system is stated as follows: $(\mathbf{y},\boldsymbol{\omega}, p,\mathbf{u}) \in \boldsymbol{V} \times \boldsymbol{W} \times Q \times \boldsymbol{\mathcal{A}_d}$ is an optimal solution of the optimal control problem (\ref{2.1.1}, \ref{2.2.1.2}, \ref{2.1.3}) if and only if $(\y, \boldsymbol{\omega}, p, \w, \boldsymbol{\vartheta}, q, \mathbf{u}) \in  \boldsymbol{V} \times \boldsymbol{W} \times Q \times  \boldsymbol{V} \times \boldsymbol{W} \times Q \times \boldsymbol{\mathcal{A}_d}$ satisfies the following:
	\begin{subequations}
		\begin{align}
			\label{2.2.2.11} \boldsymbol{\mathcal{A}}((\mathbf{y},\boldsymbol{\omega}), (\v, \boldsymbol{\theta})) + \boldsymbol{\mathcal{B}}((\v, \boldsymbol{\theta}),p) &= (\mathbf{f} + \mathbf{u}, \mathbf{v})  \quad \quad \quad \quad  &&  \forall \ (\mathbf{v}, \boldsymbol{\theta}) \in \boldsymbol{V} \times \boldsymbol{W}, \\
			\label{2.2.2.12} \hspace{1.57cm} \boldsymbol{\mathcal{B}}((\mathbf{y},\boldsymbol{\omega}), \phi) &= 0 \quad   && \forall \ \phi \in Q, \\
			\label{2.2.2.13} \boldsymbol{\mathcal{C}}((\mathbf{w},\boldsymbol{\vartheta}), (\z, \boldsymbol{\tau})) - \boldsymbol{\mathcal{B}}((\z, \boldsymbol{\tau}),q) &=  (\mathbf{y} - \y_{d}, \mathbf{z}) + (\boldsymbol{\omega} - \boldsymbol{\omega}_{d}, \boldsymbol{\tau}) \ \quad \hspace{1.5cm} && \forall \ (\z, \boldsymbol{\tau}) \in \boldsymbol{V} \times \boldsymbol{W},\\
			\label{2.2.2.14} \hspace{1.56cm}\boldsymbol{\mathcal{B}}((\mathbf{w},\boldsymbol{\vartheta}), \psi) &= 0 \  \quad \quad \quad \quad    && \forall \ \psi \in Q,\\
			\label{2.2.2.15} (\mathbf{w} + \gamma \u,\tilde{\u}-\u) &\ge \mathbf{0} \quad \quad \quad \quad \quad \quad    && \forall \ \tilde{u} \in \boldsymbol{\mathcal{A}_d}.
		\end{align} 
	\end{subequations}
By applying the projection formula \cite[Theorem~2.28]{FTO} to the optimal control variable, the variational inequality \eqref{2.2.2.15} can be reformulated as:
\begin{align}
\label{2.2.2.16}	\u = \boldsymbol{\Pi}_{[\mathbf{a}, \mathbf{b}]}\left(- \gamma^{-1} \w \right) \ \text{a.e. in} \ \Omega, \qquad \text{where} \qquad
\boldsymbol{\Pi}_{[\mathbf{a}, \mathbf{b}]}(\mathbf{v})(\x) := \min \{\mathbf{b}(\x), \max \{\mathbf{a}(\x), \mathbf{v}(\x)\}\}.
\end{align}
	\subsection{Second-order conditions}
	In conducting a numerical analysis of the problem and evaluating optimization algorithms, we establish the second-order conditions for the optimal control problem with velocity-vorticity-pressure formulation by following \cite{HSAK}.
\begin{definition}
	A control $\bar{\u} \in \boldsymbol{\mathcal{A}_d}$ is termed \textbf{locally optimal} if there exists a positive constant $\epsilon$, such that for any $\u$ in $\boldsymbol{\mathcal{A}_d}$ with $\|\bar{\u}-\u\|_{0}^{2} \le \epsilon$, following inequality holds:
	\begin{align*}
		\mathcal{J}(\bar{\y}, \bar{\boldsymbol{\omega}}, \bar{\u}) \le \mathcal{J}(\y, \boldsymbol{\omega}, \u).
	\end{align*}
	Here, $(\y, \boldsymbol{\omega})$ and $(\bar{\y}, \bar{\boldsymbol{\omega}})$ represent the velocity-vorticity states associated to the controls $\u$ and $\bar{\u}$, respectively.
\end{definition}
	\begin{definition}
		A pair $(\bar{\y}, \bar{\boldsymbol{\omega}}, \bar{\u}) \in \boldsymbol{V} \times \boldsymbol{W} \times \boldsymbol{\mathcal{A}_d}$ is considered a \textbf{globally optimal solution} if 
		\begin{align*}
			\mathcal{J}(\bar{\y}, \bar{\boldsymbol{\omega}}, \bar{\u}) = \underset{(\y, \boldsymbol{\omega}, \u) \in \boldsymbol{V} \times \boldsymbol{W} \times \boldsymbol{\mathcal{A}_d}}{\min} \mathcal{J}(\y, \boldsymbol{\omega}, \u).
		\end{align*}
	\end{definition}
	\noindent  For $\boldsymbol{X} := \boldsymbol{V} \times Q$, and $\y = (\y^{v}, \y^{p}), \ \z = (\z^v, \z^p) \in \boldsymbol{X}$, define $\boldsymbol{\mathcal{R}} : [\boldsymbol{X} \times \boldsymbol{W}]^{2} \rightarrow \R$ as: 
	\begin{align*}
		\nonumber	\boldsymbol{\mathcal{R}}((\y,\boldsymbol{\omega}), (\z,\boldsymbol{\theta})) &= - 2 (\boldsymbol{\varepsilon}(\y^{v}) \nabla \nu, \z^v) +  (\nu \boldsymbol{\omega}, \textbf{curl}(\z^v)) + (\boldsymbol{\omega}, \nabla \nu \times \z^v) + (\sigma \y^{v} + (\boldsymbol{\beta} \cdot \nabla)\mathbf{y}^{v}, \z^v) + (\nu \boldsymbol{\omega}, \boldsymbol{\theta})  \\
		\nonumber & \ \ \ - (\nu \boldsymbol{\theta}, \textbf{curl}(\y^{v})) + \rho_1(\textbf{curl}(\y^{v})-\boldsymbol{\omega}, \textbf{curl}(\z^v)) + \rho_2 (\nabla \cdot \y^{v}, \nabla \cdot \z^v) - (\y^{p}, \nabla \cdot \z^{v})\\
		& \ \ \ \ + (\z^{p},\nabla \cdot \y^{v}).
	\end{align*}
	The Lagrange function $\boldsymbol{\mathcal{L}}: \boldsymbol{X} \times \boldsymbol{W} \times \boldsymbol{L}^{2}(\Omega) \times \boldsymbol{X} \times \boldsymbol{W} \rightarrow \mathbb{R}$, is defined as: 
	\begin{align}\label{2.2.3.1}
		\boldsymbol{\mathcal{L}}(\y, \boldsymbol{\omega}, \u,\z, \boldsymbol{\theta}) := \ & \mathcal{J}(\y, \boldsymbol{\omega}, \u) - \boldsymbol{\mathcal{R}}((\y,\boldsymbol{\omega}), (\z,\boldsymbol{\theta})) + (\f+\u,\z^{v}).
	\end{align}
\begin{lemma}\label{lem: 2.2.3.1}
	The Lagrangian $\boldsymbol{\mathcal{L}}$ exhibits Fréchet differentiability of order two w.r.t. the vector $\v = (\tilde{\boldsymbol{y}}, \tilde{\boldsymbol{\kappa}}, \tilde{\boldsymbol{u}})$. The second-order derivative evaluated at $\v = (\tilde{\boldsymbol{y}}, \tilde{\boldsymbol{\kappa}}, \tilde{\boldsymbol{u}})$, along with the associated adjoint state $\tilde{\boldsymbol{z}}$, satisfies the following conditions:
	\begin{align}
		\label{2.2.3.2} 	\mathcal{L}_{\v \v} (\tilde{\v},\tilde{\z})[(\r_1,\s_1,\t_1),(\r_2,\s_2,\t_2)] &= \mathcal{L}_{\y \y} (\tilde{\v},\tilde{\z})[\r_1,\r_2] + \mathcal{L}_{\boldsymbol{\kappa} \boldsymbol{\kappa} } (\tilde{\v},\tilde{\z})[\s_1,\s_2] + \mathcal{L}_{\u \u} (\tilde{\v},\tilde{\z})[\t_1,\t_2],\\
		\label{2.2.3.3} 	|\mathcal{L}_{\y \y} (\tilde{\v},\tilde{\z})[\r_1,\r_2]| &\le C_{\mathcal{L}} |\r_1| |\r_2|
	\end{align}
	for all $(\r_i,\s_i,\t_i) \in \boldsymbol{V} \times \boldsymbol{W} \times \boldsymbol{L}^{2}(\Omega)$; $i=1,2$ , and $C_{\mathcal{L}}$ is a positive constant independent of $\tilde{\v},\r_1$ and $\r_2.$
\end{lemma}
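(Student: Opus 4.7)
The plan is to exploit the structural decomposition
\[
\boldsymbol{\mathcal{L}}(\y,\boldsymbol{\omega},\u,\z,\boldsymbol{\theta}) = \mathcal{J}(\y,\boldsymbol{\omega},\u) - \boldsymbol{\mathcal{R}}((\y,\boldsymbol{\omega}),(\z,\boldsymbol{\theta})) + (\f+\u,\z^v)
\]
and to differentiate termwise in $\v = (\y,\boldsymbol{\kappa},\u)$. The first thing I would verify is that $\boldsymbol{\mathcal{R}}$ is \emph{linear} in its first argument: scanning the definition, every integral depends linearly on $\y^v$ (through $\boldsymbol{\varepsilon}(\y^v)$, $\textbf{curl}(\y^v)$, $\nabla\!\cdot\!\y^v$, or $(\boldsymbol{\beta}\!\cdot\!\nabla)\y^v$), on $\boldsymbol{\omega}$, or on $\y^p$, with no term coupling two of these primal quantities to one another. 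Similarly, $(\f+\u,\z^v)$ is affine in $\u$ and independent of $\y$ and $\boldsymbol{\kappa}$. Consequently, both contributions vanish after a second differentiation in $\v$, and every second-order contribution to $\mathcal{L}_{\v\v}$ comes from $\mathcal{J}$ alone.

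Since $\mathcal{J}$ is a direct sum $\tfrac{1}{2}\|\y-\y_d\|_0^2 + \tfrac{1}{2}\|\boldsymbol{\omega}-\boldsymbol{\omega}_d\|_0^2 + \tfrac{\gamma}{2}\|\u\|_0^2$ of three quadratic functionals, each depending on only one primal variable, its first Fréchet derivative along $(\r,\s,\t)$ is $(\y-\y_d,\r)+(\boldsymbol{\omega}-\boldsymbol{\omega}_d,\s)+\gamma(\u,\t)$ and its bilinear second derivative collapses to
\[
\mathcal{L}_{\v\v}(\tilde{\v},\tilde{\z})[(\r_1,\s_1,\t_1),(\r_2,\s_2,\t_2)] = (\r_1,\r_2) + (\s_1,\s_2) + \gamma(\t_1,\t_2),
\]
with all mixed partials vanishing automatically. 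Identifying the three diagonal pieces as $\mathcal{L}_{\y\y}[\r_1,\r_2]$, $\mathcal{L}_{\boldsymbol{\kappa}\boldsymbol{\kappa}}[\s_1,\s_2]$, and $\mathcal{L}_{\u\u}[\t_1,\t_2]$ establishes \eqref{2.2.3.2}.

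For \eqref{2.2.3.3}, Cauchy--Schwarz gives $|\mathcal{L}_{\y\y}(\tilde{\v},\tilde{\z})[\r_1,\r_2]| = |(\r_1,\r_2)| \le \|\r_1\|_0\|\r_2\|_0$, so the constant $C_{\mathcal{L}}=1$ suffices, and it is manifestly independent of $\tilde{\v}$ and $\tilde{\z}$ since the Hessian of the tracking cost is a fixed $L^2$ inner product. The main (though still mild) obstacle is the linearity bookkeeping for $\boldsymbol{\mathcal{R}}$: couplings such as $(\nu\boldsymbol{\omega},\textbf{curl}(\z^v))$ or $(\boldsymbol{\omega},\nabla\nu\times\z^v)$ might, at first sight, suggest a mixed second partial in $(\y,\boldsymbol{\kappa})$, but each such product pairs a primal unknown with an adjoint test component $(\z,\boldsymbol{\theta})$ rather than with another primal unknown, so no additional second-order contribution arises. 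Once this observation is secured, the lemma reduces to the direct calculation above.
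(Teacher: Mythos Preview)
Your proof is correct and follows essentially the same route as the paper: both arguments observe that $\boldsymbol{\mathcal{R}}$ and $(\f+\u,\z^v)$ are affine-linear in the primal variables (so their second derivatives vanish), reduce $\mathcal{L}_{\v\v}$ to the Hessian of the quadratic tracking cost $\mathcal{J}$, and then apply Cauchy--Schwarz for \eqref{2.2.3.3}. Your explicit identification of the three diagonal blocks $(\r_1,\r_2)+(\s_1,\s_2)+\gamma(\t_1,\t_2)$ is in fact slightly more complete than the paper's displayed formula, which omits the $(\s_1,\s_2)$ contribution in its final equality.
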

\begin{proof}
	The first order derivatives of $\boldsymbol{\mathcal{L}}$  w.r.t. $\y$, $\boldsymbol{\kappa}$ and $\u$ are 
	\begin{align*}
		\mathcal{L}_{\y} (\tilde{\v},\tilde{\z}) \r &= (\r,\tilde{\y}-\y_{d}) - \boldsymbol{\mathcal{R}}_{\y}((\tilde{\y},\tilde{\boldsymbol{\kappa}}), (\z,\s)), \qquad &&\mathcal{L}_{\boldsymbol{\kappa}} (\tilde{\v},\tilde{\z}) \s = (\s,\tilde{\boldsymbol{\kappa}}-\boldsymbol{\kappa}_{d}) - \boldsymbol{\mathcal{R}}_{\boldsymbol{\kappa}}((\tilde{\y},\tilde{\boldsymbol{\kappa}}), (\z,\s)) \\
		\mathcal{L}_{\u} (\tilde{\v},\tilde{\z}) \t &= \gamma (\t,\tilde{\u}) + (\u,\tilde{\z}).
	\end{align*}
	The mappings $\tilde{\y} \mapsto \boldsymbol{\mathcal{L}}_{\y}(\tilde{\v}, \tilde{\w})$, $\tilde{\boldsymbol{\kappa}} \mapsto \boldsymbol{\mathcal{L}}_{\boldsymbol{\kappa}}(\tilde{\v}, \tilde{\w})$ and $\tilde{\u} \mapsto \boldsymbol{\mathcal{L}}_{\u}(\tilde{\w}, \tilde{\z})$ exhibit an affine linear structure with bounded linear components, ensuring continuity. Consequently, both mappings are Fréchet-differentiable. This observation establishes that $\boldsymbol{\mathcal{L}}$ is twice Fréchet-differentiable. The second-order derivative of $\boldsymbol{\mathcal{L}}$ with respect to $\v$ is then expressed as:
	\begin{align*}
		\boldsymbol{\mathcal{L}}_{\v \v}(\tilde{\v}, \tilde{\z})[(\r_1,\s_1,\t_1),(\r_2,\s_2,\t_2)]  &= \boldsymbol{\mathcal{L}}_{\u \u}(\tilde{\v}, \tilde{\z})[\t_1,\t_2]+ \mathcal{L}_{\boldsymbol{\kappa} \boldsymbol{\kappa} } (\tilde{\v},\tilde{\z})[\s_1,\s_2]  +\boldsymbol{\mathcal{L}}_{\y \y}(\tilde{\v}, \tilde{\z})[\r_1,\r_2] \\
		&= \gamma (\t_1,\t_2) + (\r_1,\r_2).
	\end{align*}
	The second estimate is derived by applying the Cauchy-Schwarz inequality.
\end{proof}
	\section{Mixed formulations and error analysis} \label{Discrete formulation and priori error estimates.} \setcounter{equation}{0}
	In this section, we propose a conforming and a non-conforming scheme for the continuous optimality system by selecting appropriate finite element spaces for the velocity, vorticity, and pressure variables, ensuring necessary stability conditions. Furthermore, we will investigate the well-posedness of the discrete problem and derive a priori and a posteriori error estimates for both schemes.\\ \\
	Firstly, we introduce the notations associated with the discretization of the domain $\Omega$. Let $\mathcal{T}_h$ represent a shape-regular partition of the polygonal or polyhedral domain $\bar{\Omega}$ into closed triangles or tetrahedrons $K$, in the sense of \cite{AJ}, such that $\bigcup\limits_{K \in \mathcal{T}_h}K = \bar{\Omega}.$ Let $h= \max \{h_K: K \in \mathcal{T}_h \}$ denote the global mesh-size, where $h_K$ represents the diameter of an element $K$. The sets $\mathcal{E}^{i}(\mathcal{T}_h)$, $\mathcal{E}^{b}(\mathcal{T}_h)$, and $\mathcal{E}(\mathcal{T}_h)$ consist of interior edges, boundary edges, and all edges of $\mathcal{T}_h$, respectively. $h_E$ signifies the length of an edge $E$, and $\boldsymbol{n}_E$ indicates its unit outward normal vector. We denote the broken Sobolev space norm on an element $K$ by $\|\cdot\|_{s,K}$. For an integer $k \ge 0$, let $\P_{k}(K)$ denotes the space of polynomials of degree atmost $k$ on an element $K$. \\ \\
\subsection{Conforming formulation}\label{Conforming Scheme}
Consider a finite element family using the MINI-element \cite[Sections 8.6 and 8.7]{BMO} for velocity-pressure, and continuous or discontinuous piecewise polynomials for vorticity as:
\begin{align}
	\label{2.3.2.1}	\boldsymbol{V}_h &:= \boldsymbol{U}_h \oplus \mathbb{B}(b_{K} \nabla Q_h) \cap \boldsymbol{H}_{0}^{1}(\Omega), \\
	\label{2.3.2.2}	Q_h &:= \big \{\phi_h \in C(\bar{\Omega}) : \phi_h|_{K} \in \P_{k}(K) \ \forall \ K \in \mathcal{T}_h \big \} \cap L_{0}^{2}(\Omega),
\end{align}
where 
\begin{align*}
\boldsymbol{U}_h &:= \big \{\v_h \in [C(\bar{\Omega})]^{d} : \v_h|_{K} \in [\P_{k}(K)]^{d} \ \forall \ K \in \mathcal{T}_h \big \}, \\
\mathbb{B}(b_{K} \nabla \phi_h) &:= \{\v_{hb} \in \boldsymbol{H}^{1}(\Omega) : \v_{hb}|_{K} = b_{K} \nabla (\phi_h)|_{K} \ \text{for some} \ \phi_h \in Q_h \},
\end{align*}
and $b_K$ is the standard (cubic or quartic) bubble function $\lambda_1, \ldots,\lambda_{d+1} \in [\P_{k}(K)]^{d+1}$. The primary reason for selecting such a pair of discrete spaces $(\boldsymbol{V}_h, Q_h)$ is to ensure that the following discrete inf-sup condition is satisfied for a positive constant $\alpha_0$ (invariant w.r.t. $h$), as discussed in \cite{DFO}:
\begin{align*}
	\alpha_{0} \|\phi_h\|_{0} &\le  \underset{0 \neq (\v_h, \boldsymbol{\theta}_h) \in \boldsymbol{V}_h \times \boldsymbol{W}_h}{\sup} \frac{|\boldsymbol{\mathcal{B}}((\v_h, \boldsymbol{\theta}_h),\phi_h)|}{\|(\v_h, \boldsymbol{\theta}_h)\|} \quad \quad \forall \ \phi_h \in Q_h.
\end{align*}
	Define $\boldsymbol{W}_h^{i}$ as a continuous or piecewise discontinuous polynomial subspace of $\boldsymbol{W}$, in the following manner:
	\begin{subequations}\label{2.3.2.3}
		\begin{align}
			\label{2.3.2.3.a}	\boldsymbol{W}_h^{1} &:= \Big \{\boldsymbol{\theta}_h \in [C(\bar{\Omega})]^{\frac{d(d-1)}{2}} \ : \boldsymbol{\theta}_h|_{K} \in [\P_{k}(K)]^{\frac{d(d-1)}{2}} \ \forall \ K \in \mathcal{T}_h \Big \}, \\
			\label{2.3.2.3.b}	\boldsymbol{W}_h^{2} &:= \Big \{\boldsymbol{\theta}_h \in [L^{2}(\Omega)]^{\frac{d(d-1)}{2}} : \boldsymbol{\theta}_h|_{K} \in [\P_{k}(K)]^{\frac{d(d-1)}{2}} \ \forall \ K \in \mathcal{T}_h \Big \}.
		\end{align}
	\end{subequations}
	For the control variable, we define $\boldsymbol{\mathcal{A}_{dh}}$ as a discrete subspace of $\boldsymbol{\mathcal{A}_d}$, that is nonempty, closed, and convex. Utilizing a piecewise constant discretization, the discrete control space is defined as:
	\begin{align}
	\label{2.3.2.5}	\boldsymbol{\mathcal{A}_{dh}} :=\{\u_h \in \boldsymbol{L}^2(\Omega) \ : \u_h|_{K} \in [\P_{0}(K)]^{d} \ \forall \ K \in \mathcal{T}_h \}.
	\end{align}
	The discrete optimality system for the subspaces introduced in (\ref{2.3.2.1}-\ref{2.3.2.5}) is to find $(\y_h, \boldsymbol{\omega}_h, p_h, \w_h, \boldsymbol{\vartheta}_h, q_h, \mathbf{u}_h) \in  \boldsymbol{V}_h \times \boldsymbol{W}_h \times Q_h \times  \boldsymbol{V}_h \times \boldsymbol{W}_h \times Q_h \times \boldsymbol{\mathcal{A}_{dh}}$ such that
	\begin{subequations}
		\begin{align}
			\label{2.3.2.6} \boldsymbol{\mathcal{A}}((\mathbf{y}_h,\boldsymbol{\omega}_h), (\v_h, \boldsymbol{\theta}_h)) + \boldsymbol{\mathcal{B}}((\v_h, \boldsymbol{\theta}_h),p_h) &= (\mathbf{f} + \mathbf{u}_h, \mathbf{v}_h) &&  \forall  (\mathbf{v}_h, \boldsymbol{\theta}_h) \in \boldsymbol{V}_h \times \boldsymbol{W}_h,\\
			\label{2.3.2.7} \hspace{1.57cm} \boldsymbol{\mathcal{B}}((\mathbf{y}_h,\boldsymbol{\omega}_h), \phi_h) &= 0 && \hspace{-0.1cm} \forall  \phi_h \in Q_h,\\
			\label{2.3.2.8} \boldsymbol{\mathcal{C}}((\mathbf{w}_h,\boldsymbol{\vartheta}_h), (\z_h, \boldsymbol{\tau}_h)) - \boldsymbol{\mathcal{B}}((\z_h, \boldsymbol{\tau}_h),q_h) &=  (\mathbf{y}_h - \y_{d}, \mathbf{z}_h) + (\boldsymbol{\omega}_h - \boldsymbol{\omega}_{d}, \boldsymbol{\tau}_h) && \hspace{-0.1cm} \forall (\z_h, \boldsymbol{\tau}_h) \in \boldsymbol{V}_h \times \boldsymbol{W}_h,\\
			\label{2.3.2.9} \hspace{1.56cm}\boldsymbol{\mathcal{B}}((\mathbf{w}_h,\boldsymbol{\vartheta}_h), \psi_h) &= 0 && \hspace{-0.1cm} \forall  \psi_h \in Q_h,\\
			\label{2.3.2.10} (\mathbf{w}_h + \gamma \u_h,\tilde{\u}_h -\u_h) &\ge \mathbf{0} && \hspace{-0.1cm} \forall \tilde{\u}_h \in \boldsymbol{\mathcal{A}_{dh}}.
		\end{align} 
	\end{subequations}
	\noindent The application of Babu\^{s}ka-Brezzi theory to saddle point problems, along with the continuity-coercivity properties of bilinear form $\boldsymbol{\mathcal{A}}$ and the discrete inf-sup stability of $\boldsymbol{\mathcal{B}}$, guarantees the unique solvability of the discrete optimal control problem.
	\begin{remark}
		As detailed in \cite[Section~3.1.1]{VANAYA}, the proposed scheme allows the use of generalised Taylor-Hood-$\P_k$ finite elements for approximating velocity and pressure, and continuous or discontinuous piecewise polynomial spaces for vorticity.
	\end{remark}
	\subsubsection{A priori error estimates}\label{A priori error estimates}
	The main objective of this subsection is to derive the a priori error estimates for the control, state, and co-state variables. Let $k \geq 1$ be an integer.
	Throughout this subsection, we enforce the following regularity assumptions:
	\begin{align*}
		\y, \w \in \boldsymbol{H}^{s+1} (\Omega), \ \boldsymbol{\omega}, \boldsymbol{\vartheta} \in [H^{s} (\Omega)]^{\frac{d(d-1)}{2}}, \ p, q \in H^{s} (\Omega), \ \text{and} \ \u \in \boldsymbol{H}^{1} (\Omega)\  \text{for some } \ s \in (1/2,k].
	\end{align*}
	For a specified control $\u$, let $(\y_h(\u), \boldsymbol{\omega}_h(\u), p_h(\u)) \in  \boldsymbol{V}_h \times \boldsymbol{W}_h \times Q_h$ be a solution to the problem
	\begin{subequations}\label{3.10}
		\begin{align}
			\label{2.4.1.1.a}  \boldsymbol{\mathcal{A}}((\mathbf{y}_h(\u),\boldsymbol{\omega}_h(\u)), (\v_h, \boldsymbol{\theta}_h)) + \boldsymbol{\mathcal{B}}((\v_h, \boldsymbol{\theta}_h),p_h(\u)) &= (\mathbf{f} + \mathbf{u}, \mathbf{v}_h) &&\forall \ (\v_h, \boldsymbol{\theta}_h) \in \boldsymbol{V}_h \times \boldsymbol{W}_h,\\
			\label{2.4.1.1.b} \hspace{1.57cm} \boldsymbol{\mathcal{B}}((\mathbf{y}_h(\u),\boldsymbol{\omega}_h(\u)), \phi_h) &= 0 &&\forall \ \phi_h \in Q_h.
		\end{align} 
	\end{subequations}
	Similarly, let $(\w_h(\y), \boldsymbol{\vartheta}_h(\y), r_h(\y)) \in  \boldsymbol{V}_h \times \boldsymbol{W}_h \times Q_h$ be a solution to the following problem:
	\begin{subequations}\label{3.11}
		\begin{align}
			\label{2.4.1.2.a}	\boldsymbol{\mathcal{C}}((\mathbf{w}_h(\y),\boldsymbol{\vartheta}_h(\y)), (\z_h, \boldsymbol{\tau}_h)) - \boldsymbol{\mathcal{B}}((\z_h, \boldsymbol{\tau}_h),q_h(\y)) &=  (\mathbf{y} - \y_{d}, \mathbf{z}_h) + (\boldsymbol{\omega} - \boldsymbol{\omega}_{d}, \boldsymbol{\tau}_h),\\
			\label{2.4.1.2.b} \hspace{1.56cm}\boldsymbol{\mathcal{B}}((\mathbf{w}_h(\y),\boldsymbol{\vartheta}_h(\y)), \psi_h) &= 0,
		\end{align}
	\end{subequations}
	for all $(\z_h, \boldsymbol{\tau}_h, \psi_h) \in \boldsymbol{V}_h \times \boldsymbol{W}_h \times Q_h$.
	\begin{lemma}\label{Lemma-2.4.1.1}
		Let $(\y_h, \boldsymbol{\omega}_h, p_h)$ and $(\w_h, \boldsymbol{\vartheta}_h, q_h)$  be the solutions to the state and co-state discrete systems (\ref{2.3.2.6}-\ref{2.3.2.7}) and (\ref{2.3.2.8}-\ref{2.3.2.9}), respectively. Let $(\y_h(\u), \boldsymbol{\omega}_h(\u), p_h(\u))$ and $(\w_h(\y), \boldsymbol{\vartheta}_h(\y), r_h(\y))$ be the auxiliary variables. Then, we have the following estimates:
		\begin{align}
			\label{2.4.1.3}	\|(\y_h(\u)-\y_h,\boldsymbol{\omega}_h(\u)-\boldsymbol{\omega}_h)\| + \|p_h(\u)-p_h\|_{0} &\precsim \|\u-\u_h\|_{0}, \\
			\label{2.4.1.4}	\|(\w_h(\y)-\w_h,\boldsymbol{\vartheta}_h(\y)-\boldsymbol{\vartheta}_h)\| + \|q_h(\y)-q_h\|_{0} &\precsim \|\y-\y_h\|_{0} + \|\boldsymbol{\omega} - \boldsymbol{\omega}_{h}\|_{0}.
		\end{align}
	\end{lemma}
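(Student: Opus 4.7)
The plan is to obtain both estimates by forming the relevant error equations and then invoking the discrete Babu\^{s}ka--Brezzi theory that underlies the well-posedness of \eqref{2.3.2.6}--\eqref{2.3.2.9}. Concretely, for the first estimate I subtract \eqref{3.10} (evaluated at the continuous control $\u$) from \eqref{2.3.2.6}--\eqref{2.3.2.7}. Writing $\boldsymbol{e}_{\y} := \y_h(\u) - \y_h$, $\boldsymbol{e}_{\boldsymbol{\omega}} := \boldsymbol{\omega}_h(\u) - \boldsymbol{\omega}_h$ and $e_p := p_h(\u) - p_h$, this yields the discrete saddle-point system
\begin{align*}
\boldsymbol{\mathcal{A}}((\boldsymbol{e}_{\y},\boldsymbol{e}_{\boldsymbol{\omega}}),(\v_h,\boldsymbol{\theta}_h)) + \boldsymbol{\mathcal{B}}((\v_h,\boldsymbol{\theta}_h),e_p) &= (\u - \u_h, \v_h), \\
\boldsymbol{\mathcal{B}}((\boldsymbol{e}_{\y},\boldsymbol{e}_{\boldsymbol{\omega}}),\phi_h) &= 0,
\end{align*}
for all test functions in $\boldsymbol{V}_h \times \boldsymbol{W}_h \times Q_h$, with right-hand side bounded by $\|\u - \u_h\|_0\,\|\v_h\|_0$.

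Next I apply the discrete version of the Babu\^{s}ka--Brezzi framework. The continuity constants from Lemma~\ref{Lemma-2.2.2.2.} carry over verbatim to the conforming discrete spaces, and Lemma~\ref{Lemma-2.2.2.3.}(i) gives $\boldsymbol{\mathcal{A}}$-ellipticity on the entire product space $\boldsymbol{V}\times\boldsymbol{W}$ (not just on the discrete kernel), which in particular implies ellipticity on $\boldsymbol{V}_h \times \boldsymbol{W}_h$ under the assumption \eqref{2.2.2.6}. Combined with the discrete inf-sup condition for $\boldsymbol{\mathcal{B}}$ stated in Section~\ref{Conforming Scheme}, this yields the bound
\begin{align*}
\|(\boldsymbol{e}_{\y},\boldsymbol{e}_{\boldsymbol{\omega}})\| + \|e_p\|_0 \precsim \|\u-\u_h\|_0,
\end{align*}
which is exactly \eqref{2.4.1.3}.

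For the second estimate the procedure is structurally identical: subtract \eqref{3.11} from \eqref{2.3.2.8}--\eqref{2.3.2.9} to obtain a discrete saddle-point problem for the differences $\w_h(\y)-\w_h$, $\boldsymbol{\vartheta}_h(\y)-\boldsymbol{\vartheta}_h$ and $q_h(\y)-q_h$, with right-hand side $(\y - \y_h,\z_h) + (\boldsymbol{\omega} - \boldsymbol{\omega}_h,\boldsymbol{\tau}_h)$ that is bounded by $(\|\y-\y_h\|_0 + \|\boldsymbol{\omega}-\boldsymbol{\omega}_h\|_0)\|(\z_h,\boldsymbol{\tau}_h)\|$, and then appeal once more to the discrete Babu\^{s}ka--Brezzi theory. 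The main subtlety I would have to address here is that the adjoint bilinear form $\boldsymbol{\mathcal{C}}$ in \eqref{2.12} differs from $\boldsymbol{\mathcal{A}}$ by replacing $(\boldsymbol{\beta}\cdot\nabla)\y$ with $-(\boldsymbol{\beta}\cdot\nabla)\y - (\nabla\cdot\boldsymbol{\beta})\w$; I would verify that under the smallness assumption \eqref{2.2.2.6} on $\|\nabla\cdot\boldsymbol{\beta}\|_0$, an integration-by-parts argument analogous to that of \cite[Lemma~2]{VANAYA} gives the same coercivity constant $C_3$ for $\boldsymbol{\mathcal{C}}$ as for $\boldsymbol{\mathcal{A}}$, and that Lemma~\ref{Lemma-2.2.2.1.} bounds its extra terms. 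Once this coercivity transfer is in place, the rest of the argument is completely parallel to the state case and delivers \eqref{2.4.1.4}. The main obstacle is therefore not the error-equation manipulation itself but confirming this coercivity-transfer step for $\boldsymbol{\mathcal{C}}$; everything else reduces to continuity bounds and the standard Brezzi splitting.
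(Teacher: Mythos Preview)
Your proposal is correct and takes essentially the same approach as the paper: form the error equations by subtraction and then exploit the stability of the discrete saddle-point problem (coercivity of $\boldsymbol{\mathcal{A}}$ and $\boldsymbol{\mathcal{C}}$ on the full space together with the discrete inf-sup condition for $\boldsymbol{\mathcal{B}}$). The only cosmetic difference is that the paper unpacks the Babu\^{s}ka--Brezzi estimate by hand---testing the first error equation with the error itself to obtain the velocity--vorticity bound via coercivity and Cauchy--Schwarz, and then invoking the inf-sup condition separately for the pressure---whereas you appeal to the abstract stability theorem in one stroke; the two routes are equivalent, and your flag about verifying coercivity of $\boldsymbol{\mathcal{C}}$ is a point the paper leaves implicit.
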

	\begin{proof}
	Subtracting the system (\ref{2.3.2.6}-\ref{2.3.2.7}) from (\ref{2.4.1.1.a}-\ref{2.4.1.1.b}), we have
		\begin{align*}
		 \boldsymbol{\mathcal{A}}((\mathbf{y}_h(\u)-\y_h,\boldsymbol{\omega}_h(\u)-\boldsymbol{\omega}_h), (\v_h, \boldsymbol{\theta}_h)) + \boldsymbol{\mathcal{B}}((\v_h, \boldsymbol{\theta}_h),p_h(\u)-p_h) &= (\mathbf{u}-\u_h, \mathbf{v}_h),\\
			\label{2.4.1.3.1b} \hspace{1.57cm} \boldsymbol{\mathcal{B}}((\mathbf{y}_h(\u)-\y_h,\boldsymbol{\omega}_h(\u)-\boldsymbol{\omega}_h), \phi_h) &= 0,
		\end{align*}
for all $(\v_h, \boldsymbol{\theta}_h,\phi_h) \in \boldsymbol{V}_h \times \boldsymbol{W}_h \times Q_h$. After substituting $\v_h =  \y_h(\u)-\y_h,\boldsymbol{\theta}_h= \boldsymbol{\omega}_h(\u)-\boldsymbol{\omega}_h$ and  $\phi_h =  p_h(\u)-p_h$, in the above set of equations, we obtain
	\begin{align*}
 \boldsymbol{\mathcal{A}}((\mathbf{y}_h(\u)-\y_h,\boldsymbol{\omega}_h(\u)-\boldsymbol{\omega}_h), (\mathbf{y}_h(\u)-\y_h,\boldsymbol{\omega}_h(\u)-\boldsymbol{\omega}_h)) = (\mathbf{u}-\u_h, \y_h(\u)-\y_h).
	\end{align*} 
By a use of Lemma~\ref{Lemma-2.2.2.3.} and an application of Cauchy-Schwarz inequality, we have
\begin{align}
\label{2.4.1.3.3} 	\|(\y_h(\u)-\y_h,\boldsymbol{\omega}_h(\u)-\boldsymbol{\omega}_h)\| &\precsim  \|\u-\u_h\|_{0}.
\end{align}
Similarly, by subtracting the system (\ref{2.3.2.8}-\ref{2.3.2.9}) from (\ref{2.4.1.2.a}-\ref{2.4.1.2.b}), we get
	\begin{align*} \boldsymbol{\mathcal{C}}((\mathbf{w}_h(\y)-\w_h,\boldsymbol{\vartheta}_h(\y)-\boldsymbol{\vartheta}_h), (\z_h, \boldsymbol{\tau}_h)) - \boldsymbol{\mathcal{B}}((\z_h, \boldsymbol{\tau}_h),q_h(\y)-q_h) &= (\mathbf{y}-\y_h, \mathbf{z}_h)+ (\boldsymbol{\omega} - \boldsymbol{\omega}_{h}, \boldsymbol{\tau}_h),\\
	\hspace{1.57cm} \boldsymbol{\mathcal{B}}((\mathbf{w}_h(\y)-\w_h,\boldsymbol{\vartheta}_h(\y)-\boldsymbol{\vartheta}_h), \psi_h) &= 0,
	\end{align*} 
	for all $(\z_h,\boldsymbol{\tau}_h,\psi_h) \in \boldsymbol{V}_h \times \boldsymbol{W}_h \times Q_h$. Substituting $\z_h = \w_h(\y)- \w_h$, $\boldsymbol{\tau}_h=\boldsymbol{\vartheta}_h(\y)-\boldsymbol{\vartheta}_h$, and $\psi_h =  q_h(\y)-q_h$, in the above system of equations, we arrive at:
\begin{align*}
	\boldsymbol{\mathcal{C}}((\mathbf{w}_h(\y)-\w_h,\boldsymbol{\vartheta}_h(\y)-\boldsymbol{\vartheta}_h), (\w_h(\y)- \w_h, \boldsymbol{\vartheta}_h(\y)-\boldsymbol{\vartheta}_h)) = (\mathbf{y}-\y_h, \w_h(\y)- \w_h)+ (\boldsymbol{\omega} - \boldsymbol{\omega}_{h}, \boldsymbol{\vartheta}_h(\y)-\boldsymbol{\vartheta}_h).
\end{align*} 
Using Lemma~\ref{Lemma-2.2.2.3.} and Cauchy-Schwarz inequality, we obtain
\begin{align}
	\label{2.4.1.3.4} 	\|(\w_h(\y)-\w_h,\boldsymbol{\vartheta}_h(\y)-\boldsymbol{\vartheta}_h)\| &\precsim \|\y-\y_h\|_{0} + \|\boldsymbol{\omega} - \boldsymbol{\omega}_{h}\|_{0}.
\end{align}
Since $\boldsymbol{\mathcal{B}}((\cdot,\cdot),\cdot)$ satisfies the inf-sup condition, so for a constant $C>0$, we have
\begin{align}
	\nonumber C \|p_h(\u)-p_h\|_{0}  &\le \underset{0 \neq (\v_h, \boldsymbol{\theta}_h) \in \boldsymbol{V}_h \times \boldsymbol{W}_h}{\sup} \frac{|\boldsymbol{\mathcal{B}}((\v_h, \boldsymbol{\theta}_h),p_h(\u)-p_h)|}{\|(\v_h, \boldsymbol{\theta}_h)\|} \\
	\nonumber &= \underset{0 \neq (\v_h, \boldsymbol{\theta}_h) \in \boldsymbol{V}_h \times \boldsymbol{W}_h}{\sup}  \frac{|(\mathbf{u}-\u_h, \mathbf{v}_h) - \boldsymbol{\mathcal{A}}((\mathbf{y}_h(\u)-\y_h,\boldsymbol{\omega}_h(\u)-\boldsymbol{\omega}_h), (\v_h, \boldsymbol{\theta}_h))|}{\|(\v_h, \boldsymbol{\theta}_h)\|}\\
	\label{2.4.1.3.5} &\precsim \|\u-\u_h\|_{0}.
\end{align}
By following similar steps for the co-state problem, we get
\begin{align}
	\label{2.4.1.3.6}	 \|q_h(\y)-q_h\|_{0} &\precsim \|\y-\y_h\|_{0} + \|\boldsymbol{\omega} - \boldsymbol{\omega}_{h}\|_{0}.
\end{align}
Combining (\ref{2.4.1.3.3}) with (\ref{2.4.1.3.5}) and (\ref{2.4.1.3.4}) with (\ref{2.4.1.3.6}), we get the desired estimates.
	\end{proof}
	\begin{lemma}\label{Lemma-2.4.1.2}
		Let $(\y, \boldsymbol{\omega}, p), (\w, \boldsymbol{\vartheta}, q) \in  \boldsymbol{V} \times \boldsymbol{W} \times Q$ be the solutions to the state and co-state systems (\ref{2.2.2.11}-\ref{2.2.2.12}) and (\ref{2.2.2.13}-\ref{2.2.2.14}), respectively. Then, for positive constants $C_1$ and $C_2$, we have the estimates:
		\begin{align}
			\label{2.4.1.5.}	\|(\y- \y_h(\u),\boldsymbol{\omega} - \boldsymbol{\omega}_h(\u))\| + \|p-p_h(\u)\|_{0} &\le C_1 h^{s}\big(\|\y\|_{s+1} + \|\boldsymbol{\omega}\|_{s+1} + \|p\|_{s}\big), \\
			\label{2.4.1.6}	\|(\w - \w_h(\y),\boldsymbol{\vartheta} - \boldsymbol{\vartheta}_h(\y))\| + \|q-q_h(\y)\|_{0} &\le C_2 h^{s}\big(\|\w\|_{s+1} + \|\boldsymbol{\vartheta}\|_{s+1} + \|q\|_{s}\big).
		\end{align}
	\end{lemma}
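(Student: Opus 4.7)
The plan is to treat this as a standard Céa-type approximation estimate for a conforming mixed finite element method and apply the abstract Babu\v{s}ka--Brezzi framework. The pairs $(\y_h(\u),\boldsymbol{\omega}_h(\u),p_h(\u))$ and $(\w_h(\y),\boldsymbol{\vartheta}_h(\y),q_h(\y))$ are precisely the Galerkin projections of the continuous solutions onto $\boldsymbol{V}_h\times\boldsymbol{W}_h\times Q_h$, with right-hand sides matching those of (\ref{2.2.2.11}--\ref{2.2.2.14}). Since the discretization is conforming, subtracting the continuous and discrete systems yields Galerkin orthogonality of the errors with respect to all discrete test functions.

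The first step is to invoke the abstract error estimate for saddle-point problems (see, e.g., \cite[Proposition~II.2.6]{BDMM}): given continuity of $\boldsymbol{\mathcal{A}}$ and $\boldsymbol{\mathcal{B}}$ (Lemma~\ref{Lemma-2.2.2.2.}), ellipticity of $\boldsymbol{\mathcal{A}}$ on $\boldsymbol{V}\times\boldsymbol{W}$ (Lemma~\ref{Lemma-2.2.2.3.}(i), which transfers to the discrete level because the scheme is conforming), and the discrete inf-sup condition stated in Section~\ref{Conforming Scheme}, one obtains the best-approximation bound
\begin{align*}
\|(\y-\y_h(\u),\boldsymbol{\omega}-\boldsymbol{\omega}_h(\u))\|+\|p-p_h(\u)\|_{0}
\precsim \inf_{(\v_h,\boldsymbol{\theta}_h,\phi_h)}\big(\|(\y-\v_h,\boldsymbol{\omega}-\boldsymbol{\theta}_h)\|+\|p-\phi_h\|_{0}\big),
\end{align*}
the infimum being taken over $\boldsymbol{V}_h\times\boldsymbol{W}_h\times Q_h$.

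The second step is to control this infimum by plugging in standard interpolants: the Scott--Zhang (or Lagrange) interpolant $I_h\y\in\boldsymbol{V}_h$, a Lagrange or local $L^2$-projection interpolant for $\boldsymbol{\omega}$ in $\boldsymbol{W}_h^1$ or $\boldsymbol{W}_h^2$, and the Lagrange interpolant for $p$ in $Q_h$. Recalling the triple-norm $|\!|\!|\cdot|\!|\!|_1$ in \eqref{2.2.2.5} is bounded by the full $H^1$-norm, classical interpolation estimates give, under the regularity $\y\in\boldsymbol{H}^{s+1}(\Omega)$, $\boldsymbol{\omega}\in[H^{s+1}(\Omega)]^{d(d-1)/2}$, $p\in H^{s}(\Omega)$ with $s\in(1/2,k]$, the bounds $|\!|\!|\y-I_h\y|\!|\!|_1\precsim h^{s}\|\y\|_{s+1}$, $\|\boldsymbol{\omega}-I_h\boldsymbol{\omega}\|_{0}\precsim h^{s}\|\boldsymbol{\omega}\|_{s+1}$ and $\|p-I_h p\|_{0}\precsim h^{s}\|p\|_{s}$. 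Combining these with the abstract estimate delivers \eqref{2.4.1.5.}. The co-state estimate \eqref{2.4.1.6} then follows by repeating the argument verbatim with $\boldsymbol{\mathcal{C}}$ in place of $\boldsymbol{\mathcal{A}}$: the continuity of $\boldsymbol{\mathcal{C}}$ is immediate from Lemma~\ref{Lemma-2.2.2.1.}, the sign of the convective term has merely been transposed, and the additional lower-order contribution $-(\nabla\cdot\boldsymbol{\beta})\w$ is absorbed by the smallness assumption \eqref{2.2.2.6} in exactly the same fashion as in the proof of Lemma~\ref{Lemma-2.2.2.3.}, yielding the analogous discrete ellipticity constant.

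The main obstacle I expect is verifying that the adjoint bilinear form $\boldsymbol{\mathcal{C}}$ inherits the ellipticity constant $C_3$ of Lemma~\ref{Lemma-2.2.2.3.} under the same hypotheses on $\nu$, $\sigma$ and $\boldsymbol{\beta}$; once this is in place together with the (unchanged) inf-sup condition for $\boldsymbol{\mathcal{B}}$, the abstract saddle-point theory and the interpolation estimates give the result with no further technical complications. The rest of the argument reduces to invoking established results, so I would present the proof by stating the abstract estimate, citing the required interpolation bounds for the MINI (or generalized Taylor--Hood) pair and for $\boldsymbol{W}_h$, and remarking that \eqref{2.4.1.6} is obtained mutatis mutandis.
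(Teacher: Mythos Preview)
Your proposal is correct and is precisely the content of the result the paper invokes: the paper's proof simply cites \cite[Theorem~3.3]{VANAYA} for \eqref{2.4.1.5.} and states that \eqref{2.4.1.6} follows by the same argument, which is exactly the C\'ea/Babu\v{s}ka--Brezzi best-approximation estimate combined with standard interpolation bounds that you have outlined. Your remark that the only point requiring care is the ellipticity of $\boldsymbol{\mathcal{C}}$ under \eqref{2.2.2.6} is well taken and is indeed the sole modification needed for the co-state.
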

	\begin{proof}
		The first estimate is derived directly from the result presented in \cite[Theorem~3.3]{VANAYA}. The second estimate for the co-state follows by a similar approach.
	\end{proof}	
	\begin{theorem}\label{Lemma-2.4.1.3}
		Let $(\y, \boldsymbol{\omega}, p, \w, \boldsymbol{\vartheta}, q, \mathbf{u})$ be a solution to the system (\ref{2.2.2.11}-\ref{2.2.2.15}), with corresponding discrete approximation $(\y_h, \boldsymbol{\omega}_h, p_h, \w_h, \boldsymbol{\vartheta}_h, q_h, \mathbf{u}_h)$. Then, for a constant $C>0$ independent of $h$, we have:
		\begin{align}
			\label{2.4.1.7}	\|\u-\u_h\|_{0} \le Ch \|\u\|_{1}.
		\end{align}
	\end{theorem}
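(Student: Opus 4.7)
The plan is to exploit the continuous and discrete first‐order optimality conditions simultaneously, using the $L^{2}$--projection onto piecewise constants to play the role of a test function, and then to bound the arising co–state error by splitting it through the auxiliary problems (\ref{3.10})–(\ref{3.11}).

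\textbf{Step 1: Testing the variational inequalities.} Let $\Pi_{h}:\boldsymbol{L}^{2}(\Omega)\to\boldsymbol{\mathcal{A}_{dh}}$ denote the componentwise $L^{2}$--projection onto piecewise constants; since $\mathbf{a}$ and $\mathbf{b}$ are constants, $\Pi_{h}\u\in\boldsymbol{\mathcal{A}_{dh}}$, and conversely $\u_{h}\in\boldsymbol{\mathcal{A}_{dh}}\subset\boldsymbol{\mathcal{A}_d}$. Testing \eqref{2.2.2.15} with $\tilde\u=\u_{h}$ and \eqref{2.3.2.10} with $\tilde\u_{h}=\Pi_{h}\u$, adding, and rearranging yields
\begin{align*}
\gamma\|\u-\u_{h}\|_{0}^{2} \le \gamma(\u_{h},\Pi_{h}\u-\u) + (\w-\w_{h},\u_{h}-\u) + (\w_{h},\Pi_{h}\u-\u).
\end{align*}
By the defining property of $\Pi_{h}$, the first term vanishes (since $\u_{h}$ is piecewise constant), and in the third term we may replace $\w_{h}$ by $\w_{h}-\Pi_{h}\w$, producing
\begin{align*}
\gamma\|\u-\u_{h}\|_{0}^{2} \le (\w-\w_{h},\u_{h}-\u) + (\w_{h}-\Pi_{h}\w,\Pi_{h}\u-\u).
\end{align*}

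\textbf{Step 2: Splitting the co-state error.} Insert the auxiliary states to write
$\w-\w_{h} = \bigl(\w-\w_{h}(\y)\bigr) + \bigl(\w_{h}(\y)-\w_{h}\bigr)$ and similarly
$\y-\y_{h} = \bigl(\y-\y_{h}(\u)\bigr) + \bigl(\y_{h}(\u)-\y_{h}\bigr)$ (and analogously for $\boldsymbol{\omega}$). Lemma~\ref{Lemma-2.4.1.2} (with $s=1$) gives $\|\w-\w_{h}(\y)\|_{0}\precsim h$ and $\|\y-\y_{h}(\u)\|_{0}+\|\boldsymbol{\omega}-\boldsymbol{\omega}_{h}(\u)\|_{0}\precsim h$, while Lemma~\ref{Lemma-2.4.1.1} furnishes
\begin{align*}
\|\w_{h}(\y)-\w_{h}\|_{0} \precsim \|\y-\y_{h}\|_{0}+\|\boldsymbol{\omega}-\boldsymbol{\omega}_{h}\|_{0}, \qquad \|\y_{h}(\u)-\y_{h}\|_{0}+\|\boldsymbol{\omega}_{h}(\u)-\boldsymbol{\omega}_{h}\|_{0}\precsim \|\u-\u_{h}\|_{0}.
\end{align*}
Chaining these bounds produces the key estimate
$\|\w-\w_{h}\|_{0} \precsim h + \|\u-\u_{h}\|_{0}$.

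\textbf{Step 3: Interpolation bound and kickback.} Standard approximation for the piecewise‐constant $L^{2}$--projection gives $\|\Pi_{h}\u-\u\|_{0}\precsim h\|\u\|_{1}$, and regularity of $\w$ together with the triangle inequality yields $\|\w_{h}-\Pi_{h}\w\|_{0}\precsim \|\w-\w_{h}\|_{0}+h\|\w\|_{1}$. Substituting into the Step~1 inequality and applying Cauchy–Schwarz gives
\begin{align*}
\gamma\|\u-\u_{h}\|_{0}^{2} \precsim \bigl(h+\|\u-\u_{h}\|_{0}\bigr)\,\|\u-\u_{h}\|_{0} + h\bigl(h+\|\u-\u_{h}\|_{0}\bigr)\|\u\|_{1} + h^{2}\|\w\|_{1}\|\u\|_{1}.
\end{align*}
Young's inequality applied to each cross term $h\cdot\|\u-\u_{h}\|_{0}$ with small parameter $\tfrac{\gamma}{4}$ lets us absorb the $\|\u-\u_{h}\|_{0}^{2}$ contributions on the left-hand side, leaving $\|\u-\u_{h}\|_{0}^{2}\precsim h^{2}\|\u\|_{1}^{2}$ after using the a priori bound $\|\w\|_{1}\precsim\|\y\|_{0}+\|\boldsymbol{\omega}\|_{0}+\|\y_d\|_{0}+\|\boldsymbol{\omega}_d\|_{0}$ from Theorem~\ref{Theorem-2.2.2.4.} applied to the co-state system.

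\textbf{Main obstacle.} The delicate point is the circular dependence: the bound on $\|\w-\w_{h}\|_{0}$ contains $\|\u-\u_{h}\|_{0}$, the very quantity being estimated. This is what forces the introduction of the auxiliary variables $(\y_{h}(\u),\boldsymbol{\omega}_{h}(\u),p_{h}(\u))$ and $(\w_{h}(\y),\boldsymbol{\vartheta}_{h}(\y),q_{h}(\y))$, so that the $h$-part and the $\|\u-\u_{h}\|_{0}$-part can be cleanly separated and the latter absorbed by a Young-type kickback; verifying that this absorption survives (i.e.\ that the implied constant on $\|\u-\u_{h}\|_{0}^{2}$ on the right is strictly less than $\gamma$ before absorption, which essentially requires the Babuška–Brezzi constant $C_{3}$ of Lemma~\ref{Lemma-2.2.2.3.} and the stability constants from Lemma~\ref{Lemma-2.4.1.1} to be independent of $h$) is the only subtle ingredient.
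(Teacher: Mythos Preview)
Your overall strategy is standard, but there is a genuine gap in the kickback step that you yourself flag in the ``Main obstacle'' paragraph and do not actually close. When you bound $\|\w-\w_{h}\|_{0}\precsim h+\|\u-\u_{h}\|_{0}$ (Step~2) and then apply Cauchy--Schwarz to $(\w-\w_{h},\u_{h}-\u)$, the right-hand side of your Step~3 inequality contains a term $C\,\|\u-\u_{h}\|_{0}^{2}$ in which $C$ is a \emph{fixed} product of the stability constants from Lemmas~\ref{Lemma-2.4.1.1} and~\ref{Lemma-2.4.1.2}. No choice of Young parameter makes this coefficient small: $ab\le \epsilon a^{2}+\tfrac{1}{4\epsilon}b^{2}$ helps only when one factor is already $O(h)$, but here both factors are $\|\u-\u_{h}\|_{0}$. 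Being ``independent of $h$'' is not the issue---the obstruction is that $C$ may simply exceed $\gamma$, in which case $(\gamma-C)\|\u-\u_{h}\|_{0}^{2}\le\cdots$ yields no information.

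The paper avoids this by a different decomposition. Instead of the \emph{discrete} auxiliary co-state $\w_{h}(\y)$, it introduces the \emph{continuous} auxiliary co-state $\w(\u_{h})$ attached to the continuous state $\y(\u_{h})$ driven by the discrete control. The split $\w-\w_{h}=(\w-\w(\u_{h}))+(\w(\u_{h})-\w_{h})$ then has the crucial property that the second piece is a pure finite-element error, $\|\w(\u_{h})-\w_{h}\|_{0}=O(h)$ with \emph{no} dependence on $\|\u-\u_{h}\|_{0}$, while the first piece obeys $\|\w-\w(\u_{h})\|_{0}\le\eta\,\|\u-\u_{h}\|_{0}$. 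Inserting this into the corresponding inner products and applying Young's inequality with a free parameter $\mu$ \emph{before} substituting any bounds produces a coefficient $\mu\bigl(2\gamma+\eta^{2}\gamma^{-1}\bigr)$ in front of $\|\u-\u_{h}\|_{0}^{2}$; choosing $\mu=\tfrac{\gamma}{2}\bigl(2\gamma+\eta^{2}\gamma^{-1}\bigr)^{-1}$ makes this exactly $\gamma/2$, regardless of how large $\eta$ is. Your route through the discrete auxiliaries $\y_{h}(\u),\w_{h}(\y)$ cannot manufacture such a free parameter, because the $\|\u-\u_{h}\|_{0}$-contamination enters already at the level of $\|\w_{h}(\y)-\w_{h}\|_{0}$ with a fixed constant.
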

	\begin{proof}
		For $\u_h \in \boldsymbol{\mathcal{A}_{d}}$, let $(\y(\u_h), \boldsymbol{\omega} (\u_h), p(\u_h)) \in \boldsymbol{V} \times \boldsymbol{W} \times Q$ denotes the solution to the problem
		\begin{subequations}
			\begin{align}
				\label{2.4.1.8}	\boldsymbol{\mathcal{A}}((\mathbf{y}(\u_h),\boldsymbol{\omega}(\u_h)), (\v, \boldsymbol{\theta})) + \boldsymbol{\mathcal{B}}((\v, \boldsymbol{\theta}),p(\u_h)) &= (\mathbf{f} + \mathbf{u}_h, \mathbf{v}) &&\forall \ (\v, \boldsymbol{\theta}) \in \boldsymbol{V} \times \boldsymbol{W}, \\
				\label{2.4.1.9}	\hspace{2.87cm}\boldsymbol{\mathcal{B}}((\mathbf{y}(\u_h),\boldsymbol{\omega}(\u_h)), \phi) &= 0 &&\forall \ \phi \in Q.
			\end{align}
		\end{subequations} 
		Let $(\w(\u_h), \boldsymbol{\vartheta} (\u_h), q(\u_h)) \in \boldsymbol{V} \times \boldsymbol{W} \times Q$ be the solution to the following problem:
		\begin{subequations}
			\begin{align}
				\label{2.4.1.10}	\boldsymbol{\mathcal{C}}((\mathbf{w}(\u_h),\boldsymbol{\vartheta}(\u_h)), (\z, \boldsymbol{\tau})) - \boldsymbol{\mathcal{B}}((\z, \boldsymbol{\tau}),q(\u_h)) &= (\mathbf{y}(\u_h) - \y_{d}, \mathbf{z}) + (\boldsymbol{\omega}(\u_h) - \boldsymbol{\omega}_{d}, \boldsymbol{\tau}), \\
				\label{2.4.1.11} \hspace{2.87cm}\boldsymbol{\mathcal{B}}((\mathbf{w}(\u_h),\boldsymbol{\vartheta}(\u_h)), \psi) &= 0,
			\end{align}
		\end{subequations}
		for all $(\mathbf{z}, \boldsymbol{\tau}, \psi) \in \boldsymbol{V} \times \boldsymbol{W} \times Q$. The reduced functional $\mathcal{F}$ 
		exhibits the following properties:
		\begin{subequations}\label{2.4.1.111}
		\begin{align}
		\label{2.4.1.111a}	\mathcal{F}'(\u) (\boldsymbol{\lambda}) &= - \ \gamma (\u,\boldsymbol{\lambda}) + (\boldsymbol{\lambda},\w) \hspace{1.8cm} &&\forall \ \boldsymbol{\lambda} \in \boldsymbol{\mathcal{A}_{d}}, \\ 
		\label{2.4.1.111b}	\mathcal{F}'(\u_{h}) (\boldsymbol{\lambda}_{h}) &= - \ \gamma (\u_{h},\boldsymbol{\lambda}_{h}) + (\boldsymbol{\lambda}_{h},\w(\u_h)) \hspace{0.5cm} &&\forall \ \boldsymbol{\lambda}_{h} \in \boldsymbol{\mathcal{A}_{dh}}.
		\end{align}
	\end{subequations}
		By using second-order conditions as illustrated in \cite{HSAK}, we have the following identities:
		\begin{align*}
			-(\gamma \u,\u - \u_{h}) + (\u - \u_{h},\w) &= \mathbf{0} = -(\gamma \u,\u - \Pi_{h} \u) + (\u - \Pi_{h} \u,\w), \\
			-(\gamma \u_{h},\u_{h} - \Pi_{h} \u) + (\u_{h} - \Pi_{h} \u,\w_{h}) &= \mathbf{0},
		\end{align*}
		where $\Pi_h$ represents the standard $L^2$-projection operator. Additionally, we also have:
		\begin{align}
		\label{2.4.1.12} 	\gamma \|\u-\u_{h}\|_{0}^{2} &\le \mathcal{F}'(\u)(\u-\u_h) - \mathcal{F}'(\u_h)(\u-\u_h) =  {\gamma(\u-\u_h,\Pi_h \u- \u)} \\
		\nonumber	& \ \ + {(\Pi_h \u - \u + \u - \u_{h},\w_h-\w(\u_h))} + {(\Pi_h \u - \u,\w(\u_h)-\w)}.
		\end{align}
		Using the Cauchy-Schwarz and Young's inequalities for all the terms, we obtain the following:
		\begin{align}
			\label{2.4.1.13}	\gamma \|\u-\u_{h}\|_{0}^{2} &\le \frac{\gamma}{\mu} \|\u-\Pi_h \u\|_{0}^{2} + 2\gamma \mu \|\u-\u_h\|_{0}^{2} + \frac{\mu}{\gamma}  \|\w-\w(\u_h)\|_{0}^{2} \\
		\nonumber	& \ \ \ + \bigg( \frac{\mu}{2\gamma}+\frac{1}{4\gamma \mu}\bigg)\|\w(\u_h)-\w_h\|_{0}^{2},
		\end{align}
		where $\mu$ is a positive constant. Subtracting (\ref{2.4.1.10}-\ref{2.4.1.11}) from (\ref{2.2.2.13}-\ref{2.2.2.14}), we get
		\begin{align*}
			\boldsymbol{\mathcal{C}}((\w-\mathbf{w}(\u_h),\boldsymbol{\vartheta} - \boldsymbol{\vartheta}(\u_h)), (\z, \boldsymbol{\tau})) - \boldsymbol{\mathcal{B}}((\z, \boldsymbol{\tau}),q-q(\u_h)) &= (\mathbf{y} - \y(\u_h), \mathbf{z}) + (\boldsymbol{\omega} - \boldsymbol{\omega}(\u_h), \boldsymbol{\tau}), \\
			\hspace{2.87cm}\boldsymbol{\mathcal{B}}((\w-\mathbf{w}(\u_h),\boldsymbol{\vartheta} - \boldsymbol{\vartheta}(\u_h)), \psi) &= 0,
		\end{align*}
		for all $(\z, \boldsymbol{\tau}, \psi) \in \boldsymbol{V} \times \boldsymbol{W} \times Q$.
		Substituting $\z = \w-\w(\u_h), \boldsymbol{\tau} = \boldsymbol{\vartheta} - \boldsymbol{\vartheta}(\u_h)$ and $\psi = q-q(\u_h)$, we have
		\begin{align*}
			\boldsymbol{\mathcal{C}}((\w-\mathbf{w}(\u_h),\boldsymbol{\vartheta} - \boldsymbol{\vartheta}(\u_h)), (\w-\mathbf{w}(\u_h), \boldsymbol{\vartheta} - \boldsymbol{\vartheta}(\u_h))) = (\y - \mathbf{y}(\u_h), \w-\w(\u_h)) + (\boldsymbol{\omega} - \boldsymbol{\omega}(\u_h), \boldsymbol{\vartheta} - \boldsymbol{\vartheta}(\u_h)).
		\end{align*}
		Using the constitutive relation and the norm definition \eqref{2.2.2.5}, we obtain
\begin{align}
	\label{2.4.1.14}  |\!|\!|\w-\w(\u_h)|\!|\!|_{1} \le (C_{a}^{c})^{-1} |\!|\!|\y-\y(\u_h)|\!|\!|_{1} \le \eta \|\u-\u_h\|_{0},
\end{align}
		where $\eta = (C_{a}^{c})^{-1}$, with $C_{a}^{c}$ being coercivity constant. Using the estimate (\ref{2.4.1.14}) in (\ref{2.4.1.13}), we have
		\begin{align*}
			\gamma \|\u-\u_{h}\|_{0}^{2} &\le \frac{\gamma}{\mu} \|\u-\Pi_h \u\|_{0}^{2} + \mu \bigg(2\gamma + \frac{\eta^{2}}{\gamma}\bigg) \|\u-\u_h\|_{0}^{2} + \bigg( \frac{\mu}{2\gamma}+\frac{1}{4\gamma \mu}\bigg)\|\w(\u_h)-\w_h\|_{0}^{2}.
		\end{align*}
		By selecting $\mu = \frac{\gamma}{2} \left(2\gamma + \eta^{2} \gamma^{-1}\right)^{-1}$, we get
		\begin{align*}
			\|\u-\u_{h}\|_{0}^{2} &\le \bigg(8 \gamma + \frac{4\eta^{2}}{\gamma^{2}}\bigg) \|\u- \Pi_h \u\|_{0}^{2} + \bigg( \frac{\left(2\gamma + \eta^{2} \gamma^{-1}\right)^{-1}}{ 2 \gamma}+\frac{2\gamma + \eta^{2} \gamma^{-1}}{\gamma^{3}}\bigg)\|\w(\u_h)-\w_h\|_{0}^{2}.
		\end{align*}
		Use of estimates for the second term and the $L^2$-projection gives
		\begin{align*}
			\|\u-\u_{h}\|_{0} &\le C_1 \bigg(\sum_{K \in \mathcal{T}_h} h_{K}^{2} \|\u\|_{1,K}^{2}\bigg)^{1/2} \le Ch \|\u\|_{1}.
		\end{align*}
	\end{proof}
	\begin{theorem}\label{lem: Lemma-2.4.1.4}
		Let $(\y, \boldsymbol{\omega}, p, \w, \boldsymbol{\vartheta}, q, \mathbf{u})$ be a solution to the system (\ref{2.2.2.11}-\ref{2.2.2.15}), with the discrete approximation $(\y_h, \boldsymbol{\omega}_h, p_h, \w_h, \boldsymbol{\vartheta}_h, q_h, \mathbf{u}_h)$. Then, for positive constants $C_s$ and $C_a$, we have the estimates:
		\begin{align}
			\label{2.4.1.15}	\|(\y - \y_h,\boldsymbol{\omega} - \boldsymbol{\omega}_h)\| + \|p-p_h\|_{0} &\le C_s h^{s}\big(\|\y\|_{s+1} + \|\boldsymbol{\omega}\|_{s+1} + \|p\|_{s} + \|\u\|_{s} \big), \\
			\label{2.4.1.16}	\|(\w - \w_h, \boldsymbol{\vartheta} - \boldsymbol{\vartheta}_h)\| + \|q-q_h\|_{0} &\le C_a h^{s}\big(\|\y\|_{s+1} + \|p\|_{s} + \|\u\|_{s} + \|\w\|_{s+1} + \|\boldsymbol{\vartheta}\|_{s+1} + \|q\|_{s}\big).
		\end{align}
	\end{theorem}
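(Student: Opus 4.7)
The plan is a standard triangle-inequality decomposition through the two auxiliary problems already introduced: the discrete state driven by the continuous optimal control $\u$, namely $(\y_h(\u),\boldsymbol{\omega}_h(\u),p_h(\u))$ from \eqref{3.10}, and the discrete co-state driven by the continuous optimal state $\y,\boldsymbol{\omega}$, namely $(\w_h(\y),\boldsymbol{\vartheta}_h(\y),q_h(\y))$ from \eqref{3.11}. For the state estimate \eqref{2.4.1.15}, I would write
\begin{align*}
\|(\y-\y_h,\boldsymbol{\omega}-\boldsymbol{\omega}_h)\|+\|p-p_h\|_{0}
&\le \|(\y-\y_h(\u),\boldsymbol{\omega}-\boldsymbol{\omega}_h(\u))\|+\|p-p_h(\u)\|_{0}\\
&\quad +\|(\y_h(\u)-\y_h,\boldsymbol{\omega}_h(\u)-\boldsymbol{\omega}_h)\|+\|p_h(\u)-p_h\|_{0}.
\end{align*}
The first line is a pure approximation error, bounded by Lemma~\ref{Lemma-2.4.1.2} by $C_1 h^{s}(\|\y\|_{s+1}+\|\boldsymbol{\omega}\|_{s+1}+\|p\|_{s})$. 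The second line is a data-perturbation error between two discrete problems differing only in their source, and Lemma~\ref{Lemma-2.4.1.1} yields the clean bound $\precsim\|\u-\u_h\|_{0}$.

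The key ingredient is then Theorem~\ref{Lemma-2.4.1.3}, which gives $\|\u-\u_h\|_0 \le Ch\|\u\|_1$; absorbing $h\|\u\|_1$ into $h^s\|\u\|_s$ (using the regularity $\u\in\boldsymbol{H}^1(\Omega)$ together with $s\in(1/2,k]$) yields \eqref{2.4.1.15} with a constant $C_s$ collecting the constants from Lemmas~\ref{Lemma-2.4.1.1} and \ref{Lemma-2.4.1.2} and Theorem~\ref{Lemma-2.4.1.3}.

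For the co-state estimate \eqref{2.4.1.16}, I would use the same triangle inequality but through $(\w_h(\y),\boldsymbol{\vartheta}_h(\y),q_h(\y))$:
\begin{align*}
\|(\w-\w_h,\boldsymbol{\vartheta}-\boldsymbol{\vartheta}_h)\|+\|q-q_h\|_{0}
&\le \|(\w-\w_h(\y),\boldsymbol{\vartheta}-\boldsymbol{\vartheta}_h(\y))\|+\|q-q_h(\y)\|_{0}\\
&\quad +\|(\w_h(\y)-\w_h,\boldsymbol{\vartheta}_h(\y)-\boldsymbol{\vartheta}_h)\|+\|q_h(\y)-q_h\|_{0}.
\end{align*}
The first line is bounded by Lemma~\ref{Lemma-2.4.1.2} with the co-state right-hand side. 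The second line is bounded by Lemma~\ref{Lemma-2.4.1.1} in terms of $\|\y-\y_h\|_0+\|\boldsymbol{\omega}-\boldsymbol{\omega}_h\|_0$, which is already controlled by the state estimate just proved. Substituting \eqref{2.4.1.15} and collecting constants yields \eqref{2.4.1.16}.

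The only place where real work happens is Theorem~\ref{Lemma-2.4.1.3}; here that result is already available, so the proof is a straightforward triangle-inequality assembly. The principal subtlety to watch for is the interplay between the exponent $s$ in the approximation estimates and the fixed $h^1$ rate coming from the piecewise-constant control discretization: one must verify that $h\|\u\|_{1}\le h^{s}\|\u\|_{s}$ under the standing regularity $\u\in\boldsymbol{H}^{1}(\Omega)$ and $s\in(1/2,k]$, which is the only point where a minor care with Sobolev embeddings (or a simple dominance between $h$ and $h^{s}$) is needed before concluding.
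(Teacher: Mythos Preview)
Your proposal is correct and follows exactly the paper's own argument: the same triangle-inequality decomposition through the auxiliary solutions $(\y_h(\u),\boldsymbol{\omega}_h(\u),p_h(\u))$ and $(\w_h(\y),\boldsymbol{\vartheta}_h(\y),q_h(\y))$, followed by the same invocation of Lemmas~\ref{Lemma-2.4.1.1}, \ref{Lemma-2.4.1.2} and Theorem~\ref{Lemma-2.4.1.3}. Your remark about reconciling the $h$-rate from the control estimate with the $h^{s}$ appearing in the statement is a legitimate subtlety that the paper itself does not address explicitly.
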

	\begin{proof}
			To prove the first estimate, we use Triangle inequality to get
			\begin{align*}
				\|(\y - \y_h,\boldsymbol{\omega} - \boldsymbol{\omega}_h)\| + \|p-p_h\|_{0} \le \ &\ \|(\y- \y_h(\u), \boldsymbol{\omega} -\boldsymbol{\omega}_h(\u))\| + \|p-p_h(\u)\|_{0} \\
				&+ \|(\y_h(\u) - \y_h,\boldsymbol{\omega}_h(\u) - \boldsymbol{\omega}_h)\| + \|p_h(\u)-p_h\|_{0}.
			\end{align*}
		Now, by a use of the estimates derived in \eqref{2.4.1.3}, \eqref{2.4.1.5.} and Theorem~\ref{Lemma-2.4.1.3}, we obtain the first estimate. Similarly, a use of Triangle inequality for the second estimate gives 
		\begin{align*}
				\|(\w - \w_h, \boldsymbol{\vartheta} - \boldsymbol{\vartheta}_h)\|  + \|q-q_h\|_{0} \le \ &\ 	\|(\w - \w_h(\y),\boldsymbol{\vartheta} -\boldsymbol{\vartheta}_h(\y))\| + \|q-q_h(\y)\|_{0} \\
			& + 	\|(\w_h(\y) - \w_h,\boldsymbol{\vartheta}_h(\y) - \boldsymbol{\vartheta}_h)\| + \|q_h(\y)-q_h\|_{0}.
		\end{align*}
	By using the estimates derived in \eqref{2.4.1.4}, \eqref{2.4.1.6} and Theorem~\ref{Lemma-2.4.1.3}, we get the desired estimate.
	\end{proof}
	\subsubsection{A posteriori error estimates}\label{A posteriori error estimates} 
	A posteriori error estimators are computable quantities that rely solely on the approximate solution and known data. They provide insight into the local accuracy of the approximate solution, making them a crucial component of adaptive finite element methods. The iterative techniques aim to enhance the approximation's quality while maintaining an efficient allocation of computational resources. In this subsection, we develop a residual-based a posteriori error estimator and illustrate its reliability and efficiency in the context of the optimal control problem. The analysis is limited to the two-dimensional scenario, using continuous finite element approximations for vorticity. However, extending the analysis to three dimensions and incorporating discontinuous vorticity is straightforward.\\ \\
	Let $(\y, \omega, p, \w, \vartheta, q, \mathbf{u}) \in \boldsymbol{V} \times \boldsymbol{W} \times Q \times \boldsymbol{V} \times \boldsymbol{W} \times Q \times \boldsymbol{\mathcal{A}_{d}}$ and $(\y_h, \omega_h, p_h, \w_h, \vartheta_h, q_h, \mathbf{u}_h) \in \boldsymbol{V}_h \times \boldsymbol{W}_{h}^{1} \times Q_h \times \boldsymbol{V}_h \times \boldsymbol{W}_{h}^{1} \times Q_h \times \boldsymbol{\mathcal{A}_{dh}}$ be the unique solutions to the continuous and discrete problems (\ref{2.2.2.11}-\ref{2.2.2.15}) and (\ref{2.3.2.6}-\ref{2.3.2.10}), respectively.
	For an element $K \in \mathcal{T}_h$, we introduce local error indicators denoted as $\eta_{c,K}^{\y}$, $\eta_{c,K}^{\w}$, and $\eta_{c,K}^{\u}$, where:
	\begin{align*}
		\begin{cases}
			\big(\eta_{c,K}
			^{\y}\big)^{2} &:= h_{K}^{2}\|\f + \u_h + 2 \boldsymbol{\varepsilon}(\y_h) \nabla \nu - \nu \  \textbf{curl}(\omega_h) - (\boldsymbol{\beta} \cdot \nabla) \y_h - \sigma \y_h - \nabla p_h\|_{0,K}^{2} \\
			& \ \ \ \ + \|\omega_h - \textbf{curl}(\y_h)\|_{0,K}^{2}+ \|\nabla \cdot \y_h\|_{0,K}^{2}, \\ 
			\big(\eta_{c,K}^{\w}\big)^{2} &:= h_{K}^{2}\|\y_h-\y_{d} + 2 \boldsymbol{\varepsilon}(\w_h) \nabla \nu - \nu \  \textbf{curl}(\vartheta_h) + (\boldsymbol{\beta} \cdot \nabla) \w_h + (\nabla \cdot \boldsymbol{\beta}) \w_h - \sigma \w_h + \nabla q_h\|_{0,K}^{2}  \\
			& \ \ \ \ + \|\vartheta_h - \textbf{curl}(\w_h) - \omega_h + \omega_d\|_{0,K}^{2} + \|\nabla \cdot \w_h\|_{0,K}^{2}, \\
			(\eta_{c,K}^{\u})^{2}  &:= h_{K}^{2} \|\w_h + \gamma \u_h\|_{0,K}^{2}.
		\end{cases} 
	\end{align*}
	We define the \textbf{global error estimators} $\eta_c^{\y}, \eta_c^{\w}$, and  $\eta_c^{\u}$ as:
	\begin{align*}
		(\eta_c^{\y})^2 &:= \sum_{K \in \mathcal{T}_h}(\eta_{c,K}^{\y})^{2} , &&  (\eta_c^{\w})^2 := \sum_{K \in \mathcal{T}_h}(\eta_{c,K}^{\w})^{2}, \quad \quad \quad \quad (\eta_c^{\u})^{2} := \sum_{K \in \mathcal{T}_h}(\eta_{c,K}^{\u})^{2}.
	\end{align*}
	\textbf{Reliability:} Firstly, we establish a reliability estimate for the a posteriori error estimator of the optimal control problem. The continuous dependence estimate expressed in \eqref{2.2.2.7} is essentially a counterpart to the global inf-sup condition for the continuous formulation outlined in \eqref{2.2.2.1}. Thus, employing this estimate for the error $(\y-\y_h, \omega-\omega_h, p-p_h)$ yields: 
	\begin{align}\label{2.5.1.2}
		\|(\y - \y_h,\omega - \omega_h)\| + \|p-p_h\|_{0} \precsim \underset{(\v, \theta, \phi) \in \boldsymbol{H}_{0}^{1}(\Omega) \times L^{2}(\Omega) \times L_{0}^{2}(\Omega)}{\sup} \frac{\mathcal{R}(\v, \theta, \phi)}{\|(\v, \theta, \phi)\|},
	\end{align}
	where for all $(\v, \theta, \phi) \in \boldsymbol{H}_{0}^{1}(\Omega) \times L^{2}(\Omega) \times L_{0}^{2}(\Omega)$, the residual functional $\mathcal{R}$ is defined by
	\begin{align*}
			\mathcal{R}(\v, \theta, \phi) = \boldsymbol{\mathcal{A}}((\mathbf{y}-\y_h,\omega-\omega_h), (\v, \theta)) + \boldsymbol{\mathcal{B}}((\v, \theta),p-p_h) + \boldsymbol{\mathcal{B}}((\mathbf{y}-\y_h,\omega-\omega_h), \phi).
	\end{align*}
	\begin{lemma}\label{Lemma-2.5.1.2.}
		Let $(\y, \omega, p, \w, \vartheta, q)$ and $(\y_h, \omega_h, p_h, \w_h, \vartheta_h, q_h)$ be the solutions to the continuous and discrete problems (\ref{2.2.2.11}-\ref{2.2.2.14}) and (\ref{2.3.2.6}-\ref{2.3.2.9}), respectively. Then, the following estimates hold true:
		\begin{align}
			\label{2.5.1.4}	\|(\y - \y_h,\omega - \omega_h)\| + \|p-p_h\|_{0} &\precsim \eta_c^{\y}, \\
			\label{2.5.1.5}	\|(\w - \w_h, \vartheta - \vartheta_h)\| + \|q-q_h\|_{0} &\precsim \eta_c^{\w}.
		\end{align}
	\end{lemma}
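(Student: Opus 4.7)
The plan is to combine the continuous inf-sup bound \eqref{2.5.1.2} with Galerkin-type orthogonality and element-wise integration by parts, following the standard residual-based a posteriori template for saddle-point problems.

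For \eqref{2.5.1.4}, by \eqref{2.5.1.2} it suffices to establish $\mathcal{R}(\v,\theta,\phi)\precsim \eta_c^{\y}\,\|(\v,\theta,\phi)\|$ for every $(\v,\theta,\phi)\in\boldsymbol{H}_0^1(\Omega)\times L^2(\Omega)\times L_0^2(\Omega)$. First I would substitute \eqref{2.2.2.11}--\eqref{2.2.2.12} into the definition of $\mathcal{R}$ to rewrite
$$\mathcal{R}(\v,\theta,\phi)=(\f+\u,\v)-\boldsymbol{\mathcal{A}}((\y_h,\omega_h),(\v,\theta))-\boldsymbol{\mathcal{B}}((\v,\theta),p_h)-\boldsymbol{\mathcal{B}}((\y_h,\omega_h),\phi),$$
and then subtract the discrete equations \eqref{2.3.2.6}--\eqref{2.3.2.7} tested against Cl\'ement/Scott--Zhang quasi-interpolants $(\v_h,\theta_h,\phi_h)\in\boldsymbol{V}_h\times\boldsymbol{W}_h^{1}\times Q_h$ of $(\v,\theta,\phi)$. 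This Galerkin-type orthogonality expresses $\mathcal{R}(\v,\theta,\phi)$ as a sum of element-wise contributions in the interpolation errors $\v-\v_h$, $\theta-\theta_h$, $\phi-\phi_h$, together with a perturbation $(\u-\u_h,\v_h)$ arising from the control mismatch.

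Next I would integrate by parts on each $K\in\mathcal{T}_h$. Thanks to the $H^1$-conformity of $\boldsymbol{V}_h$ and the continuity of $Q_h$ and $\boldsymbol{W}_h^{1}$, all inter-element jumps of $\y_h$, $p_h$, and $\omega_h$ cancel, so that only the bulk contributions survive: the momentum strong residual $\f+\u_h+2\boldsymbol{\varepsilon}(\y_h)\nabla\nu-\nu\,\textbf{curl}(\omega_h)-(\boldsymbol{\beta}\cdot\nabla)\y_h-\sigma\y_h-\nabla p_h$ paired with $\v-\v_h$, the constitutive residual $\omega_h-\textbf{curl}(\y_h)$ paired with $\textbf{curl}(\v-\v_h)$ and $\theta-\theta_h$ (through the augmentation and the $\theta$-equation), and the incompressibility residual $\nabla\cdot\y_h$ paired with $\nabla\cdot(\v-\v_h)$ and $\phi-\phi_h$. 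Applying standard Cl\'ement/Scott--Zhang interpolation estimates (with the first-order residual weighted by $h_K$ and the constitutive/incompressibility residuals by a constant) together with a Cauchy--Schwarz sum over elements with finite-overlap patches yields $\mathcal{R}(\v,\theta,\phi)\precsim \eta_c^{\y}\,\|(\v,\theta,\phi)\|$, giving \eqref{2.5.1.4}. The co-state bound \eqref{2.5.1.5} follows by the same template applied to the residual functional associated with \eqref{2.2.2.13}--\eqref{2.2.2.14} and its discrete counterpart \eqref{2.3.2.8}--\eqref{2.3.2.9}, using the bilinear form $\boldsymbol{\mathcal{C}}$ (identical in structure to $\boldsymbol{\mathcal{A}}$ up to the sign and $\nabla\cdot\boldsymbol{\beta}$ correction of the convective part) and identifying the local residuals listed in $\eta_c^{\w}$.

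The main obstacle I anticipate is the $(\u-\u_h,\v_h)$ perturbation generated by Galerkin orthogonality: since $\eta_c^{\y}$ involves only the discrete control, this term cannot be absorbed directly into its element contributions. To handle it, I would introduce the auxiliary solution $(\y_h(\u),\omega_h(\u),p_h(\u))$ from \eqref{3.10} and split the error by the triangle inequality into a ``pure-discretization'' piece $(\y_h(\u)-\y_h,\omega_h(\u)-\omega_h,p_h(\u)-p_h)$, whose residual is driven by $\u_h$ and matches $\eta_c^{\y}$ exactly, plus a ``control-consistency'' piece $(\y-\y_h(\u),\omega-\omega_h(\u),p-p_h(\u))$. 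The first piece is handled by the residual argument above without perturbation; the second is bounded through Lemma~\ref{Lemma-2.4.1.1} and, via the projection formula \eqref{2.2.2.16}, ultimately couples to the co-state residual, consistent with the combined reliability of the full optimality system.
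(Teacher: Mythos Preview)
Your core residual-based strategy (continuous inf-sup bound \eqref{2.5.1.2}, Galerkin orthogonality with a Cl\'ement/Scott--Zhang interpolant, elementwise integration by parts, absence of edge jumps by conformity of $\boldsymbol{V}_h$, $Q_h$, $\boldsymbol{W}_h^1$) is exactly what the paper invokes: its proof simply cites \cite[Theorem~4.1]{VANAYA}, which carries out precisely this template for the Oseen problem with a fixed right-hand side.

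Where you depart is in worrying about the perturbation $(\u-\u_h,\v_h)$, and here you have in fact detected a genuine looseness in the paper's \emph{statement} rather than a flaw in your argument. Taken literally, the pair $(\y,\omega,p)$ in the lemma solves \eqref{2.2.2.11}--\eqref{2.2.2.12} with control $\u$, while $(\y_h,\omega_h,p_h)$ and the residual $\eta_c^{\y}$ are built with $\u_h$; the residual argument then produces an extra $\|\u-\u_h\|_0$ that $\eta_c^{\y}$ alone does not absorb. The paper sidesteps this because the reference \cite{VANAYA} treats a fixed-data problem, and because the only places where the lemma is actually \emph{applied} (equations \eqref{2.5.1.17.1} and \eqref{2.5.1.19}) are to pairs such as $(\y(\u_h),\y_h)$ or $(\tilde\w,\w_h)$ that share the same right-hand side data as the discrete problem, so the mismatch never occurs. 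In other words, the lemma as used is really ``continuous solution with data $\u_h$ (resp.\ $\y_h,\omega_h$) versus discrete solution'', and for that version your argument goes through cleanly without any auxiliary construction.

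Your proposed fix via the auxiliary discrete triple $(\y_h(\u),\omega_h(\u),p_h(\u))$ from \eqref{3.10} and Lemma~\ref{Lemma-2.4.1.1} is therefore unnecessary here and would in any case not recover the inequality \eqref{2.5.1.4} \emph{as stated}: it reintroduces $\|\u-\u_h\|_0$, which is only controlled later in Theorem~\ref{Theorem-2.5.1.3.} through the variational inequality and the co-state coupling. Keep that machinery for the full reliability proof; for the present lemma, simply carry out the residual argument with matching data (or, equivalently, record the estimate with an additional $\|\u-\u_h\|_0$ term on the right).
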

	\begin{proof}
		The first estimate is derived directly from the proof presented in \cite[Theorem~4.1]{VANAYA}. A similar approach applies to the subsequent estimate for the co-state.
	\end{proof}
	Now we state and prove the main reliability result of this section for the optimal control problem.
	\begin{theorem}\label{Theorem-2.5.1.3.}
		Let $(\y, \omega, p, \w, \vartheta, q, \u)$ and $(\y_h, \omega_h, p_h, \w_h, \vartheta_h, q_h, \u_h)$ be the solutions to the continuous and discrete optimality systems (\ref{2.2.2.11}-\ref{2.2.2.15}) and (\ref{2.3.2.6}-\ref{2.3.2.10}), respectively. Then, the following \textbf{reliability} estimate holds:
		\begin{align}
			\label{2.5.1.6}	\|\u-\u_h\|_{0}+\|(\y - \y_h,\omega - \omega_h)\| + \|p-p_h\|_{0} + \|(\w - \w_h, \vartheta - \vartheta_h)\|  + \|q-q_h\|_{0} \precsim \eta_c^{\y}+ \eta_c^{\w}+\eta_c^{\u}.
		\end{align}
	\end{theorem}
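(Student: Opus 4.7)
The plan is to establish the bound in two stages: first the control error $\|\u-\u_h\|_0$, and then to combine it with the state and co-state errors, for which Lemma~\ref{Lemma-2.5.1.2.} already supplies the bounds $\|(\y-\y_h,\omega-\omega_h)\| + \|p-p_h\|_0 \precsim \eta_c^{\y}$ and $\|(\w-\w_h,\vartheta-\vartheta_h)\| + \|q-q_h\|_0 \precsim \eta_c^{\w}$. A straightforward triangle inequality argument at the end will give the full reliability bound, provided I can control $\|\u-\u_h\|_0$ by $\eta_c^{\w}+\eta_c^{\u}$ (modulo absorption).

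The central step is the control bound, which I would handle by a Liu–Yan–type argument that couples the continuous and discrete variational inequalities \eqref{2.2.2.15} and \eqref{2.3.2.10}. Since $\mathbf{a},\mathbf{b}$ are constants, the $L^2$-projection $\Pi_h\u$ onto piecewise constants lies in $\boldsymbol{\mathcal{A}_{dh}}$, and since $\boldsymbol{\mathcal{A}_{dh}}\subset\boldsymbol{\mathcal{A}_d}$ the choice $\tilde{\u}=\u_h$ is admissible in the continuous inequality. Testing the continuous VI with $\tilde{\u}=\u_h$ and the discrete VI with $\tilde{\u}_h=\Pi_h\u$, adding them, and using the orthogonality property $\int_K \u_h\cdot(\u-\Pi_h\u)\,dx=0$ (valid because $\u_h$ is piecewise constant), I expect to arrive at
\begin{align*}
\gamma\,\|\u-\u_h\|_0^{2}\ \le\ (\w-\w_h,\u_h-\u)\ +\ (\w_h+\gamma\u_h,\Pi_h\u-\u).
\end{align*}

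For the first term, Cauchy–Schwarz together with Lemma~\ref{Lemma-2.5.1.2.} gives a bound of order $\eta_c^{\w}\|\u-\u_h\|_0$, which can be absorbed through Young's inequality. For the second term, an elementwise Cauchy–Schwarz combined with the local approximation estimate $\|\u-\Pi_h\u\|_{0,K}\le C h_K |\u|_{1,K}$ produces $\sum_K h_K\|\w_h+\gamma\u_h\|_{0,K}|\u|_{1,K}\le \eta_c^{\u}\,|\u|_1$, and another application of Young's inequality yields a bound of the form $\|\u-\u_h\|_0\precsim \eta_c^{\w}+\eta_c^{\u}$. Finally, the triangle inequality with Lemma~\ref{Lemma-2.5.1.2.} delivers the target estimate.

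The main obstacle is handling the term $(\w_h+\gamma\u_h,\Pi_h\u-\u)$ in a way that is consistent with the $h_K$ weight inside $\eta_c^{\u}$: the factor $h_K$ in the estimator must be matched exactly by the approximation error of the $L^2$-projection, which in turn uses the $H^1$-regularity of the optimal control (this is natural here, since the projection formula \eqref{2.2.2.16} transfers regularity from $\w$ to $\u$). A second, more subtle point is that Lemma~\ref{Lemma-2.5.1.2.} is stated with residuals that involve the discrete control $\u_h$; any residual contribution proportional to $\|\u-\u_h\|_0$ that surfaces during its use must be re-absorbed using the control bound established above, and this bootstrap needs to be executed carefully to avoid circular dependencies.
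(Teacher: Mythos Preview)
Your variational-inequality manipulation is correct and yields
\[
\gamma\|\u-\u_h\|_0^{2}\le (\w-\w_h,\u_h-\u)+(\w_h+\gamma\u_h,\Pi_h\u-\u),
\]
but the bootstrap you sketch for the first term does not close. As you yourself note, the residual in $\eta_c^{\y}$ (resp.\ $\eta_c^{\w}$) is written with the discrete source $\u_h$ (resp.\ $\y_h,\omega_h$), so a correct form of Lemma~\ref{Lemma-2.5.1.2.} carries the extra contributions $\|\u-\u_h\|_0$ for the state and $\|\y-\y_h\|_0+\|\omega-\omega_h\|_0$ for the co-state. Chaining these gives $\|\w-\w_h\|_0\le C(\eta_c^{\w}+\eta_c^{\y})+C\|\u-\u_h\|_0$ with a constant $C$ determined by the continuity and coercivity constants of the bilinear forms. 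After Cauchy--Schwarz and Young, the term $(\w-\w_h,\u_h-\u)$ therefore produces a right-hand side contribution $\tfrac{C^{2}}{\gamma}\|\u-\u_h\|_0^{2}$; there is no mechanism to make $C^{2}/\gamma<\gamma$ for arbitrary $\gamma>0$, so the absorption fails in general.

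The paper sidesteps this by introducing the auxiliary continuous solutions $(\y(\u_h),\omega(\u_h),p(\u_h))$ and $(\w(\u_h),\vartheta(\u_h),q(\u_h))$ and exploiting the convexity of the reduced functional in the form of the sign identity
\[
(\u-\u_h,\w-\w(\u_h))=\|\y-\y(\u_h)\|_0^{2}+\|\omega-\omega(\u_h)\|_0^{2}\ge 0.
\]
This makes the dangerous piece $(\w-\w(\u_h),\u_h-\u)$ non-positive and leaves only $(\w_h-\w(\u_h),\u-\u_h)$ to estimate. Crucially, $\|\w_h-\w(\u_h)\|$ is bounded purely by $\eta_c^{\y}+\eta_c^{\w}$ via a further auxiliary $(\tilde\w,\tilde\vartheta,\tilde q)$ solving the co-state with $\y=\y_h$: here Lemma~\ref{Lemma-2.5.1.2.} applies without any extra $\|\u-\u_h\|_0$ term because the continuous source coincides with the one in the estimator. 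No circular dependency arises, and no restriction on $\gamma$ is needed. Your handling of the second term $(\w_h+\gamma\u_h,\Pi_h\u-\u)\precsim \eta_c^{\u}$ is the same device the paper uses.
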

	\begin{proof}
		For given $\u_h \in \boldsymbol{L}^{2}(\Omega)$, let $(\y(\u_h), \omega(\u_h), p(\u_h)) \in \boldsymbol{V} \times \boldsymbol{W} \times Q$ be a solution of the system (\ref{2.4.1.8}-\ref{2.4.1.9}) and $(\w(\u_h), \vartheta(\u_h), q(\u_h))\in \boldsymbol{V} \times \boldsymbol{W} \times Q$ be a solution to the system (\ref{2.4.1.10}-\ref{2.4.1.11}). Subtracting these equations separately from (\ref{2.2.2.11}-\ref{2.2.2.12}) and (\ref{2.2.2.13}-\ref{2.2.2.14}), respectively, we obtain the following system:
		\begin{subequations}
			\begin{align}
				\label{2.5.1.7}	\boldsymbol{\mathcal{A}}((\y-\mathbf{y}(\u_h),\omega - \omega(\u_h)), (\v, \theta)) + \boldsymbol{\mathcal{B}}((\v, \theta),p-p(\u_h)) &= (\u - \mathbf{u}_h, \mathbf{v}), \\
				\label{2.5.1.8}	\hspace{2.87cm}\boldsymbol{\mathcal{B}}((\y-\mathbf{y}(\u_h),\omega - \omega(\u_h)), \phi) &= 0,\\
				\label{2.5.1.9}	\boldsymbol{\mathcal{C}}((\w-\mathbf{w}(\u_h), \vartheta - \vartheta(\u_h)), (\z, \tau)) - \boldsymbol{\mathcal{B}}((\z, \tau), q - q(\u_h)) &= (\y - \mathbf{y}(\u_h), \mathbf{z}) + (\omega - \omega(\u_h), \tau), \\
				\label{2.5.1.10} \hspace{2.87cm}\boldsymbol{\mathcal{B}}((\w - \mathbf{w}(\u_h), \vartheta - \vartheta(\u_h)), \psi) &= 0,
			\end{align} 
		\end{subequations}
		for all $(\mathbf{v}, \theta, \phi), (\mathbf{z}, \tau, \psi) \in \boldsymbol{V} \times \boldsymbol{W} \times Q$. Upon Substituting $\v = \w-\w(\u_h),\ \theta = \vartheta - \vartheta(\u_h), \ \phi = q - q(\u_h),\ \z = \y-\y(\u_h), \tau = \omega - \omega(\u_h)$, and $\psi = p - p(\u_h)$ into these set of equations, we obtain:
		\begin{align}
		\label{2.5.1.11} 	 \boldsymbol{\mathcal{A}}((\y-\mathbf{y}(\u_h),\omega - \omega(\u_h), (\w-\w(\u_h), \vartheta - \vartheta(\u_h))) &=  (\u-\u_h,\w-\w(\u_h)) \\
		\nonumber&= \|\y-\y(\u_h)\|_{0}^{2} + \|\omega - \omega(\u_h)\|_{0}^{2} \ge 0.
		\end{align}
		To establish a connection between the control and the co-state, consider the following:
		\begin{align*}
			(\mathcal{F}'(\u),\v) &= (\gamma \u+ \w, \v),  \qquad \qquad (\mathcal{F}'(\u_h),\v) = (\gamma \u_h + \w(\u_h), \v), \qquad \forall \ \v \in \boldsymbol{V}.
		\end{align*}
		After subtracting and using the substitution $\v = \u - \u_h$, we obtain
		\begin{align}
			\label{2.5.1.14}	 (\mathcal{F}'(\u)-\mathcal{F}'(\u_h),\u - \u_h) &= \gamma(\u - \u_h, \u - \u_h) + (\w-\w(\u_h),\u - \u_h).
		\end{align}
		By using (\ref{2.5.1.11}) and the variational inequality (\ref{2.3.2.10}) into (\ref{2.5.1.14}), we derive the following:
		\begin{align*}
			\nonumber   \gamma \|\u - \u_h\|_{0}^{2}  &\le (\mathcal{F}'(\u) - \mathcal{F}'(\u_h),\u - \u_h) \le -(\gamma \u_h + \w_h,\u-\u_h) \\
			\nonumber 	&\le (\w_h - \w(\u_h),\u-\u_h) - (\w_h + \gamma \u_h, \u-\v_h) - (\w_h + \gamma \u_h, \v_h - \u_h) \\
			&\le (\w_h - \w(\u_h),\u-\u_h) - (\w_h + \gamma \u_h, \u-\v_h).
		\end{align*}
		Considering $\v_h = \Pi_{h} \u \in \boldsymbol{U}_{ad,h}$, we apply Young’s inequality to derive:
		\begin{align}
			\label{2.5.1.15}  \|\u - \u_h\|_{0}  \precsim \ & \eta_c^{\u} +  \|(\w_h - \w(\u_h),\vartheta_h - \vartheta(\u_h))\|.
		\end{align}
		This result establishes connection between the control and the co-state velocity-vorticity. Now, let $(\tilde{\w}, \tilde{\vartheta}, \tilde{q})$ solves (\ref{2.2.2.13}-\ref{2.2.2.14}) with $\y=\y_h$. Then, $(\w(\u_h)-\tilde{\w}, \vartheta(\u_h)- \tilde{\vartheta}, q(\u_h)-\tilde{q})$ solves 
		\begin{subequations}\label{2.5.1.15.a}
			\begin{align}
				\label{2.5.1.15.1}	\boldsymbol{\mathcal{C}}((\w(\u_h) - \tilde{\w}, \vartheta(\u_h) - \tilde{\vartheta}), (\z, \tau)) - \boldsymbol{\mathcal{B}}((\z, \tau),  q(\u_h)-\tilde{q}) &= (\mathbf{y}(\u_h) - \y_h, \mathbf{z}) + (\omega(\u_h) - \omega_h, \tau), \\
				\label{2.5.1.15.2}	\hspace{2.87cm}\boldsymbol{\mathcal{B}}((\mathbf{w}(\u_h) - \tilde{\w}, \vartheta(\u_h) - \tilde{\vartheta}), \psi) &= 0.
			\end{align}
		\end{subequations}
		By an application of Theorem \ref{Theorem-2.2.2.4.} and Lemma~\ref{Lemma-2.5.1.2.},
		  we obtain:
		\begin{align}
			\label{2.5.1.17}	\|(\w(\u_h)-\tilde{\w},\vartheta(\u_h)-\tilde{\vartheta})\| + \|q(\u_h)-\tilde{q}\|_{0} &\precsim \|\mathbf{y}(\u_h) - \y_h\|_{0} + \|\omega(\u_h) - \omega_h\|_{0},\\
			\label{2.5.1.17.1}	\|(\tilde{\w}-\w_h,\tilde{\vartheta} - \vartheta_h)\| + \|\tilde{q}-q_h\|_{0} &\precsim \eta_{c}^{\w}.
		\end{align}
		Using the Triangle Inequality and (\ref{2.5.1.17}-\ref{2.5.1.17.1}), we have
		\begin{align}
		  \label{2.5.1.18} 	 \|(\w_h - \w(\u_h),\vartheta_h - \vartheta(\u_h))\| + \|q(\u_h)-q_h)\|_{0} \precsim  \|(\y_h - \y(\u_h),\omega_h - \omega(\u_h))\| + \eta_c^{\w}.
		\end{align}
		For the state equation, Lemma \ref{Lemma-2.5.1.2.} leads to:
		\begin{align}
			\label{2.5.1.19}	\|(\y_h - \y(\u_h),\omega_h - \omega(\u_h))\| + \|p(\u_h)-p_h\|_{0} \precsim \eta_c^{\y}.
		\end{align}
		Using the estimate \eqref{2.2.2.7} and substituting (\ref{2.5.1.18}-\ref{2.5.1.19}) into (\ref{2.5.1.15}), we achieve the desired estimate.
	\end{proof}
\textbf{Efficiency:} Now, we demonstrate the effectiveness of the a posteriori error estimator by conventional element and edge bubble function technique. This bound shows the relationship between the total error and corresponding approximation, showing the effectiveness of the computational approach.\\
	For an element $K \in \mathcal{T}_h$ and an edge $E \in \mathcal{E}(\mathcal{T}_h),$ let $\chi_{K}$ and $\chi_{E}$ be the interior and edge bubble functions, respectively, as defined in \cite{AMO}. Let $\chi_{K} \in \P_{3}(K)$ with support($\chi_{K}$) $\subset K,\ \chi_{K} = 0$ on $\partial K$, and $0 \le \chi_{K} \le 1$ in $K$. Similarly, let $\chi_{E} \in \P_{2}(K)$ with support($\chi_{E}$) $\subset \Omega_{E}:= \{K' \in \mathcal{T}_h: E \in \mathcal{E}(K')\},\ \chi_{E} = 0$ on $\partial K \setminus E$, and $0 \le \chi_{K} \le 1$ in $\Omega_{E}$. We define an extension operator $\boldsymbol{E}: C^{0}(E) \rightarrow C^{0}(T)$ that satisfies $\boldsymbol{E}(q) \in \P_{k}(K)$ and $\boldsymbol{E}(q)|_E = q $ for all $q \in \P_{k}(E)$ and for all $k \in \N \cup \{0\}$.\\
	The element and edge bubble functions $\chi_{K}$ and $\chi_{E}$, and the extension operator $\boldsymbol{E}$ satisfies the following properties proven in \cite{AMO, VERFP}.
	\begin{lemma}\label{Lemma-4.5}
		(i) For $K \in \mathcal{T}_h$ and $v \in \P_{k}(K)$, there exists a positive constant $C_1$ such that
		\begin{align*}
			C_{1}^{-1} \|v\|_{0,K}^{2} \le \int_{K} \chi_{K} v^{2} \ dx \le C_1 \|v\|_{0,K}^{2}, \quad C_{1}^{-1} \|v\|_{0,K}^{2} \le \|\chi v\|_{0,K}^{2} + h_{K}^{2} |\chi v|_{1,K}^{2} \le C_1 \|v\|_{0,K}^{2}.
		\end{align*}
		(ii) For $E \in \mathcal{E}(\mathcal{T}_h)$ and $v \in \P_{k}(E)$, there exists a positive constant $C_2$ such that
		\begin{align*}
			C_{2}^{-1} \|v\|_{0,E}^{2} \le \int_{E} \chi_{E} v^{2} \ ds \le C_2 \|v\|_{0,E}^{2}.
		\end{align*}
		(iii) For $K \in \mathcal{T}_h$, $e \in \mathcal{E}(\mathcal{T}_h)$ and $v \in \P_{k}(E)$, there exists a positive constant $C_3$ such that
		\begin{align*}
			\|\chi_{E} \boldsymbol{E}(v)\|_{0,K}^{2} \le C_3 h_{E} \|v\|_{0,E}^{2}.
		\end{align*}
	\end{lemma}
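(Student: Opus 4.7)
The plan is to reduce all three estimates to norm equivalences on a fixed reference element via affine scaling, exploiting that on any finite-dimensional polynomial space all norms are equivalent. Let $F_K : \hat{K} \to K$ be the affine map from the reference simplex $\hat{K}$ to $K$, with Jacobian $\det(DF_K) \simeq h_K^{d}$; let $\hat{\chi}_{\hat{K}}$ denote the reference interior bubble and $\hat{\chi}_{\hat{E}}$ the reference edge bubble, and let $\hat{v}(\hat{x}) := v(F_K(\hat{x}))$.

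For (i), transforming the integral $\int_K \chi_K v^2 \, dx$ via $F_K$ yields $\det(DF_K) \int_{\hat{K}} \hat{\chi}_{\hat{K}} \hat{v}^2 \, d\hat{x}$. Since $\hat{\chi}_{\hat{K}}$ is strictly positive on the interior of $\hat{K}$ and vanishes only on $\partial \hat{K}$, the map $\hat{v} \mapsto \bigl(\int_{\hat{K}} \hat{\chi}_{\hat{K}} \hat{v}^2\bigr)^{1/2}$ defines a norm on $\P_k(\hat{K})$. By finite dimensionality, this norm is equivalent to $\|\hat{v}\|_{0,\hat{K}}$, and scaling back (the $h_K^d$ factor cancels on both sides) gives the first equivalence. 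The second equivalence is obtained in the same manner: on $\P_k(\hat{K})$, both $\hat{v} \mapsto \|\hat{v}\|_{0,\hat{K}}$ and $\hat{v} \mapsto (\|\hat{\chi}_{\hat{K}} \hat{v}\|_{0,\hat{K}}^2 + |\hat{\chi}_{\hat{K}} \hat{v}|_{1,\hat{K}}^2)^{1/2}$ are norms, hence equivalent; mapping back to $K$, the $L^2$ term scales as $h_K^d$, while the $H^1$ seminorm scales as $h_K^{d-2}$, so the factor $h_K^2$ multiplying $|\chi_K v|_{1,K}^2$ is exactly what is needed to match both sides.

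For (ii), the same template applies to the edge $E$: pull back to a reference edge $\hat{E}$ with Jacobian $\simeq h_E^{d-1}$, observe that $\hat{v} \mapsto \bigl(\int_{\hat{E}} \hat{\chi}_{\hat{E}} \hat{v}^2\bigr)^{1/2}$ is a norm on $\P_k(\hat{E})$ equivalent to $\|\hat{v}\|_{0,\hat{E}}$, and scale back so that the $h_E^{d-1}$ factors cancel on both sides. For (iii), on the reference patch one first extends $\hat{v} \in \P_k(\hat{E})$ to a polynomial on $\hat{K}$ via the reference extension operator and bounds $\|\hat{\chi}_{\hat{E}} \hat{\boldsymbol{E}}(\hat{v})\|_{0,\hat{K}}^2 \precsim \|\hat{v}\|_{0,\hat{E}}^2$ by norm equivalence on the finite-dimensional space; pulling back yields a volume factor $h_K^d$ on the left and an edge factor $h_E^{d-1}$ on the right, leaving one extra power of $h_E$ (which, by shape-regularity, is comparable to $h_K$) that accounts for the single $h_E$ on the right of the inequality.

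The only real obstacle is bookkeeping of scaling exponents for (iii), where volume and boundary integrals scale differently and shape-regularity $h_E \simeq h_K$ must be invoked to combine them; all remaining steps are routine applications of affine equivalence and finite-dimensional norm equivalence, which is why these properties are customarily quoted from \cite{AMO,VERFP} without proof.
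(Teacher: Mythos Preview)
Your proof is correct and follows the standard affine-scaling plus finite-dimensional norm-equivalence argument that underlies these bubble-function inequalities. The paper itself does not prove Lemma~\ref{Lemma-4.5} at all: it simply cites \cite{AMO, VERFP} for the properties, so your sketch is precisely the argument one would find in those references and there is nothing to compare against in the paper's own text.
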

	\begin{lemma}\label{Lemma-2.5.1.4}
		Let $(\y, \omega, p, \w, \vartheta, q)$ and $(\y_h, \omega_h, p_h, \w_h, \vartheta_h, q_h)$ be the solutions to the continuous and discrete problems (\ref{2.2.2.11}-\ref{2.2.2.14}) and (\ref{2.3.2.6}-\ref{2.3.2.9}), respectively. Then, the following estimates hold true:
		\begin{align}
			\label{2.5.1.20}	\eta_c^{\y} &\precsim	\|(\y-\y_h,\omega - \omega_h)\| + \|p-p_h\|_{0}, \\
			\label{2.5.1.21}	\eta_c^{\w} &\precsim \|(\w-\w_h,\vartheta - \vartheta_h)\| + \|q-q_h\|_{0}.
		\end{align}
	\end{lemma}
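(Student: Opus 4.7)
The plan is to prove each of the two bounds element by element, treating the three contributions to $\eta_{c,K}^{\y}$ (respectively $\eta_{c,K}^{\w}$) separately and summing over $K \in \mathcal{T}_h$. Since the conforming formulation involves only interior residuals (no inter-element jump terms), only the interior bubble $\chi_K$ from Lemma~\ref{Lemma-4.5} is required—the edge bubbles will be reserved for the DG analysis.

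The two non-residual terms of $\eta_{c,K}^{\y}$ are immediate. The continuous constitutive relation $\omega = \textbf{curl}(\y)$ and the incompressibility $\nabla \cdot \y = 0$ yield
\begin{align*}
\omega_h - \textbf{curl}(\y_h) &= (\omega_h - \omega) + \textbf{curl}(\y - \y_h), \\
\nabla \cdot \y_h &= \nabla \cdot (\y_h - \y),
\end{align*}
both of which are controlled on $K$ by components of $\|(\y - \y_h, \omega - \omega_h)\|$ via the triangle inequality and the definition \eqref{2.2.2.5}. The analogous co-state identities $\vartheta = \textbf{curl}(\w)$ (with the viscosity weight absorbed through the $W^{1,\infty}$ bound on $\nu$) and $\nabla \cdot \w = 0$ dispatch the corresponding terms of $\eta_{c,K}^{\w}$, with the extra $\omega - \omega_d$ piece appearing naturally from the co-state source.

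For the bulk momentum residual $R_K := \f + \u_h + 2\boldsymbol{\varepsilon}(\y_h)\nabla\nu - \nu\,\textbf{curl}(\omega_h) - (\boldsymbol{\beta}\cdot\nabla)\y_h - \sigma\y_h - \nabla p_h$, I would apply the classical interior bubble argument: set $\v_K := \chi_K R_K$, extended by zero to $\Omega$, and invoke the first estimate of Lemma~\ref{Lemma-4.5} to obtain $\|R_K\|_{0,K}^{2} \precsim \int_K R_K \cdot \v_K \, dx$. Subtracting the strong form of the continuous momentum equation in velocity-vorticity-pressure form (namely \eqref{2.2.1.2}) and integrating by parts inside $K$—with no boundary contribution, since $\v_K$ vanishes on $\partial K$—recasts the right-hand side as pairings of the state errors $\y - \y_h$, $\omega - \omega_h$, $p - p_h$ with $\v_K$ or $\nabla \v_K$. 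The $W^{1,\infty}$ bounds on $\nu, \boldsymbol{\beta}$ and the $L^\infty$ bound on $\sigma$ control the coefficients, and the inverse estimate $|\v_K|_{1,K} \precsim h_K^{-1}\|\v_K\|_{0,K}$ from Lemma~\ref{Lemma-4.5} produces precisely the factor $h_K^{-1}$ needed to cancel the $h_K^{2}$ weight in the indicator. Squaring, summing over $K$ and combining with the two easy contributions gives \eqref{2.5.1.20}; the proof of \eqref{2.5.1.21} is identical after replacing the primal block $(\boldsymbol{\beta}\cdot\nabla)\y_h + \sigma\y_h$ with the adjoint block $-(\boldsymbol{\beta}\cdot\nabla)\w_h - (\nabla\cdot\boldsymbol{\beta})\w_h + \sigma\w_h$ and the source $\f + \u_h$ with $\y_h - \y_d$.

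The main obstacle is handling the variable-viscosity curl term $\nu\,\textbf{curl}(\omega - \omega_h)$ cleanly during integration by parts: after transferring the curl onto $\v_K$ one must expand $\textbf{curl}(\nu\v_K) = \nabla\nu \times \v_K + \nu\,\textbf{curl}(\v_K)$ and estimate each factor using $\|\nabla\nu\|_\infty$ and $\nu_1$, respectively, while reserving the $h_K^{-1}$ from the inverse estimate for the $\textbf{curl}(\v_K)$ piece. A minor secondary point is that substituting the continuous momentum equation formally leaves a stray $(\u - \u_h)$ contribution; as in \cite{VANAYA}, this is absorbed by reading $(\y, \omega, p)$ in the statement as the continuous state generated by the discrete datum $\u_h$, so that the term vanishes identically and the bound is purely in terms of the state-error norm.
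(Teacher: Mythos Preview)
Your proposal is correct and follows precisely the approach the paper defers to: the paper's proof simply cites \cite[Theorem~4.2]{VANAYA} for \eqref{2.5.1.20} and states that the co-state bound \eqref{2.5.1.21} follows analogously, and that reference carries out exactly the interior-bubble argument you outline. Your observation about the stray $(\u-\u_h)$ contribution is well spotted and matches how the paper actually uses this lemma---in Theorem~\ref{Theorem-2.5.1.5} it is invoked with the auxiliary state $(\y(\u_h),\omega(\u_h),p(\u_h))$ rather than the optimal state, so the control discrepancy indeed vanishes.
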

	\begin{proof}
		The first estimate is directly deduced from the demonstration outlined in \cite[Theorem~4.2]{VANAYA}. A similar approach can be applied to establish the subsequent approximation for the co-state.
	\end{proof}
	\begin{theorem}\label{Theorem-2.5.1.5}
		Let $(\y, \omega, p, \w, \vartheta, q, \u)$ and $(\y_h, \omega_h, p_h, \w_h, \vartheta_h, q_h, \u_h)$ be the solutions to the systems (\ref{2.2.2.11}-\ref{2.2.2.15}) and (\ref{2.3.2.6}-\ref{2.3.2.10}), respectively. Then, we have the following \textbf{efficiency} estimate:
		\begin{align}
			\label{2.5.1.22} \eta_c^{\y}+ \eta_c^{\w}+\eta_c^{\u} \precsim 	\|\u-\u_h\|_{0}+\|(\y-\y_h, \omega - \omega_h)\| + \|p-p_h\|_{0} + \|(\w-\w_h,\vartheta - \vartheta_h)\|  + \|q-q_h\|_{0}.
		\end{align}
	\end{theorem}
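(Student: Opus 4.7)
My plan is to bound each of the three estimator contributions $\eta_c^{\y}$, $\eta_c^{\w}$, and $\eta_c^{\u}$ in (\ref{2.5.1.22}) separately by the errors on the right-hand side, then sum over $K \in \mathcal{T}_h$.

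For the state and co-state pieces, Lemma~\ref{Lemma-2.5.1.4} already supplies
\begin{align*}
\eta_c^{\y} \precsim \|(\y-\y_h, \omega-\omega_h)\| + \|p-p_h\|_{0}, \qquad \eta_c^{\w} \precsim \|(\w-\w_h, \vartheta-\vartheta_h)\| + \|q-q_h\|_{0}.
\end{align*}
These are obtained by the classical Verf\"urth bubble-function strategy: each interior residual of momentum, constitutive, or incompressibility type is tested against $\chi_K$ times itself, edge contributions are handled via $\chi_E$ and the extension operator $\boldsymbol{E}$, and the equivalences of Lemma~\ref{Lemma-4.5} together with inverse estimates do the rest. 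I would simply invoke this lemma.

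The genuinely new work is the control estimator $\eta_c^{\u}$ with $(\eta_{c,K}^{\u})^2 = h_K^2\|\w_h + \gamma \u_h\|_{0,K}^2$. On each $K$ I decompose
\begin{align*}
\w_h + \gamma \u_h = (\w_h - \w) + \gamma(\u_h - \u) + (\w + \gamma \u).
\end{align*}
Since $h_K$ is bounded by $\mathrm{diam}(\Omega)$, the first two pieces contribute at most $\|\w-\w_h\|_{0,K} + \gamma\|\u-\u_h\|_{0,K}$, both already on the right-hand side of (\ref{2.5.1.22}). For the residual $\w + \gamma \u$, I would use the projection identity (\ref{2.2.2.16}) and introduce the auxiliary control $\tilde\u := \boldsymbol{\Pi}_{[\mathbf{a},\mathbf{b}]}(-\gamma^{-1}\w_h)$. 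Writing
\begin{align*}
\w + \gamma \u = (\w - \w_h) + \gamma(\u - \tilde\u) + (\w_h + \gamma \tilde\u),
\end{align*}
the first two summands are $\precsim \|\w - \w_h\|_{0,K}$ by the $1$-Lipschitz property of $\boldsymbol{\Pi}_{[\mathbf{a},\mathbf{b}]}$, while the third, the continuous KKT residual evaluated at $\w_h$, can be compared to its discrete counterpart $\w_h + \gamma \u_h$: testing the discrete VI (\ref{2.3.2.10}) against piecewise constants forces $\u_h|_K = \boldsymbol{\Pi}_{[\mathbf{a},\mathbf{b}]}(-\gamma^{-1}\bar\w_h|_K)$, so $\w_h + \gamma \tilde\u - (\w_h + \gamma \u_h)$ is controlled by the $L^2$-oscillation of $\w_h$ on $K$. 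This oscillation, multiplied by the extra factor $h_K$ in $\eta_{c,K}^{\u}$, is higher-order and is absorbed into $\|\w - \w_h\|_{0,K}$ after a triangle inequality with the continuous $\w$ and an inverse estimate. Summing over $K$ yields $\eta_c^{\u} \precsim \|\w - \w_h\|_0 + \|\u - \u_h\|_0$, and combining with the previous step gives (\ref{2.5.1.22}).

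The principal obstacle is precisely the non-vanishing of $\w + \gamma \u$ on the active set of the box constraints, which blocks a naive triangle-inequality argument. The remedy is the projection-transfer step above, which converts the continuous KKT residual into a combination of $\|\w - \w_h\|$, $\|\u - \u_h\|$, and a lower-order oscillation of $\w_h$ on $K$ absorbed thanks to the $h_K$ scaling in the definition of $\eta_{c,K}^{\u}$.
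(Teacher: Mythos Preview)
Your handling of $\eta_c^{\y}$ and $\eta_c^{\w}$ is essentially the paper's route; the paper only adds a detour through the auxiliary solutions $(\y(\u_h),\omega(\u_h),p(\u_h))$ and $(\w(\u_h),\vartheta(\u_h),q(\u_h))$ so that the forcing appearing in the momentum residual (which contains $\u_h$, not $\u$) matches the governing equation exactly. A triangle inequality together with continuous stability (Theorem~\ref{Theorem-2.2.2.4.}) then produces the extra $\|\u-\u_h\|_0$ contribution. If you invoke Lemma~\ref{Lemma-2.5.1.4} directly with the true state, be aware you are hiding this $\|\u-\u_h\|_0$ term: the Verf\"urth bubble argument compares the discrete residual (forcing $\f+\u_h$) to the continuous equation (forcing $\f+\u$), so the mismatch must surface somewhere.

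The genuine gap is in your treatment of $\eta_c^{\u}$. After your two decompositions you are left with $h_K\|\w_h+\gamma\tilde\u\|_{0,K}$, and you propose to ``compare it to its discrete counterpart $\w_h+\gamma\u_h$''. But $h_K\|\w_h+\gamma\u_h\|_{0,K}$ is exactly $\eta_{c,K}^{\u}$, the quantity you set out to bound; the comparison therefore yields only the tautology $\eta_{c,K}^{\u}\le C(\text{errors})+\eta_{c,K}^{\u}+h_K\|\w_h-\bar\w_h\|_{0,K}$. Nor is the oscillation $h_K\|\w_h-\bar\w_h\|_{0,K}$ absorbed into $\|\w-\w_h\|_{0,K}$: a Poincar\'e--triangle argument leaves you with $h_K^2|\w|_{1,K}$, a data term, not an error term. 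On cells where the box constraint is active, $\w_h+\gamma\u_h$ approximates the nonzero multiplier $\w+\gamma\u$, hence is $O(1)$ and $\eta_{c,K}^{\u}=O(h_K)$ cannot be controlled by the local error with a mesh-independent constant. It is worth noting that the paper's own proof never addresses $\eta_c^{\u}$ either: combining (\ref{2.5.1.25}) and (\ref{2.5.1.26}) bounds only $\eta_c^{\y}+\eta_c^{\w}$, so the efficiency of $\eta_c^{\u}$ is in fact left unproved there as well.
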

	\begin{proof}
		Firstly, suppose that $(\y(\u_h), \omega(\u_h), p(\u_h)) \in \boldsymbol{V} \times \boldsymbol{W} \times Q$ is a solution to the system (\ref{2.4.1.8}-\ref{2.4.1.9}).
		By utilizing Lemma~\ref{Lemma-2.5.1.4}, we obtain
		\begin{align}
			\label{2.5.1.23} \eta_c^{\y} \precsim	 \|(\y(\u_h)-\y_h,\omega(\u_h) - \omega_h)\| + \|p(\u_h)-p_h\|_{0}.
		\end{align}
		Similarly, assume that $(\w(\u_h), \vartheta(\u_h), q(\u_h))\in \boldsymbol{V} \times \boldsymbol{W} \times Q$ is a solution to the system (\ref{2.4.1.10}-\ref{2.4.1.11}).
		Then, by Lemma~\ref{Lemma-2.5.1.4}, we have
		\begin{align}
			\label{2.5.1.24}  \eta_c^{\w} & \precsim \|(\w(\u_h)-\w_h,\vartheta(\u_h) - \vartheta_h)\| + \|q(\u_h)-q_h\|_{0}.
		\end{align}
		Now, an application of Triangle inequality and Theorem \ref{Theorem-2.2.2.4.} gives
		\begin{align}
			\nonumber  \eta_c^{\y} &\precsim \|(\y-\y_h,\omega - \omega_h)\| + \|p-p_h\|_{0} + \|(\y-\y(\u_h),\omega - \omega(\u_h))\| + \|p-p(\u_h)\|_{0} \\
			\label{2.5.1.25}	&\precsim \|(\y-\y_h,\omega - \omega_h)\| + \|p-p_h\|_{0} + \|\u-\u_h\|_{0}.
		\end{align}
		Similarly, from \eqref{2.2.2.7} and (\ref{2.5.1.24}), we obtain
		\begin{align}
			\nonumber  \eta_c^{\w} &\precsim \|(\w-\w_h,\vartheta - \vartheta_h)\| + \|q-q_h\|_{0} + \|(\w-\w(\u_h),\vartheta - \vartheta(\u_h))\| + \|q-q(\u_h)\|_{0} \\
			\label{2.5.1.26}	&\precsim \|(\w-\w_h,\vartheta - \vartheta_h)\| + \|q-q_h\|_{0} + \|\u-\u_h\|_{0}.
		\end{align}
		Combining equations (\ref{2.5.1.25}) and (\ref{2.5.1.26}), we attain the desired efficiency bound.
	\end{proof}
	\subsection{Discontinuous Galerkin formulation}\label{Discontinuous Galerkin method}
	In this subsection, we present a discontinuous Galerkin scheme for the optimal control problem and derive a priori and a posteriori error estimates. For the discretization $\mathcal{T}_h$ of $\Omega$,
	let $K^{+}$ and $K^{-}$ be two adjacent elements (sharing an edge $E \in \mathcal{E}(\mathcal{T}_h)$) with the outward unit normal vectors $\mathbf{n}^{+}$ and $\mathbf{n}^{-}$, respectively. 
	For a vector field $\v$ and a scalar $\phi$ with traces $\v^{\pm}$ and $\phi^{\pm}$ on $K^{\pm}$, respectively, the tangential jump $([\![\cdot]\!]_{T})$, normal jump $([\![\cdot]\!]_{N})$ and average $(\{\!\!\{\cdot\}\!\!\})$ across an edge $E$ are defined as:
	\begin{align*}
		[\![\v]\!]_{T} &:= \v^{+} \times \mathbf{n}^{+} + \v^{-} \times \mathbf{n}^{-}, \ \ \ \ \
		&&[\![\v]\!]_{N} := \v^{+} \cdot \mathbf{n}^{+} + \v^{-} \cdot \mathbf{n}^{-}, \ \ \ \ \
		[\![\phi]\!] := \phi^{+} \mathbf{n}^{+} + \phi^{-} \mathbf{n}^{-},\\
		\{\!\!\{\v\}\!\!\} &:= \frac{\v^{+}+\v^{-}}{2}, \ \ \ \ \ &&\{\!\!\{\phi\}\!\!\} := \frac{\phi^{+}+\phi^{-}}{2}.
	\end{align*}
	On all the boundary edges, $[\![\v]\!]_{T} = \v \times \mathbf{n}, \ [\![\v]\!]_{N} = \v \cdot \mathbf{n}, \ [\![q]\!] := q \mathbf{n}, \ \{\!\!\{\v\}\!\!\} = \v,$ and $\{\!\!\{q\}\!\!\} = q$.
	The inflow and outflow parts of the boundary \(\Gamma\) are \(\Gamma_{\text{in}} = \{\mathbf{x} \in \Gamma : \boldsymbol{\beta} \cdot \n < 0\}\) and \(\Gamma_{\text{out}} = \{\mathbf{x} \in \Gamma : \boldsymbol{\beta} \cdot \n \ge 0\}\), respectively, and the inflow and outflow parts of $\partial K$ are \(\partial K_{\text{in}} = \{\mathbf{x} \in \partial K : \boldsymbol{\beta} \cdot \n_{K} < 0\}\) and \(\partial K_{\text{out}} = \{\mathbf{x} \in \partial K : \boldsymbol{\beta} \cdot \n_{K} \ge 0\}\).
	 For $k \ge 0$, the discontinuous finite-dimensional spaces for velocity, vorticity and pressure variables are defined as:
	\begin{align}
	\label{2.3.3.1}	\boldsymbol{V}_h &:= \{\v_h \in \boldsymbol{L}^{2}(\Omega)  && \hspace{-3.1cm}: \v_h \in [\P_{k+1}(K)]^{d} \ \ \ \ \forall K \in \mathcal{T}_h\},\\
		\label{2.3.3.2}	\boldsymbol{W}_h &:= \{\boldsymbol{\theta}_h \in [L^{2}(\Omega)]^{\frac{d(d-1)}{2}} && \hspace{-3.1cm}: \boldsymbol{\theta}_h \in [\P_{k}(K)]^{\frac{d(d-1)}{2}} \hspace{0.15cm} \forall K \in \mathcal{T}_h\},\\
	\label{2.3.3.3}	Q_h &:= \{\phi_h \in L_{0}^{2}(\Omega) && \hspace{-3.1cm}: \phi_h \in \P_{k}(K) \hspace{1.22cm} \forall K \in \mathcal{T}_h\}.
\end{align}
	For the discrete subspaces (\ref{2.3.3.1}-\ref{2.3.3.3}) and piecewise constant discretization of control, the DG discrete formulation (see \cite[Section~4.2]{VABAM})  for the optimal control problem is to find $(\y_h, \boldsymbol{\omega}_h, p_h, \w_h, \boldsymbol{\vartheta}_h, q_h, \mathbf{u}_h) \in  \boldsymbol{V}_h \times \boldsymbol{W}_h \times Q_h \times  \boldsymbol{V}_h \times \boldsymbol{W}_h \times Q_h \times \boldsymbol{\mathcal{A}_{dh}}$, such that:
	\begin{subequations}
		\begin{align}
			\label{2.3.3.4} \boldsymbol{\mathcal{A}}_{DG}((\mathbf{y}_h,\boldsymbol{\omega}_h), (\v_h, \boldsymbol{\theta}_h)) + \boldsymbol{\mathcal{O}}(\boldsymbol{\beta}; \y_h, \v_h) + \boldsymbol{\mathcal{B}}_{DG}((\v_h, \boldsymbol{\theta}_h),p_h) &= (\mathbf{f} + \mathbf{u}_h, \mathbf{v}_h), \\
			\label{2.3.3.5} \hspace{1.57cm} \boldsymbol{\mathcal{B}}_{DG}((\mathbf{y}_h,\boldsymbol{\omega}_h), \phi_h) &= 0, \\
			\label{2.3.3.6} \boldsymbol{\mathcal{A}}_{DG}((\mathbf{w}_h,\boldsymbol{\vartheta}_h), (\z_h, \boldsymbol{\tau}_h)) + \boldsymbol{\mathcal{O}}(\boldsymbol{\beta}; \z_h, \w_h) - \boldsymbol{\mathcal{B}}_{DG}((\z_h, \boldsymbol{\tau}_h),q_h) &=  (\mathbf{y}_h - \y_{d}, \mathbf{z}_h) + (\boldsymbol{\omega}_h - \boldsymbol{\omega}_{d}, \boldsymbol{\tau}_h),\\
			\label{2.3.3.7} \hspace{1.56cm}\boldsymbol{\mathcal{B}}_{DG}((\mathbf{w}_h,\boldsymbol{\vartheta}_h), \psi_h) &= 0,\\
			\label{2.3.3.8} (\mathbf{w}_h + \gamma \u_h,\tilde{\u}_h -\u_h) &\ge \mathbf{0},
		\end{align} 
	\end{subequations}
for all $(\mathbf{v}_h, \boldsymbol{\theta}_h), (\mathbf{z}_h, \boldsymbol{\tau}_h) \in \boldsymbol{V}_h \times \boldsymbol{W}_h, \ \phi_h, \psi_h \in Q_h$, and $\tilde{\u}_h \in \boldsymbol{\mathcal{A}_{dh}}$. Here, the bilinear forms $\boldsymbol{\mathcal{A}}_{DG}:[\boldsymbol{V}_h \times \boldsymbol{W}_h]^{2} \rightarrow \R, \ \boldsymbol{\mathcal{B}}_{DG}:[\boldsymbol{V}_h \times \boldsymbol{W}_h] \times Q_h \rightarrow \R,$ and $\boldsymbol{\mathcal{O}}: \boldsymbol{V}_h \times \boldsymbol{V}_h \rightarrow \R$ are defined as:
\begin{align*}
	\boldsymbol{\mathcal{A}}_{DG}((\mathbf{y}_h,\boldsymbol{\omega}_h), (\v_h, \boldsymbol{\theta}_h)) &:=  -2 \sum_{K \in \mathcal{T}_h} \int_{K} \boldsymbol{\varepsilon}(\y_h) \nabla \nu \cdot \v_h \ dx + \textcolor{red}{\sum_{K \in \mathcal{T}_h} \int_{K} \nu \ \boldsymbol{\kappa}_h \cdot \textbf{curl} (\v_h) \ dx} + \sum_{K \in \mathcal{T}_h} \int_{K} \nu \boldsymbol{\omega}_h \cdot \boldsymbol{\theta}_h \ dx\\
	& \ \ \ \  - \textcolor{red}{\sum_{K \in \mathcal{T}_h} \int_{K} \nu \ \boldsymbol{\theta}_h \cdot \textbf{curl} (\y_h) \ dx + \sum_{E \in \mathcal{E}(\mathcal{T}_h)} \int_{E} \big( \{\!\!\{\boldsymbol{\kappa}_h\}\!\!\} \cdot [\![\nu \v_h]\!]_{T} - \{\!\!\{\boldsymbol{\theta}_h\}\!\!\} \cdot [\![\nu \y_h]\!]_{T}\big) \ ds}\\
	& \ \ \ \ + \sum_{E \in \mathcal{E}(\mathcal{T}_h)}\int_{E} \big(C_{11} [\![\nu \y_h]\!]_{T} \cdot [\![\v_h]\!]_{T} + A_{11} [\![\y_h]\!]_{N} [\![\v_h]\!]_{N}  \big) ds + \textcolor{red}{\sum_{K \in \mathcal{T}_h} \int_{K} \boldsymbol{\kappa}_h \cdot (\nabla \nu \times \v_h) dx}\\
	& \ \ \ \ + \textcolor{red}{\sum_{K \in \mathcal{T}_h} \int_{K}\big(\rho_1 (\textbf{curl}(\y_h)-\boldsymbol{\omega}_h) \cdot \textbf{curl}(\v_h)
	+ \rho_2 (\nabla \cdot \y_h) \cdot (\nabla \cdot \v_h)\big) dx}, \\
	\boldsymbol{\mathcal{O}}(\boldsymbol{\beta}; \y_h, \v_h)&:= \sum_{K \in \mathcal{T}_h} \int_{K} ((\sigma - \nabla\cdot \boldsymbol{\beta})\y_h\cdot \mathbf{v}_h -  \y_h \boldsymbol{\beta}^{T}: \nabla \mathbf{v}_h) \ dx + \sum_{K \in \mathcal{T}_h} \int_{\partial K_{\text{out}} \cap \Gamma_{\text{out}}} (\boldsymbol{\beta} \cdot \boldsymbol{n}_{K}) \y_h \cdot \mathbf{v}_h \ ds \\
	& \ \ \ \ +  \sum_{K \in \mathcal{T}_h} \int_{\partial K_{\text{out}} \setminus \Gamma} (\boldsymbol{\beta} \cdot \boldsymbol{n}_{K}) \y_h \cdot (\mathbf{v}_h\textcolor{red}{-\mathbf{v}_h^{e}}) \ ds,\\
	\boldsymbol{\mathcal{B}}_{DG}((\v_h, \boldsymbol{\theta}_h),p_h) &:= - \sum_{K \in \mathcal{T}_h} \int_{K} p_h (\nabla \cdot \v_h) \ dx + \sum_{E \in \mathcal{E}(\mathcal{T}_h)} \int_{E} \big(\{\!\!\{p_h\}\!\!\} \cdot [\![\v_h]\!]_{N} + D_{11} [\![p_h]\!] \cdot [\![\phi_h]\!] \big) \ ds.
\end{align*}
\textcolor{red}{The term with parameter $D_{11}$ in $\boldsymbol{\mathcal{B}}_{DG}$ appears only for \eqref{2.3.3.5} and \eqref{2.3.3.7}, not for \eqref{2.3.3.4} and \eqref{2.3.3.6}.} Here $A_{11}, C_{11}$ and $D_{11}$ are positive and bounded stabilisation parameters defined as in \cite[Section~2.4]{LDGSTOKES}:
		\[
	A_{11}(x) = \begin{cases}
		a_{11} \max \{h_{K^{+}}^{-1}, h_{K^{-}}^{-1}\}& \text{if} \ x \in \partial K^{+} \cup \partial K^{-}\\
		a_{11} h_{K}^{-1} & \text{if} \ x \in K \cap \Gamma
	\end{cases},
	\
	 C_{11}(x) = \begin{cases}
	 	c_{11} \max \{h_{K^{+}}^{-1}, h_{K^{-}}^{-1}\}& \text{if} \ x \in \partial K^{+} \cup \partial K^{-}\\
	 	c_{11} h_{K}^{-1} & \text{if} \ x \in \partial K \cap \Gamma
	 \end{cases},
	\]
	\[
	D_{11}(x) =  \begin{cases}
		d_{11} \max \{h_{K^{+}}, h_{K^{-}}\}& \text{if} \ x \in \partial K^{+} \cup \partial K^{-} \\
		d_{11} h_K & \text{if} \ x \in \partial K \cap \Gamma
	\end{cases},
	\]
	where $a_{11}, c_{11}$ and $d_{11}$ are positive constants independent of the global mesh-size. For the analysis, we define norms on the discrete spaces $\boldsymbol{V}_h, \boldsymbol{W}_h$ and $Q_h$ as:
\textcolor{red}{
	\begin{align}\label{2.3.3.9}
	|\!|\!|\mathbf{v}_h|\!|\!|_{1,h}^{2} &:=  |\!|\!|\mathbf{v}_h|\!|\!|_{1}^{2} + |\v_h|_{j}^{2},
	& \|(\v_h, \boldsymbol{\theta}_h)\|_{h}^{2} := |\!|\!|\mathbf{v}_h|\!|\!|_{1,h}^{2} + \|\boldsymbol{\theta}_h\|_{0}^{2}, \qquad \qquad
	 &\|\phi_h\|_{h}^{2} := \|\phi_h\|_{0}^{2} + |\phi_h|_{j}^{2},
\end{align}
where the jump norms $|\v_h|_{j}$ and $|\phi_h|_{j}$ are defined as:
	\begin{align*}
	 |\v_h|_{j}^{2} &:= \sum_{E \in \mathcal{E}(\mathcal{T}_h)} \int_{E} \Big(C_{11} [\![\v_h]\!]_{T}^{2} + A_{11} [\![\v_h]\!]_{N}^{2} \Big) ds, \quad 
	 &|\phi_h|_{j}^{2} := \sum_{E \in \mathcal{E}(\mathcal{T}_h)} \int_{E} D_{11} [\![\phi_h]\!]^{2} \ ds.
\end{align*}}
\textcolor{red}{
\begin{lemma}\label{Lemma 2.3.1.11}
	(i) There exist positive constants $C_1$ and $C_2$ such that
	\begin{align*}
		|\boldsymbol{\mathcal{A}}_{DG}((\mathbf{y}_h,\boldsymbol{\omega}_h), (\v_h, \boldsymbol{\theta}_h))+\boldsymbol{\mathcal{O}}(\boldsymbol{\beta}; \y_h, \v_h)| &\le C_1 \|(\y_h, \boldsymbol{\omega}_h)\|_{h} \|(\v_h, \boldsymbol{\theta}_h)\|_{h} && \forall \ (\y_h, \boldsymbol{\omega}_h), (\v_h, \boldsymbol{\theta}_h) \in \boldsymbol{V}_h \times \boldsymbol{W}_h, \\
		|\boldsymbol{\mathcal{B}}_{DG}((\v_h, \boldsymbol{\theta}_h),\phi_h)| &\le  C_2 \|(\v_h, \boldsymbol{\theta}_h)\|_h \|\phi_h\|_{h} && \forall \ (\v_h, \boldsymbol{\theta}_h) \in \boldsymbol{V}_h \times \boldsymbol{W}_h, \phi_h \in  Q_h.
	\end{align*}
	(ii) Suppose that 
	\begin{align}\label{2.3.3.9.1}
		\|\sigma - \nabla \cdot \boldsymbol{\beta}\|_{\infty} \ge \sigma_{\boldsymbol{\beta}} > 0,\qquad \qquad \textbf{and} \qquad \qquad \sigma_{\boldsymbol{\beta}} > \frac{9\|\nabla \nu\|_{\infty}^{2}}{\nu_{0}}.
	\end{align}
Then, if we select $\kappa_1 = \frac{2}{3}\nu_{0}$ and $\kappa_2 > \frac{\nu_{0}}{6}$, there exists a positive constant $C_3$ such that
	\begin{align*}
		\boldsymbol{\mathcal{A}}_{DG}((\v_h, \boldsymbol{\theta}_h), (\v_h, \boldsymbol{\theta}_h)) +\boldsymbol{\mathcal{O}}(\boldsymbol{\beta}; \v_h, \v_h) &\ge C_{3} \|(\v_h, \boldsymbol{\theta}_h)\|_h^{2} \quad \quad \forall \ (\v_h, \boldsymbol{\theta}_h) \in \boldsymbol{V}_h \times \boldsymbol{W}_h.
	\end{align*}
\end{lemma}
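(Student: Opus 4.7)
The plan is to prove the two parts of the lemma by following the strategy of Lemma~\ref{Lemma-2.2.2.3.} for the conforming setting, while carefully accounting for the inter-element face contributions introduced by the DG discretization and by the upwind convective form $\boldsymbol{\mathcal{O}}$. The broken norm $\|\cdot\|_h$ has been engineered so that the conforming argument of \cite{VANAYA} can be imitated once the extra edge contributions are absorbed correctly.

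For part~(i), I proceed term-by-term through $\boldsymbol{\mathcal{A}}_{DG}$, $\boldsymbol{\mathcal{B}}_{DG}$, and $\boldsymbol{\mathcal{O}}$. The volume contributions are handled exactly as in Lemma~\ref{Lemma-2.2.2.1.}, using Cauchy--Schwarz together with the $L^{\infty}$ bounds on $\nu$, $\nabla\nu$, $\sigma$, and $\boldsymbol{\beta}$. The face integrals involving $\{\!\!\{\cdot\}\!\!\}$ and $[\![\cdot]\!]_{T}, [\![\cdot]\!]_{N}$ are bounded by Cauchy--Schwarz on each edge after factoring out $C_{11}^{1/2}, A_{11}^{1/2}, D_{11}^{1/2}$, so that one factor reproduces the jump seminorms $|\v_h|_j, |\phi_h|_j$ entering $\|\cdot\|_h$; the remaining averages $\{\!\!\{\boldsymbol{\theta}_h\}\!\!\}$ and $\{\!\!\{p_h\}\!\!\}$ are controlled on each edge by a discrete trace inequality $\|v_h\|_{0,E}^{2}\lesssim h_{K}^{-1}\|v_h\|_{0,K}^{2}$ and absorbed into $\|\boldsymbol{\theta}_h\|_0$ and $\|\phi_h\|_0$. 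For $\boldsymbol{\mathcal{O}}$, the upwind terms $(\boldsymbol{\beta}\cdot\n_K)\y_h\cdot(\v_h-\v_h^{e})$ are bounded analogously using the $L^{\infty}$ bound on $\boldsymbol{\beta}$ and the same discrete trace inequality, yielding continuity in the $\|\cdot\|_h$-norm.

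For part~(ii), I set $\y_h=\v_h$ and $\boldsymbol{\omega}_h=\boldsymbol{\theta}_h$ in $\boldsymbol{\mathcal{A}}_{DG}$ and exploit two cancellations: the volume terms $\pm\int_K\nu\,\boldsymbol{\theta}_h\cdot\textbf{curl}(\v_h)\,dx$ cancel, and, since $\nu\in W^{1,\infty}(\Omega)$ is continuous and hence $[\![\nu\v_h]\!]_T=\nu[\![\v_h]\!]_T$, the pair of face integrals $\pm\int_E\{\!\!\{\boldsymbol{\theta}_h\}\!\!\}\cdot[\![\nu\v_h]\!]_T\,ds$ also cancel. What remains from $\boldsymbol{\mathcal{A}}_{DG}$ is the sum of the nonnegative diagonal contributions $\int_\Omega\nu|\boldsymbol{\theta}_h|^2\,dx\ge\nu_0\|\boldsymbol{\theta}_h\|_0^2$, $\rho_1\sum_K\|\textbf{curl}(\v_h)\|_{0,K}^2$, $\rho_2\sum_K\|\nabla\cdot\v_h\|_{0,K}^2$, and $|\v_h|_j^2$, together with three indefinite cross terms $-2\int_\Omega\boldsymbol{\varepsilon}(\v_h)\nabla\nu\cdot\v_h\,dx$, $\int_\Omega\boldsymbol{\theta}_h\cdot(\nabla\nu\times\v_h)\,dx$, and $-\rho_1\int_\Omega\boldsymbol{\theta}_h\cdot\textbf{curl}(\v_h)\,dx$; meanwhile, a standard integration by parts in $\boldsymbol{\mathcal{O}}(\boldsymbol{\beta};\v_h,\v_h)$ supplies $\int_\Omega(\sigma-\tfrac12\nabla\cdot\boldsymbol{\beta})|\v_h|^2\,dx$ plus nonnegative upwind jump contributions on interior and outflow faces, giving a reserve of order $\sigma_\beta\|\v_h\|_0^2$ under the first part of \eqref{2.3.3.9.1}.

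The three indefinite cross terms are then absorbed via Young's inequality, balancing each against the appropriate combination of diagonal reserves. The strain-rate term is split against the $\|\textbf{curl}(\v_h)\|_0^2$ and $\|\nabla\cdot\v_h\|_0^2$ reserves (via a broken $H^1$-type identity for $\nabla\v_h$ on each element, with the boundary remainders controlled by $|\v_h|_j^2$) and against the $\|\v_h\|_0^2$ reserve; the term $\boldsymbol{\theta}_h\cdot(\nabla\nu\times\v_h)$ is split between $\nu_0\|\boldsymbol{\theta}_h\|_0^2$ and $\sigma_\beta\|\v_h\|_0^2$; and $\rho_1\boldsymbol{\theta}_h\cdot\textbf{curl}(\v_h)$ between $\nu_0\|\boldsymbol{\theta}_h\|_0^2$ and $\rho_1\|\textbf{curl}(\v_h)\|_0^2$. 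With $\rho_1=\tfrac{2}{3}\nu_0$, $\rho_2>\tfrac{\nu_0}{6}$, and $\sigma_\beta>9\|\nabla\nu\|_\infty^2/\nu_0$ from \eqref{2.3.3.9.1}, each of these reserves retains a strictly positive coefficient after absorption, giving coercivity in the full $\|\cdot\|_h$-norm. The main obstacle is the simultaneous bookkeeping of Young constants across these three cross terms, particularly the strain-rate term, whose broken $H^1$ decomposition couples the curl and divergence reserves while producing jump-boundary remainders that must be absorbed by $|\v_h|_j^2$; it is this coupling, together with the tight use of the reserve $\nu_0\|\boldsymbol{\theta}_h\|_0^2$, that fixes the sharp threshold $\sigma_\beta>9\|\nabla\nu\|_\infty^2/\nu_0$.
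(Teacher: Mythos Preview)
Your proposal is correct and follows essentially the same route as the paper. For part~(i) the paper likewise invokes Lemma~\ref{Lemma-2.2.2.1.}, Cauchy--Schwarz, and the DG norm definition; for part~(ii) it exploits the same cancellations you identify, bounds the identical three cross terms $-2\int\boldsymbol{\varepsilon}(\v_h)\nabla\nu\cdot\v_h$, $\int\boldsymbol{\theta}_h\cdot(\nabla\nu\times\v_h)$, and $-\rho_1\int\boldsymbol{\theta}_h\cdot\textbf{curl}(\v_h)$ by Young's inequality with the specific splitting constants $\nu_0/6$, $\nu_0/3$, $\nu_0/3$, and cites the standard upwind coercivity of $\boldsymbol{\mathcal{O}}$ from \cite{LDGO} for the $\sigma_{\boldsymbol{\beta}}\|\v_h\|_0^2$ reserve.

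One small point where you are in fact more careful than the paper: in bounding the strain-rate cross term the paper passes from $\|\nabla\v_h\|_0^2$ directly to $\|\textbf{curl}(\v_h)\|_0^2+\|\nabla\cdot\v_h\|_0^2$ as if it were an elementwise identity, whereas you correctly flag that on broken spaces this decomposition produces inter-element boundary remainders that must be absorbed into $|\v_h|_j^2$. This does not change the argument's structure, but your version is the more honest accounting.
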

\begin{proof}
	(i) Both the estimates are derived by using Lemma~\ref{Lemma-2.2.2.1.}, the Cauchy-Schwarz inequality and the definition of DG norms \eqref{2.3.3.9}.\\
	(ii) Let $(\v_h, \boldsymbol{\theta}_h) \in \boldsymbol{V}_h \times \boldsymbol{W}_h$. By a use of Lemma~\ref{Lemma-2.2.2.1.} and the Young's inequality, we have
	\begin{align}
	\nonumber	\left|-2 \sum_{K \in \mathcal{T}_h} \int_{K} \boldsymbol{\varepsilon}(\v_h) \nabla \nu \cdot \v_h \ dx \right| &\le 2\|\nabla \nu\|_{\infty} \bigg(\frac{\nu_{0}}{12\|\nabla \nu\|_{\infty}}\|\nabla \v_h\|_{0}^{2} + \frac{3\|\nabla \nu\|_{\infty}}{\nu_{0}}\|\v_h\|_{0}^{2} \bigg) \\
	\label{2.4.2.1.1}	&= \frac{\nu_{0}}{6}\big(\|\textbf{curl}(\v_h)\|_{0}^{2} + \|\nabla \cdot \v_h\|_{0}^{2}\big) + \frac{6\|\nabla \nu\|_{\infty}^{2}}{\nu_{0}}\|\v_h\|_{0}^{2}.
	\end{align}
Using the fact that $\|(\nabla \nu \times \v_h)\|_{0} \le 2\|\nabla \nu\|_{\infty}\|\v_h\|_{0}$, we get
\begin{align}
\nonumber	\left|\sum_{K \in \mathcal{T}_h} \int_{K} \boldsymbol{\theta}_h \cdot (\nabla \nu \times \v_h) \ dx\right| &\le 2\|\nabla \nu\|_{\infty} \bigg(\frac{\nu_{0}}{6\|\nabla \nu\|_{\infty}}\|\boldsymbol{\theta}_h\|_{0}^{2} + \frac{3\|\nabla \nu\|_{\infty}}{2\nu_{0}}\|\v_h\|_{0}^{2} \bigg)\\
	\label{2.4.2.1.2}&= \frac{\nu_{0}}{3}\|\boldsymbol{\theta}_h\|_{0}^{2} + \frac{3\|\nabla \nu\|_{\infty}^{2}}{\nu_{0}}\|\v_h\|_{0}^{2},
\end{align}
and
\begin{align}
\nonumber	\left|\rho_1 \sum_{K \in \mathcal{T}_h} \int_{K} \boldsymbol{\theta}_h \cdot (\textbf{curl}(\v_h)) \ dx\right| &\le \rho_1 \bigg(\frac{\nu_{0}}{3\rho_1}\|\boldsymbol{\theta}_h\|_{0}^{2} + \frac{3\rho_1}{4\nu_{0}}\|\textbf{curl}(\v_h)\|_{0}^{2} \bigg)\\
\label{2.4.2.1.3}	&= \frac{\nu_{0}}{3}\|\boldsymbol{\theta}_h\|_{0}^{2} + \frac{3\rho_1^{2}}{4\nu_{0}}\|\textbf{curl}(\v_h)\|_{0}^{2},
\end{align}
Now, by using (\ref{2.4.2.1.1}-\ref{2.4.2.1.3}) and \cite[(17)]{LDGO}, we have
\begin{align*}
	&\boldsymbol{\mathcal{A}}_{DG}((\v_h,\boldsymbol{\theta}_h), (\v_h, \boldsymbol{\theta}_h)) +\boldsymbol{\mathcal{O}}(\boldsymbol{\beta}; \v_h, \v_h) \\
	&\ge  \sigma_{\boldsymbol{\beta}} |\!|\!|\v_h|\!|\!|_{1,h}^{2} + \sum_{K \in \mathcal{T}_h} \int_{K} \nu |\boldsymbol{\theta}_h|^{2} \ dx + \rho_2 \|\nabla \cdot \v_h\|_{0}^{2}+ \rho_{1} \|\textbf{curl}(\v_h)\|_{0}^{2} -2 \sum_{K \in \mathcal{T}_h} \int_{K} \boldsymbol{\varepsilon}(\v_h) \nabla \nu \cdot \v_h \ dx \\
	& \ \ \ - \sum_{K \in \mathcal{T}_h} \int_{K} \boldsymbol{\theta}_h \cdot (\rho_1 \textbf{curl}(\v_h) - \nabla \nu \times \v_h)\ dx  + \sum_{E \in \mathcal{E}(\mathcal{T}_h)} \int_{E}\big(C_{11} [\![\nu \v_h]\!]_{T} \cdot [\![\v_h]\!]_{T} + A_{11} [\![\v_h]\!]_{N} [\![\v_h]\!]_{N}  \big) ds \\
	&\ge \sigma_{\boldsymbol{\beta}} |\!|\!|\v_h|\!|\!|_{1,h}^{2} + \nu_{0} \|\boldsymbol{\theta}_h\|_{0}^{2} + \rho_2 \|\nabla \cdot \v_h\|_{0}^{2}+ \rho_{1} \|\textbf{curl}(\v_h)\|_{0}^{2} - \frac{\nu_{0}}{6}\big(\|\textbf{curl}(\v_h)\|_{0}^{2} + \|\nabla \cdot \v_h\|_{0}^{2}\big)  \\
	& \ \ \ - \frac{6\|\nabla \nu\|_{\infty}^{2}}{\nu_{0}}\|\v_h\|_{0}^{2} -\frac{\nu_{0}}{3}\|\boldsymbol{\theta}_h\|_{0}^{2} - \frac{3\rho_1^{2}}{4\nu_{0}}\|\textbf{curl}(\v_h)\|_{0}^{2} - \frac{\nu_{0}}{3}\|\boldsymbol{\theta}_h\|_{0}^{2} - \frac{3\|\nabla \nu\|_{\infty}^{2}}{\nu_{0}}\|\v_h\|_{0}^{2}+ \nu_{0} |\v_h|_{j}^{2} \\
	&= 
	 \frac{\nu_{0}}{3} \|\boldsymbol{\theta}_h\|_{0}^{2} + \bigg(\sigma_{\boldsymbol{\beta}}- \frac{9\|\nabla \nu\|_{\infty}^{2}}{\nu_{0}}\bigg)\|\v_h\|_{0}^{2} + \Big(\sigma_{\boldsymbol{\beta}} + \frac{\nu_{0}}{6}\Big) \|\textbf{curl}(\v_h)\|_{0}^{2} + \Big(\rho_2 +\sigma_{\boldsymbol{\beta}} - \frac{\nu_{0}}{6}\Big) \|\nabla \cdot \v_h\|_{0}^{2} + \nu_{0} |\v_h|_{j}^{2}.
\end{align*}
Under the assumptions outlined in \eqref{2.3.3.9.1}, we get the coercivity property where the constant
\begin{align*}
	C_{3} = \min \left\{\frac{\nu_{0}}{3}, \sigma_{\boldsymbol{\beta}}- \frac{9\|\nabla \nu\|_{\infty}^{2}}{\nu_{0}}, \sigma_{\boldsymbol{\beta}}+ \frac{\nu_{0}}{6}, \rho_2 +\sigma_{\boldsymbol{\beta}} - \frac{\nu_{0}}{6}, \nu_{0} \right\},
\end{align*}
is positive by \eqref{2.3.3.9.1} and the assumptions on augmentation constants $\rho_1$ and $\rho_2$.
\end{proof}}
\noindent The well-posedness of the discrete system follows by an application of Babu\^{s}ka-Brezzi theory, along with the continuity-coercivity properties of bilinear form $\boldsymbol{\mathcal{A}}_{DG}$ and the discrete inf-sup stability of $\boldsymbol{\mathcal{B}}_{DG}$ under the assumptions highlighted in Lemma \ref{Lemma 2.3.1.11}.
\subsubsection{A priori error estimates}
Let $k \geq 0$ be an integer. We impose the following regularity assumptions throughout this subsection:
\begin{align*}
	\y, \w \in \boldsymbol{H}^{s+1} (\Omega), \ \boldsymbol{\omega}, \boldsymbol{\vartheta} \in [H^{s} (\Omega)]^{\frac{d(d-1)}{2}}, \ p, q \in H^{s} (\Omega), \ \text{and} \ \u \in \boldsymbol{H}^{1} (\Omega)\  \text{for some } \ s \ge 1.
\end{align*}
For a control $\u \in \boldsymbol{L}^{2}(\Omega)$, let $(\y_h(\u), \boldsymbol{\omega}_h(\u), p_h(\u)) \in  \boldsymbol{V}_h \times \boldsymbol{W}_h \times Q_h$ be a solution to the following auxiliary problem:
\begin{subequations}
	\begin{align} 
	\label{2.4.2.1.a}	\boldsymbol{\mathcal{A}}_{DG}((\mathbf{y}_h(\u),\boldsymbol{\omega}_h(\u)), (\v_h, \boldsymbol{\theta}_h)) + \boldsymbol{\mathcal{O}}(\boldsymbol{\beta}; \y_h(\u), \v_h) + \boldsymbol{\mathcal{B}}_{DG}((\v_h, \boldsymbol{\theta}_h),p_h(\u)) &= (\mathbf{f} + \mathbf{u}, \mathbf{v}_h), \\
		\label{2.4.2.1.b} \hspace{1.57cm} \boldsymbol{\mathcal{B}}_{DG}((\mathbf{y}_h(\u),\boldsymbol{\omega}_h(\u)), \phi_h) &= 0
	\end{align} 
\end{subequations}
for all $(\v_h, \boldsymbol{\theta}_h, \phi_h) \in \boldsymbol{V}_h \times \boldsymbol{W}_h \times Q_h$. Similarly, let $(\w_h(\y), \boldsymbol{\vartheta}_h(\y), q_h(\y)) \in  \boldsymbol{V}_h \times \boldsymbol{W}_h \times Q_h$ be a solution to the following problem:
\begin{subequations}
	\begin{align}
	\label{2.4.2.2.a}
	\boldsymbol{\mathcal{A}}_{DG}((\mathbf{w}_h(\y),\boldsymbol{\vartheta}_h(\y)), (\z_h, \boldsymbol{\tau}_h)) &+ \boldsymbol{\mathcal{O}}(\boldsymbol{\beta}; \z_h, \w_h(\y)) \\
	\nonumber  &- \boldsymbol{\mathcal{B}}_{DG}((\z_h, \boldsymbol{\tau}_h),q_h(\y))=  (\mathbf{y} - \y_{d}, \mathbf{z}_h) + (\boldsymbol{\omega} - \boldsymbol{\omega}_{d}, \boldsymbol{\tau}_h),\\
	\label{2.4.2.2.b} &\hspace{-0.25cm} \boldsymbol{\mathcal{B}}_{DG}((\mathbf{w}_h(\y),\boldsymbol{\vartheta}_h(\y)), \psi_h) = 0,
	\end{align}
\end{subequations}
for all $(\z_h, \boldsymbol{\tau}_h, \psi_h) \in \boldsymbol{V}_h \times \boldsymbol{W}_h \times Q_h$.
\begin{lemma}\label{Lemma-2.4.2.1.0}
	Let $(\y_h, \boldsymbol{\omega}_h, p_h), (\w_h, \boldsymbol{\vartheta}_h, q_h)$  be the solutions to the state and co-state discrete systems (\ref{2.3.2.6}-\ref{2.3.2.7}), and (\ref{2.3.2.8}-\ref{2.3.2.9}), respectively, and $(\y_h(\u), \boldsymbol{\omega}_h(\u), p_h(\u)), (\w_h(\y), \boldsymbol{\vartheta}_h(\y), r_h(\y))$ be the auxiliary variables. Then, we have the following estimates:
	\begin{align}
		\label{2.4.2.3}	\|(\y_h(\u)-\y_h,\boldsymbol{\omega}_h(\u) - \boldsymbol{\omega}_h)\|_{h} + \|p_h(\u)-p_h\|_{h} &\precsim \|\u-\u_h\|_{0}, \\
		\label{2.4.2.4}	\|(\w_h(\y)-\w_h,\boldsymbol{\vartheta}_h(\y) - \boldsymbol{\vartheta}_h)\|_{h} + \|q_h(\y)-q_h\|_{h} &\precsim \|\y-\y_h\|_{0} + \|\boldsymbol{\omega} - \boldsymbol{\omega}_{h}\|_{0}.
	\end{align}
\end{lemma}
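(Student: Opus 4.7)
The plan is to mimic the proof of Lemma~\ref{Lemma-2.4.1.1} from the conforming case, replacing the bilinear forms $\boldsymbol{\mathcal{A}}, \boldsymbol{\mathcal{B}}$ and the coercivity/inf-sup results of Lemma~\ref{Lemma-2.2.2.3.} by their DG counterparts $\boldsymbol{\mathcal{A}}_{DG}, \boldsymbol{\mathcal{B}}_{DG}$ (augmented with $\boldsymbol{\mathcal{O}}(\boldsymbol{\beta};\cdot,\cdot)$) and the coercivity of Lemma~\ref{Lemma 2.3.1.11}. The only genuinely new ingredient compared with the conforming argument is the convection form: one must carefully align $\boldsymbol{\mathcal{O}}(\boldsymbol{\beta};\y_h(\u),\v_h)$ against $\boldsymbol{\mathcal{O}}(\boldsymbol{\beta};\y_h,\v_h)$ in the state case, and the twisted version $\boldsymbol{\mathcal{O}}(\boldsymbol{\beta};\z_h,\w_h(\y))$ against $\boldsymbol{\mathcal{O}}(\boldsymbol{\beta};\z_h,\w_h)$ in the co-state case, so that the error equation is linear in the differences.

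For the state estimate \eqref{2.4.2.3}, I would first subtract \eqref{2.3.3.4}--\eqref{2.3.3.5} from \eqref{2.4.2.1.a}--\eqref{2.4.2.1.b} to obtain, for all $(\v_h,\boldsymbol{\theta}_h,\phi_h) \in \boldsymbol{V}_h \times \boldsymbol{W}_h \times Q_h$,
\begin{align*}
\boldsymbol{\mathcal{A}}_{DG}((\y_h(\u)-\y_h,\boldsymbol{\omega}_h(\u)-\boldsymbol{\omega}_h),(\v_h,\boldsymbol{\theta}_h)) + \boldsymbol{\mathcal{O}}(\boldsymbol{\beta};\y_h(\u)-\y_h,\v_h) + \boldsymbol{\mathcal{B}}_{DG}((\v_h,\boldsymbol{\theta}_h),p_h(\u)-p_h) &= (\u-\u_h,\v_h),\\
\boldsymbol{\mathcal{B}}_{DG}((\y_h(\u)-\y_h,\boldsymbol{\omega}_h(\u)-\boldsymbol{\omega}_h),\phi_h) &= 0.
\end{align*}
Choosing $(\v_h,\boldsymbol{\theta}_h,\phi_h)=(\y_h(\u)-\y_h,\boldsymbol{\omega}_h(\u)-\boldsymbol{\omega}_h,p_h(\u)-p_h)$, the pressure-dependent terms cancel thanks to the second equation and the symmetric $D_{11}$ stabilisation contribution (which only touches $Q_h$-test functions), yielding
\begin{align*}
\boldsymbol{\mathcal{A}}_{DG}((\y_h(\u)-\y_h,\boldsymbol{\omega}_h(\u)-\boldsymbol{\omega}_h),(\y_h(\u)-\y_h,\boldsymbol{\omega}_h(\u)-\boldsymbol{\omega}_h)) + \boldsymbol{\mathcal{O}}(\boldsymbol{\beta};\y_h(\u)-\y_h,\y_h(\u)-\y_h) = (\u-\u_h,\y_h(\u)-\y_h).
\end{align*}
Applying Lemma~\ref{Lemma 2.3.1.11}(ii) to the left-hand side and Cauchy--Schwarz together with the embedding $\|\cdot\|_{0}\le|\!|\!|\cdot|\!|\!|_{1,h}$ on the right gives $\|(\y_h(\u)-\y_h,\boldsymbol{\omega}_h(\u)-\boldsymbol{\omega}_h)\|_{h}\precsim\|\u-\u_h\|_{0}$. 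The pressure part is then recovered from the discrete inf-sup stability of $\boldsymbol{\mathcal{B}}_{DG}$ on $(\boldsymbol{V}_h\times\boldsymbol{W}_h)\times Q_h$: using the first error equation to rewrite $\boldsymbol{\mathcal{B}}_{DG}((\v_h,\boldsymbol{\theta}_h),p_h(\u)-p_h)$ in terms of $(\u-\u_h,\v_h)$, the continuity bound from Lemma~\ref{Lemma 2.3.1.11}(i), and the already-established velocity-vorticity bound.

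The co-state estimate \eqref{2.4.2.4} follows the same template: subtract \eqref{2.3.3.6}--\eqref{2.3.3.7} from \eqref{2.4.2.2.a}--\eqref{2.4.2.2.b}, test with the error pair $(\w_h(\y)-\w_h,\boldsymbol{\vartheta}_h(\y)-\boldsymbol{\vartheta}_h,q_h(\y)-q_h)$, invoke coercivity of $\boldsymbol{\mathcal{A}}_{DG}+\boldsymbol{\mathcal{O}}$, and close with Cauchy--Schwarz and the inf-sup condition to produce $\|\y-\y_h\|_{0}+\|\boldsymbol{\omega}-\boldsymbol{\omega}_h\|_{0}$ on the right. The main obstacle I anticipate is bookkeeping around the convection operator: one must verify that the error in the state system really cancels in the form $\boldsymbol{\mathcal{O}}(\boldsymbol{\beta};\y_h(\u)-\y_h,\cdot)$ (it does, since $\boldsymbol{\mathcal{O}}$ is linear in its second slot and the jumps/upwind terms are linear in $\y_h$), and symmetrically in the co-state that $\boldsymbol{\mathcal{O}}(\boldsymbol{\beta};\cdot,\w_h(\y)-\w_h)$ combines with $\boldsymbol{\mathcal{A}}_{DG}$ to retain coercivity after testing with the error. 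Once these linearity/coercivity observations are in place, the estimates fall out exactly as in the conforming proof.
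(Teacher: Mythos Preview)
Your proposal is correct and follows essentially the same approach as the paper: subtract the discrete systems from the auxiliary systems, test with the error triples, use the coercivity of $\boldsymbol{\mathcal{A}}_{DG}+\boldsymbol{\mathcal{O}}(\boldsymbol{\beta};\cdot,\cdot)$ from Lemma~\ref{Lemma 2.3.1.11}(ii) together with Cauchy--Schwarz to bound the velocity-vorticity error, and then recover the pressure error via the discrete inf-sup condition for $\boldsymbol{\mathcal{B}}_{DG}$. Your remarks on the linearity of $\boldsymbol{\mathcal{O}}$ and the swapped argument order in the co-state convection term are exactly the bookkeeping the paper's proof handles (implicitly), so there is no gap.
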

\begin{proof}
			Subtracting the system (\ref{2.3.3.4}-\ref{2.3.3.5}) from (\ref{2.4.2.1.a}-\ref{2.4.2.1.b}), we have
			\begin{align*}
				\boldsymbol{\mathcal{A}}_{DG}((\mathbf{y}_h(\u)-\y_h,\boldsymbol{\omega}_h(\u)-\boldsymbol{\omega}_h), (\v_h, \boldsymbol{\theta}_h)) &+ \boldsymbol{\mathcal{O}}(\boldsymbol{\beta}; \y_h(\u)-\y_h, \v_h) \\
				&+ \boldsymbol{\mathcal{B}}_{DG}((\v_h, \boldsymbol{\theta}_h),p_h(\u)-p_h) = (\mathbf{u}-\u_h, \mathbf{v}_h),\\
				 \hspace{1.57cm} &\boldsymbol{\mathcal{B}}((\mathbf{y}_h(\u)-\y_h,\boldsymbol{\omega}_h(\u)-\boldsymbol{\omega}_h), \phi_h) = 0,
			\end{align*}
			for all $(\v_h, \boldsymbol{\theta}_h,\phi_h) \in \boldsymbol{V}_h \times \boldsymbol{W}_h \times Q_h$. We substitute $\v_h =  \y_h(\u)-\y_h,\boldsymbol{\theta}_h= \boldsymbol{\omega}_h(\u)-\boldsymbol{\omega}_h$ and  $\phi_h =  p_h(\u)-p_h$, in the above set of equations to get
			\begin{align*}
				\boldsymbol{\mathcal{A}}_{DG}((\mathbf{y}_h(\u)-\y_h,\boldsymbol{\omega}_h(\u)-\boldsymbol{\omega}_h),& (\mathbf{y}_h(\u)-\y_h,\boldsymbol{\omega}_h(\u)-\boldsymbol{\omega}_h)) \\
				&+ \boldsymbol{\mathcal{O}}(\boldsymbol{\beta}; \y_h(\u)-\y_h, \y_h(\u)-\y_h) = (\mathbf{u}-\u_h,\mathbf{y}_h(\u)-\y_h).
			\end{align*} 
			By using the coercivity property
			and Cauchy-Schwarz inequality, we have
			\begin{align}
				\label{2.4.2.4.1} 	\|(\y_h(\u)-\y_h,\boldsymbol{\omega}_h(\u)-\boldsymbol{\omega}_h)\|_{h} &\precsim  \|\u-\u_h\|_{0}.
			\end{align}
			Similarly, by subtracting the system (\ref{2.3.3.6}-\ref{2.3.3.7}) from (\ref{2.4.2.2.a}-\ref{2.4.2.2.b}), we get
			\begin{align*}
				\boldsymbol{\mathcal{A}}_{DG}((\mathbf{w}_h(\y)-\w_h,\boldsymbol{\vartheta}_h(\y)-\boldsymbol{\vartheta}_h), (\z_h, \boldsymbol{\tau}_h)) &+ \boldsymbol{\mathcal{O}}(\boldsymbol{\beta}; \z_h, \w_h(\y)-\w_h) \\
				\nonumber  &- \boldsymbol{\mathcal{B}}_{DG}((\z_h, \boldsymbol{\tau}_h),q_h(\y)-q_h) =  (\mathbf{y} - \y_{h}, \mathbf{z}_h) + (\boldsymbol{\omega} - \boldsymbol{\omega}_{h}, \boldsymbol{\tau}_h),\\
			 & \hspace{-1.25cm}\boldsymbol{\mathcal{B}}_{DG}((\mathbf{w}_h(\y)-\w_h,\boldsymbol{\vartheta}_h(\y)-\boldsymbol{\vartheta}_h), \psi_h) = 0,
			\end{align*} 
			for all $(\z_h,\boldsymbol{\tau}_h,\psi_h) \in \boldsymbol{V}_h \times \boldsymbol{W}_h \times Q_h$. Substituting $\z_h = \w_h(\y) - \w_h$, $\boldsymbol{\tau}_h=\boldsymbol{\vartheta}_h(\y)-\boldsymbol{\vartheta}_h$, and $\psi_h =  q_h(\y) - q_h$, in the above pair of equations, we have
			\begin{align*}
				\boldsymbol{\mathcal{A}}_{DG}(&(\mathbf{w}_h(\y)-\w_h,\boldsymbol{\vartheta}_h(\y)-\boldsymbol{\vartheta}_h), (\w_h(\y)- \w_h, \boldsymbol{\vartheta}_h(\y)-\boldsymbol{\vartheta}_h)) \\
				&+ \boldsymbol{\mathcal{O}}(\boldsymbol{\beta}; \mathbf{w}_h(\y)-\w_h, \mathbf{w}_h(\y)-\w_h) = (\mathbf{y}-\y_h, \w_h(\y)- \w_h)+ (\boldsymbol{\omega} - \boldsymbol{\omega}_{h}, \boldsymbol{\vartheta}_h(\y)-\boldsymbol{\vartheta}_h).
			\end{align*} 
			Using a similar argument as \eqref{2.4.2.4.1}, we obtain
			\begin{align}
				\label{2.4.2.4.2} 	\|(\w_h(\y)-\w_h,\boldsymbol{\vartheta}_h(\y)-\boldsymbol{\vartheta}_h)\|_{h} &\precsim \|\y-\y_h\|_{0} + \|\boldsymbol{\omega} - \boldsymbol{\omega}_{h}\|_{0}.
			\end{align}
			Now, by using the inf-sup stability of $\boldsymbol{\mathcal{B}}_{DG}((\cdot,\cdot),\cdot)$, for a constant $C>0$, we have
			\begin{align}
			\label{2.4.2.4.3} C \|p_h(\u)-p_h\|_{h}  \le \underset{0 \neq (\v_h, \boldsymbol{\theta}_h) \in \boldsymbol{V}_h \times \boldsymbol{W}_h}{\sup} \frac{|\boldsymbol{\mathcal{B}}_{DG}((\v_h, \boldsymbol{\theta}_h),p_h(\u)-p_h)|}{\|(\v_h, \boldsymbol{\theta}_h)\|_{h}}
			 \precsim \|\u-\u_h\|_{0}.
			\end{align}
			Similarly, for the co-state problem, we get
			\begin{align}
				\label{2.4.2.4.4}	 \|q_h(\y)-q_h\|_{h} &\precsim \|\y-\y_h\|_{0} + \|\boldsymbol{\omega} - \boldsymbol{\omega}_{h}\|_{0}.
			\end{align}
			Combining (\ref{2.4.2.4.1}) with (\ref{2.4.2.4.3}) and (\ref{2.4.2.4.2}) with (\ref{2.4.2.4.4}), we get the desired estimates.
\end{proof}
\textcolor{red}{
\begin{lemma}\label{Lemma-2.4.2.1}
	Let $(\y, \boldsymbol{\omega}, p), (\w, \boldsymbol{\vartheta}, q) \in  \boldsymbol{V} \times \boldsymbol{W} \times Q$ be solutions to the state and co-state systems (\ref{2.2.2.11}-\ref{2.2.2.12}), and (\ref{2.2.2.13}-\ref{2.2.2.14}), respectively, and $(\y_h(\u), \boldsymbol{\omega}_h(\u), p_h(\u)), (\w_h(\y), \boldsymbol{\vartheta}_h(\y), r_h(\y))$ be the auxiliary variables. Then, for positive constants $C_1, C_2$ (independent of $h$), the following estimates hold: 
	\begin{align*}
		\|(\y-\y_h(\u),\boldsymbol{\omega} - \boldsymbol{\omega}_h(\u))\|_{h} + \|p-p_h(\u)\|_{h} & \le C_1 h^{\min\{s,k+1\}} \big(\|\y\|_{s+1} + \|\boldsymbol{\omega}\|_{s+1} + \|p\|_{s} + \|\u\|_{1} \big), \\
	\|(\w-\w_h(\y),\boldsymbol{\vartheta} - \boldsymbol{\vartheta}_h(\y))\|_{h} + \|q-q_h(\y)\|_{h} & \le C_2 h^{\min\{s,k+1\}} \big( \|\y\|_{s+1} + \|\w\|_{s+1} + \|\boldsymbol{\vartheta}\|_{s+1} + \|q\|_{s}\big).
	\end{align*}
\end{lemma}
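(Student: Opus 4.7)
The plan is to follow the classical two-step DG a priori analysis: introduce projections onto the discrete spaces and split each error into an approximation part plus a Galerkin-discrete part, then estimate the two pieces separately. Fix suitable projections $\Pi_h^{\boldsymbol{V}}:\boldsymbol{V}\to\boldsymbol{V}_h$, $\Pi_h^{\boldsymbol{W}}:\boldsymbol{W}\to\boldsymbol{W}_h$, and $\Pi_h^Q:Q\to Q_h$ (for instance, the element-wise $L^2$-orthogonal projections onto the broken polynomial spaces (\ref{2.3.3.1})--(\ref{2.3.3.3})), and write
\begin{align*}
\y-\y_h(\u) &= (\y-\Pi_h^{\boldsymbol{V}}\y) + (\Pi_h^{\boldsymbol{V}}\y-\y_h(\u)) =: \boldsymbol{\eta}_{\y}+\boldsymbol{\xi}_{\y},
\end{align*}
and analogously for $\boldsymbol{\omega}$ and $p$. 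The approximation terms $\|(\boldsymbol{\eta}_{\y},\boldsymbol{\eta}_{\boldsymbol{\omega}})\|_h$ and $\|\eta_p\|_h$ are handled by standard broken Sobolev interpolation estimates combined with trace-inverse inequalities on element boundaries, yielding a contribution of order $h^{\min\{s,k+1\}}$ times the appropriate Sobolev norms of $(\y,\boldsymbol{\omega},p)$; the jump seminorms in (\ref{2.3.3.9}) are controlled because the exact $\y$ has vanishing jumps across interior edges, so only projection residuals remain.

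For the Galerkin-discrete part, the key ingredient is consistency. Since $\y\in\boldsymbol{H}^{s+1}(\Omega)\cap\boldsymbol{H}_0^1(\Omega)$ with $\nabla\cdot\y=0$ and $\boldsymbol{\omega}=\textbf{curl}(\y)$, elementwise integration by parts together with the vanishing of $[\![\y]\!]_T$, $[\![\y]\!]_N$, and $[\![p]\!]$ along interior edges shows that $(\y,\boldsymbol{\omega},p)$ satisfies the DG system (\ref{2.4.2.1.a})--(\ref{2.4.2.1.b}) exactly. Subtracting the discrete auxiliary problem yields the Galerkin identity
\begin{align*}
\boldsymbol{\mathcal{A}}_{DG}((\boldsymbol{\xi}_{\y},\boldsymbol{\xi}_{\boldsymbol{\omega}}),(\v_h,\boldsymbol{\theta}_h)) + \boldsymbol{\mathcal{O}}(\boldsymbol{\beta};\boldsymbol{\xi}_{\y},\v_h) + \boldsymbol{\mathcal{B}}_{DG}((\v_h,\boldsymbol{\theta}_h),\xi_p) = -\mathcal{E}_h(\boldsymbol{\eta}_{\y},\boldsymbol{\eta}_{\boldsymbol{\omega}},\eta_p;\v_h,\boldsymbol{\theta}_h),
\end{align*}
where the right-hand side collects the approximation residuals. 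Testing with $(\v_h,\boldsymbol{\theta}_h)=(\boldsymbol{\xi}_{\y},\boldsymbol{\xi}_{\boldsymbol{\omega}})$ and invoking the coercivity bound from Lemma~\ref{Lemma 2.3.1.11}(ii) on the left, together with the continuity bound from Lemma~\ref{Lemma 2.3.1.11}(i) on the right, gives $\|(\boldsymbol{\xi}_{\y},\boldsymbol{\xi}_{\boldsymbol{\omega}})\|_h\precsim\|(\boldsymbol{\eta}_{\y},\boldsymbol{\eta}_{\boldsymbol{\omega}})\|_h+\|\eta_p\|_h$. The pressure error $\|\xi_p\|_h$ is then recovered from the discrete inf-sup stability of $\boldsymbol{\mathcal{B}}_{DG}$ applied to the momentum residual, whose numerator is already controlled by the velocity-vorticity error. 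The triangle inequality then delivers the first estimate.

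The second estimate is proved by the same template, with $(\y,\boldsymbol{\omega},p)$ replaced by $(\w,\boldsymbol{\vartheta},q)$ and with the right-hand-side data $(\y-\y_d,\boldsymbol{\omega}-\boldsymbol{\omega}_d)$ playing the role of $\f+\u$; the adjoint convection terms are already absorbed by $\boldsymbol{\mathcal{O}}(\boldsymbol{\beta};\cdot,\cdot)$ so that Lemma~\ref{Lemma 2.3.1.11} still applies under hypothesis (\ref{2.3.3.9.1}).

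The main technical obstacle will be verifying consistency carefully with the variable viscosity $\nu(\x)$: one has to check that the numerical flux edge contributions involving $\{\!\!\{\nu\boldsymbol{\omega}\}\!\!\}$, $\{\!\!\{\nu\y\}\!\!\}$, the stabilizations $C_{11},A_{11},D_{11}$, and the augmentation terms with $\rho_1,\rho_2$ all vanish for the exact solution, leaving only projection residuals. A secondary difficulty is obtaining the correct order for the $\nu$-weighted jump seminorms: this is accomplished using the $W^{1,\infty}$-regularity of $\nu$ together with scaled trace and inverse inequalities on shape-regular elements, paralleling the treatment of the constant-viscosity case in \cite{VABAM}.
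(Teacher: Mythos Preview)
Your proposal is correct and follows essentially the same route as the paper: split via $L^2$-projections into approximation and discrete parts, invoke consistency/Galerkin orthogonality, apply the coercivity of Lemma~\ref{Lemma 2.3.1.11}(ii) to control the discrete part, and recover pressure by the discrete inf--sup. The only place where your sketch is looser than the paper is the step ``together with the continuity bound from Lemma~\ref{Lemma 2.3.1.11}(i) on the right'': that lemma is stated for arguments in $\boldsymbol{V}_h\times\boldsymbol{W}_h$, whereas the residual $\mathcal{E}_h$ involves the approximation errors $(\boldsymbol{\eta}_{\y},\boldsymbol{\eta}_{\boldsymbol{\omega}},\eta_p)$, which do not live there. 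The paper handles this not by a global continuity bound but by a term-by-term analysis (exploiting the $L^2$-orthogonality of the projections to kill several volume terms and then citing \cite[Lemma~8]{VABAM} and \cite[Section~3.3]{LDGSTOKES} for the remaining edge contributions), which is exactly the work you flag in your final paragraph.
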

\begin{proof}
	To prove the first estimate, by a use of Triangle inequality we get
	\begin{align}
	\nonumber	\|(\y-\y_h(\u),\boldsymbol{\omega} - \boldsymbol{\omega}_h(\u))\|_{h} + \|p-p_h(\u)\|_{h} &\le \|(\y - \boldsymbol{\Pi}_{\boldsymbol{V}}\y,\boldsymbol{\kappa} - \boldsymbol{\Pi}_{\boldsymbol{W}}\boldsymbol{\kappa})\|_{h} + \|p - \Pi_{Q}p\|_{h} \\
	\nonumber	& \ \ \ + \|(\boldsymbol{\Pi}_{\boldsymbol{V}}\y - \y_h(\u),\boldsymbol{\Pi}_{\boldsymbol{W}}\boldsymbol{\kappa} - \boldsymbol{\kappa}_h(\u))\|_{h} + \|\Pi_{Q}p - p_h(\u)\|_{h}\\
	\label{1.00}	&\le \|(\boldsymbol{\xi}_{\y},\boldsymbol{\xi}_{\boldsymbol{\kappa}})\|_{h} + \|\xi_{p}\|_{h} + \|(\boldsymbol{\Psi}_{\y},\boldsymbol{\Psi}_{\boldsymbol{\kappa}})\|_{h} + \|\Psi_{p}\|_{h},
	\end{align}
where the numerical and approximation errors are defined by:
	\begin{subequations}\label{2.4.2.9.00}
		\begin{align}
		\label{2.4.2.9.00a}	&\boldsymbol{\xi}_{\y} = \y - \boldsymbol{\Pi}_{\boldsymbol{V}}\y,&&\boldsymbol{\xi}_{\boldsymbol{\kappa}} = \boldsymbol{\kappa} - \boldsymbol{\Pi}_{\boldsymbol{W}}\boldsymbol{\kappa}, &&& \xi_{p} = p - \Pi_{Q}p,\\
		\label{2.4.2.9.00b}	&\boldsymbol{\Psi}_{\y} = \boldsymbol{\Pi}_{\boldsymbol{V}}\y - \y_h(\u),&&\boldsymbol{\Psi}_{\boldsymbol{\kappa}} = \boldsymbol{\Pi}_{\boldsymbol{W}}\boldsymbol{\kappa} - \boldsymbol{\kappa}_h(\u), &&& \Psi_{p} = \Pi_{Q}p - p_h(\u),
		\end{align} 
	\end{subequations}
where $\boldsymbol{\Pi}_{\boldsymbol{V}}, \boldsymbol{\Pi}_{\boldsymbol{W}}$ and $\Pi_{Q}$ denote the $L^2$-projection onto the discrete spaces $\V_h, \W_h$ and $Q_h$, respectively. For the regularity assumptions discussed in the beginning of this subsection, by following standard $L^{2}$-projection estimates, we get the following estimates \cite[Lemma~7]{VABAM}:
\begin{subequations}\label{2.4.2.9.01}
	\begin{align}
		&|\!|\!|\boldsymbol{\xi}_{\y}|\!|\!|_{1}
		\le C_{1} h^{\min\{s,k+1\}}\|\y\|_{s+1}, && \|\boldsymbol{\xi}_{\boldsymbol{\kappa}}\|_{0} \le C_{2} h^{\min\{s,k\}+1}\|\y\|_{s+1},\\
		& \ |\boldsymbol{\xi}_{\y}|_{j} \le C_{3} h^{\min\{s,k+1\}}\|\y\|_{s+1}, && \ \ |\xi_{p}|_{j} \le C_{4} h^{\min\{s,k\}+1}\|p\|_{s},
	\end{align}
\end{subequations}
where $C_1, C_2, C_3$ and $C_4$ are positive constants independent of the mesh size. Combining these estimates, we obtain
\begin{align}\label{1.1}
	\|(\boldsymbol{\xi}_{\y},\boldsymbol{\xi}_{\boldsymbol{\kappa}})\|_{h} + |\xi_{p}|_{j} \le C_{1} h^{\min\{s,k+1\}}\big(\|\y\|_{s+1}+\|p\|_{s}\big).
\end{align} Additionally, we have the following estimates:
\begin{subequations}\label{2}
	\begin{align}
	\label{2a}	\nu_{1}\bigg|\sum_{K \in \mathcal{T}_h} \int_{K} \boldsymbol{\theta}_h \cdot \textbf{curl}(\boldsymbol{\xi}_{\y}) dx + \sum_{E \in \mathcal{E}(\mathcal{T}_h)}\int_{E} \{\!\!\{\boldsymbol{\theta}_h\}\!\!\} \cdot [\![\boldsymbol{\xi}_{\y}]\!]_{T} ds\bigg|&\le C_{5} h^{\min\{s,k+1\}}\|\y\|_{s+1} \|\boldsymbol{\theta}_h\|_{0},\\
	\label{2b} \nu_{1}\bigg|\sum_{K \in \mathcal{T}_h} \int_{K} \boldsymbol{\xi}_{\boldsymbol{\kappa}} \cdot \textbf{curl}(\v_h) dx + \sum_{E \in \mathcal{E}(\mathcal{T}_h)}\int_{E} \{\!\!\{\boldsymbol{\xi}_{\boldsymbol{\kappa}}\}\!\!\} \cdot [\![\v_h]\!]_{T} ds\bigg| &\le C_{6}h^{\min\{s,k\}+1}\|\y\|_{s+1} |\v_h|_{j},\\
	\label{2c} \bigg|-\sum_{K \in \mathcal{T}_h} \int_{K} \xi_{p} \nabla \cdot \v_h \ dx + \sum_{E \in \mathcal{E}(\mathcal{T}_h)}\int_{E} \{\!\!\{\xi_{p}\}\!\!\} \cdot [\![\v_h]\!]_{N} ds\bigg|&\le C_{7}h^{\min\{s,k\}+1}\|p\|_{s} |\v_h|_{j},\\
	\label{2d} \bigg|-\sum_{K \in \mathcal{T}_h} \int_{K} \phi_h \nabla \cdot \boldsymbol{\xi}_{\y} \ dx + \sum_{E \in \mathcal{E}(\mathcal{T}_h)}\int_{E} \{\!\!\{\phi_h\}\!\!\} \cdot [\![\boldsymbol{\xi}_{\y}]\!]_{N} ds\bigg| &\le C_{8}h^{\min\{s,k+1\}}\|\y\|_{s+1} |\phi_h|_{j},\\
	\label{2e} \bigg|\sum_{E \in \mathcal{E}(\mathcal{T}_h)} \big(C_{11} [\![\nu \boldsymbol{\xi}_{\y}]\!]_{T} \cdot [\![\v_h]\!]_{T} + A_{11} [\![\boldsymbol{\xi}_{\y}]\!]_{N} [\![\v_h]\!]_{N}  \big) ds\bigg| &\le C_{9}h^{\min\{s,k+1\}}\|\y\|_{s+1} |\v_h|_{j},\\
	\label{2f}\bigg|\sum_{E \in \mathcal{E}(\mathcal{T}_h)} \int_{E}  D_{11} [\![\xi_{p}]\!] \cdot [\![\phi_h]\!]\ ds\bigg|&\le C_{10}h^{\min\{s,k\}+1}\|p\|_{s} |\phi_h|_{j},
	\end{align}
\end{subequations}
where $C_5, C_6, C_7, C_8, C_9$ and $C_{10}$ are positive constants dependent on the stabilisation parameters and viscosity parameter $\nu$. The estimates (\ref{2a}-\ref{2b}, \ref{2e}-\ref{2f})  follow from the results provided in \cite[Lemma~8]{VABAM} and (\ref{2c}-\ref{2d}) follow exactly with same arguments from \cite[Section~3.3]{LDGSTOKES}.\\
By using Galerkin orthogonality and the orthogonality of $L^2$-projections, along with the bounds from (\ref{2.4.2.9.01},\ref{2}), we obtain
\begin{align}\label{1.02}
\nonumber	C \|(\boldsymbol{\Psi}_{\y},\boldsymbol{\Psi}_{\boldsymbol{\kappa}})\|_{h}^{2}&\le \boldsymbol{\mathcal{A}}_{DG}((\boldsymbol{\Psi}_{\y},\boldsymbol{\Psi}_{\boldsymbol{\kappa}}), (\boldsymbol{\Psi}_{\y}, \boldsymbol{\Psi}_{\boldsymbol{\kappa}})) + \boldsymbol{\mathcal{O}}(\boldsymbol{\beta}; \boldsymbol{\Psi}_{\y}, \boldsymbol{\Psi}_{\y}) \\
\nonumber	&\le \bigg|\sum_{K \in \mathcal{T}_h} \int_{K} 2\boldsymbol{\varepsilon}(\boldsymbol{\Psi}_{\y}) \nabla \nu \cdot \boldsymbol{\xi}_{\y} \ dx\bigg| + \bigg|\sum_{K \in \mathcal{T}_h} \int_{K} \boldsymbol{\Psi}_{\boldsymbol{\kappa}} \cdot (\nabla \nu \times \boldsymbol{\xi}_{\y}) dx\bigg| \\
\nonumber & \ \ \ + \nu_{1} \bigg|\sum_{K \in \mathcal{T}_h} \int_{K} \boldsymbol{\Psi}_{\boldsymbol{\kappa}} \cdot \textbf{curl} (\boldsymbol{\xi}_{\y}) \ dx + \sum_{E \in \mathcal{E}(\mathcal{T}_h)} \int_{E}  \{\!\!\{\boldsymbol{\Psi}_{\boldsymbol{\kappa}}\}\!\!\} \cdot [\![\boldsymbol{\xi}_{\y}]\!]_{T}\ ds\bigg|\\
\nonumber	& \ \ \ + \nu_{1} \bigg|\sum_{K \in \mathcal{T}_h} \int_{K} \boldsymbol{\xi}_{\boldsymbol{\kappa}} \cdot \textbf{curl} (\boldsymbol{\Psi}_{\y}) \ dx - \sum_{E \in \mathcal{E}(\mathcal{T}_h)} \int_{E} \{\!\!\{\boldsymbol{\xi}_{\boldsymbol{\kappa}}\}\!\!\} \cdot [\![\boldsymbol{\Psi}_{\y}]\!]_{T} \ ds\bigg|\\
\nonumber	& \ \ \ + \nu_{1} \bigg|\sum_{E \in \mathcal{E}(\mathcal{T}_h)} \big(C_{11} [\![\boldsymbol{\xi}_{\y}]\!]_{T} \cdot [\![\boldsymbol{\Psi}_{\y}]\!]_{T} + A_{11} [\![\boldsymbol{\xi}_{\y}]\!]_{N} [\![\boldsymbol{\Psi}_{\y}]\!]_{N}  \big) ds\bigg| + \bigg|\sum_{K \in \mathcal{T}_h} \int_{K} \boldsymbol{\Psi}_{\y} \boldsymbol{\beta}^{T}: \nabla \boldsymbol{\xi}_{\y} \ dx\bigg| \\
\nonumber	& \ \ \ + \bigg|\sum_{K \in \mathcal{T}_h} \int_{K}\big(\rho_1 (\textbf{curl}(\boldsymbol{\Psi}_{\y})-\boldsymbol{\Psi}_{\boldsymbol{\kappa}}) \cdot \textbf{curl}(\boldsymbol{\xi}_{\y})
+ \rho_2 (\nabla \cdot \boldsymbol{\Psi}_{\y}) \cdot (\nabla \cdot \boldsymbol{\xi}_{\y})\big) dx\bigg| \\
\nonumber	& \ \ \ + \bigg|\sum_{K \in \mathcal{T}_h} \int_{\partial K_{\text{out}} \cap \Gamma_{\text{out}}} (\boldsymbol{\beta} \cdot \boldsymbol{n}_{K}) \boldsymbol{\Psi}_{\y} \cdot \boldsymbol{\xi}_{\y} \ ds\bigg| + \bigg|\sum_{K \in \mathcal{T}_h} \int_{\partial K_{\text{out}} \setminus \Gamma} (\boldsymbol{\beta} \cdot \boldsymbol{n}_{K}) \boldsymbol{\Psi}_{\y} \cdot (\boldsymbol{\xi}_{\y}-\boldsymbol{\xi}_{\y}^{e}) \ ds\bigg|\\
\nonumber&\le \Big(C_{5}  \|\boldsymbol{\Psi}_{\boldsymbol{\kappa}}\|_{0} + (C_{6} + C_{9}) |\boldsymbol{\Psi}_{\y}|_{j} + C_{12}\big(\|\boldsymbol{\Psi}_{\y}\|_{0} + \|\textbf{curl}( \boldsymbol{\Psi}_{\y})\|_{0} + \|\nabla \cdot \boldsymbol{\Psi}_{\y}\|_{0}\big) \Big)h^{\min\{s,k+1\}}\|\y\|_{s+1}\\
\therefore \|(\boldsymbol{\Psi}_{\y},\boldsymbol{\Psi}_{\boldsymbol{\kappa}})\|_{h} &\le C_{13}h^{\min\{s,k+1\}}\|\y\|_{s+1}.
\end{align}
Similarly, using the estimates (\ref{2c}-\ref{2e}), we get
\begin{align}\label{4}
	|\Psi_{p}|_{j} \le C_{13} h^{\min\{s,k+1\}}\|p\|_{s}.
\end{align}
The following $L^{2}$-norm error of the pressure follows from \cite[Lemma~5]{VABAM}:
\begin{align}\label{1.01}
	\|\xi_{p}\|_{0} + \|\Psi_{p}\|_{0} \le C_{14} h^{\min\{s,k+1\}}\|p\|_{s}.
\end{align}
Using the estimates \eqref{1.1}, \eqref{1.02}, \eqref{4} and \eqref{1.01} in \eqref{1.00}, we get the desired estimate for the state problem.
Similarly, the second estimate for the co-state is derived using a comparable approach.
\end{proof}
}
\begin{theorem}\label{Lemma-2.4.2.2}
	Let $(\y, \boldsymbol{\omega}, p, \w, \boldsymbol{\vartheta}, q, \mathbf{u})$ be the solution to the continuous system (\ref{2.2.2.11}-\ref{2.2.2.15}), and \\ $(\y_h, \boldsymbol{\omega}_h, p_h, \w_h, \boldsymbol{\vartheta}_h, q_h, \mathbf{u}_h)$ be their discrete DG approximation. Then, we have the following estimate:
	\begin{align}
		\label{2.4.2.9}	\|\u-\u_h\|_{0} \le Ch \|\u\|_{1}.
	\end{align}
\end{theorem}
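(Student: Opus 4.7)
The plan is to mirror the argument of Theorem~\ref{Lemma-2.4.1.3} for the DG setting, substituting every conforming ingredient by its DG counterpart. I would first introduce the continuous state $(\y(\u_h),\boldsymbol{\omega}(\u_h),p(\u_h))$ and the continuous co-state $(\w(\u_h),\boldsymbol{\vartheta}(\u_h),q(\u_h))$ driven by the discrete control $\u_h$, as in (\ref{2.4.1.8})--(\ref{2.4.1.11}), along with the reduced-functional derivative identities (\ref{2.4.1.111a})--(\ref{2.4.1.111b}). Combining the continuous optimality condition (\ref{2.2.2.15}) with the DG discrete optimality condition (\ref{2.3.3.8}), tested with $\tilde{\u}_h = \Pi_h \u$ (where $\Pi_h$ is the $L^2$-projection onto $\boldsymbol{\mathcal{A}_{dh}}$), and using the same second-order identities as in the conforming derivation, leads to
\begin{align*}
\gamma\|\u-\u_h\|_{0}^{2} \le \gamma(\u-\u_h,\Pi_h\u - \u) + (\Pi_h\u - \u + \u - \u_h,\w_h - \w(\u_h)) + (\Pi_h\u - \u,\w(\u_h) - \w),
\end{align*}
which is the DG analogue of (\ref{2.4.1.12}).

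Applying Cauchy--Schwarz and Young's inequality to the right-hand side reduces the task to controlling $\|\w - \w(\u_h)\|_0$ and $\|\w_h - \w(\u_h)\|_0$. The first quantity lives entirely at the continuous level: subtracting the continuous adjoint systems at $\u$ and at $\u_h$, testing against the difference, and invoking the continuous coercivity from Lemma~\ref{Lemma-2.2.2.3.} together with the continuous state estimate $\|\y - \y(\u_h)\|_0 \precsim \|\u - \u_h\|_0$ produces $\|\w - \w(\u_h)\|_0 \le \eta \|\u - \u_h\|_0$, exactly as in (\ref{2.4.1.14}); the resulting $\|\u-\u_h\|_0^2$ contribution on the right is absorbed by choosing the Young parameter small enough.

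The second quantity, $\|\w_h - \w(\u_h)\|_0$, is the central discretization term. I would introduce the DG auxiliary co-state $\w_h(\y(\u_h))$ defined by (\ref{2.4.2.2.a})--(\ref{2.4.2.2.b}) with data $\y(\u_h),\boldsymbol{\omega}(\u_h)$, and use the triangle inequality
\begin{align*}
\|\w_h - \w(\u_h)\|_0 \le \|\w_h - \w_h(\y(\u_h))\|_0 + \|\w_h(\y(\u_h)) - \w(\u_h)\|_0.
\end{align*}
Lemma~\ref{Lemma-2.4.2.1.0} bounds the first summand by $\|\y(\u_h)-\y_h\|_0 + \|\boldsymbol{\omega}(\u_h)-\boldsymbol{\omega}_h\|_0$, which is a pure DG state-discretization error governed by Lemma~\ref{Lemma-2.4.2.1} (applied with control $\u_h$), hence $O(h)$. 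The second summand is bounded directly by Lemma~\ref{Lemma-2.4.2.1}, also giving $O(h)$.

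Assembling these ingredients, choosing the Young parameter as in the conforming derivation, and invoking the standard $L^2$-projection estimate $\|\u-\Pi_h\u\|_0 \precsim h\|\u\|_1$ yields the claim. The main obstacle is the careful bookkeeping in the splitting of $\|\w_h - \w(\u_h)\|_0$: the auxiliary object $\w_h(\y(\u_h))$ must be chosen so that Lemma~\ref{Lemma-2.4.2.1.0} applies with data already controlled by DG discretization estimates, avoiding a circular dependence on $\|\u-\u_h\|_0$. A minor verification is that the regularity hypotheses of Lemma~\ref{Lemma-2.4.2.1} hold uniformly for the states driven by $\u_h$; this follows from $\u_h \in \boldsymbol{L}^2(\Omega)$ together with standard regularity of the continuous problem.
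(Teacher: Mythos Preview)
Your proposal is correct and follows essentially the same route as the paper: introduce the continuous auxiliaries $(\y(\u_h),\boldsymbol{\omega}(\u_h),p(\u_h))$ and $(\w(\u_h),\boldsymbol{\vartheta}(\u_h),q(\u_h))$, derive the analogue of (\ref{2.4.1.12}) via the second-order conditions, apply Cauchy--Schwarz/Young, absorb $\|\w-\w(\u_h)\|_0$ using the continuous coercivity estimate (\ref{2.4.2.14}), fix the Young parameter $\zeta$, and finish with the $L^2$-projection bound. The paper leaves the treatment of $|\!|\!|\w(\u_h)-\w_h|\!|\!|_{1,h}$ to a single sentence (``use of the estimates for the second term''), whereas you spell out a concrete splitting via the intermediate DG co-state $\w_h(\y(\u_h))$ and Lemmas~\ref{Lemma-2.4.2.1.0}--\ref{Lemma-2.4.2.1}; this is a legitimate way to fill that gap, with the regularity caveat for $\y(\u_h)$ that you correctly flag.
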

\begin{proof}
	To prove this estimate, we follow a technique similar to the proof of Lemma \ref{Lemma-2.4.1.3}. For some $\u_h \in \boldsymbol{\mathcal{A}_{d}}$, let $(\y(\u_h), \boldsymbol{\omega} (\u_h), p(\u_h)), (\w(\u_h), \boldsymbol{\vartheta} (\u_h), q(\u_h)) \in \boldsymbol{V} \times \boldsymbol{W} \times Q$ be the solution of the systems (\ref{2.4.1.8}-\ref{2.4.1.9}) and (\ref{2.4.1.10}-\ref{2.4.1.11}), respectively.
	The reduced functional $\mathcal{F}$ exhibits the properties mentioned in (\ref{2.4.1.111a}-\ref{2.4.1.111b}).
Now, by using the second order conditions, we obtain
%
\begin{align}
	\label{2.4.2.9.1}	\gamma \|\u-\u_{h}\|_{0}^{2} &\le {\gamma(\u-\u_h,\Pi_h \u- \u)} + {(\Pi_h \u - \u,\w_h-\w(\u_h))} + {(\Pi_h \u - \u,\w(\u_h)-\w)}  \\
	\nonumber& + {(\u_{h}-\u,\w(\u_h)-\w_h)}.
\end{align}
Using the Cauchy-Schwarz and Young's inequalities, for a constant $\zeta>0$, we have:
\begin{align}
	\label{2.4.2.9.2}	\gamma \|\u-\u_{h}\|_{0}^{2} &\le \frac{\gamma}{\zeta} \|\u-\Pi_h \u\|_{0}^{2} + 2\gamma \zeta \|\u-\u_h\|_{0}^{2} + \frac{\zeta}{\gamma}  \|\w-\w(\u_h)\|_{0}^{2}+ \big( \frac{\zeta}{2\gamma}+\frac{1}{4\gamma \zeta}\big)\|\w(\u_h)-\w_h\|_{0}^{2}.
\end{align}
Now, we subtract (\ref{2.4.1.10}-\ref{2.4.1.11}) from (\ref{2.2.2.11}-\ref{2.2.2.12}), and substitute 
 $\z = \w-\w(\u_h), \boldsymbol{\tau} = \boldsymbol{\vartheta} - \boldsymbol{\vartheta}(\u_h)$ and $\psi = q-q(\u_h)$, to obtain
\begin{align*}
	\boldsymbol{\mathcal{C}}((\w-\mathbf{w}(\u_h),\boldsymbol{\vartheta} - \boldsymbol{\vartheta}(\u_h)), (\w-\mathbf{w}(\u_h), \boldsymbol{\vartheta} - \boldsymbol{\vartheta}(\u_h))) = (\y - \mathbf{y}(\u_h), \w-\w(\u_h)) + (\boldsymbol{\omega} - \boldsymbol{\omega}(\u_h), \boldsymbol{\vartheta} - \boldsymbol{\vartheta}(\u_h)).
\end{align*}
Using the constitutive relation from \eqref{2.2.1.2} and coercivity of $\boldsymbol{\mathcal{C}}((\cdot,\cdot),(\cdot,\cdot))$, we have
\begin{align}
	\label{2.4.2.14}  |\!|\!|\w-\w(\u_h)|\!|\!|_{1} \le (C_{a}^{c})^{-1} |\!|\!|\y-\y(\u_h)|\!|\!|_{1} \le \eta \|\u-\u_h\|_{0},
\end{align}
where $C_{a}^{c}$ is the coercivity constant and $\eta = (C_{a}^{c})^{-1}$. Applying (\ref{2.4.2.14}) to (\ref{2.4.2.9.2}), and selecting the constant $\zeta = \frac{\gamma}{2} \left(2\gamma + \eta^{2} \gamma^{-1}\right)^{-1}$, we obtain
\begin{align*}
	\|\u-\u_{h}\|_{0}^{2} &\le \frac{2}{\zeta} \|\u- \Pi_h \u\|_{0}^{2} + \bigg( \frac{\zeta}{\gamma^{2}}+\frac{1}{2\gamma^{2} \zeta}\bigg) |\!|\!|\w(\u_h)-\w_h|\!|\!|_{1,h}^{2}.
\end{align*}
A use of the estimates for the second term and $L^2$-projection gives the estimate for the control variable.
\end{proof}
\begin{theorem}\label{lem: Lemma-2.4.2.3}
	Let $(\y, \boldsymbol{\omega}, p, \w, \boldsymbol{\vartheta}, q, \mathbf{u})$ be a solution to the system (\ref{2.2.2.11}-\ref{2.2.2.15}), with corresponding DG approximation
	$(\y_h, \boldsymbol{\omega}_h, p_h, \w_h, \boldsymbol{\vartheta}_h, q_h, \mathbf{u}_h)$.  Then, for positive constants $C_3$ and $C_4$ independent of the mesh-size, following estimates hold:
	\begin{align*}
	\|(\y-\y_h,\boldsymbol{\omega} - \boldsymbol{\omega}_h)\|_{h} + \|p-p_h\|_{h} & \le C_3 h^{\min\{s,k+1\}} \big(\|\y\|_{s+1} + \|\boldsymbol{\omega}\|_{s+1} + \|p\|_{s} + \|\u\|_{s} \big), \\
	\|(\w-\w_h,\boldsymbol{\vartheta} - \boldsymbol{\vartheta}_h)\|_{h} + \|q-q_h\|_{h} & \le C_4 h^{\min\{s,k+1\}} \big( \|\y\|_{s+1} + \|p\|_{s} + \|\u\|_{s} + \|\w\|_{s+1} + \|\boldsymbol{\vartheta}\|_{s+1} + \|q\|_{s}\big).
\end{align*}
\end{theorem}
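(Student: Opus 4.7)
The plan is to mirror the structure of the conforming analysis in Theorem~\ref{lem: Lemma-2.4.1.4}, combining an auxiliary decomposition with the building blocks that have already been established for the DG scheme. First I would apply the triangle inequality to split the state error as
$$
\|(\y-\y_h,\boldsymbol{\omega}-\boldsymbol{\omega}_h)\|_{h} + \|p-p_h\|_{h} \le \|(\y-\y_h(\u),\boldsymbol{\omega}-\boldsymbol{\omega}_h(\u))\|_{h} + \|p-p_h(\u)\|_{h} + \|(\y_h(\u)-\y_h,\boldsymbol{\omega}_h(\u)-\boldsymbol{\omega}_h)\|_{h} + \|p_h(\u)-p_h\|_{h},
$$
so that the error is separated into an \emph{approximation} part (continuous state versus its DG discretisation at the exact control $\u$) and a \emph{perturbation} part (DG states for $\u$ versus $\u_h$).

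I would dispatch the approximation part directly by Lemma~\ref{Lemma-2.4.2.1}, which yields a bound of order $h^{\min\{s,k+1\}}(\|\y\|_{s+1}+\|\boldsymbol{\omega}\|_{s+1}+\|p\|_{s}+\|\u\|_{1})$. The perturbation part is handled by Lemma~\ref{Lemma-2.4.2.1.0}, which absorbs it into $C\|\u-\u_h\|_{0}$, and the control estimate $\|\u-\u_h\|_{0}\le Ch\|\u\|_{1}$ from Theorem~\ref{Lemma-2.4.2.2} finishes this piece. Using $\|\u\|_1\le C\|\u\|_s$ on the bounded domain $\Omega$ and the fact that the linear rate $h$ is dominated by $h^{\min\{s,k+1\}}$ only in the piecewise-constant control regime (so the overall rate is controlled by the slowest among the three contributions), one reads off the first asserted estimate.

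For the co-state estimate, I would repeat the same device with the splitting
$$
\|(\w-\w_h,\boldsymbol{\vartheta}-\boldsymbol{\vartheta}_h)\|_{h} + \|q-q_h\|_{h} \le \|(\w-\w_h(\y),\boldsymbol{\vartheta}-\boldsymbol{\vartheta}_h(\y))\|_{h} + \|q-q_h(\y)\|_{h} + \|(\w_h(\y)-\w_h,\boldsymbol{\vartheta}_h(\y)-\boldsymbol{\vartheta}_h)\|_{h} + \|q_h(\y)-q_h\|_{h}.
$$
The first group is again bounded by Lemma~\ref{Lemma-2.4.2.1}, while the second group is controlled, via Lemma~\ref{Lemma-2.4.2.1.0}, by $\|\y-\y_h\|_{0}+\|\boldsymbol{\omega}-\boldsymbol{\omega}_h\|_{0}$. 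This residual state error is precisely what was already bounded in the first half of the proof, producing the second estimate upon substitution.

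The main obstacle I anticipate is bookkeeping rather than any new technical difficulty: one must check that Lemma~\ref{Lemma-2.4.2.1.0} indeed expresses the perturbation in the strong DG norm $\|\cdot\|_h$ purely in terms of the \emph{$L^{2}$-norm} of $\u-\u_h$ (so that Theorem~\ref{Lemma-2.4.2.2} can be invoked) and that the hidden constants in the DG interpolation bounds of Lemma~\ref{Lemma-2.4.2.1} remain $h$-independent. Once these dependencies are tracked, the theorem follows by a straightforward assembly of the three ingredients, in complete analogy with the conforming proof, with $\|\cdot\|$ replaced by the broken DG norm $\|\cdot\|_{h}$ and the rate $h^{s}$ replaced by $h^{\min\{s,k+1\}}$.
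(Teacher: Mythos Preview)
Your proposal is correct and follows essentially the same approach as the paper: the same triangle-inequality splitting through the auxiliary discrete solutions $(\y_h(\u),\boldsymbol{\omega}_h(\u),p_h(\u))$ and $(\w_h(\y),\boldsymbol{\vartheta}_h(\y),q_h(\y))$, followed by Lemma~\ref{Lemma-2.4.2.1} for the approximation part, Lemma~\ref{Lemma-2.4.2.1.0} for the perturbation part, and Theorem~\ref{Lemma-2.4.2.2} for the control error. The paper's own proof is in fact terser than your outline and does not spell out the rate-matching remark you raise about $h$ versus $h^{\min\{s,k+1\}}$.
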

\begin{proof}
	To prove the first estimate, we utilize the Triangle Inequality, which gives
\begin{align*}
	\|(\y-\y_h,\boldsymbol{\omega} - \boldsymbol{\omega}_h)\|_{h} + \|p-p_h\|_{h} \le \ &\ \|(\y-\y_h(\u),\boldsymbol{\omega} - \boldsymbol{\omega}_h(\u))\|_{h} + \|p-p_h(\u)\|_{h} \\
	&+ \|(\y_h(\u)-\y_h,\boldsymbol{\omega}_h(\u) - \boldsymbol{\omega}_h)\|_{h} + \|p_h(\u)-p_h\|_{h}.
\end{align*}
By applying the estimates from Lemmas~\ref{Lemma-2.4.2.1.0}, \ref{Lemma-2.4.2.1}, and Theorem~\ref{Lemma-2.4.2.2}, we derive the first estimate. Similarly, a use of Triangle inequality for the second estimate gives 
\begin{align*}
	\|(\w-\w_h,\boldsymbol{\vartheta} - \boldsymbol{\vartheta}_h)\|_{h} + \|q-q_h\|_{h} \le \ &\ 	\|(\w-\w_h(\y),\boldsymbol{\vartheta} - \boldsymbol{\vartheta}_h(\y))\|_{h} + \|q-q_h(\y)\|_{h} \\
	& + \|(\w_h(\y)-\w_h,\boldsymbol{\vartheta}_h(\y) - \boldsymbol{\vartheta}_h)\|_{h} + \|q_h(\y)-q_h\|_{h}.
\end{align*}
An application of Lemmas~\ref{Lemma-2.4.2.1.0}, \ref{Lemma-2.4.2.1}, and Theorem~\ref{Lemma-2.4.2.2} completes the proof.
\end{proof}
\begin{remark}
	Unlike the conforming scheme explained in Section~\ref{Conforming Scheme}, the discrete velocity obtained from the DG scheme does not necessarily satisfy the divergence-free criterion. As a result, this approach is not pressure robust, which means that pressure errors influence a priori velocity error estimates. This might result in less accurate velocity approximations. This can be addressed by employing divergence-conforming discrete spaces, as mentioned in \cite{LPHILIP}.
\end{remark}
\subsubsection{A posteriori error estimates}
Let $k \ge 1$ be an integer, and $\boldsymbol{V}_h, \boldsymbol{W}_h, Q_h$ and $\boldsymbol{\mathcal{A}_{dh}}$ be the discontinuous discrete spaces. Let $(\y, \omega, p, \w, \vartheta, q, \mathbf{u}) \in \boldsymbol{V} \times \boldsymbol{W} \times Q \times \boldsymbol{V} \times \boldsymbol{W} \times Q \times \boldsymbol{\mathcal{A}_{d}}$ and $(\y_h, \omega_h, p_h, \w_h, \vartheta_h, q_h, \mathbf{u}_h) \in \boldsymbol{V}_h \times \boldsymbol{W}_{h} \times Q_h \times \boldsymbol{V}_h \times \boldsymbol{W}_{h} \times Q_h \times \boldsymbol{\mathcal{A}_{dh}}$ be the unique solutions to the continuous and discrete problems (\ref{2.2.2.11}-\ref{2.2.2.15}) and (\ref{2.3.3.4}-\ref{2.3.3.8}), respectively.
Let  $\nu_h,\ \boldsymbol{\beta}_{h},\ \sigma_{h},\ \f_{h},\ \y_{d,h}$ and $\boldsymbol{\omega}_{d,h}$ represent the piecewise polynomial approximations of the viscosity coefficient $\nu$, convective velocity field $\boldsymbol{\beta}$, reaction term coefficient $\sigma$, source function $\f$ and the desired velocity $\y_d$ and vorticity $\boldsymbol{\omega}_{d}$, respectively. These approximations may exhibit discontinuities across elemental edges.
For an element $K\in\mathcal{T}_h$, we define local error indicators $\eta_{d,K}^{\y}$, $\eta_{d,K}^{\w}$, and $\eta_{d,K}^{\u}$ as:
\begin{align*}
	(\eta_{d,K}^{\y})^{2} := (\eta_{R_K}^{\y})^{2}   + (\eta_{E_K}^{\y})^{2}   +  (\eta_{J_K}^{\y})^{2} , \quad \quad (\eta_{d,K}^{\w})^{2} := (\eta_{R_K}^{\w})^{2}   + (\eta_{E_K}^{\w})^{2}   +  (\eta_{J_K}^{\w})^{2}, \quad \quad
	(\eta_{d,K}^{\u})^{2} := (\eta_{R_K}^{\u})^{2} 
\end{align*} 
where the interior residual terms are defined as
\begin{align*}
	\begin{cases}
		\big(\eta_{R_{K}}
		^{\y}\big)^{2} &:= h_{K}^{2}\|\f_{h} + \u_h + 2 \boldsymbol{\varepsilon}(\y_h) \nabla \nu_h - \nu_h \  \textbf{curl}(\boldsymbol{\omega_h}) - (\boldsymbol{\beta}_{h} \cdot \nabla) \y_h - \nabla p_h - \sigma_h \y_h\|_{0,K}^{2} \\
		& \ \ \ \ + \|\boldsymbol{\omega_h} - \textbf{curl}(\y_h)\|_{0,K}^{2}, \\
		\big(\eta_{R_{K}}^{\w}\big)^{2} &:= h_{K}^{2} \|\y_{h} - \y_{d,h}  + 2 \boldsymbol{\varepsilon}(\w_h) \nabla \nu_h - \nu_h \  \textbf{curl}(\boldsymbol{\vartheta_h}) + (\boldsymbol{\beta}_{h} \cdot \nabla) \w_h + \nabla q_h - (\sigma_h-\nabla \cdot \boldsymbol{\beta}_h)	\w_h\|_{0,K}^{2}\\
		& \ \ \ \ + \|\boldsymbol{\vartheta}_h - \textbf{curl}(\w_h) - \boldsymbol{\omega}_h + \boldsymbol{\omega}_{d,h}\|_{0,K}^{2}, \\
		(\eta_{R_K}^{\u})^{2}  &:= h_{K}^{2} \|\w_h + \lambda \u_h\|_{0,K}^{2},
	\end{cases} 
\end{align*}
	the edge residuals are defined as
\begin{align*}
		\big(\eta_{E_{K}}^{\y}\big)^{2} &:= \frac{1}{2} \sum \limits_{E \in \partial K \setminus \Gamma} h_{E} \|[\![ (p_h I - \boldsymbol{ \omega}_h)\times \n]\!]\|_{0,E}^{2}, \qquad \big(\eta_{E_{K}}^{\w}\big)^{2} := \frac{1}{2} \sum \limits_{E \in \partial K \setminus \Gamma} h_{E} \|[\![ (q_h I - \boldsymbol{ \vartheta}_h)\times \n]\!]\|_{0,E}^{2},
\end{align*}
and the trace residuals are defined as
\begin{align*}
	\big(\eta_{J_{K}}^{\y}\big)^{2} &:= \frac{1}{2} \sum \limits_{E \in \partial K \setminus \Gamma} \Big( C_{11} \|[\![\y_h ]\!]_{T}\|_{0,E}^{2} + A_{11} \|[\![\y_h]\!]_{N}\|_{0,E}^{2} + D_{11} \|[\![p_h ]\!]\|_{0,E}^{2} \Big) \vspace{1mm}\\
	& \ \ \  \ + \sum \limits_{E \in \partial K \cap \Gamma} \Big(C_{11} \| \y_h\|_{0,E}^{2} + A_{11} \|\y_h\|_{0,E}^{2} + D_{11} \|p_h\|_{0,E}^{2} \Big),\\
	\big(\eta_{J_{K}}^{\w}\big)^{2} &:= \frac{1}{2} \sum \limits_{E \in \partial K \setminus \Gamma} \Big( C_{11} \|[\![\w_h ]\!]_{T}\|_{0,E}^{2} + A_{11} \|[\![\w_h]\!]_{N}\|_{0,E}^{2} + D_{11} \|[\![q_h ]\!]\|_{0,E}^{2} \Big) \vspace{1mm}\\
	& \ \ \  \ + \sum \limits_{E \in \partial K \cap \Gamma} \Big(C_{11} \|\w_h\|_{0,E}^{2} + A_{11} \|\w_h\|_{0,E}^{2} + D_{11} \|q_h\|_{0,E}^{2} \Big),
\end{align*}
where $I$ is the $d \times d$ identity matrix. We define the \textbf{global error estimators} $\eta_d^{\y}, \eta_d^{\w}$, and  $\eta_d^{\u}$ as:
\begin{align*}
	(\eta_d^{\y})^2 &:= \sum_{K \in \mathcal{T}_h}(\eta_{d,K}^{\y})^{2} , &&  (\eta_d^{\w})^2 := \sum_{K \in \mathcal{T}_h}(\eta_{d,K}^{\w})^{2}, \quad \quad \quad \quad \eta_d^{\u} := \sum_{K \in \mathcal{T}_h}(\eta_{d,K}^{\u})^{2}.
\end{align*}\\
For the local data oscillation terms $\Theta_{K}^{\y}$ and $\Theta_{K}^{\w}$ defined as:
\begin{align*}
	(\Theta_{K}^{\y})^{2} &:= h_{K}^{2} \big(\|\f-\f_h\|_{0,K}^{2} + \| 2 \boldsymbol{\varepsilon}(\y_h)(\nabla \nu - \nabla \nu_h)\|_{0,K}^{2} + \|(\nu-\nu_h) \textbf{curl}(\boldsymbol{\kappa}_h)\|_{0,K}^{2} + \| ((\boldsymbol{\beta}-\boldsymbol{\beta}_h) \cdot \nabla) \y_h \|_{0,K}^{2} \\
	& \ \ \ \ + \| (\sigma-\sigma_h)\y_h\|_{0,K}^{2}\big),\\
	(\Theta_{K}^{\w})^{2} &:= h_{K}^{2} \big(\|\y_{d,h}-\y_d\|_{0,K}^{2}  + \| 2 \boldsymbol{\varepsilon}(\w_h)(\nabla \nu - \nabla \nu_h)\|_{0,K}^{2}+ \|(\nu-\nu_h) \textbf{curl}(\boldsymbol{\vartheta}_h)\|_{0,K}^{2} \\
	& \ \ \ \ + \|((\boldsymbol{\beta}-\boldsymbol{\beta}_h) \cdot \nabla) \w_h \|_{0,K}^{2} + \|((\sigma - \nabla \cdot \boldsymbol{\beta})-(\sigma_h - \nabla \cdot \boldsymbol{\beta}_h))\w_h\|_{0,K}^{2} + \|\boldsymbol{\kappa}_{d,h}-\boldsymbol{\kappa}_d\|_{0,K}^{2}\big),
\end{align*}
we define the \textbf{global data oscillation terms} $\Theta^{\y}$ and $\Theta^{\w}$ as:
\begin{align*}
	 (\Theta^{\y})^2 &:= \sum_{K \in \mathcal{T}_h}(\Theta_{K}^{\y})^{2}, && (\Theta^{\w})^2 := \sum_{K \in \mathcal{T}_h}(\Theta_{K}^{\w})^{2},
\end{align*}
	\begin{lemma}\label{Lemma-2.5.2.1.}
	Let $(\y, \omega, p, \w, \vartheta, q)$ and $(\y_h, \omega_h, p_h, \w_h, \vartheta_h, q_h)$ be solutions to the continuous and discrete problems (\ref{2.2.2.11}-\ref{2.2.2.14}) and (\ref{2.3.3.4}-\ref{2.3.3.7}), respectively. Then, the following estimates hold true:
	\begin{align}
		\label{2.5.2.2}	\|(\y-\y_h,\boldsymbol{\omega} - \boldsymbol{\omega}_h)\|_{h} + \|p-p_h\|_{h} \ &  \precsim \eta_d^{\y} + \Theta^{\y}, \\
		\label{2.5.2.3}	\|(\w-\w_h,\boldsymbol{\vartheta} - \boldsymbol{\vartheta}_h)\|_{h} + \|q-q_h\|_{h} \ & \precsim \eta_d^{\w}+ \Theta^{\w}.
	\end{align}
	\end{lemma}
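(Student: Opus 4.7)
The strategy is to adapt the DG a posteriori reliability argument used for the pure state Oseen problem in \cite{VABAM} to the present variable-viscosity, optimal-control setting, while accounting for data oscillation produced by the piecewise polynomial approximations $\nu_h, \boldsymbol{\beta}_h, \sigma_h, \f_h, \y_{d,h}, \boldsymbol{\omega}_{d,h}$. I would concentrate first on the state estimate \eqref{2.5.2.2}; the co-state estimate \eqref{2.5.2.3} then follows by repeating the argument with $\boldsymbol{\mathcal{C}}$ in place of $\boldsymbol{\mathcal{A}}$ and with right-hand side $(\y-\y_d,\z)+(\boldsymbol{\omega}-\boldsymbol{\omega}_d,\boldsymbol{\tau})$, after inserting and subtracting $(\y_{d,h},\boldsymbol{\omega}_{d,h})$ so that each non-homogeneous term splits into a computable residual plus an oscillation contribution.

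The first step is a conforming/non-conforming splitting of the velocity error. Since $\y_h$ is only piecewise polynomial, I introduce an Oswald-type averaging operator $\mathcal{A}_h:\boldsymbol{V}_h \to \boldsymbol{V}_h \cap \boldsymbol{V}$ and write $\y-\y_h=(\y-\mathcal{A}_h\y_h)+(\mathcal{A}_h\y_h-\y_h)$. The non-conforming contribution is controlled locally by the standard averaging estimate
\[
\|\mathcal{A}_h\y_h-\y_h\|_{0,K}^{2}+h_K^{2}\,|\mathcal{A}_h\y_h-\y_h|_{1,K}^{2} \precsim \sum_{E\subset\partial K} h_E\bigl(\|[\![\y_h]\!]_T\|_{0,E}^{2}+\|[\![\y_h]\!]_N\|_{0,E}^{2}\bigr),
\]
which is absorbed into $\eta_{J_K}^{\y}$; an analogous $D_{11}$-weighted bound handles the pressure jump. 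For the conforming remainder $(\y-\mathcal{A}_h\y_h, \boldsymbol{\omega}-\boldsymbol{\omega}_h, p-p_h)$ I apply the continuous stability \eqref{2.5.1.2} and reduce the problem to bounding a residual functional $\mathcal{R}(\v,\boldsymbol{\theta},\phi)$ tested against arbitrary conforming $(\v,\boldsymbol{\theta},\phi)\in \boldsymbol{V}\times\boldsymbol{W}\times Q$.

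To make $\mathcal{R}$ computable, I would insert Cl\'ement-type quasi-interpolants $(\v_I,\boldsymbol{\theta}_I,\phi_I)\in \boldsymbol{V}_h\times\boldsymbol{W}_h\times Q_h$ and invoke Galerkin orthogonality against the DG system \eqref{2.3.3.4}--\eqref{2.3.3.5}. Element-wise integration by parts then produces interior volumetric residuals matching $\eta_{R_K}^{\y}$, tangential jumps of $p_hI-\boldsymbol{\omega}_h$ across interior edges matching $\eta_{E_K}^{\y}$, and $C_{11},A_{11},D_{11}$-weighted trace contributions matching $\eta_{J_K}^{\y}$. Wherever the exact coefficients $\nu,\boldsymbol{\beta},\sigma$ and data $\f$ appear, I would add and subtract their discrete counterparts; the Cl\'ement estimates $\|\v-\v_I\|_{0,K}\precsim h_K\|\v\|_{1,\tilde{K}}$ and $\|\v-\v_I\|_{0,E}\precsim h_E^{1/2}\|\v\|_{1,\tilde{E}}$ convert every resulting contribution into either an indicator times the energy norm of $(\v,\boldsymbol{\theta},\phi)$ or a piece of the data-oscillation term $\Theta_K^{\y}$.

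The main obstacle will be the bookkeeping of the variable-viscosity cross terms---namely $\boldsymbol{\varepsilon}(\y_h)\nabla\nu$, $\nu\,\textbf{curl}(\boldsymbol{\omega}_h)$, and $\boldsymbol{\omega}_h\cdot(\nabla\nu\times\cdot)$---together with the DG numerical fluxes under the substitution $\nu\mapsto \nu_h$ (and similarly for $\boldsymbol{\beta}\mapsto \boldsymbol{\beta}_h$, $\sigma\mapsto\sigma_h$). One must verify that the discrepancies decouple cleanly into $\Theta^{\y}$ without contaminating the residual indicators, while the skew-symmetric cancellations underlying the coercivity of Lemma~\ref{Lemma 2.3.1.11} remain intact when $\nu$ is moved in and out of the jump terms. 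Once this accounting is carried out, summing over $K\in\mathcal{T}_h$ and applying Cauchy--Schwarz yields \eqref{2.5.2.2}; the co-state estimate \eqref{2.5.2.3} follows by an identical procedure, with the opposite sign of the convective term and the extra $(\nabla\cdot\boldsymbol{\beta})\w_h$ contribution already reflected in the definition of $\eta_{R_K}^{\w}$.
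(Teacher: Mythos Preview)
Your proposal is correct and follows essentially the same approach as the paper. The paper decomposes $\y_h=\y_h^{c}+\y_h^{r}$ with $\y_h^{c}\in\boldsymbol{V}_h\cap\boldsymbol{H}_0^{1}(\Omega)$ and $\y_h^{r}$ in the orthogonal complement, bounds $\y_h^{r}$ by the trace residuals, and then invokes the continuous stability \eqref{2.5.1.2} for $\y-\y_h^{c}$ together with a residual/Cl\'ement argument (citing \cite{AGO} and \cite[Lemma~5.8]{HSAK}); your use of an Oswald averaging operator in place of the orthogonal projection is a standard and equivalent variant of the same conforming/non-conforming splitting, and your treatment of the data-oscillation terms and co-state mirrors the paper's outline.
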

	\begin{proof}
		To derive reliability estimates for the state and co-state problem, we employ the idea from \cite[Section~4]{AGO}. We decompose the state and co-state velocity approximation as:
		\begin{align*}
			\y_h &= \y_h^{c} + \y_h^{r}, &&\w_h = \w_h^{c} + \w_h^{r},  &&&\y_h^{c}, \w_h^{c} \in \boldsymbol{V}_{h}^{c} &&&& \text{and} &&&&& \y_h^{r}, \w_h^{r} \in \boldsymbol{V}_{h}^{\perp},
		\end{align*}
	where $\boldsymbol{V}_{h}^{c} := \boldsymbol{V}_{h} \cap \boldsymbol{H}_{0}^{1}(\Omega)$ and $\boldsymbol{V}_{h}^{\perp}$
	denotes the conforming space and orthogonal complement of $\boldsymbol{V}_{h}^{c}$, respectively. First, we establish an upper bound for the remaining term $\y_h^{r}$ in the state velocity. Then, we demonstrate an upper bound for the continuous error $\y - \y_h^{c}$, as well as the vorticity and pressure errors $\boldsymbol{\kappa} - \boldsymbol{\kappa}_h$ and $p - p_h$. By using the Triangle Inequality, Lemma \ref{2.5.1.2}, and a similar method to that in \cite[Lemma~5.8]{HSAK}, we derive the estimate \eqref{2.5.2.2}. Similar sort of technique leads to the other estimate for the co-state.
	\end{proof}
	\begin{theorem}\label{Theorem-2.5.2.2.}
	Let $(\y, \omega, p, \w, \vartheta, q, \u)$ and $(\y_h, \omega_h, p_h, \w_h, \vartheta_h, q_h, \u_h)$ be the solutions to the continuous and discrete systems (\ref{2.2.2.11}-\ref{2.2.2.15}) and (\ref{2.3.3.4}-\ref{2.3.3.8}), respectively. Then, we have the  \textbf{reliability} estimate:
	\begin{align}
\label{2.5.2.4}	\|\u-\u_h\|_{0} \ & + \|(\y-\y_h,\boldsymbol{\omega} - \boldsymbol{\omega}_h)\|_{h} + \|p-p_h\|_{h} \\
	\nonumber & + \|(\w-\w_h,\boldsymbol{\vartheta} - \boldsymbol{\vartheta}_h)\|_{h} + \|q-q_h\|_{h} \precsim \eta_d^{\y}+ \eta_d^{\w}+\eta_d^{\u}+ \Theta^{\y}+ \Theta^{\w}.
	\end{align}
\end{theorem}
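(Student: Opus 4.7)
The plan is to parallel the conforming argument of Theorem~\ref{Theorem-2.5.1.3.}, replacing Lemma~\ref{Lemma-2.5.1.2.} by its DG analogue Lemma~\ref{Lemma-2.5.2.1.}. First, I would introduce the continuous auxiliary state $(\y(\u_h), \boldsymbol{\omega}(\u_h), p(\u_h)) \in \boldsymbol{V} \times \boldsymbol{W} \times Q$ solving (\ref{2.4.1.8}-\ref{2.4.1.9}) with the discrete control $\u_h$, together with the associated continuous co-state $(\w(\u_h), \boldsymbol{\vartheta}(\u_h), q(\u_h))$ from (\ref{2.4.1.10}-\ref{2.4.1.11}). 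Subtracting from (\ref{2.2.2.11}-\ref{2.2.2.14}) and testing symmetrically, exactly as in the derivation of \eqref{2.5.1.11}, one obtains
\begin{align*}
(\u - \u_h, \w - \w(\u_h)) = \|\y - \y(\u_h)\|_{0}^{2} + \|\boldsymbol{\omega} - \boldsymbol{\omega}(\u_h)\|_{0}^{2} \ge 0.
\end{align*}

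Combined with the representations $\mathcal{F}'(\u)(\v) = \gamma(\u,\v) + (\w,\v)$ and $\mathcal{F}'(\u_h)(\v) = \gamma(\u_h,\v) + (\w(\u_h),\v)$, this yields $\gamma \|\u - \u_h\|_{0}^{2} \le (\mathcal{F}'(\u) - \mathcal{F}'(\u_h), \u - \u_h)$. Invoking the continuous variational inequality \eqref{2.2.2.15} and the discrete one \eqref{2.3.3.8} with $\tilde{\u}_h = \Pi_h \u \in \boldsymbol{\mathcal{A}_{dh}}$ (the piecewise-constant $L^{2}$-projection of $\u$) produces
\begin{align*}
\gamma \|\u - \u_h\|_{0}^{2} \le -(\gamma \u_h + \w_h, \u - \Pi_h \u) + (\w_h - \w(\u_h), \u - \u_h).
\end{align*}
An elementwise Cauchy-Schwarz estimate combined with the standard bound $\|\u - \Pi_h \u\|_{0,K} \precsim h_{K} \|\nabla \u\|_{0,K}$ controls the first term by $\eta_d^{\u}$. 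For the second, I would introduce an intermediate triple $(\tilde{\w}, \tilde{\boldsymbol{\vartheta}}, \tilde{q})$ solving (\ref{2.2.2.13}-\ref{2.2.2.14}) with $(\y_h, \boldsymbol{\omega}_h)$ as state-data and split $\w_h - \w(\u_h) = (\w_h - \tilde{\w}) + (\tilde{\w} - \w(\u_h))$. Lemma~\ref{Lemma-2.5.2.1.} applied to the pair $((\tilde{\w}, \tilde{\boldsymbol{\vartheta}}, \tilde{q}), (\w_h, \boldsymbol{\vartheta}_h, q_h))$ bounds the first piece by $\eta_d^{\w} + \Theta^{\w}$; the second, via the continuous stability \eqref{2.2.2.7}, is dominated by $\|\y_h - \y(\u_h)\|_{0} + \|\boldsymbol{\omega}_h - \boldsymbol{\omega}(\u_h)\|_{0}$, which in turn is controlled by $\eta_d^{\y} + \Theta^{\y}$ upon applying Lemma~\ref{Lemma-2.5.2.1.} to the state pair $((\y(\u_h), \boldsymbol{\omega}(\u_h), p(\u_h)), (\y_h, \boldsymbol{\omega}_h, p_h))$. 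A Young's inequality step then absorbs $\|\u - \u_h\|_{0}$ on the right and delivers $\|\u - \u_h\|_{0} \precsim \eta_d^{\y} + \eta_d^{\w} + \eta_d^{\u} + \Theta^{\y} + \Theta^{\w}$.

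The state and co-state bounds follow from a single triangle inequality: for instance
\begin{align*}
\|(\y - \y_h, \boldsymbol{\omega} - \boldsymbol{\omega}_h)\|_h + \|p - p_h\|_h &\le \|(\y - \y(\u_h), \boldsymbol{\omega} - \boldsymbol{\omega}(\u_h))\|_h + \|p - p(\u_h)\|_h \\
& \quad + \|(\y(\u_h) - \y_h, \boldsymbol{\omega}(\u_h) - \boldsymbol{\omega}_h)\|_h + \|p(\u_h) - p_h\|_h,
\end{align*}
where the first pair is dominated by $\|\u - \u_h\|_{0}$ through \eqref{2.2.2.7} and the second by $\eta_d^{\y} + \Theta^{\y}$ via Lemma~\ref{Lemma-2.5.2.1.}; the co-state estimate is analogous with $\eta_d^{\w} + \Theta^{\w}$. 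I expect the main subtlety to lie in verifying that Lemma~\ref{Lemma-2.5.2.1.} may be invoked with the non-optimal data $(\y_h, \boldsymbol{\omega}_h)$ driving the continuous co-state — the residual estimators simply encode whatever data appears on the right-hand sides, so no re-derivation is needed, but this point must be stated carefully to avoid double-counting of data-oscillation terms.
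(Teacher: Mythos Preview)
Your proposal is correct and follows essentially the same route as the paper: the same continuous auxiliary pairs $(\y(\u_h),\boldsymbol{\omega}(\u_h),p(\u_h))$ and $(\w(\u_h),\boldsymbol{\vartheta}(\u_h),q(\u_h))$, the same intermediate co-state $(\tilde{\w},\tilde{\boldsymbol{\vartheta}},\tilde{q})$ driven by the discrete data $(\y_h,\boldsymbol{\omega}_h)$, and the same appeals to Lemma~\ref{Lemma-2.5.2.1.} and the continuous stability~\eqref{2.2.2.7}. Your write-up is in fact more explicit than the paper's, which merely says ``proceed as in Theorem~\ref{Theorem-2.5.1.3.}'' and records the key intermediate bounds \eqref{2.5.2.5}--\eqref{2.5.2.8}.
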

\begin{proof}
	To prove this result, we proceed in the same manner as in the proof of Theorem~\ref{Theorem-2.5.1.3.} to obtain the following relation between the control and the co-state variables:
	\begin{align}
		\label{2.5.2.5}  \|\u - \u_h\|_{0}  \precsim \ & \eta_d^{\u} +  \|(\w_h - \w(\u_h),\boldsymbol{\vartheta}_h - \boldsymbol{\vartheta}(\u_h))\|_{h}.
	\end{align}
	Since $(\w(\u_h)-\tilde{\w}, \vartheta(\u_h)- \tilde{\boldsymbol{\vartheta}}, q(\u_h)-\tilde{q})$ solves the system \eqref{2.5.1.15.a}, where $(\tilde{\w}, \tilde{\boldsymbol{\vartheta}}, \tilde{q})$ is a solution to the system (\ref{2.2.2.13}-\ref{2.2.2.14}), by an application of Lemma~\ref{Lemma-2.5.2.1.}, we have
	\begin{align}
		\label{2.5.2.6}	\|(\tilde{\w} - \w_h,\tilde{\boldsymbol{\vartheta}} - \vartheta_h)\|_{h} + \|\tilde{q}-q_h\|_{h} \precsim \eta_d^{\w} + \Theta^{\w}.
	\end{align}
	Using the Triangle Inequality and the estimate (\ref{2.5.2.6}), we have
	\begin{align}
	\label{2.5.2.7}	 \|(\w_h - \w(\u_h),& \boldsymbol{\vartheta}_h - \boldsymbol{\vartheta}(\u_h))\|_{h} + \|q(\u_h)-q_h)\|_{h} \precsim \|(\w(\u_h)- \tilde{\w},\boldsymbol{\vartheta}(\u_h) - \tilde{\boldsymbol{\vartheta}})\|_{h}+ \|q(\u_h)-\tilde{q}\|_{h} \\
		\nonumber & + \|(\tilde{\w} - \w_h,\tilde{\boldsymbol{\vartheta}} - \boldsymbol{\vartheta}_h)\|_{h} + \|\tilde{q}-q_h\|_{h} \precsim \ \|\mathbf{y}(\u_h) - \mathbf{y}_h\|_{0}+\eta_d^{\w} + \Theta^{\w}.
	\end{align}
	For the state equation, Lemma \ref{Lemma-2.5.2.1.} provides the following result:
	\begin{align}
		\label{2.5.2.8}	\|(\y_h - \y(\u_h),\omega_h - \omega(\u_h))\|_{h} + \|p(\u_h)-p_h\|_{h} \precsim \eta_d^{\y} + \Theta^{\y}.
	\end{align}
	Substituting (\ref{2.5.2.7}-\ref{2.5.2.8}) into (\ref{2.5.2.5}) and using \eqref{2.5.1.2}, we achieve the desired estimate.
	\end{proof}
\begin{remark}
	By following a similar approach as in the proof of Theorem~\ref{Theorem-2.5.1.5} and \cite[Theorem~5.9]{HSAK}, we have the following \textbf{efficiency} estimate:
	\begin{align*}
					\eta_d^{\y}+ \eta_d^{\w}+\eta_d^{\u} \precsim \	\|\u-\u_h\|_{0} & + \|(\y-\y_h,\boldsymbol{\omega} - \boldsymbol{\omega}_h)\|_{h} + \|p-p_h\|_{h} \\
					&+ \|(\w-\w_h,\boldsymbol{\vartheta} - \boldsymbol{\vartheta}_h)\|_{h} + \|q-q_h\|_{h} + \Theta^{\y}+ \Theta^{\w}.
	\end{align*}
\end{remark}
	\section{Numerical experiments}\label{Numerical Experiments.}
	In this section, we conduct numerical experiments to validate the theoretical convergence rates and to showcase effectiveness of the proposed methods in incompressible flows on different domains. The uniqueness of pressure ensured by the zero-mean condition, is enforced using a real Lagrange multiplier. To solve the linear systems, we employ the multifrontal massively parallel sparse direct solver MUMPS in Fenics \cite{AMB}. 
	Additionally, we integrate an adaptive mesh refinement technique inspired from \cite[Section~6]{SMAHAJAN} and \cite[Section~5]{VERFP}.
	Throughout all experiments, the control cost parameter is set to be $\gamma = 1$. We define the global estimators $\boldsymbol{\eta_{\mathbf{CG}}}$ (conforming), $\boldsymbol{\eta_{\mathbf{DG}}}$ (non-conforming), and total errors $\boldsymbol{\mathcal{T \hspace{-0.35mm} E}_{CG}}$ (conforming) and $\boldsymbol{\mathcal{T \hspace{-0.35mm} E}_{DG}}$ (non-conforming) as follows:
			\begin{align*}
		\boldsymbol{\eta_{\mathbf{CG}}} &:= \big((\eta_c^{\y})^2 + (\eta_c^{\w})^2 +(\eta_c^{\u})^2\big)^{1/2}, \qquad \boldsymbol{\eta_{\mathbf{DG}}}:= \big((\eta_d^{\y})^2 + (\eta_d^{\w})^2 +(\eta_d^{\u})^2\big)^{1/2},\\
		\boldsymbol{\mathcal{T \hspace{-0.35mm} E}_{\mathbf{CG}}} &:= \big(\|\u-\u_h\|_{0}^{2}+\|(\y-\y_h,\omega - \omega_h)\|^{2} + \|p-p_h\|_{0}^{2} + \|(\w-\w_h,\vartheta - \vartheta_h)\|^{2}  + \|q-q_h\|_{0}^{2}\big)^{1/2},\\
		\boldsymbol{\mathcal{T \hspace{-0.35mm} E}_{\mathbf{DG}}} & := \big(\|\u-\u_h\|_{0}^{2}+\|(\y-\y_h,\omega - \omega_h)\|_h^{2} + \|p-p_h\|_h^{2} + \|(\w-\w_h,\vartheta - \vartheta_h)\|_h^{2} + \|q-q_h\|_h^{2}\big)^{1/2}.
	\end{align*}
	\subsection{Convergence test using manufactured solutions}\label{Example 5.1.}
	The first experiment seeks to estimate precise solutions analytically within a two-dimensional domain $\Omega = (0,1)^{2}$. We construct the forcing term $\mathbf{f}$, target velocity and vorticity fields $\mathbf{y}_d$ and $\kappa_d$ such that the exact solutions to the optimal control problem are the subsequent smooth functions:
	\begin{align*}
		\y(x_1,x_2) &= \textbf{curl}\big((\sin(\pi x_1) \sin(\pi x_2))^{2}\big),  &&\kappa(x_1,x_2) = \textbf{curl}(\y), \ &&& p(x_1,x_2) = \cos(2\pi x_1)\cos(2\pi x_2),\\
		\w(x_1,x_2) &= \textbf{curl} \big((\sin(2 \pi x_1) \sin(2 \pi x_2))^{2}\big), \ && \vartheta(x_1,x_2) = \textbf{curl}(\w), \ &&& q(x_1,x_2) = \sin(2\pi x_1)\sin(2\pi x_2).
	\end{align*} 
	We set $\boldsymbol{\beta} = \mathbf{y}$, $\sigma = 100$, and $\nu(x_1,x_2) = 0.001 + 0.999 x_1 x_2$. The control bounds are chosen as $\mathbf{a}=(-0.5,-0.5)^{T}$ and $\mathbf{b}=(0.5,0.5)^{T}$. Figure~\ref{FIGURE 1} indicates the effectiveness of both conforming ($k=1$) and non-conforming ($k=0$) numerical schemes in effectively approximating the state, co-state, and control variables. We observe that global indicators and total errors decay at an optimal rate as shown in Figure~\ref{FIGURE 1}. Figure~\ref{FIGURE 2} visualises the behaviour of all numerical solutions. The smooth and continuous form of the solutions demonstrates the numerical methods stability and accuracy. Overall, these findings confirm the validity of both approaches for capturing the dynamics of the studied system, providing helpful insights into its behaviour and parameters.
\begin{figure}
	\centering
	\subfloat[CG scheme $(k=1)$]{\includegraphics[scale=0.18]{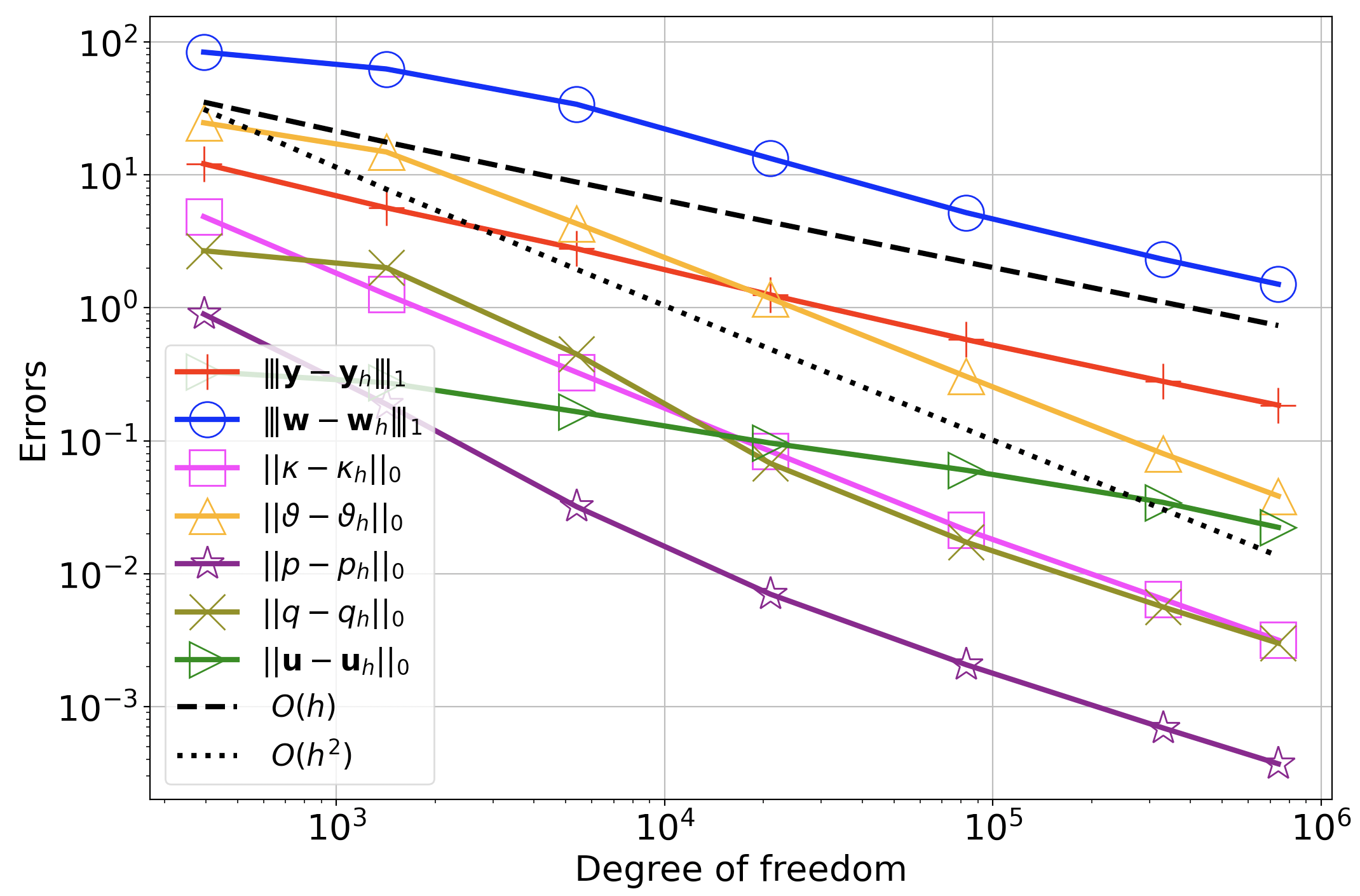}}
	\subfloat[DG scheme $(k=0)$]{\includegraphics[scale=0.18]{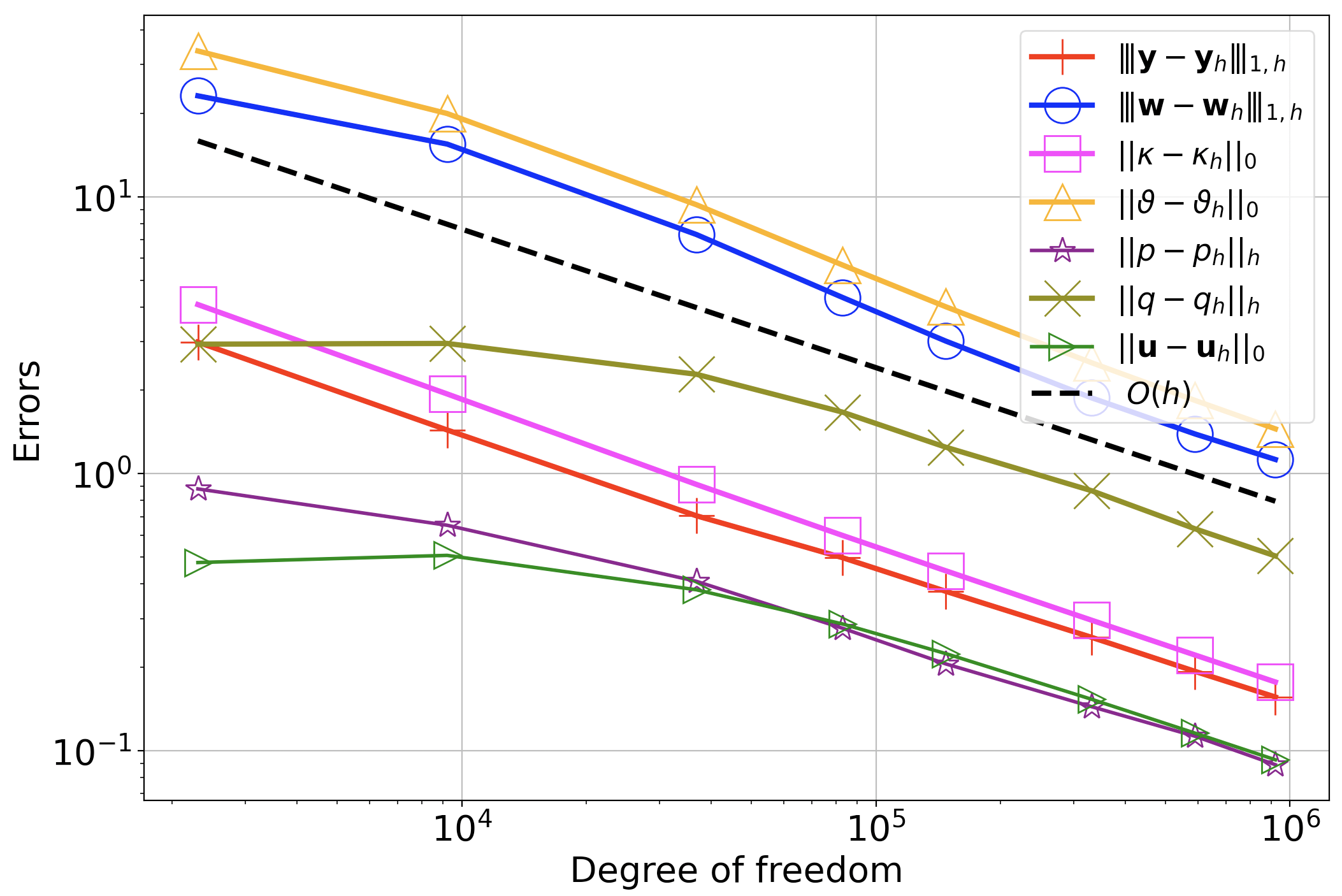}}\vspace{-2.25mm}
		\subfloat[Indicator-Total error (CG)]{\includegraphics[scale=0.162]{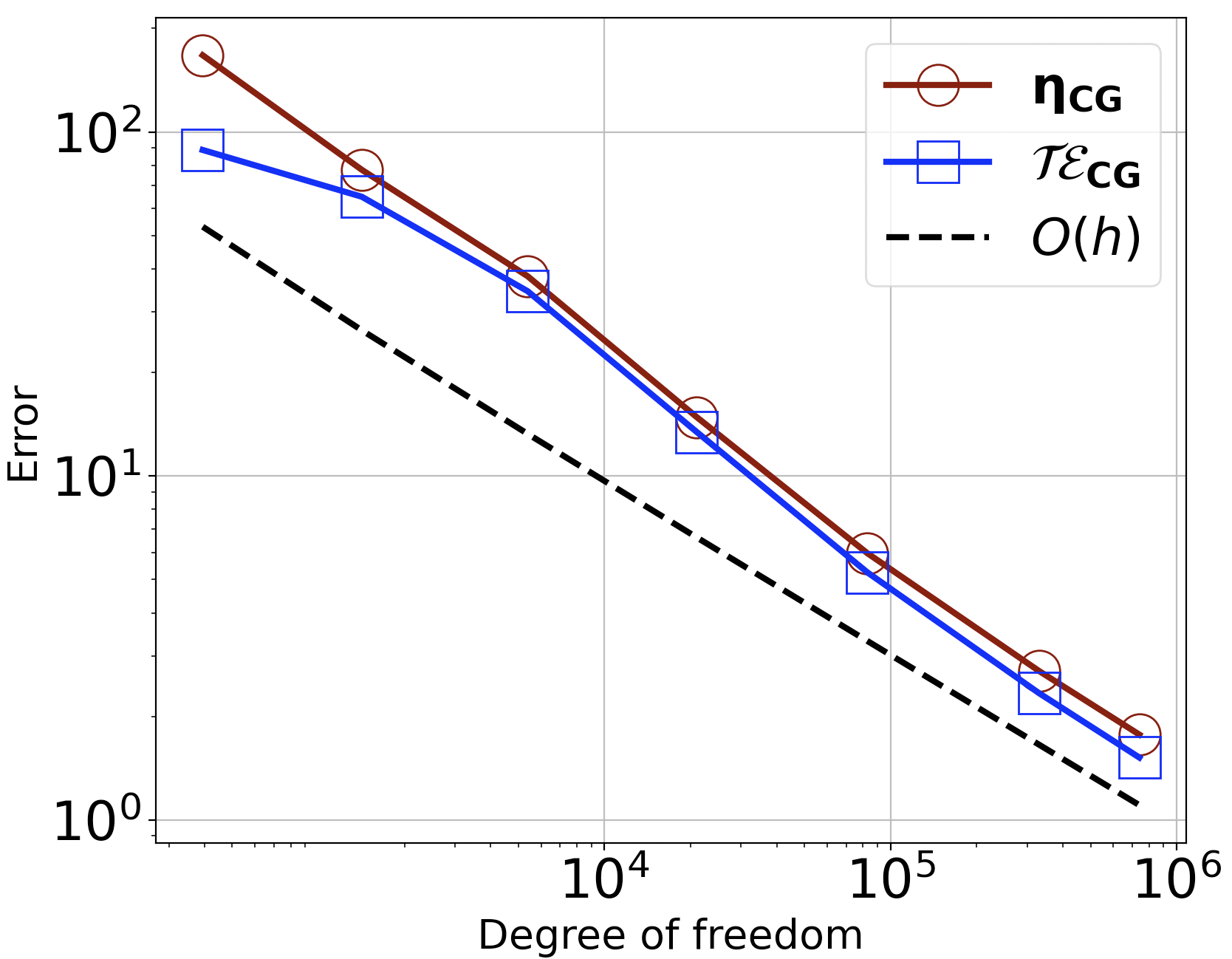}}
	\subfloat[Indicator-Total error (DG)]{\includegraphics[scale=0.162]{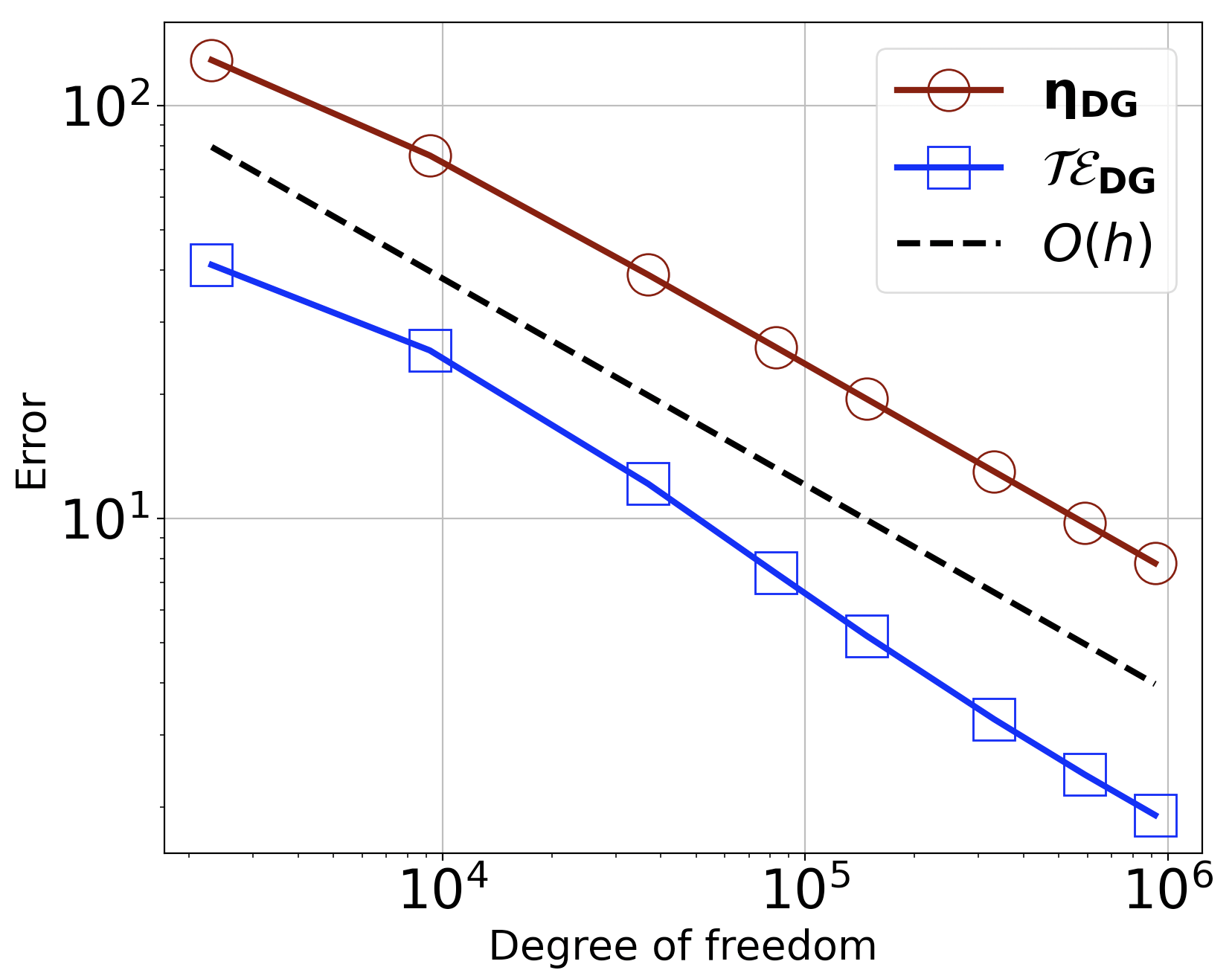}}
	\subfloat[Efficiency (CG-DG)]{\includegraphics[scale=0.162]{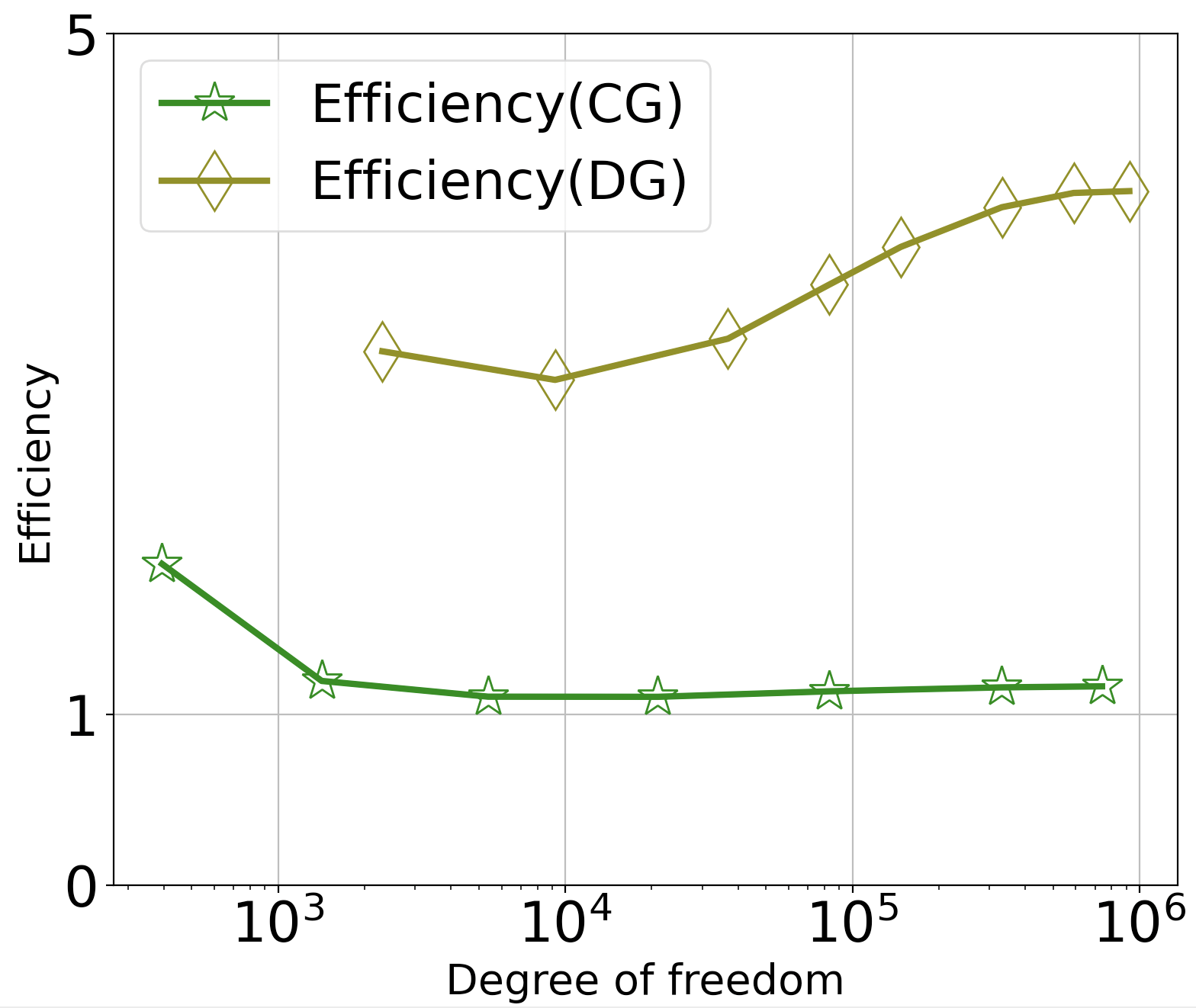}}\vspace{-2mm}
	\caption{Convergence plots for the (a) CG scheme (b) DG scheme (c) Indicator-Total error (CG) (d) Indicator-Total error (DG) and (e) Efficiency under uniform refinement for Example~\ref{Example 5.1.}.}
	\label{FIGURE 1}
\end{figure}
\begin{figure}
	\centering
	\subfloat[$\y_{h1}$]{\includegraphics[scale=0.145]{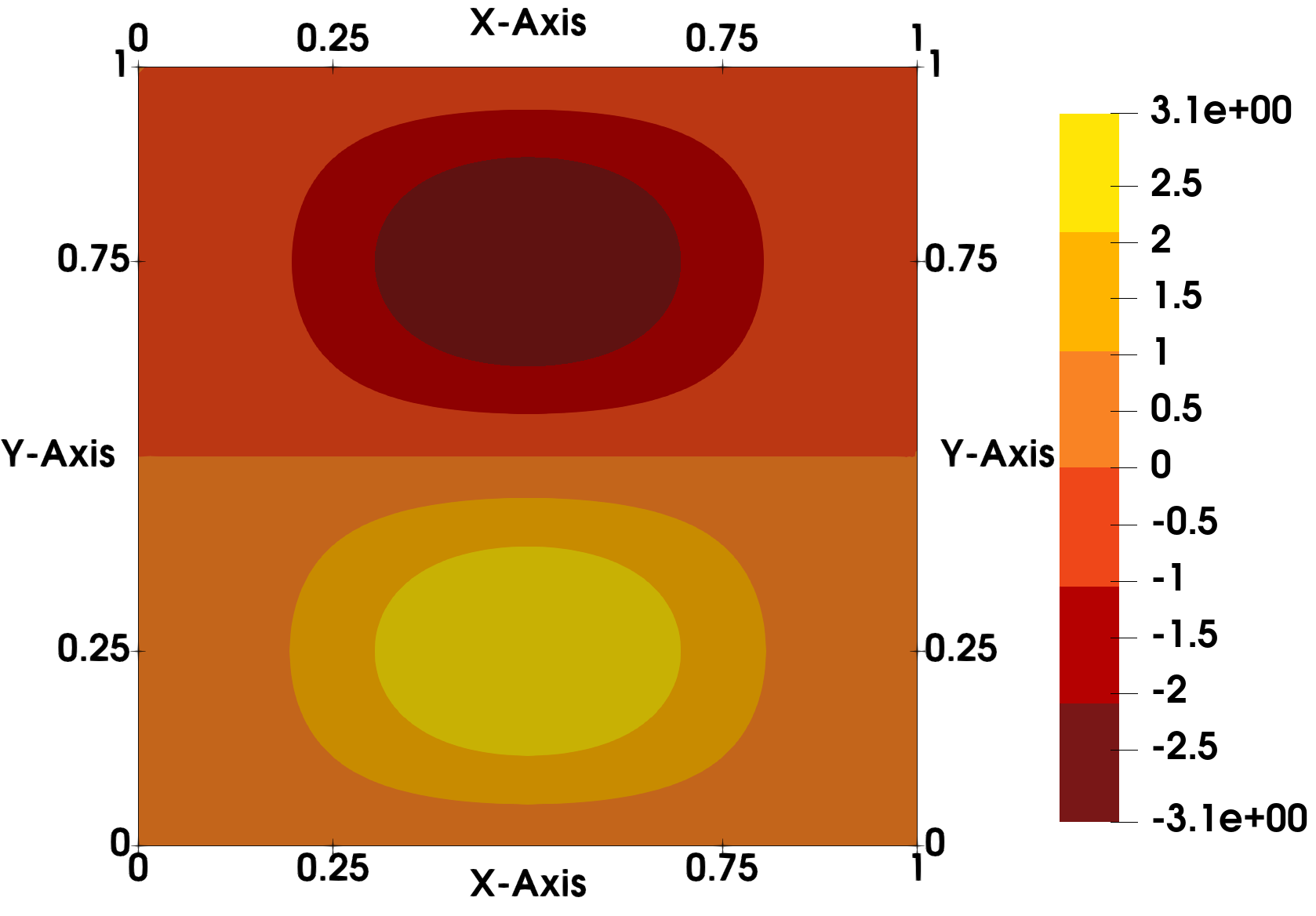}}
	\subfloat[$\y_{h2}$]{\includegraphics[scale=0.145]{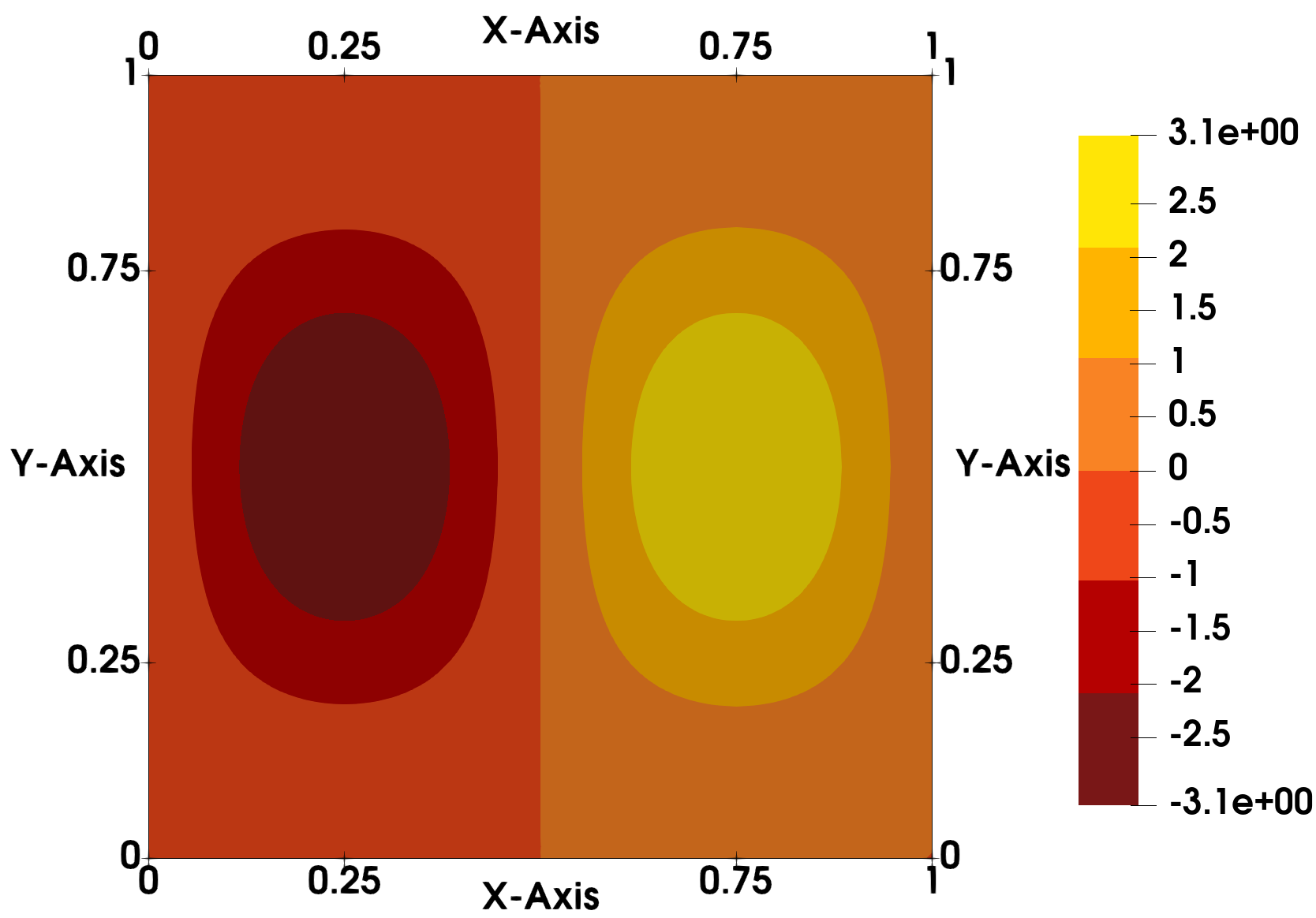}} 
	\subfloat[$\omega_h$]{\includegraphics[scale=0.145]{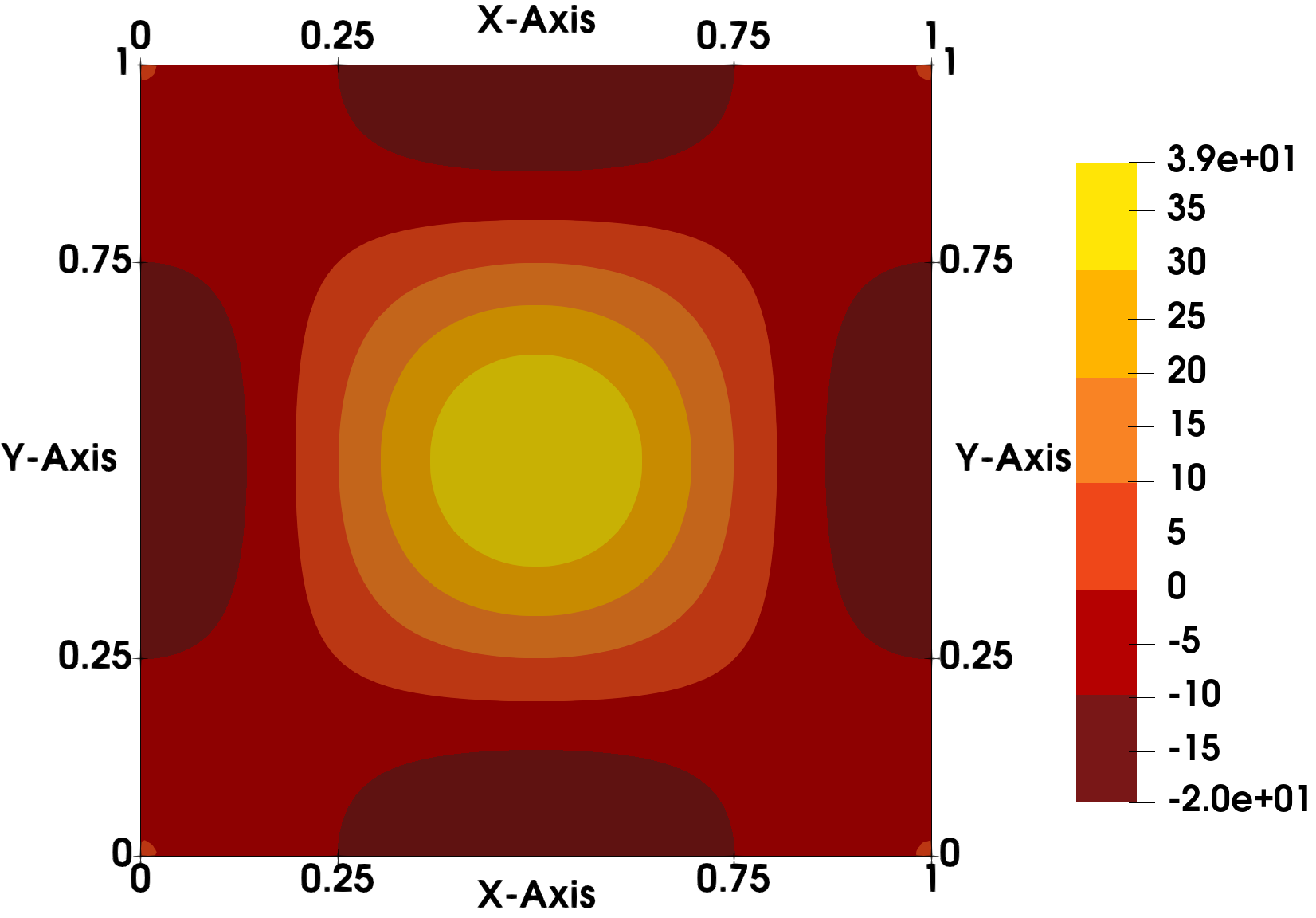}}
	\subfloat[$p_h$]{\includegraphics[scale=0.105]{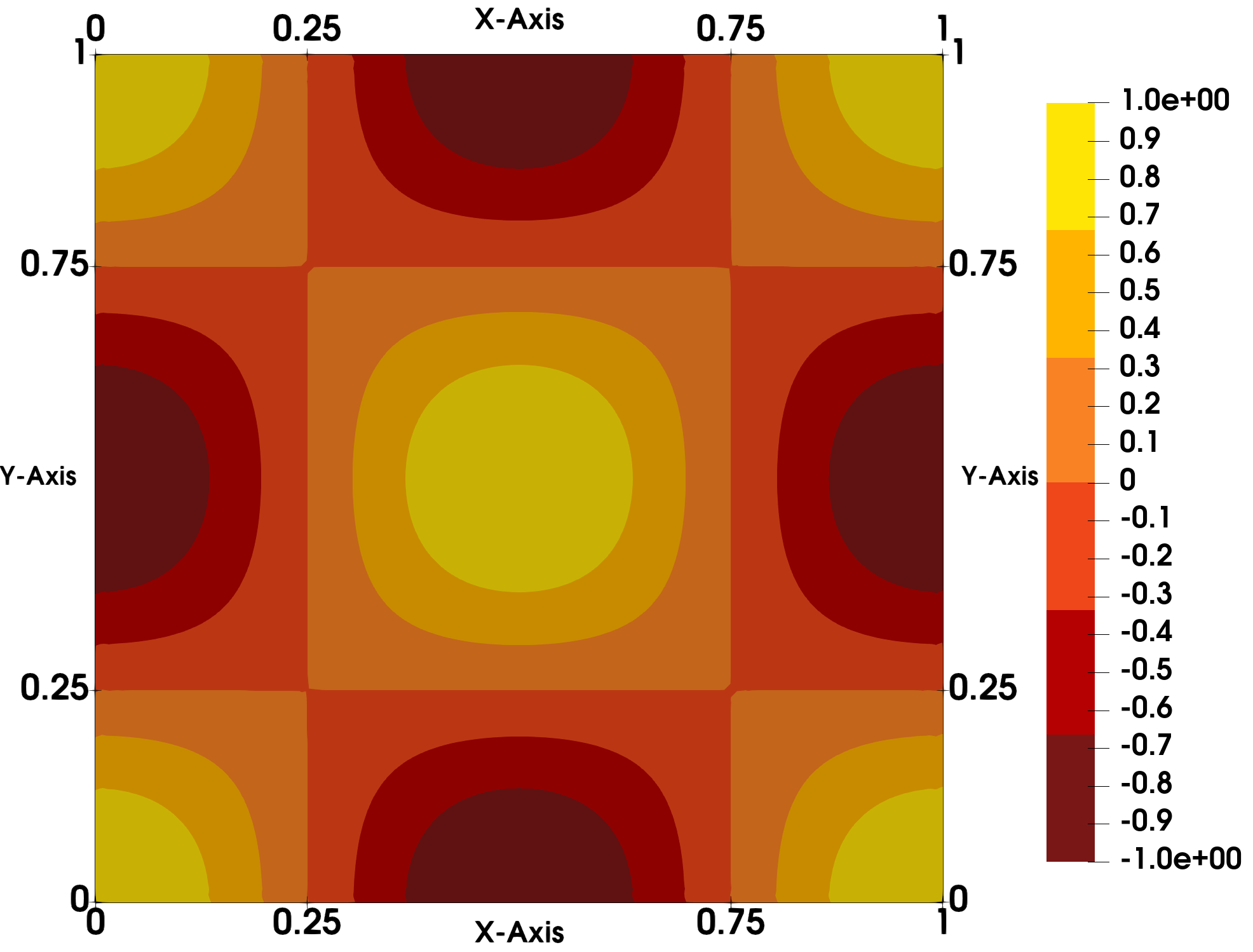}}\vspace{-2mm}\\
	\subfloat[$\w_{h1}$]{\includegraphics[scale=0.145]{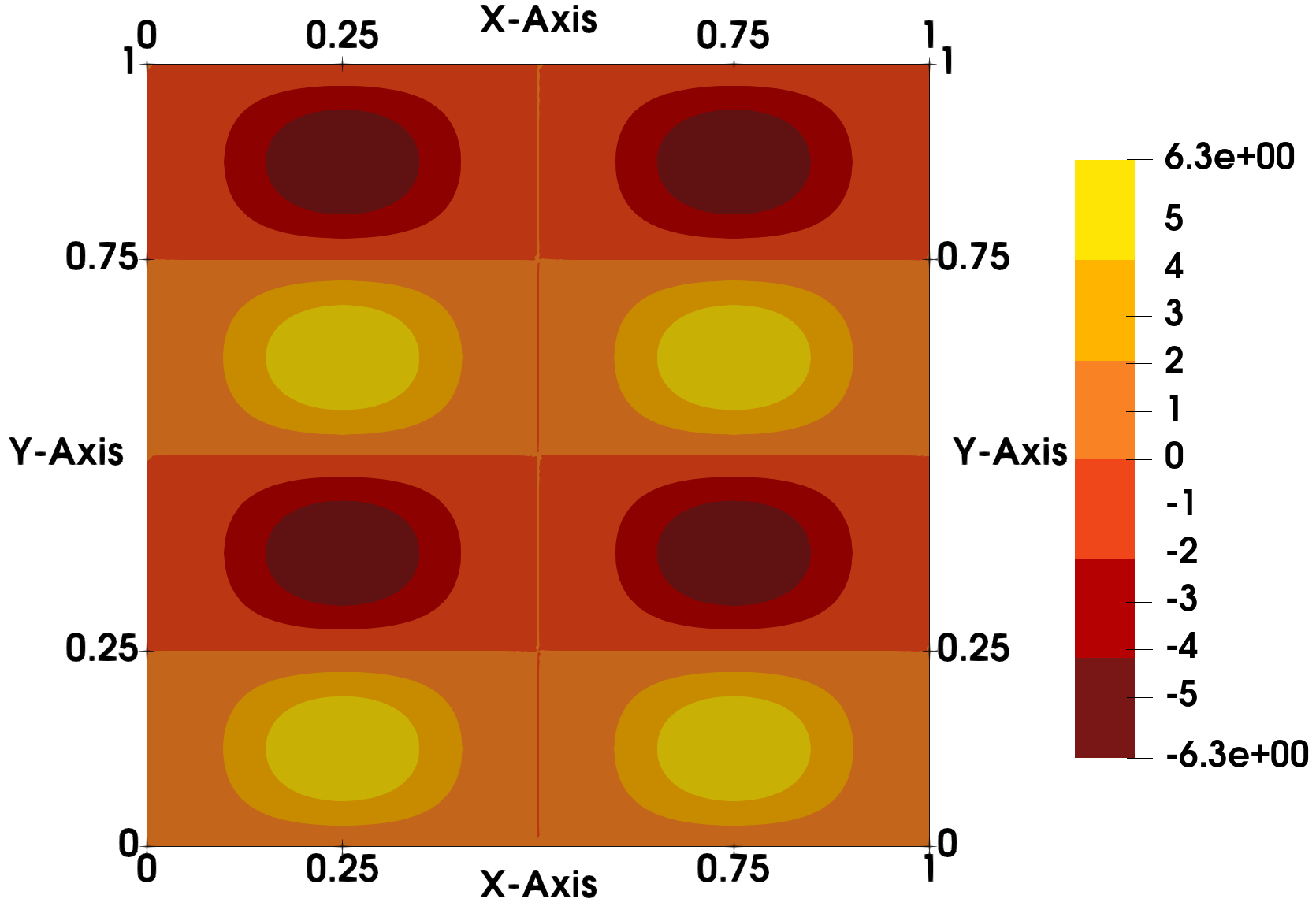}}
	\subfloat[$\w_{h2}$]{\includegraphics[scale=0.145]{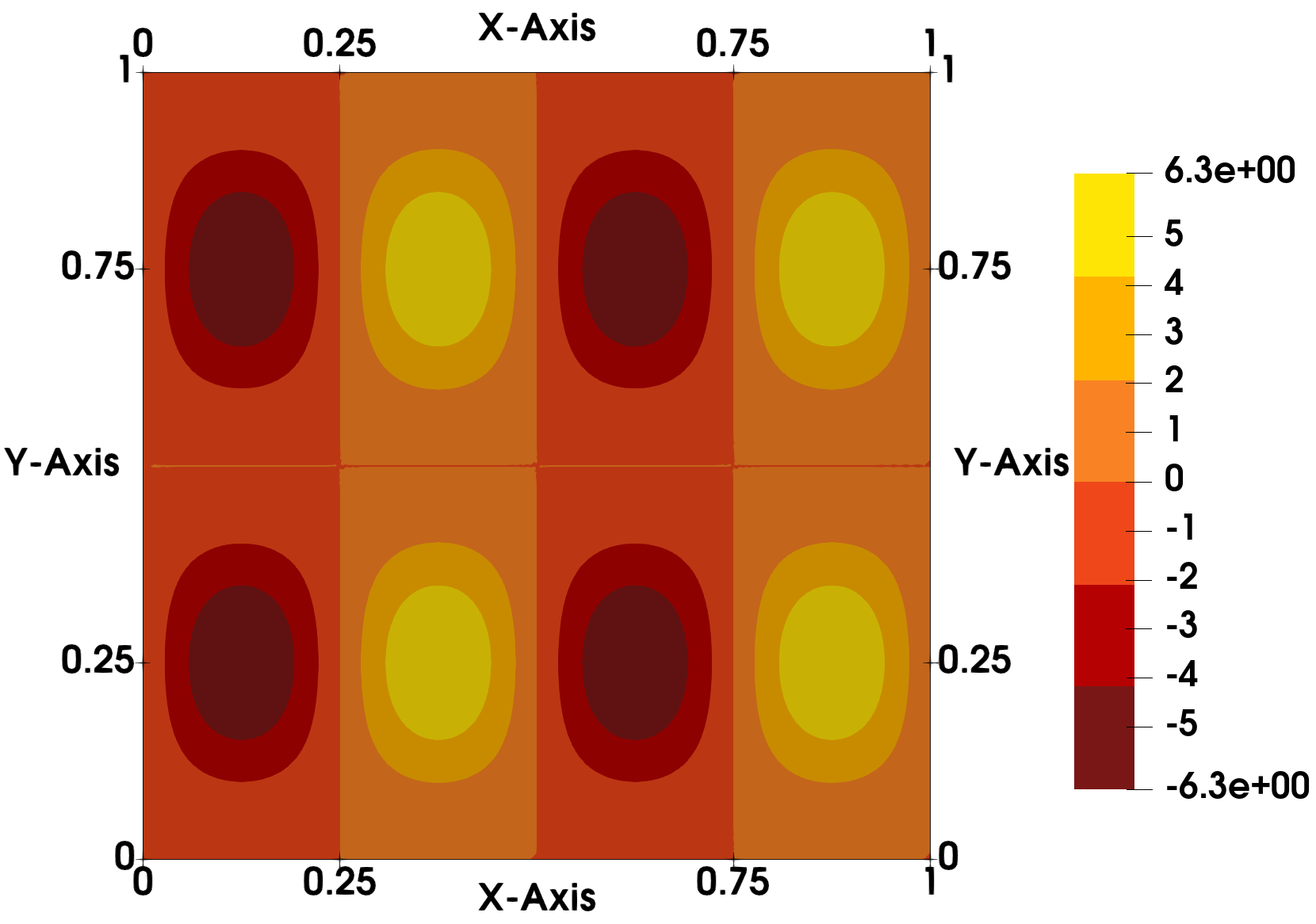}}
	\subfloat[$\vartheta_h$]{\includegraphics[scale=0.145]{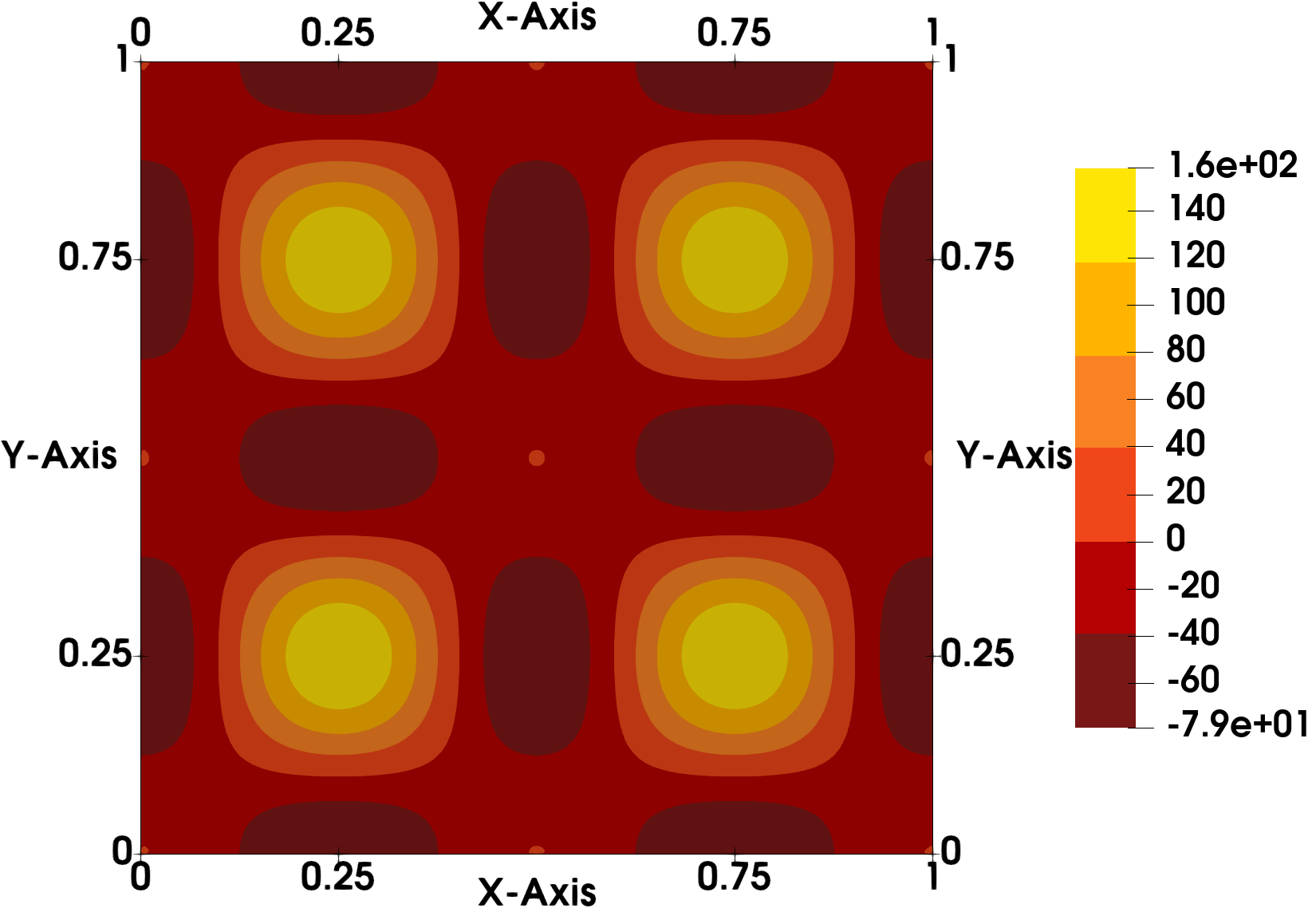}}
	\subfloat[$q_h$]{\includegraphics[scale=0.105]{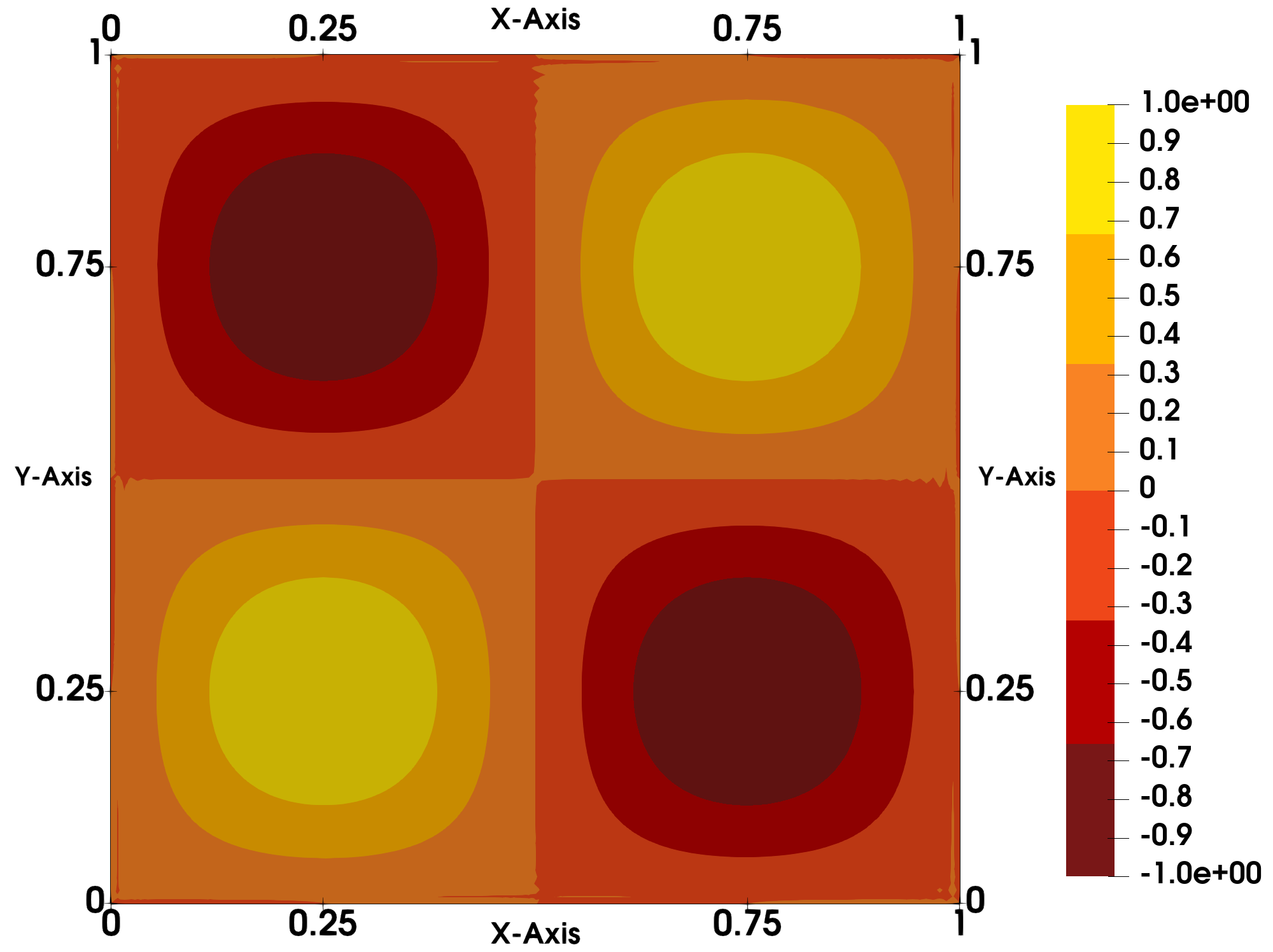}} \vspace{-2mm}\\
	\subfloat[$\u_{h1}$]{\includegraphics[scale=0.156]{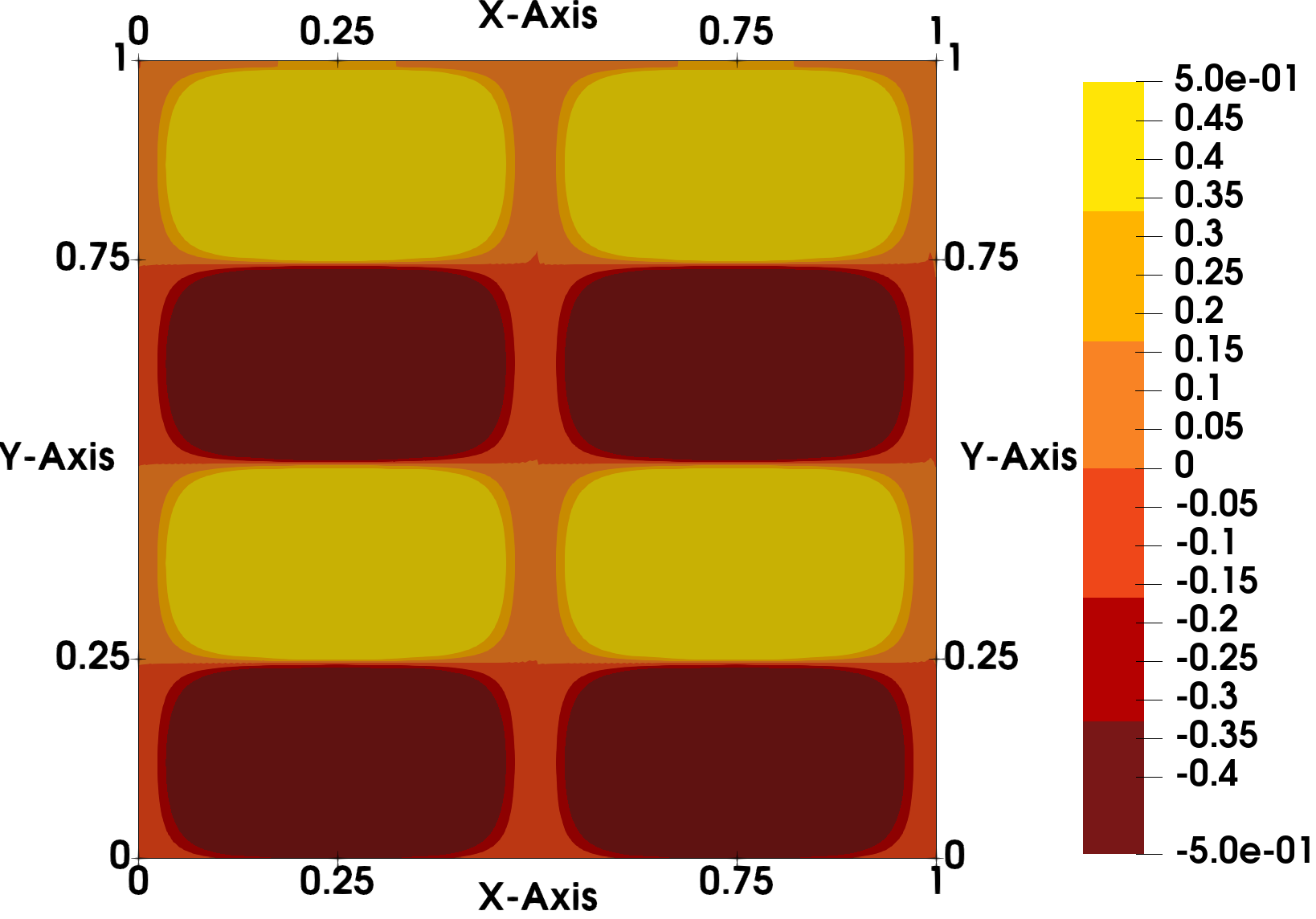}}
	\subfloat[$\u_{h2}$]{\includegraphics[scale=0.156]{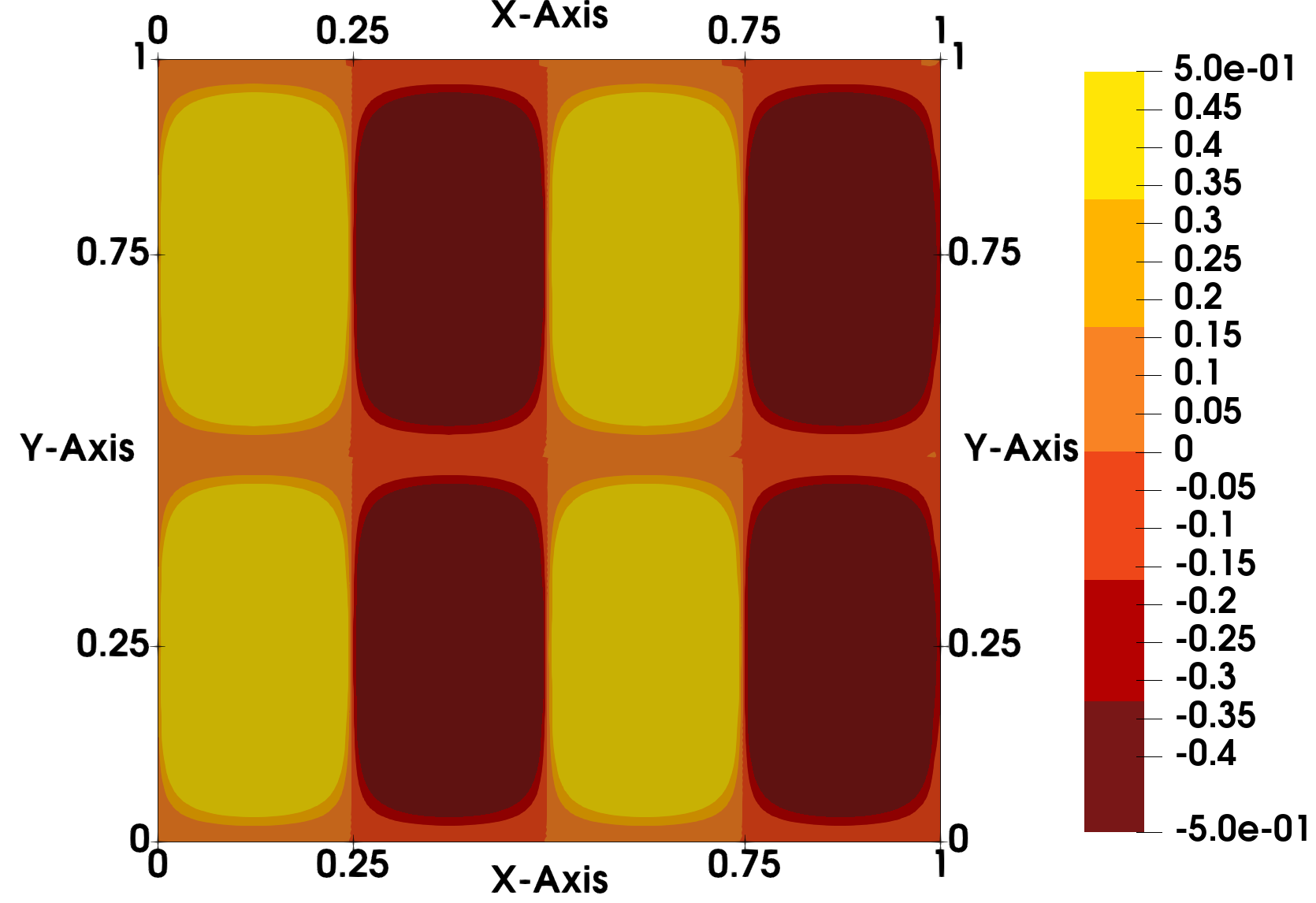}} \vspace{-2.25mm}
	\caption{Plots of numerical solutions of state velocity $(\y_{h1},\y_{h2})$, vorticity $(\kappa_h)$, pressure $(p_h)$, co-state velocity $(\w_{h1},\w_{h2})$, vorticity $(\vartheta_h)$, pressure $(q_h)$, and control $(\u_{h1},\u_{h2})$, respectively, for Example- \ref{Example 5.1.}.} \label{FIGURE 2}
\end{figure}
\begin{remark}
	The choice of augmentation constants $\rho_1$ and $\rho_2$ is important for achieving optimal convergence rates. An incorrect selection of these constants can negatively impact convergence, as demonstrated in Table-\ref{tab:example}. This table illustrates the effect of various $\rho_1$ and $\rho_2$ values on convergence when employing Mini-elements for velocity-pressure and continuous vorticity approximation with $(k=1)$ as compared to the optimal convergence rates in Figure \ref{FIGURE 1} (a).
\end{remark}
	\begin{sidewaystable}
		\caption{Effect of augmentation constants $\rho_1$ and $\rho_2$ on convergence rates. This study employs a conforming scheme on a $2^N \times 2^N$ mesh, utilizing lowest order Mini-elements for the velocity-pressure and a continuous piecewise linear polynomial space to approximate vorticity for Example~\ref{Example 5.1.}.}
		\label{tab:example}
		\begin{tabular}{|p{0.12cm}|p{1.21cm}|p{1.52cm}|p{0.55cm}|p{1.51cm}|p{0.55cm}|p{1.45cm}|p{0.55cm}|p{1.65cm}|p{0.6cm}|p{1.52cm}|p{0.6cm}|p{1.44cm}|p{0.8cm}|p{1.42cm}|p{0.55cm}|}
			\hline
			$N$ & DOF & $\tiny{|\!|\!|\y-\y_h|\!|\!|_{1}}$ & $r(\y)$ & $\tiny{\|\kappa-\kappa_h\|_{0}}$ & $r(\kappa)$ & $\tiny{\|p-p_h\|_{0}}$& $r(p)$ & $\tiny{|\!|\!|\w-\w_h|\!|\!|_{1}}$  & $r(\w)$ & $\tiny{\|\vartheta-\vartheta_h\|_{0}}$ & $r(\vartheta)$ & $\tiny{\|q-q_h\|_{0}}$& $r(q)$& $\|\u-\u_h\|_{0}$& $r(\u)$  \\
			\hline
			\multicolumn{16}{|l|}{\hspace{9.5cm}$\rho_1 = \rho_2 = 0 $} \\
			\hline
			$2$ & $394$ & $13.3122$ & - & $4.8394$ & - & $0.67089$ & - & $92.1523$ & - & $23.3484$ & - & $1.5521$ & - & $0.3358$ & - \\
			\hline
			$3$ & $1418$ & $7.0534$ & $\mathbf{0.92}$ & $1.2386$ & $1.97$ & $0.15799$ & $2.09$ & $86.6640$ & $\mathbf{0.09}$ & $15.0373$& $0.63$ & $2.3580$ & $-0.6$ & $0.3275$ & $0.04$ \\
			\hline
			$4$ & $5386$& $5.3275$ & $\mathbf{0.4}$ & $0.3174$ & $1.96$ & $0.04467$ & $1.82$ & $79.2136$ & $\mathbf{0.13}$ & $4.2023$ & $1.84$ & $0.83786$ & $1.49$ & $0.2049$ & $0.68$ \\
			\hline
			$5$ & $21002$ & $4.8844$ & $\mathbf{0.13}$ & $0.0853$ & $1.89$ & $0.01223$& $1.87$ & $75.1424$ & $\mathbf{0.08}$ & $1.11058$ & $1.92$ & $0.2517$ & $1.73$ & $0.1128$ & $0.86$ \\
			\hline
			$6$ & $82954$ & $4.7762$ & $\mathbf{0.03}$ & $0.02626$ & $1.7$ & $0.00334$ & $1.87$ & $73.3655$ & $\mathbf{0.03}$ & $0.3033$ & $1.87$ & $0.07246$ & $1.8$ & $0.0688$ & $0.71$ \\
			\hline
			$7$ & $329738$  & $4.7513$ & $\mathbf{0.01}$ & $0.0098$ & $1.42$ & $0.000924$ & $1.85$ & $72.8745$ & $\mathbf{0.01}$ & $0.09211$ & $1.72$& $0.0204$ & $1.82$ & $0.0398$ & $0.79$ \\
			\hline
			\multicolumn{16}{|l|}{\hspace{9.5cm} $\rho_1 = 2\nu_0/3, \quad \rho_2 = 0 $} \\ \hline
			$2$ & $394$ & $13.3031$ & - & $4.8385$ & - & $0.67068$ & - & $92.0294$ & - & $23.38158$ & - & $1.55918$ & - & $0.3358$ & - \\ \hline
			$3$ & $1418$ & $6.9983$ & $0.93$ & $1.23864$ & $1.97$ & $0.15737$ & $\mathbf{2.09}$ & $85.7812$ & $0.1$ & $15.0396$ & $0.64$ & $2.3396$ & $\mathbf{-0.59}$ & $0.3262$ & $0.04$ \\ \hline
			$4$ & $5386$ & $ 5.07996$ & $0.46$ & $0.31804$ & $1.96$ & $0.04301$ & $\mathbf{1.87}$ & $74.4968$ & $0.2$ & $4.2197$ & $1.83$ & $0.79374$ & $\mathbf{1.56}$ & $0.19939$ & $0.71$ \\ \hline
			$5$ & $21002$ & $4.0398$ & $0.33$& $ 0.08522$ & $1.9$ & $0.01038$ & $\mathbf{2.05}$ & $59.79037$ & $0.32$ & $1.13144$ & $1.9$ & $0.19893$ & $\mathbf{2.0}$ & $0.10641$ & $0.91$ \\ \hline
			$6$ & $82954$& $2.67845$ & $0.59$ & $0.02403$ & $1.83$ & $0.00200$ & $\mathbf{2.38}$ & $37.05938$ & $0.69$ & $0.30828$ & $1.88$ & $0.03479$& $\mathbf{2.52}$ & $0.06197$ & $0.78$ \\ \hline
			$7$ & $329738$ & $1.26965$ & $1.08$ & $0.00673$ & $1.84$ & $0.000296$ & $\mathbf{2.75}$ & $15.88539$ & $1.22$ & $0.08092$& $1.93$ & $0.00407$ & $\mathbf{3.1}$ & $0.03477$ & $0.83$ \\ \hline
			\multicolumn{16}{|l|}{\hspace{9.5cm} $\rho_1 =0, \quad \rho_2 = 0.5 \nu_{0} $} \\ \hline
			$2$ & $394$ & $13.29109$ & - & $4.8401$ & - & $0.67329$ & - & $92.0309$ & - & $23.36260$ & - & $1.5577$ & - & $0.33584$ & - \\ \hline
			$3$ & $1418$ & $7.00704$ & $0.92$ & $1.23988$ & $1.96$ & $0.157859$ & $\mathbf{2.09}$ & $86.06839$ & $0.1$ & $15.0436$ & $0.64$ & $2.34678$ & $\mathbf{-0.59}$ & $0.32680$ & $0.04$ \\ \hline
			$4$ & $5386$ & $5.190279$ & $0.43$ & $0.31848$ & $1.96$& $0.043507$ & $\mathbf{1.86}$ & $77.0860$ & $0.16$ & $4.21531$ & $1.84$ & $0.82901$ & $\mathbf{1.5}$& $0.20295$ & $0.69$ \\ \hline
			$5$ & $21002$ & $4.4229$ & $0.23$ & $0.085125$ & $1.9$ & $0.01102$ & $\mathbf{1.98}$ & $68.62444$ & $0.17$ & $1.11938$ & $1.91$ & $0.24099$ & $\mathbf{1.78}$ & $0.11013$ & $0.88$\\ \hline
			$6$ & $82954$& $3.44077$ & $0.36$ & $0.02331$ & $1.87$ & $0.00244$ & $\mathbf{2.18}$ & $55.11659$ & $0.32$ & $0.29621$ & $1.92$ & $0.06034$ & $\mathbf{2.0}$ & $0.06484$ & $0.76$ \\ \hline
			$7$ & $329738$ & $1.97081$ & $0.8$ & $0.00649$ & $1.84$ & $0.00043$ & $\mathbf{2.49}$ & $32.5027$ & $0.76$ & $0.07648$ & $1.95$ & $0.01092$ & $\mathbf{2.47}$ & $0.03566$ & $0.86$ \\ \hline
			\multicolumn{16}{|l|}{\hspace{9.5cm} $\rho_1 =2\nu_0/3, \quad \rho_2 = 0.1 \nu_{0} $} \\ \hline
			$2$ & $394$ & $13.29893$ & - & $4.8387$ & - & $0.67115$ & - & $92.00518$ & - & $23.3844$ & - & $1.56012$ & - & $0.33585$ & - \\ \hline
			$3$ & $1418$ &  $6.9889$ & $0.93$ & $1.23889$ & $1.97$ & $0.157346$ & $\mathbf{2.09}$ & $85.66354$ & $0.1$ & $15.04083$ & $0.64$ & $2.33741$ & $\mathbf{-0.58}$ & $0.32606$& $0.04$ \\ \hline
			$4$ & $5386$&  $5.05299$ & $0.47$ & $0.31825$ & $1.96$ & $0.04278$ & $\mathbf{1.88}$ & $74.10211$ & $0.21$ & $4.22227$ & $1.83$ & $0.792145$ & $\mathbf{1.56}$ & $0.19903$ & $0.71$ \\ \hline
			$5$ & $21002$ &  $3.95842$ & $0.35$ & $0.08518$ & $1.9$ & $0.01016$ & $\mathbf{2.07}$ & $58.79296$ & $0.33$ & $1.13287$ & $1.9$ & $0.19759$ & $\mathbf{2.0}$ & $0.10605$ & $0.91$ \\ \hline
			$6$ & $82954$&  $2.50213$ & $0.66$ & $0.02359$ & $1.85$ & $0.00187$ & $\mathbf{2.44}$  & $35.35574$ & $0.73$ & $0.30688$ & $1.88$ & $0.03418$ & $\mathbf{2.53}$ & $0.06166$ & $0.78$ \\ \hline
			$7$ & $329738$ & $1.08279$ & $1.21$ & $0.00648$ & $1.86$ & $0.000262$ & $\mathbf{2.83}$ & $14.25886$ & $1.31$ & $0.07979$ & $1.94$ & $ 0.00392$ & $\mathbf{3.12}$ & $0.03468$ & $0.83$ \\ \hline
		\end{tabular}
	\end{sidewaystable}
	\subsection{Adaptive mesh refinement for a boundary layers problem}\label{Example 5.2.}
	In this example, we explore a two-dimensional triangular region $\Omega = \{(x_1,x_2):x_1>0, \ x_2>0, \ x_1 + x_2 <1\}$, with coefficients defined as $\nu(x_1,x_2) = 1+0.001 x_1 x_2$,\ $ \boldsymbol{\beta} = (1,1)^{T},\ \sigma = 100$, and control constraints set as $\mathbf{a}=(0,0)^{T}$ and $\mathbf{b}=(0.1,0.1)^{T}$. The selection of source function $\mathbf{f}$, desired velocity $\y_d$, and vorticity $\omega_d$ is such that the manufactured solutions are:
	\begin{align*}
		\y(x_1,x_2) &= \textbf{curl} \ \bigg(x_1 x_2^{2}(1-x_1-x_2)^{2} \bigg(1-x_1- \frac{\exp(-50x_1)-\exp(-50)}{1-\exp(-50)}\bigg)\bigg), &&\kappa(x_1,x_2) = \textbf{curl}(\y), \\
		\w(x_1,x_2) &=  \textbf{curl} \ \bigg(x_1^{2} x_2(1-x_1-x_2)^{2} \bigg(1-x_2- \frac{\exp(-50x_2)-\exp(-50)}{1-\exp(-50)}\bigg)\bigg), && \vartheta(x_1,x_2) = \textbf{curl}(\w),\\
		p(x_1,x_2) &= \frac{\cos(2\pi x_2)}{1024}, &&q(\xi_1,x_2) = \frac{\cos(2\pi x_1)}{1024}.
	\end{align*}
	The presence of boundary layers in the solution impedes the convergence rates at which the state and co-state variables converge under uniform mesh refinement. We use an adaptive mesh refinement technique that targets regions connected with boundary layers specifically in order to overcome this. The adaptively refined mesh plots displayed in Figure~\ref{FIGURE 3} provide an illustration of this methodology. We find that ideal convergence rates emerge once mesh refinement reaches a significant level, as shown in Figure~\ref{FIGURE 2.1}. This notable increase in convergence rates shows that high-error locations are successfully targeted and resolved by the adaptive mesh refinement. Furthermore, as Figure~\ref{FIGURE 2.1} illustrates, the error indicator and total error both show an optimal decline, and their ratio (efficiency) becomes almost constant. Figure~\ref{FIGURE 5} provides more information on the numerical solutions of all state and co-state variables. The improved accuracy and resolution attained by adaptive refinement are demonstrated in these plots, highlighting the technique's significance in effectively resolving boundary layer issues.
	 \begin{figure}
	 	\centering
	 	\subfloat[1800 DOF]{\includegraphics[scale=0.322]{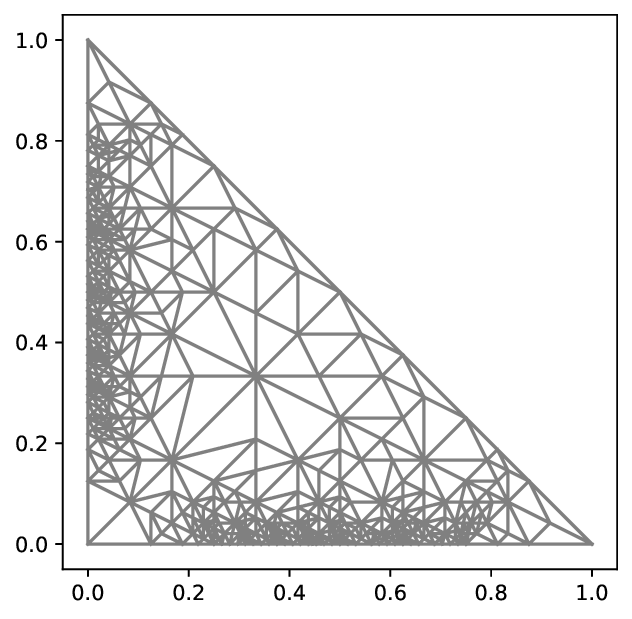}}
	 	\subfloat[8506 DOF]{\includegraphics[scale=0.322]{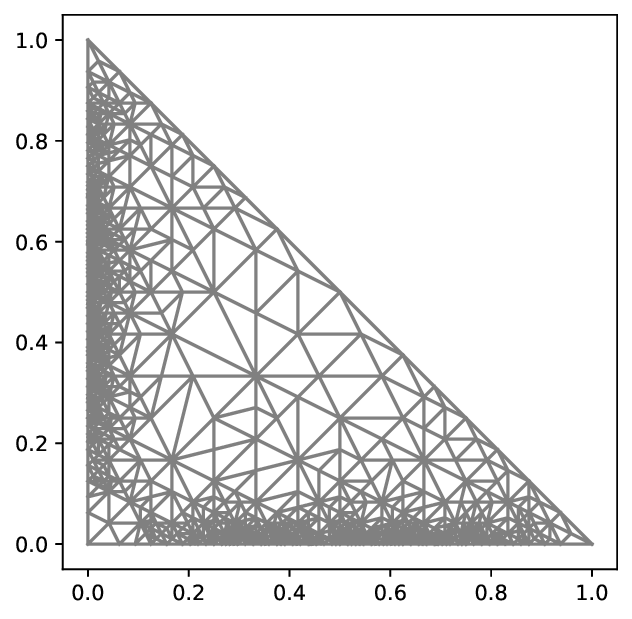}} 
	 	\subfloat[35756 DOF]{\includegraphics[scale=0.322]{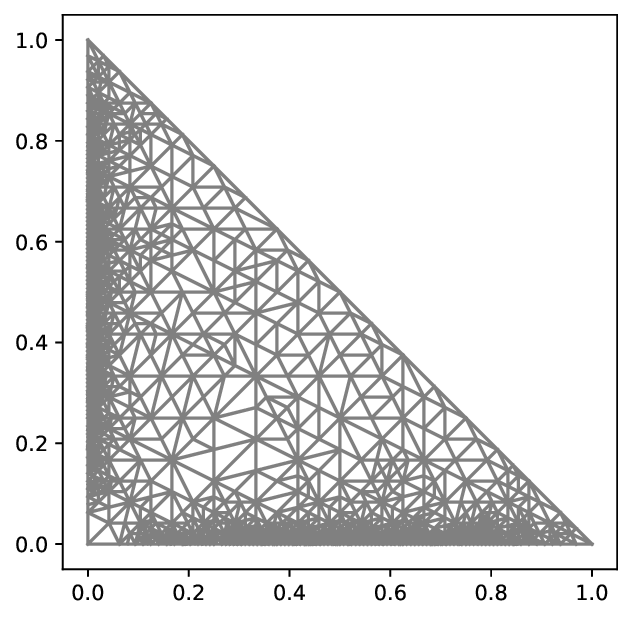}} \vspace{-3mm}
	 	\caption{Adaptively refined meshes showing refinement in the region of boundary layers for Example~\ref{Example 5.2.}.} \label{FIGURE 3}
	 \end{figure}
	\begin{figure} 
	\centering
	\subfloat[CG scheme $(k=1)$]{\includegraphics[scale=0.18]{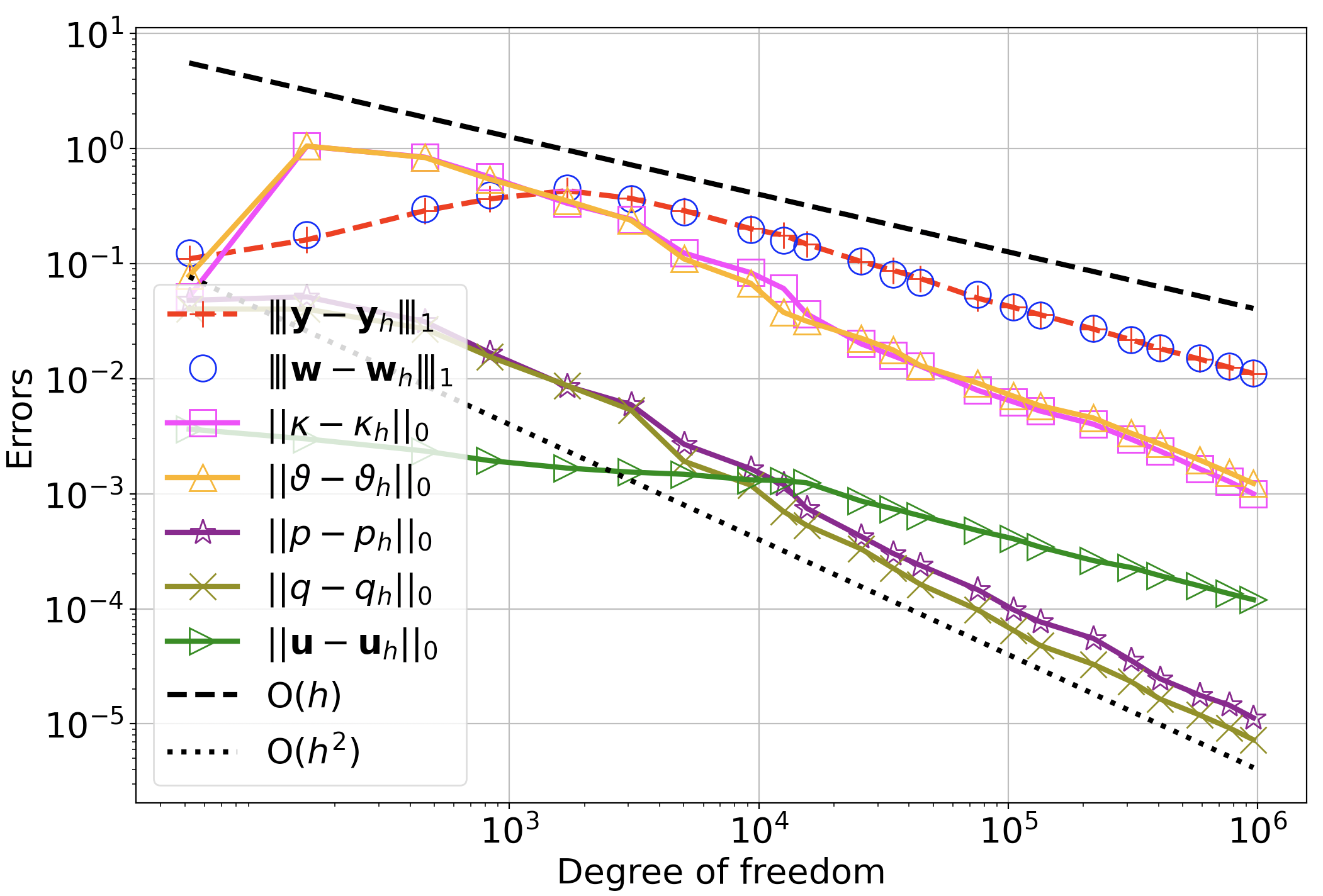}}
	\subfloat[DG scheme $(k=0)$]{\includegraphics[scale=0.18]{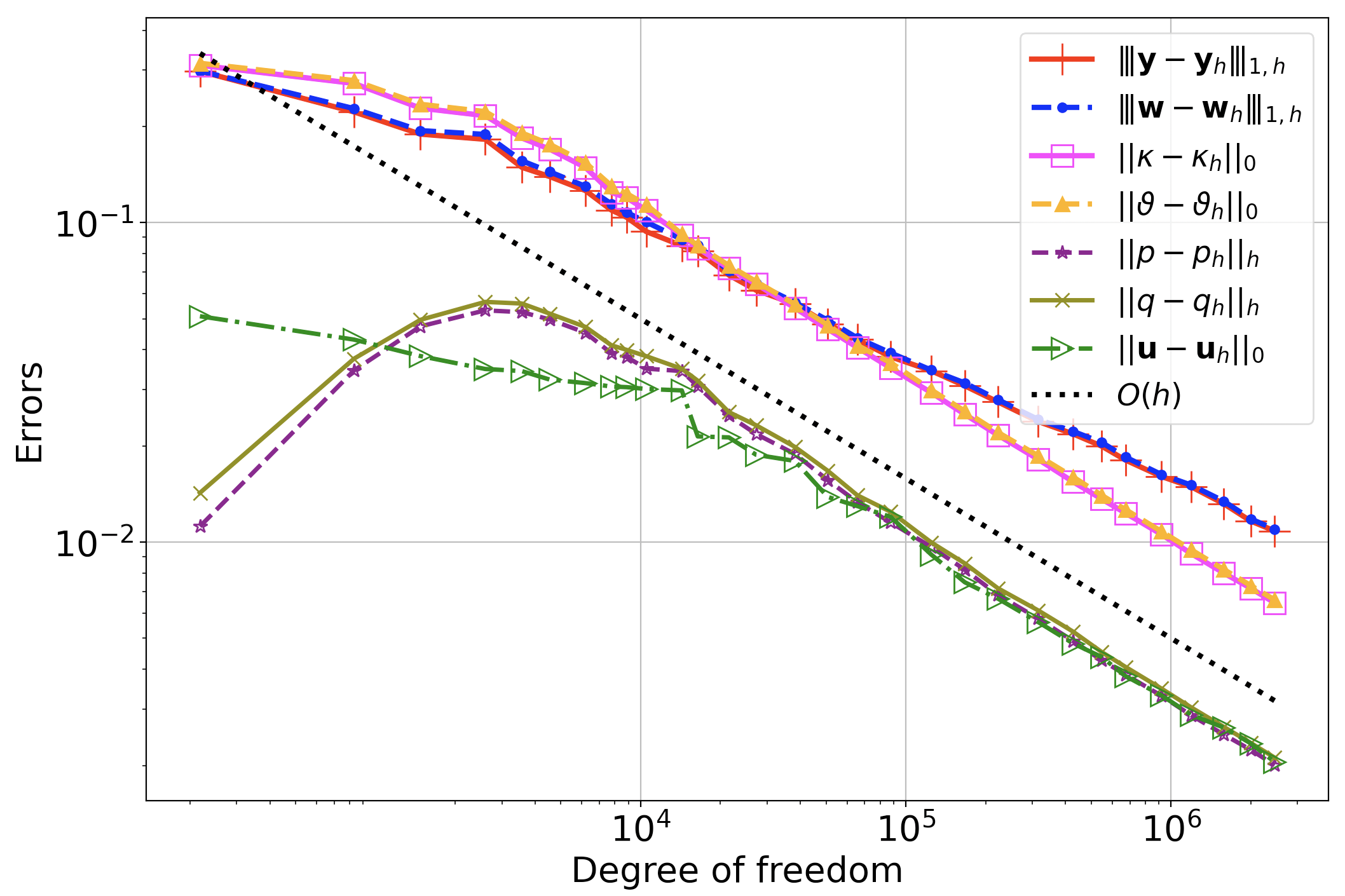}}
	\vspace{-2.5mm}
	\subfloat[Indicator-Total error (CG)]{\includegraphics[scale=0.158]{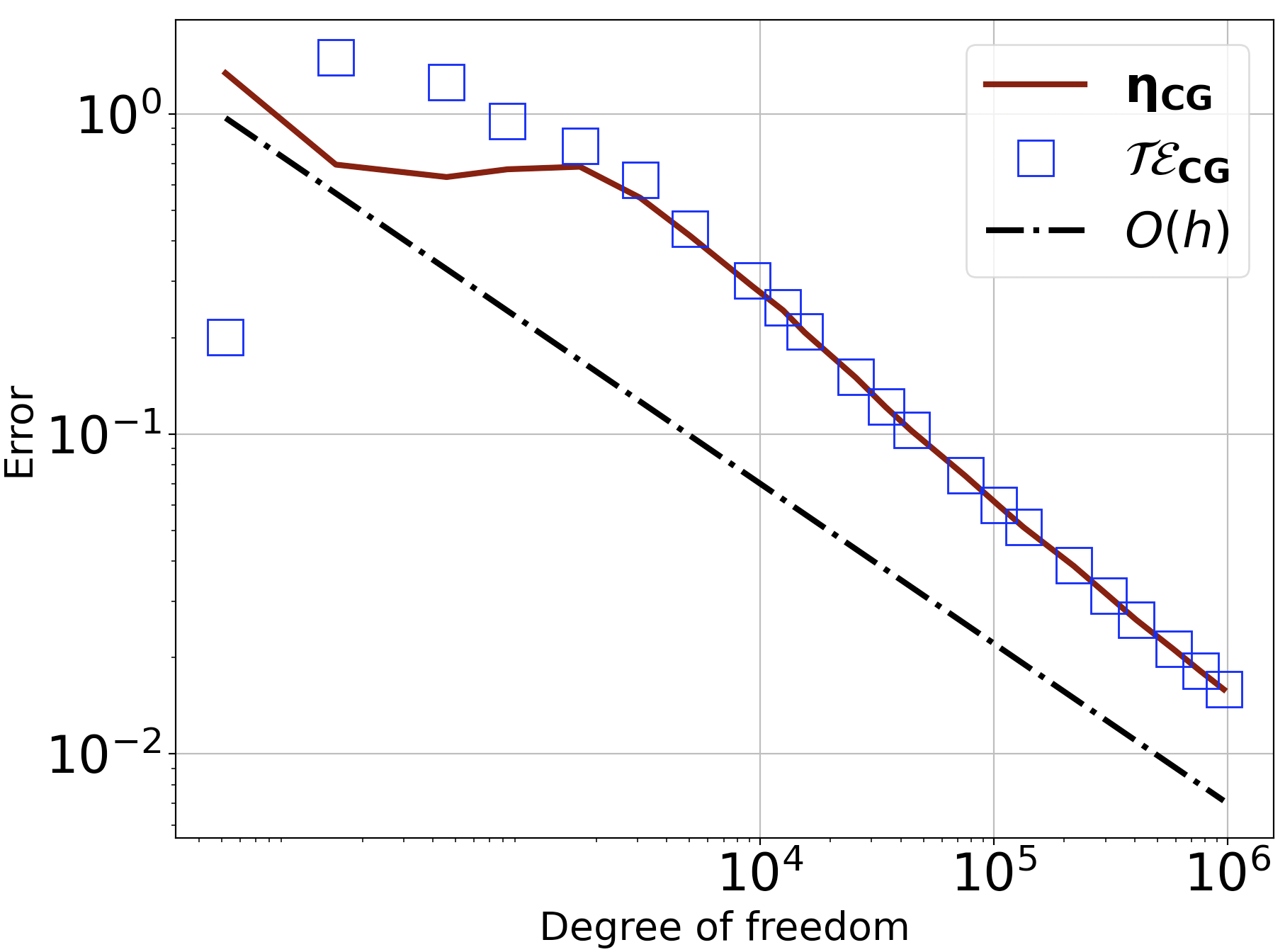}}
	\subfloat[Indicator-Total error (DG)]{\includegraphics[scale=0.158]{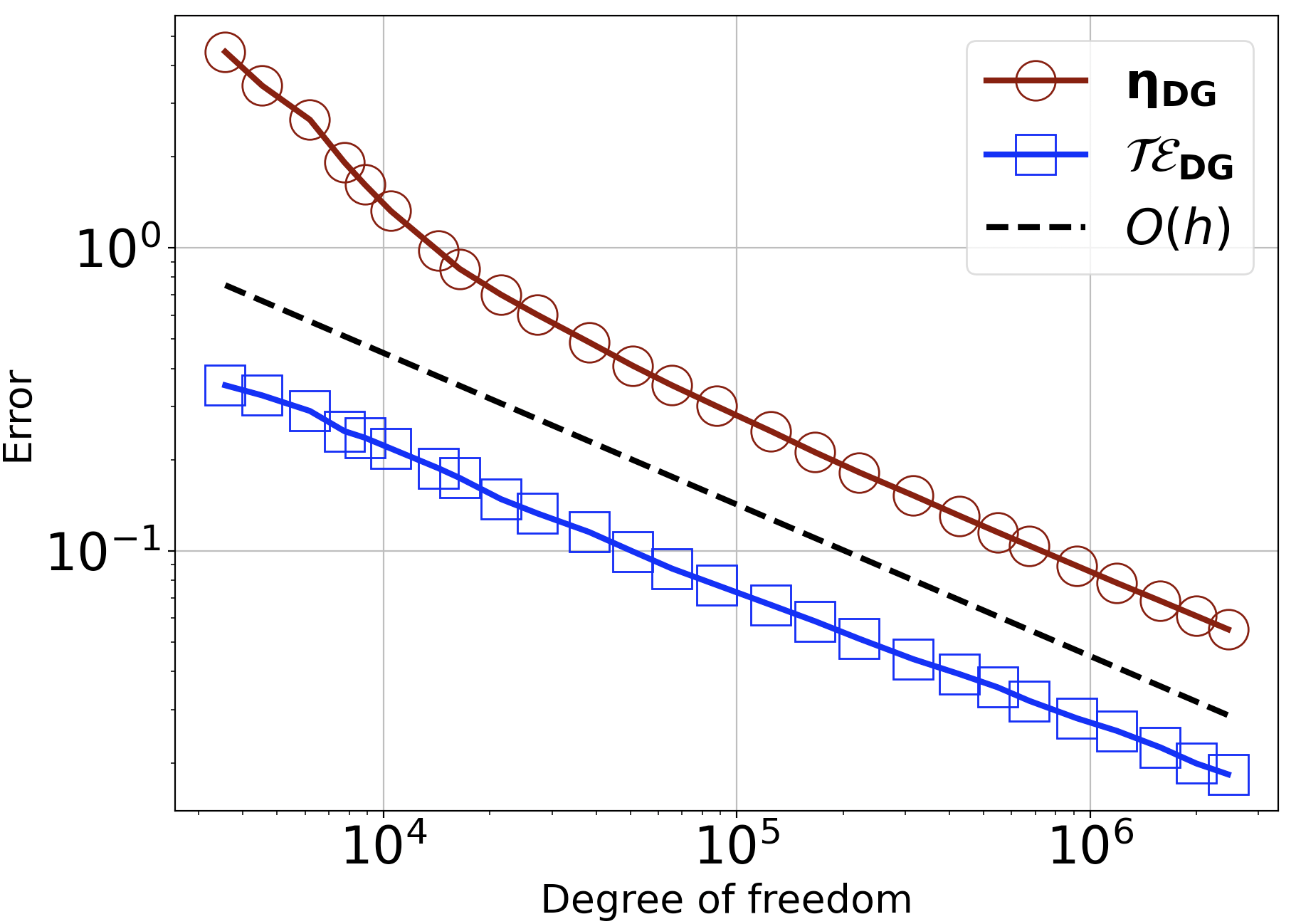}}
	\subfloat[Efficiency (CG-DG)]{\includegraphics[scale=0.158]{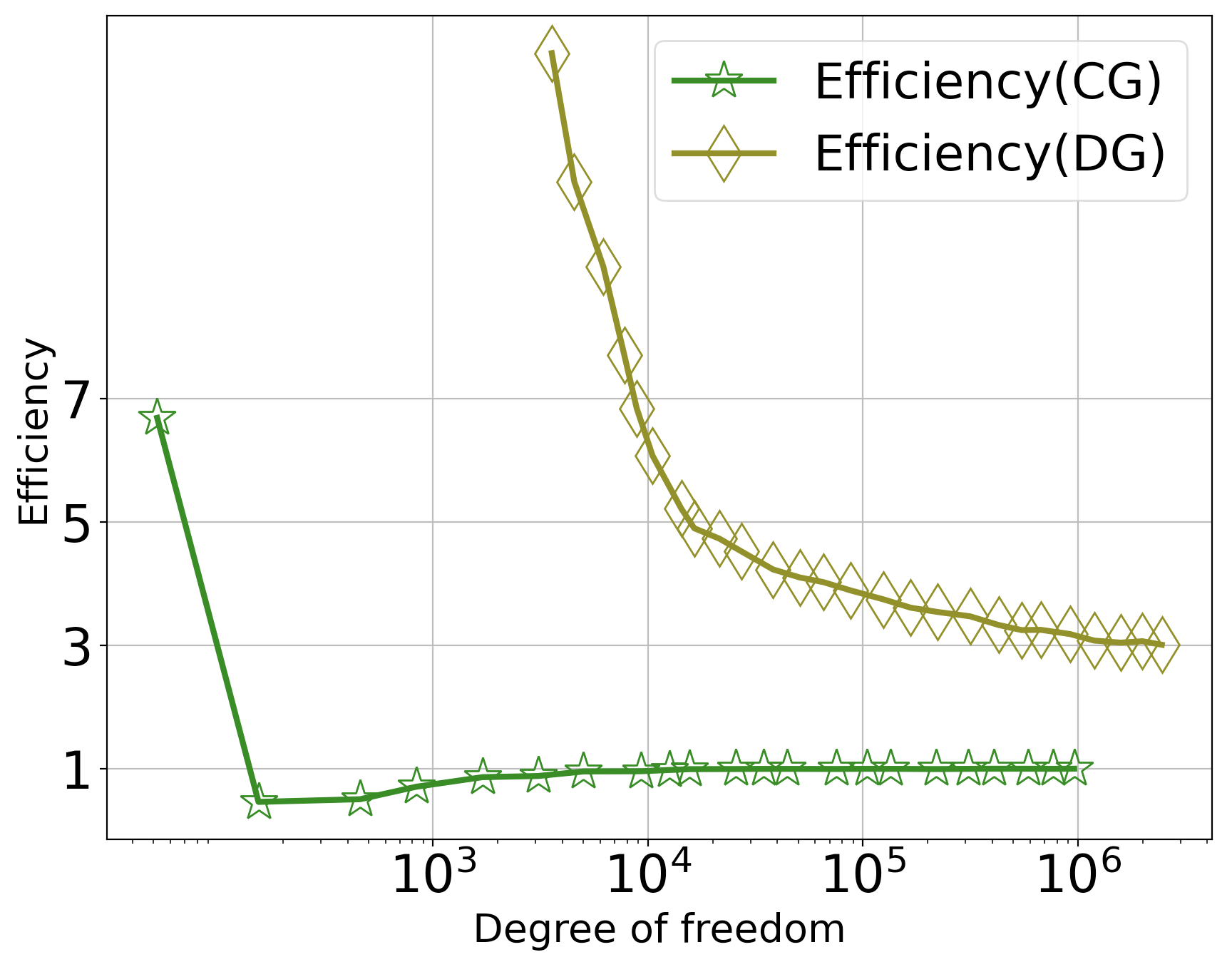}} \vspace{-2.5mm}
	\caption{Convergence plots for the (a) CG scheme (b) DG scheme (c) Indicator-Total error (CG) (d) Indicator-Total error (DG) and (e) Efficiency under uniform refinement for Example~\ref{Example 5.2.}.}
	\label{FIGURE 2.1}
\end{figure}
\begin{figure}
	\centering
	\subfloat[$\y_{h1}$]{\includegraphics[scale=0.112]{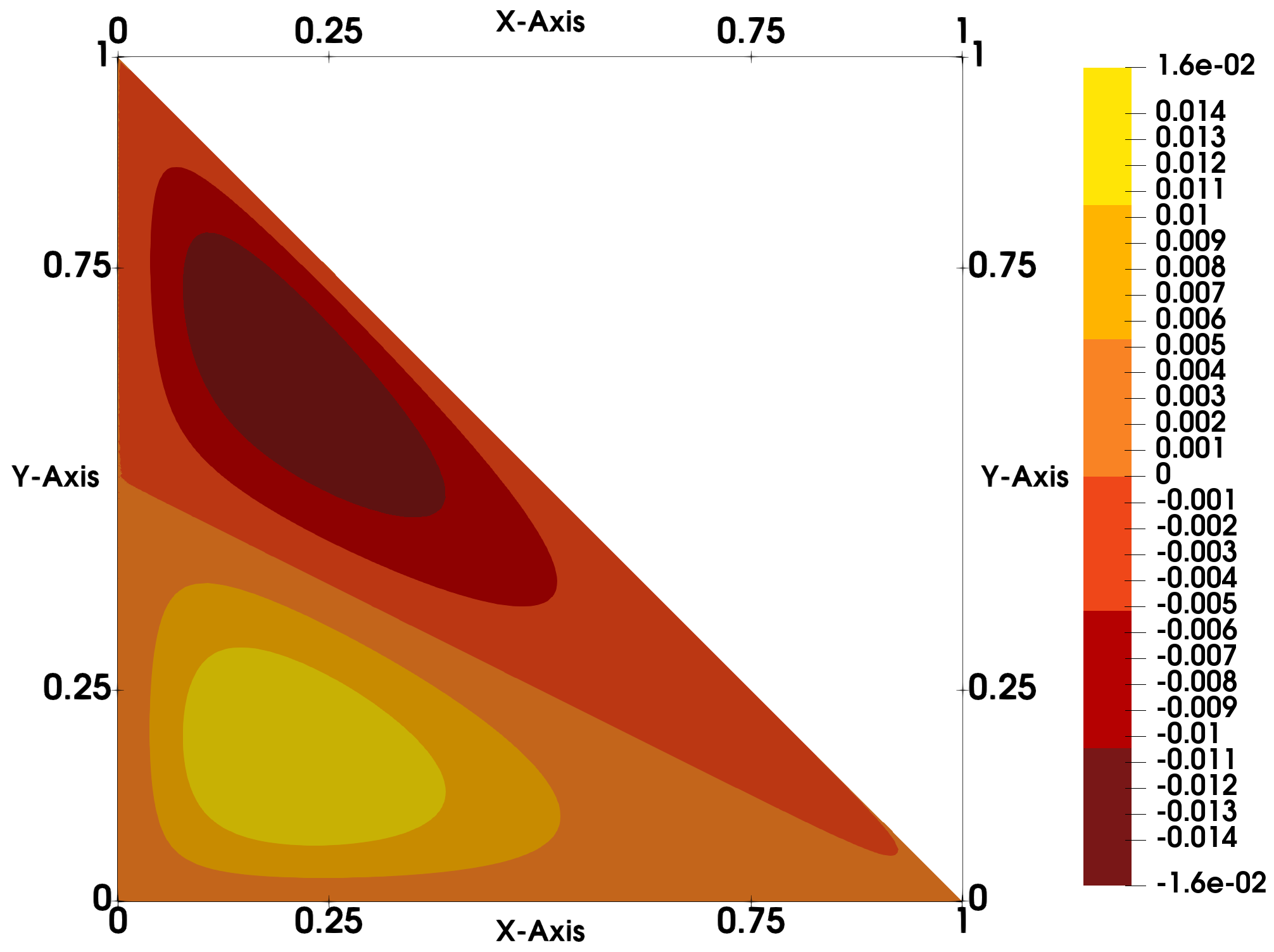}}
	\subfloat[$\y_{h2}$]{\includegraphics[scale=0.112]{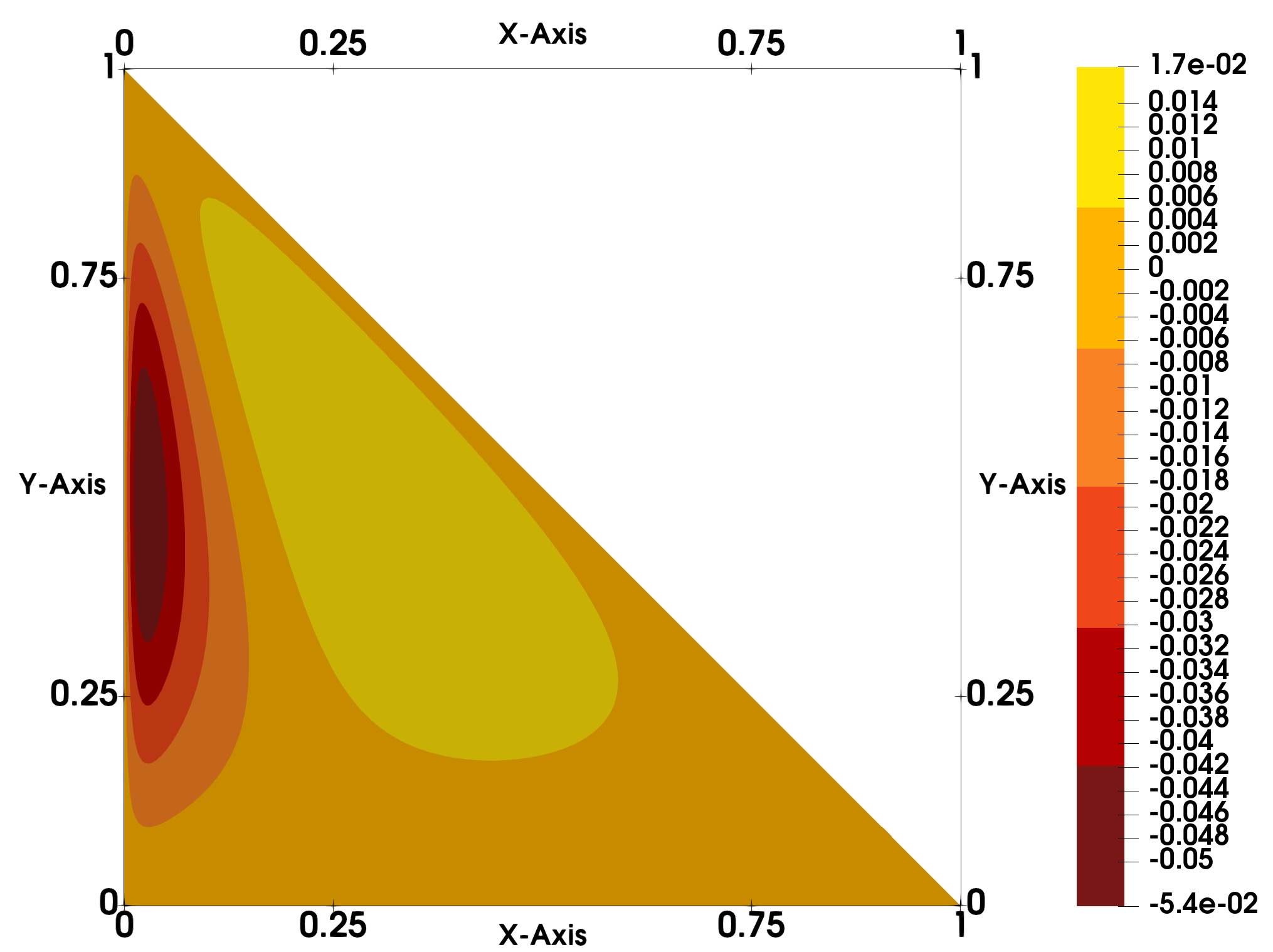}} 
	\subfloat[$\omega_h$]{\includegraphics[scale=0.112]{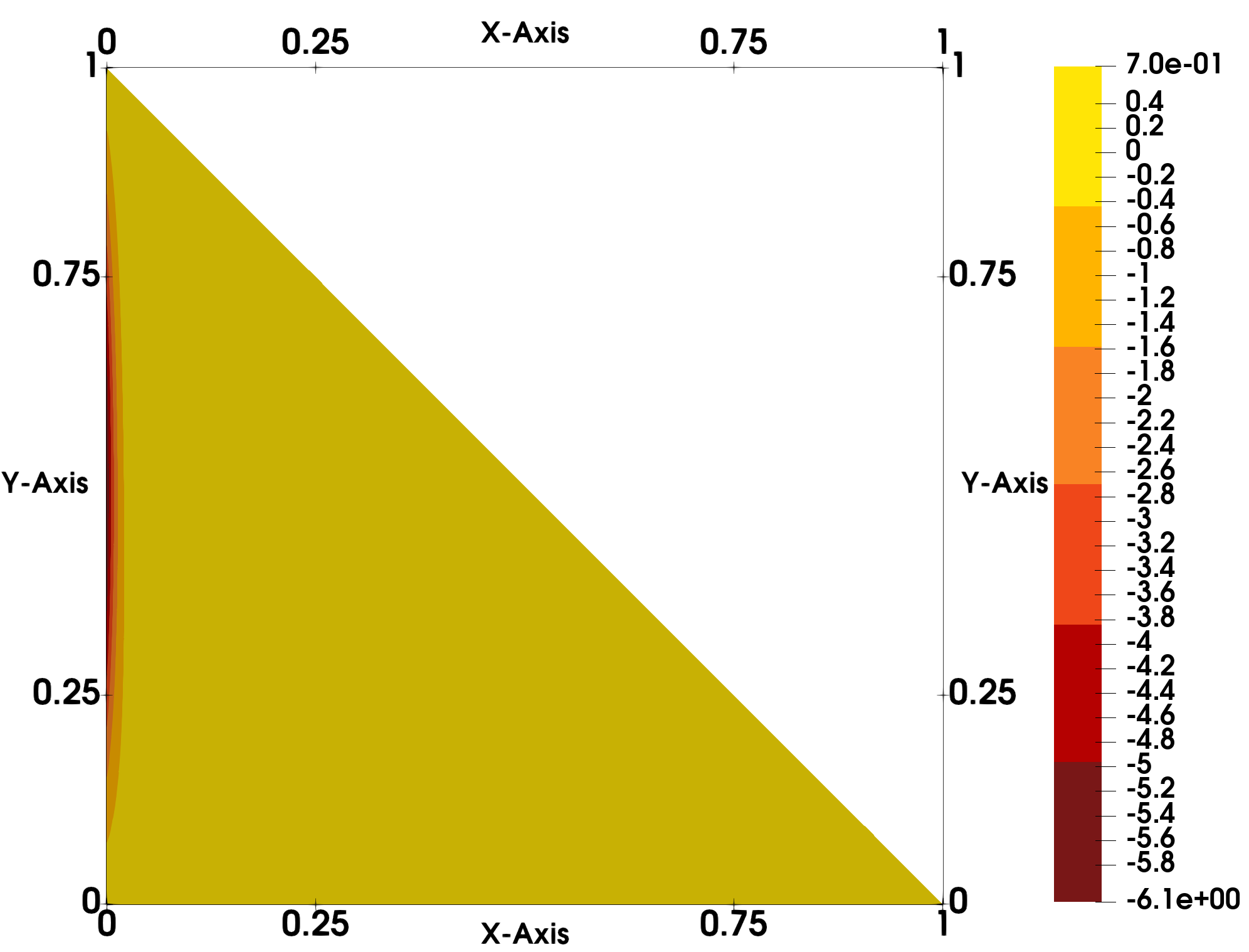}}
	\subfloat[$p_h$]{\includegraphics[scale=0.112]{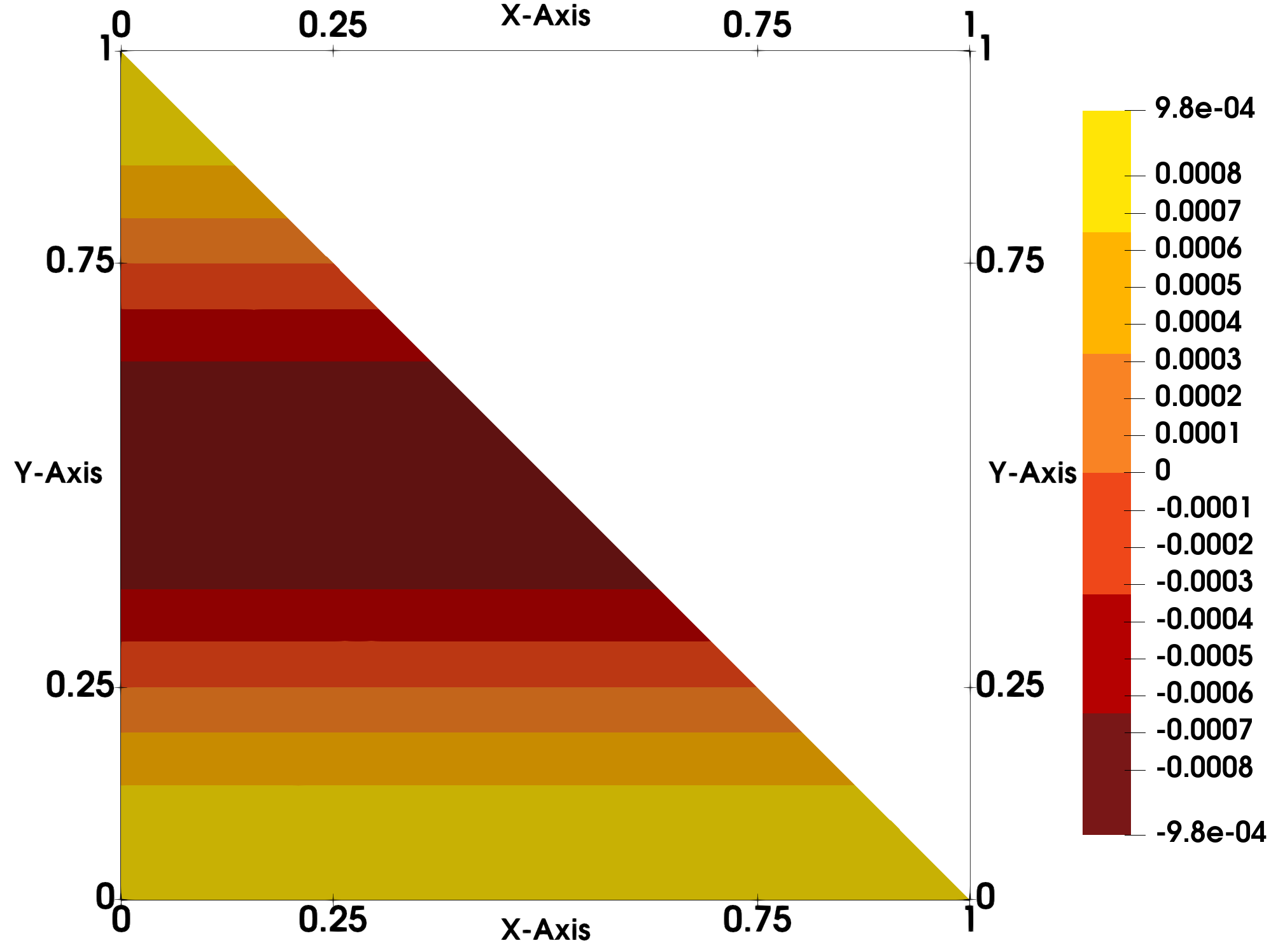}}\\
	\subfloat[$\w_{h1}$]{\includegraphics[scale=0.112]{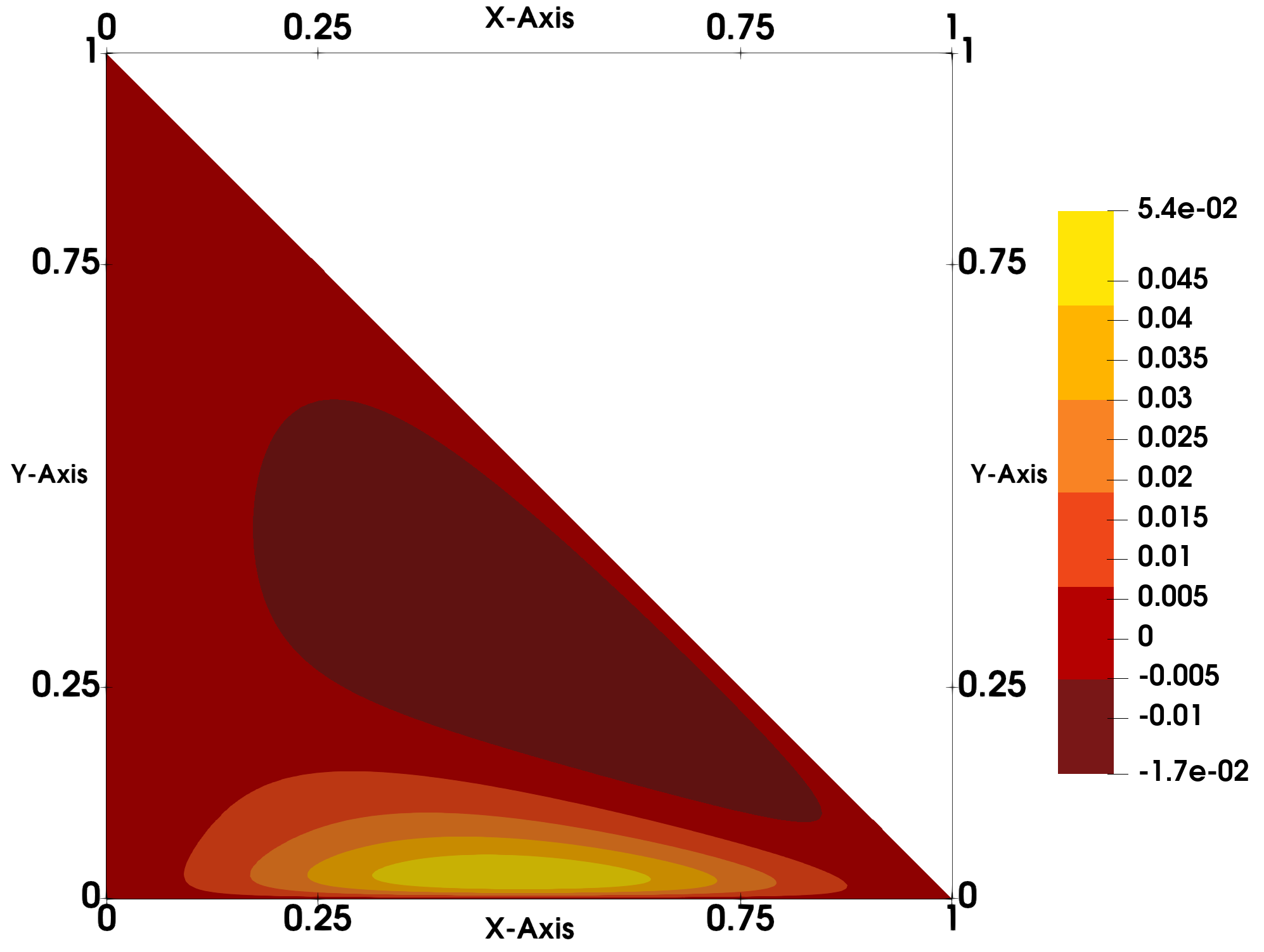}}
	\subfloat[$\w_{h2}$]{\includegraphics[scale=0.112]{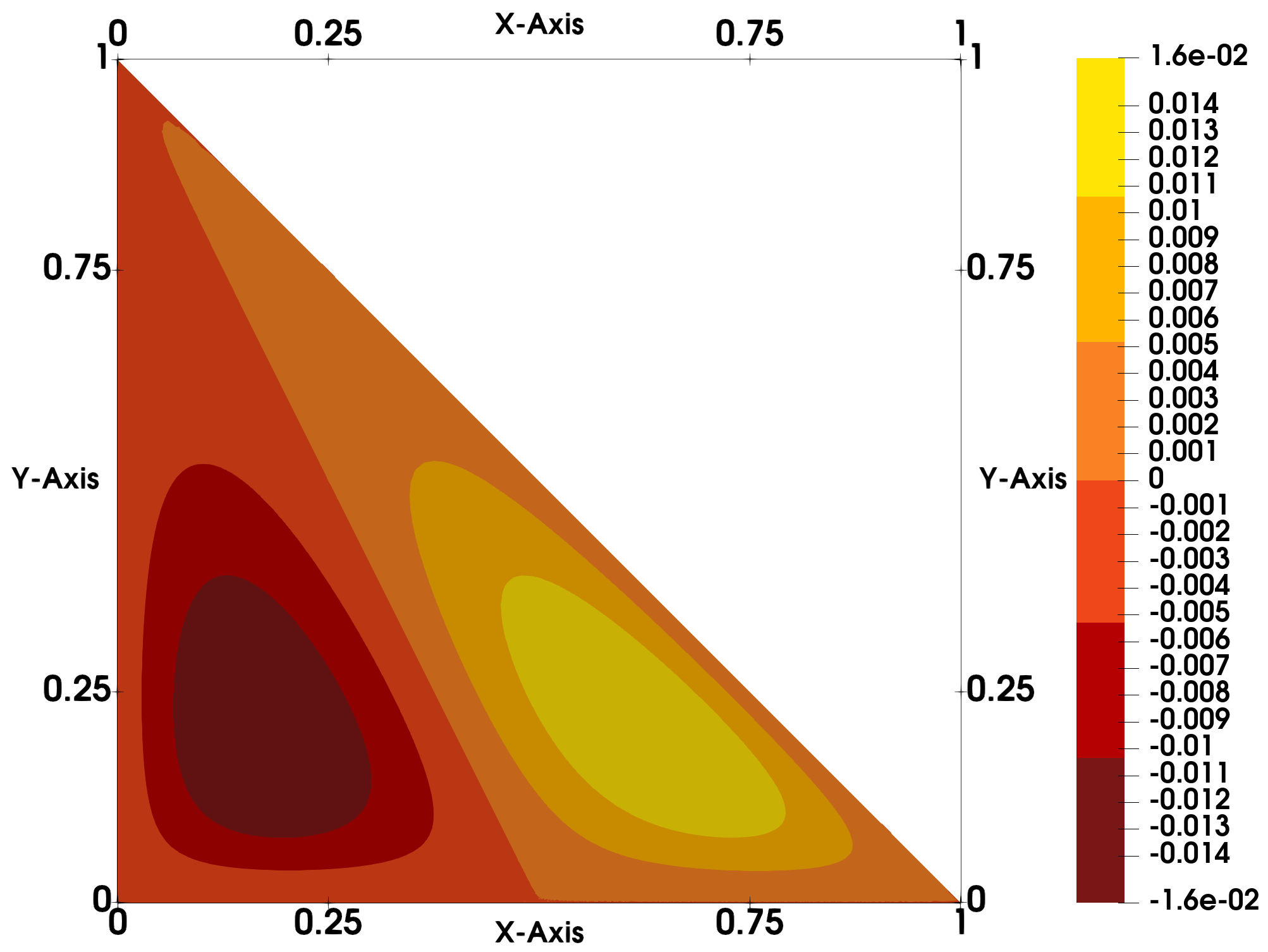}} 
	\subfloat[$\vartheta_h$]{\includegraphics[scale=0.112]{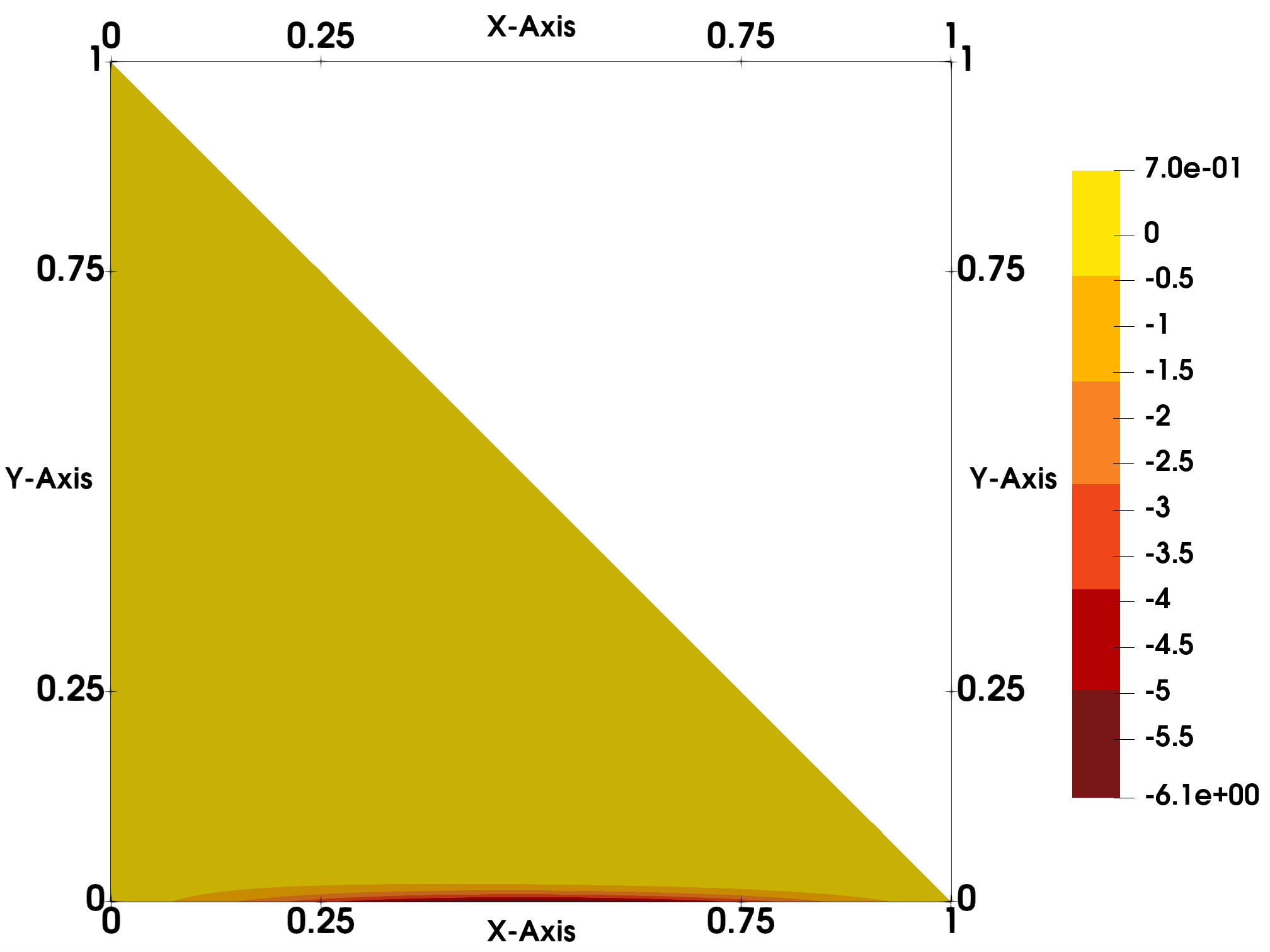}}
	\subfloat[$q_h$]{\includegraphics[scale=0.112]{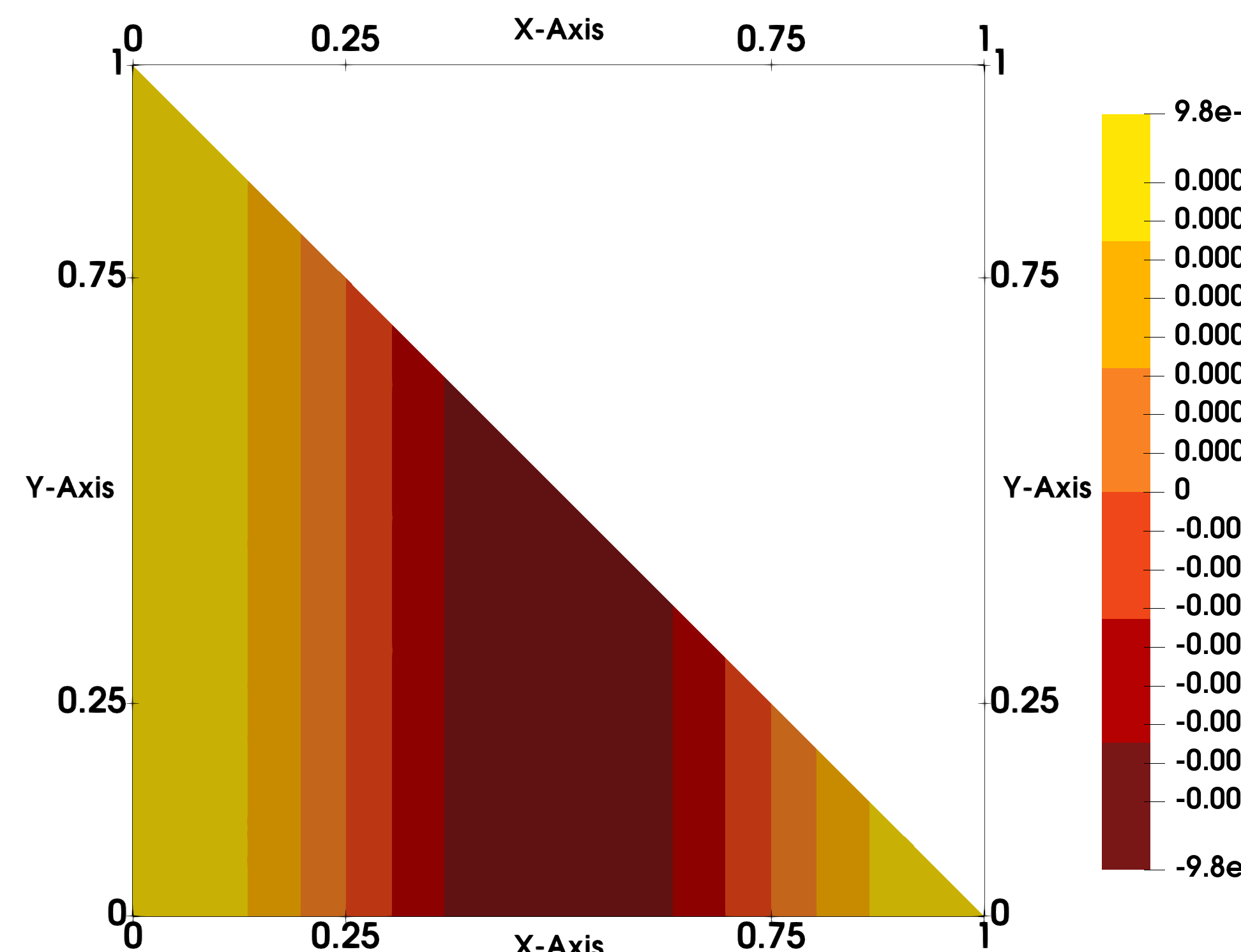}} \\
	\subfloat[$\u_{h1}$]{\includegraphics[scale=0.112]{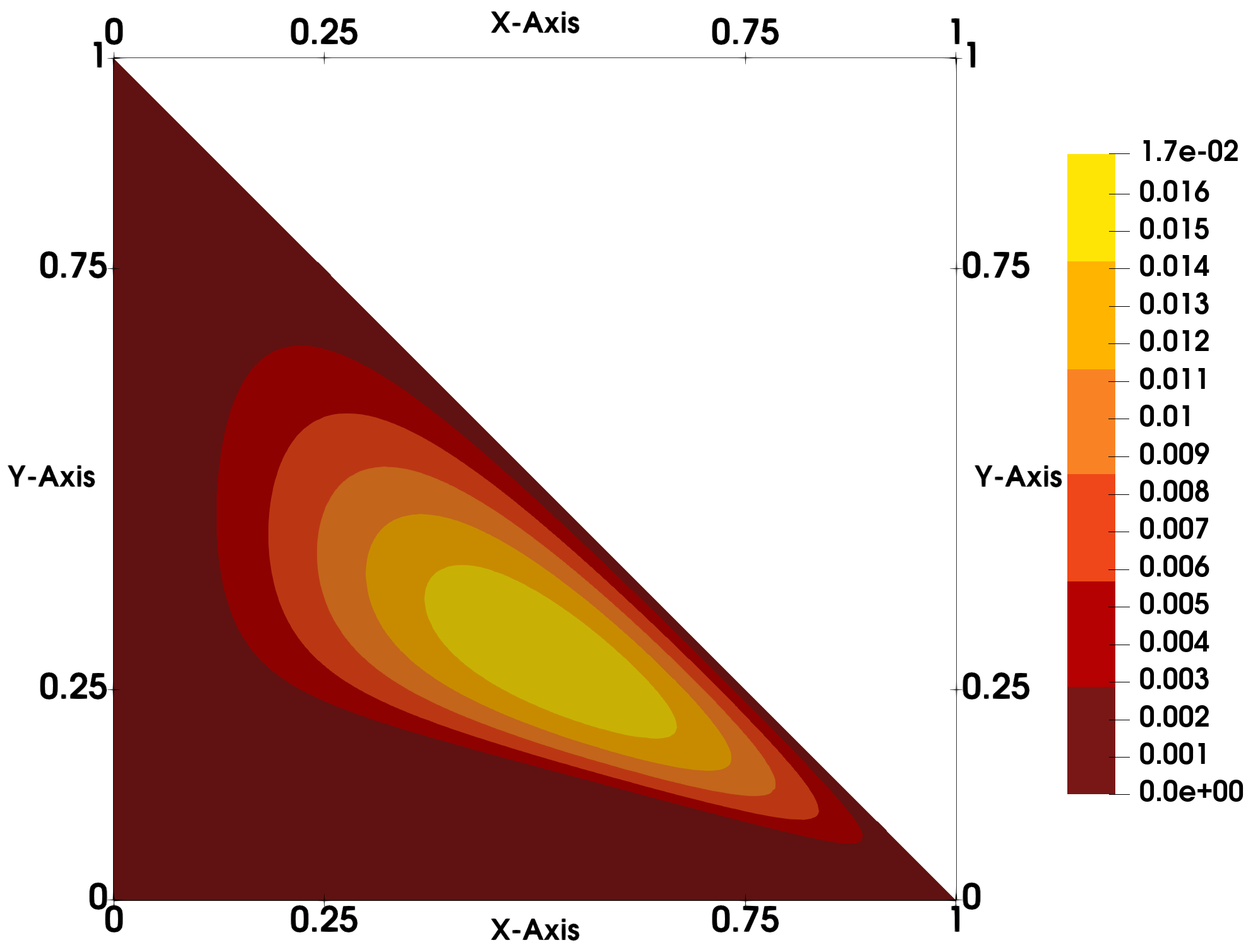}}
	\subfloat[$\u_{h2}$]{\includegraphics[scale=0.112]{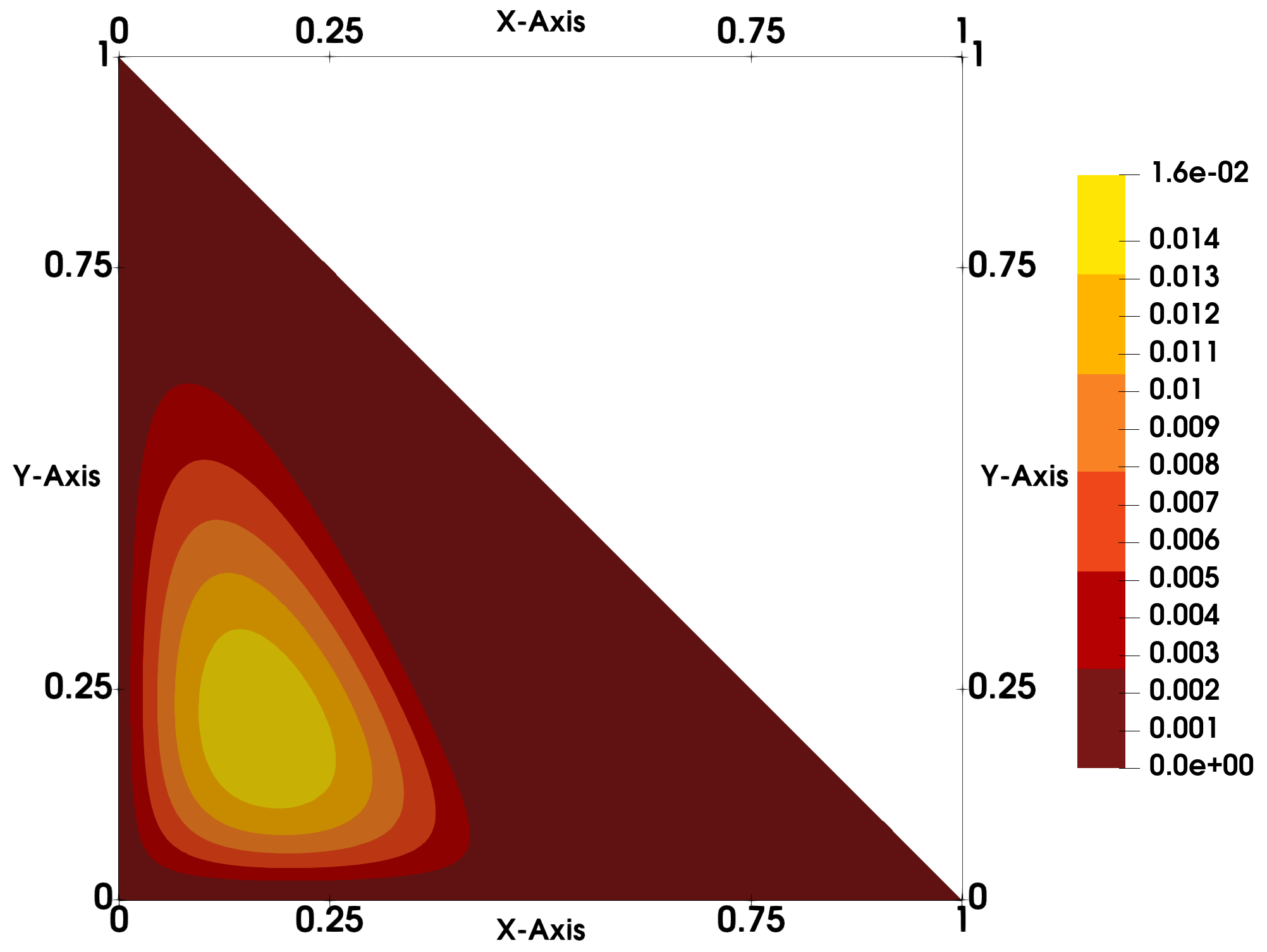}}
	\caption{Plots of numerical solutions of state velocity $(\y_{h1},\y_{h2})$, vorticity $(\kappa_h)$, pressure $(p_h)$, co-state velocity $(\w_{h1},\w_{h2})$, vorticity $(\vartheta_h)$, pressure $(q_h)$, and control $(\u_{h1},\u_{h2})$, respectively, for Example- \ref{Example 5.2.}.} \label{FIGURE 5}
\end{figure}
	\subsection{Non-convex L-shape and T-shape domains}\label{Example 5.3.}
	Consider the non-convex L-shaped and T-shaped domains $\Omega = (-1,1)^{2} \setminus (0,1)^{2}$, and $\Omega = ((-1.5,1.5) \times (0,1)) \cup ((-0.5,0.5) \times (-2,0])$, respectively. We chose coefficients $\nu = 1 + x_1^{2},\ \boldsymbol{\beta} = (1,1)^{T},\ \sigma = 0$, and control bounds $ \mathbf{a} =(0,0)^{T}$ and $\mathbf{b} =(1,1)^{T}$. We select the source function $\mathbf{f}$, desired velocity $\y_d$, and vorticity $\omega_d$ as:
	\begin{align*}
	\f(x_1,x_2) = (1,1)^{T}, 
	\qquad \y_{d} = (x_2, -x_1)^{T},
	\qquad \omega_{d} = \textbf{curl}(\y_d) = -2.
	\end{align*}
	The exact solutions for these problems remain unknown. However, we anticipate significant challenges in convergence when using uniform mesh refinement, primarily due to the presence of reentrant corners, which typically lead to singularities in the solution. In contrast, our adaptive refinement strategy proves to be much more effective in dealing with these issues. As demonstrated in Figure~\ref{FIGURE 6}, the adaptive method focuses the refinement in the regions surrounding the re-entrant corners. This targeted approach helps to accurately capture the singularities and complex behaviors in these areas, which uniform refinement often fails to do efficiently. As we continue refining the mesh adaptively, we observe that the global error estimators decrease optimally. This behavior depicted in Figure~\ref{FIGURE 7}, validates the efficacy of the adaptive scheme. Additionally, Figures~\ref{FIGURE 8} and \ref{FIGURE 9} provide detailed visualizations of the numerical solutions. These plots illustrate the improved accuracy and resolution achieved through our adaptive refinement strategy. The finer mesh around the reentrant corners allows for a more precise approximation of the solution, which is critical for capturing the true nature of the problem.
	 \begin{figure}
	\centering
	\subfloat[Initial L-shaped mesh]{\includegraphics[scale=0.34]{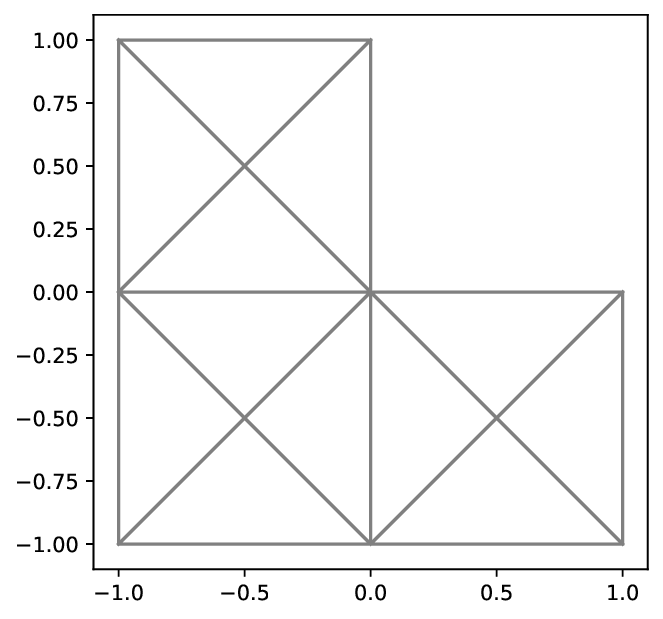}}
	\subfloat[6486 DOF]{\includegraphics[scale=0.34]{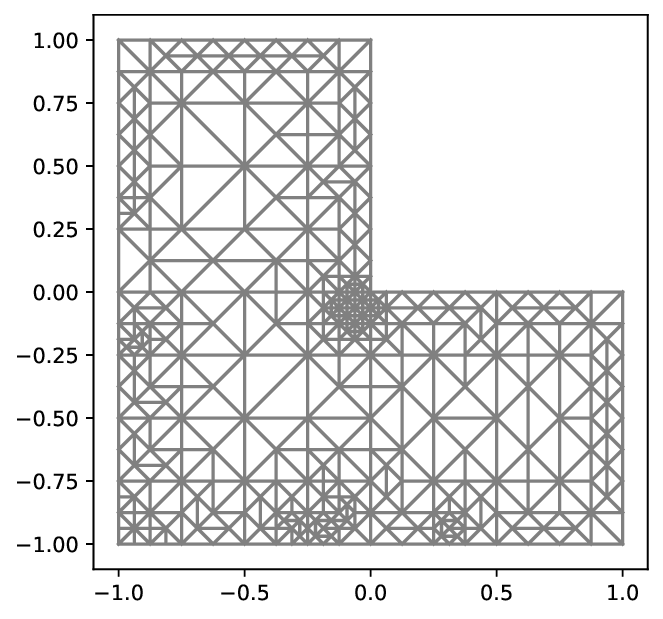}} 
	\subfloat[21770 DOF]{\includegraphics[scale=0.34]{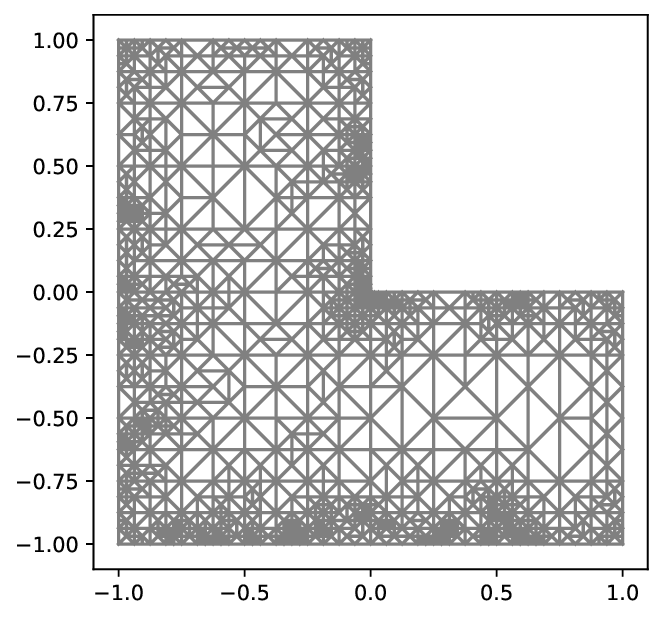}}\\
	\subfloat[Initial T-shaped mesh]{\includegraphics[scale=0.34]{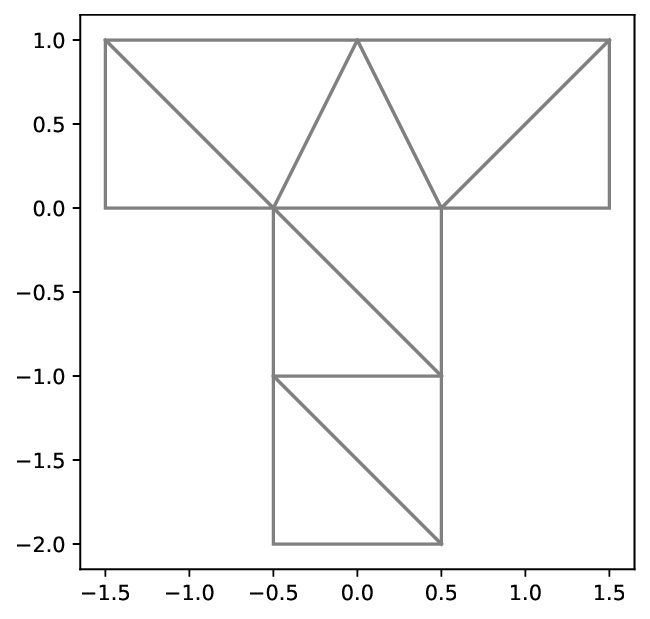}}
	\subfloat[8146 DOF]{\includegraphics[scale=0.34]{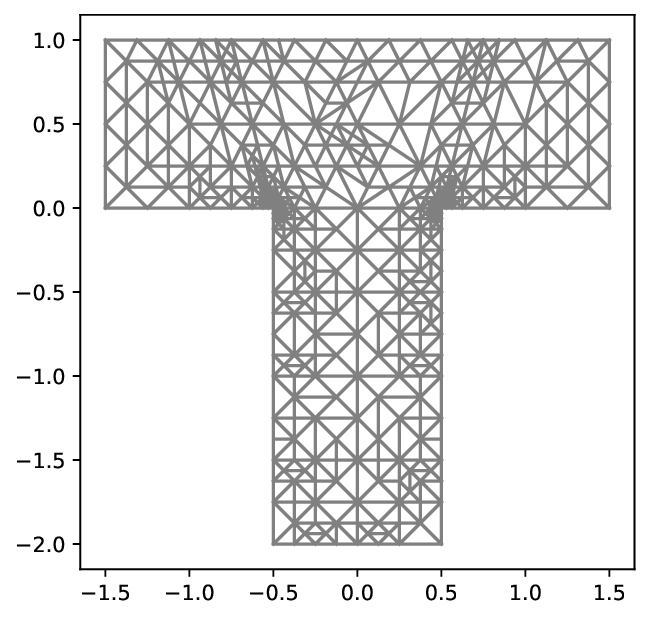}} 
	\subfloat[29786 DOF]{\includegraphics[scale=0.34]{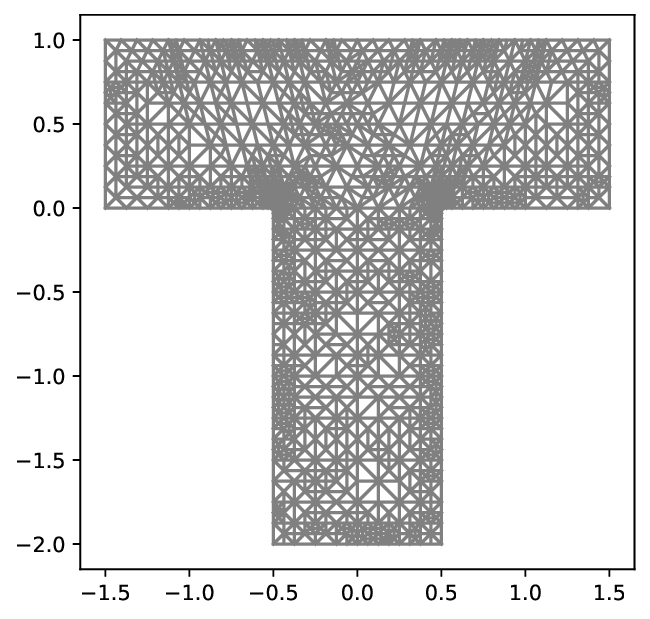}} \vspace{-2.5mm}
	\caption{Adaptively refined meshes showing refinement near re-entrant corners for Example~\ref{Example 5.3.}.} \label{FIGURE 6}
\end{figure}
\begin{figure}
	\centering
	\subfloat[L-shape]{\includegraphics[scale=0.15]{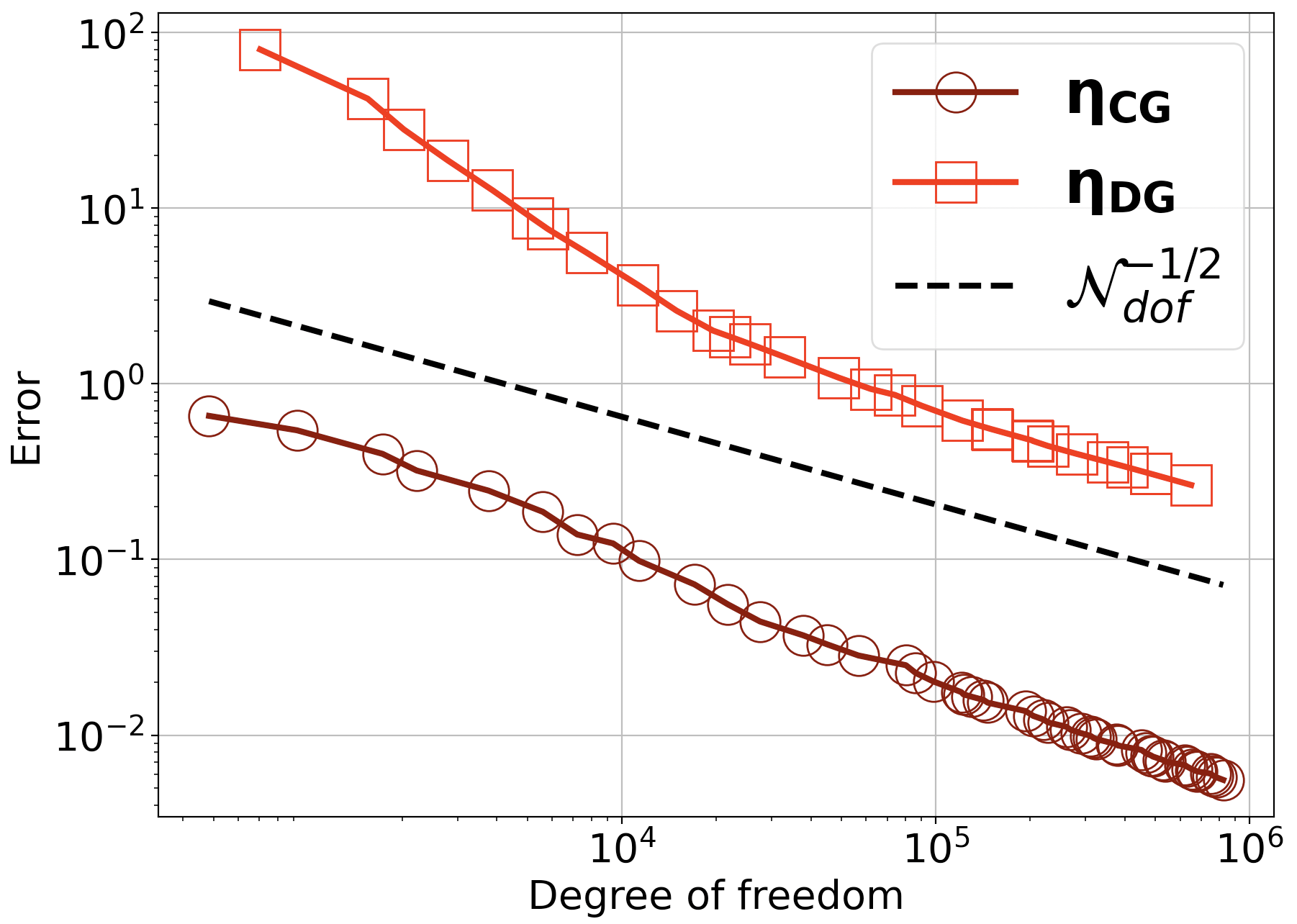}} 
	\subfloat[T-shape]{\includegraphics[scale=0.15]{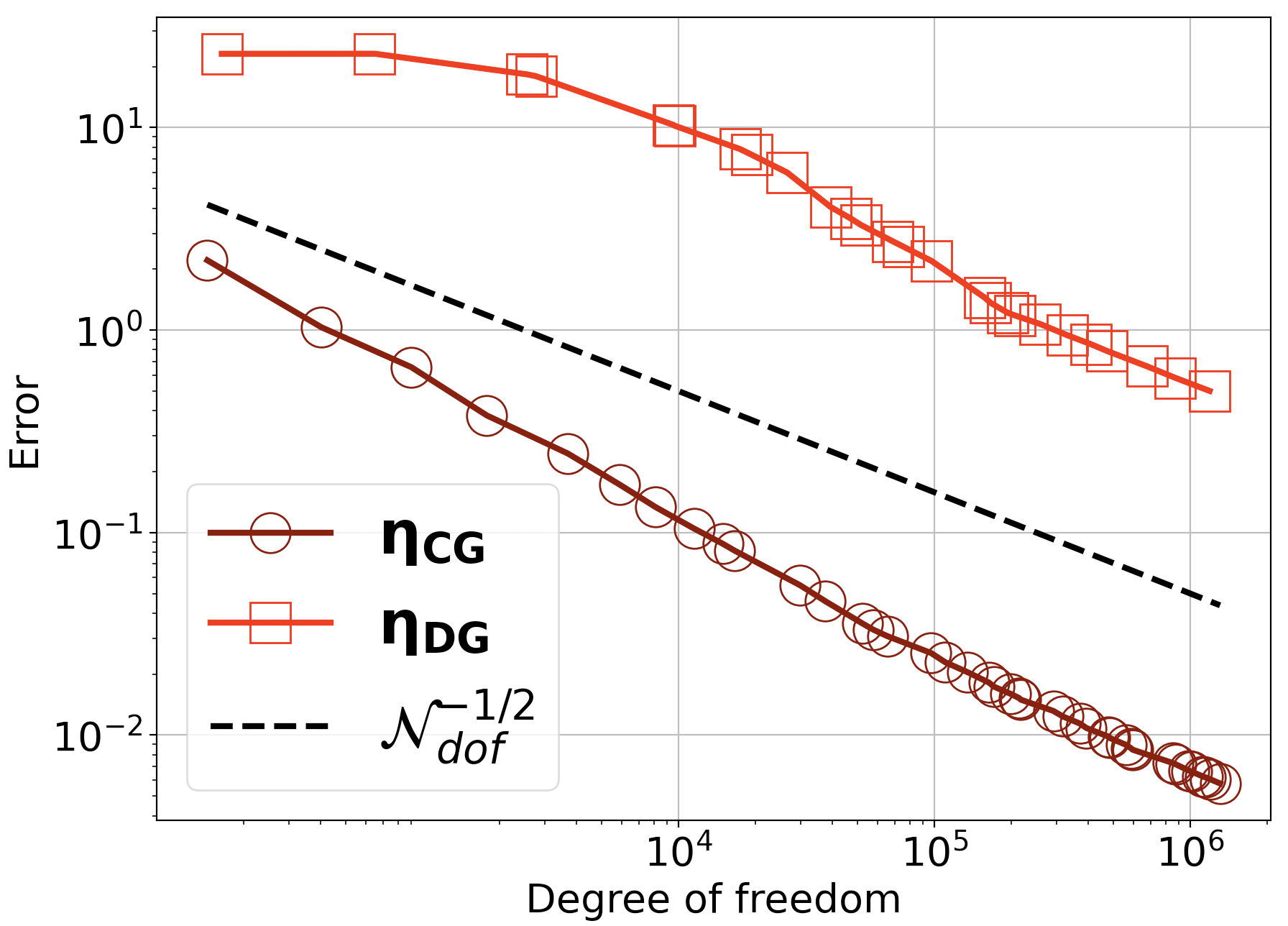}}
	\subfloat[Rectangular pipe]{\includegraphics[scale=0.15]{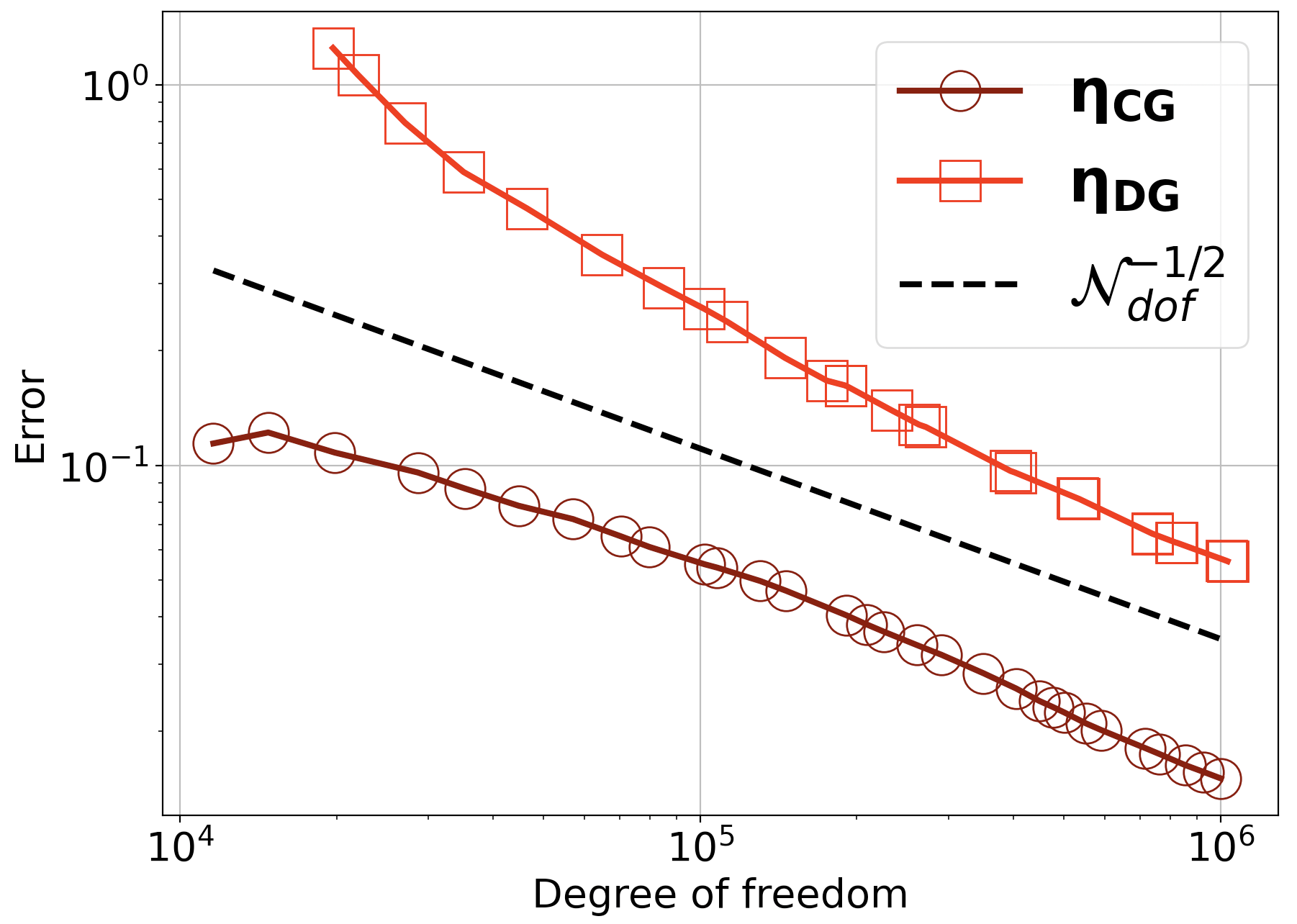}}
	\caption{Convergence plots for the global indicators (a) L-shape (b) T-shape (c) Rectangular pipe.} \label{FIGURE 7}
\end{figure}
\begin{figure}
	\centering
	\subfloat[$\y_{h1}$]{\includegraphics[scale=0.112]{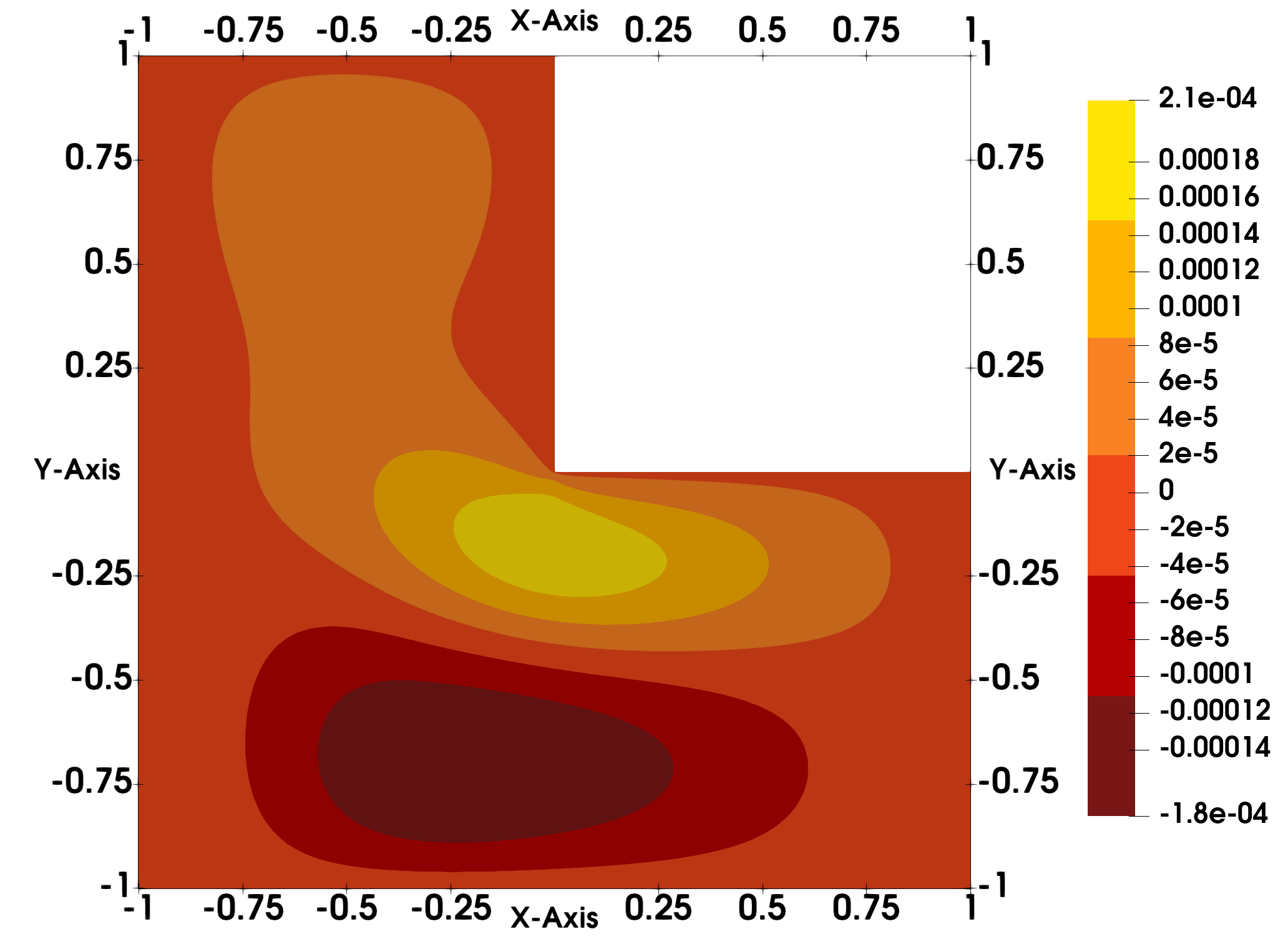}}
	\subfloat[$\y_{h2}$]{\includegraphics[scale=0.114]{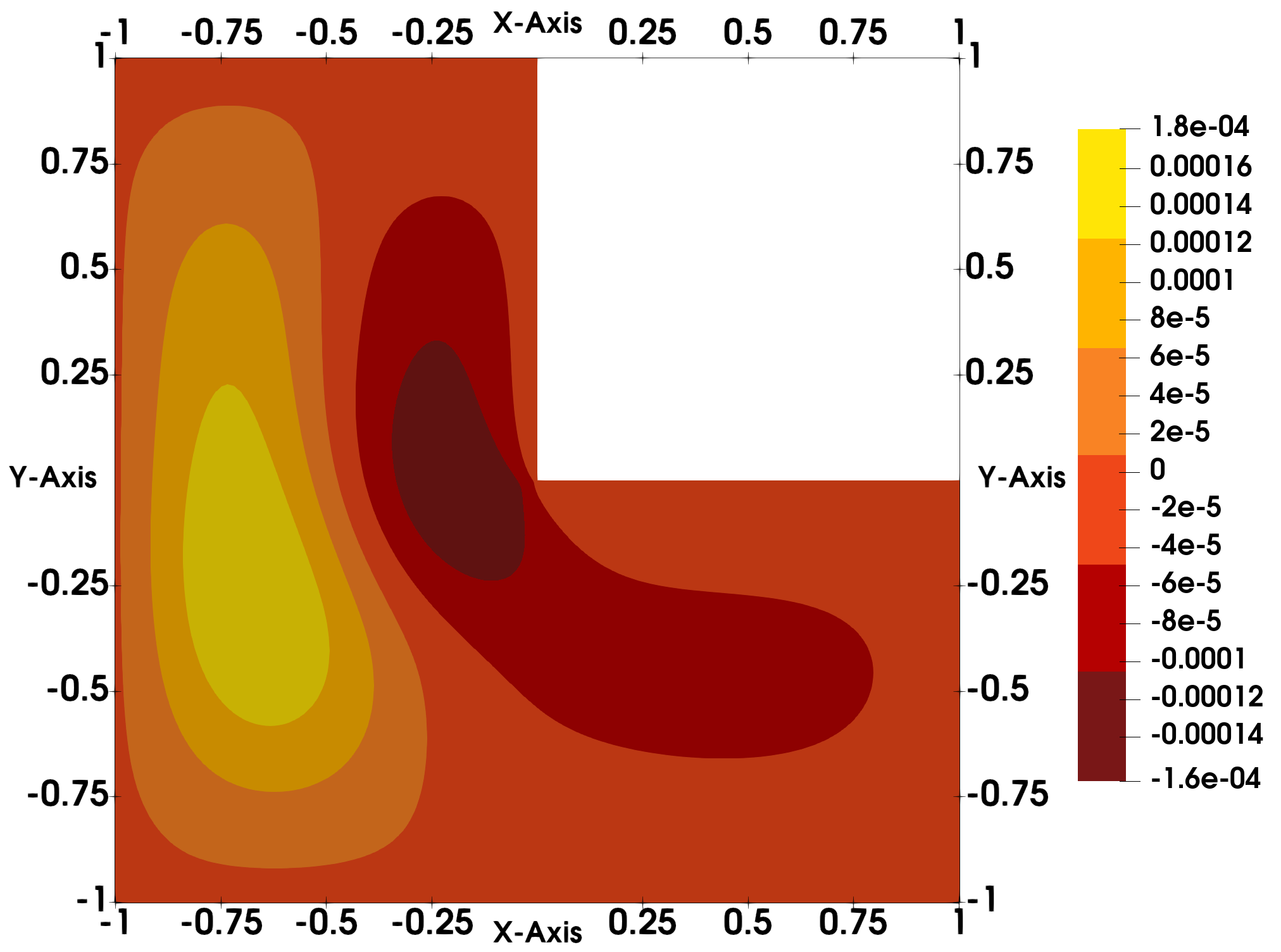}} 
	\subfloat[$\omega_h$]{\includegraphics[scale=0.114]{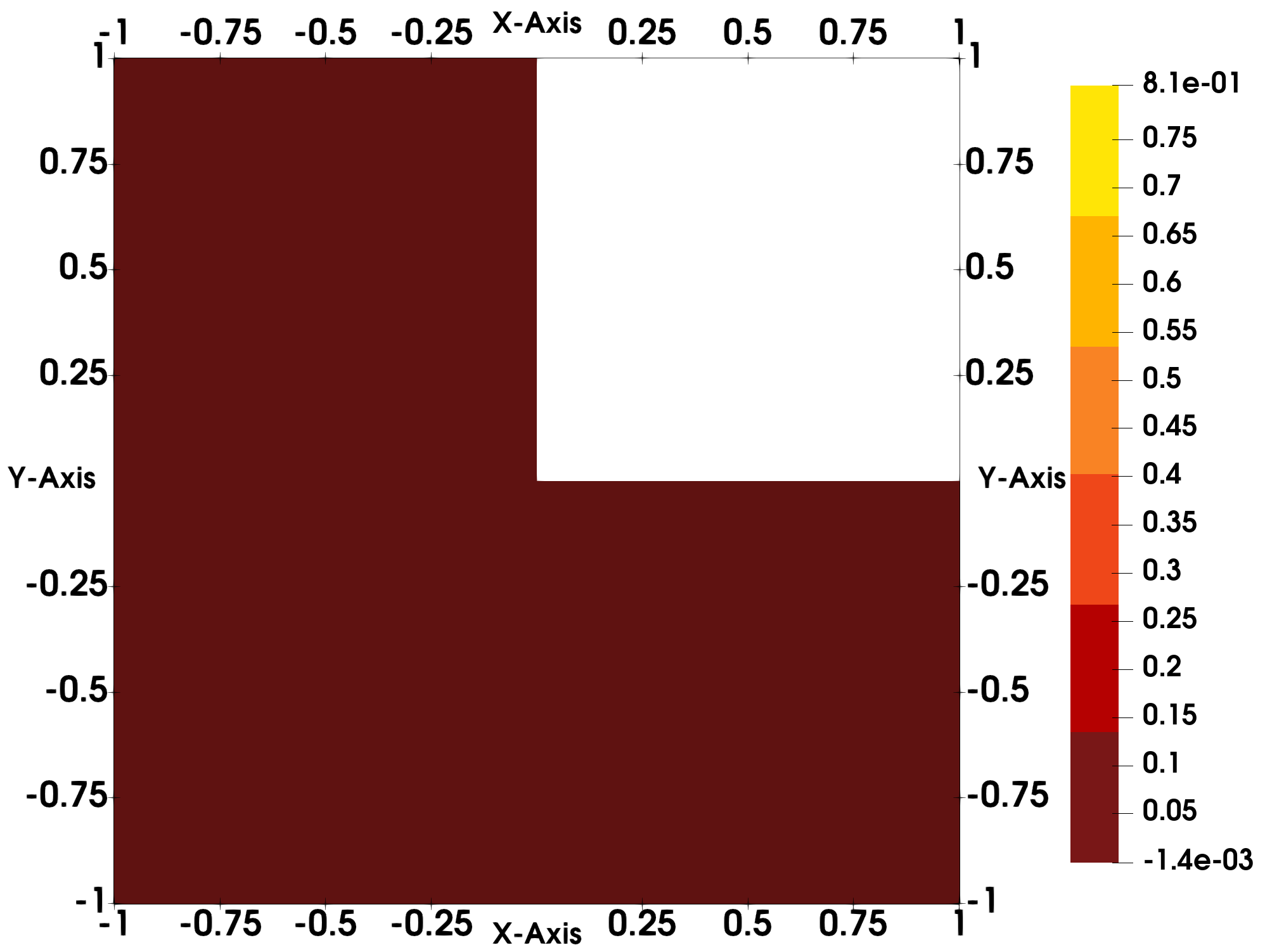}}
	\subfloat[$p_h$]{\includegraphics[scale=0.119]{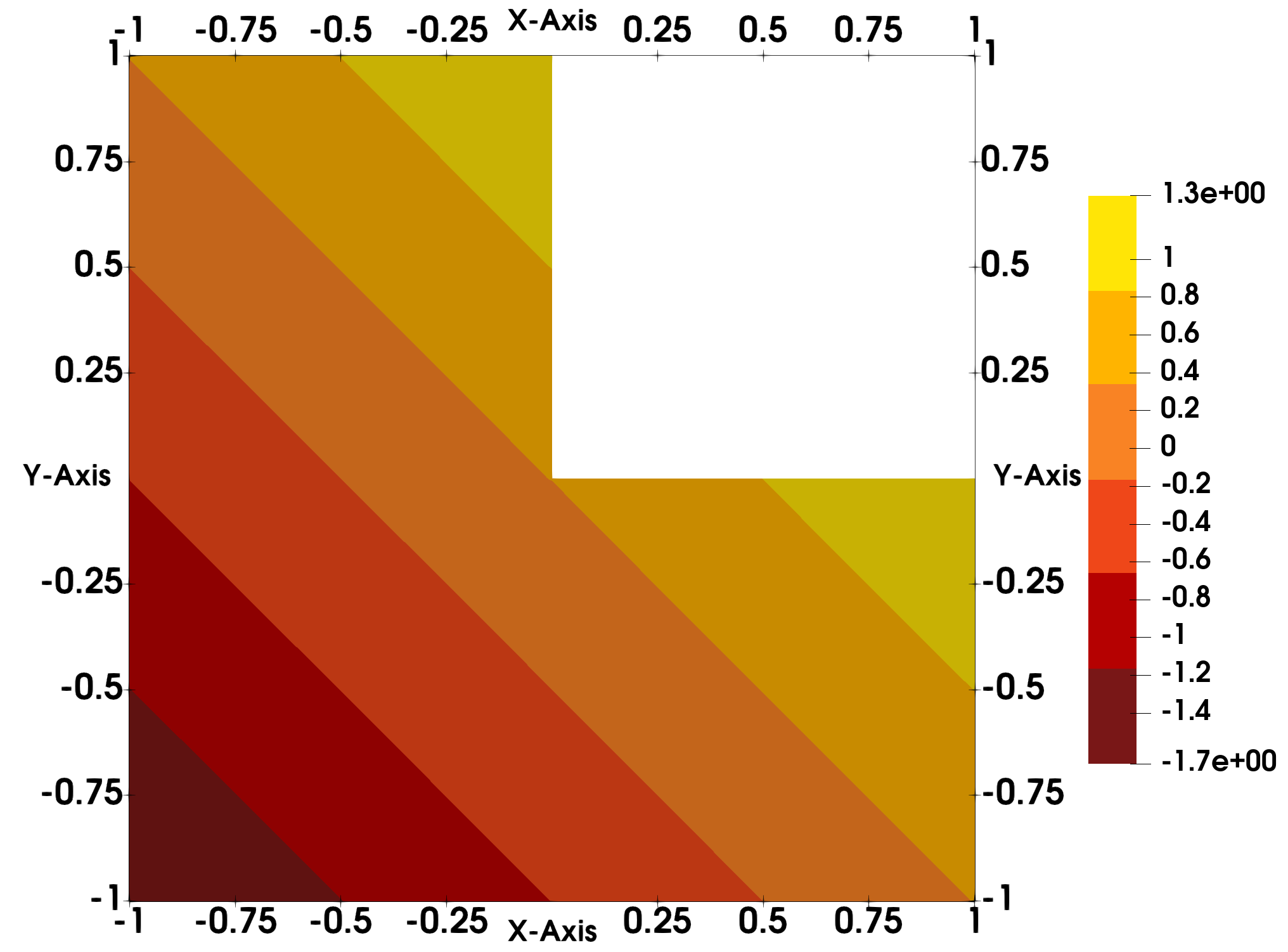}}\\
	\subfloat[$\w_{h1}$]{\includegraphics[scale=0.115]{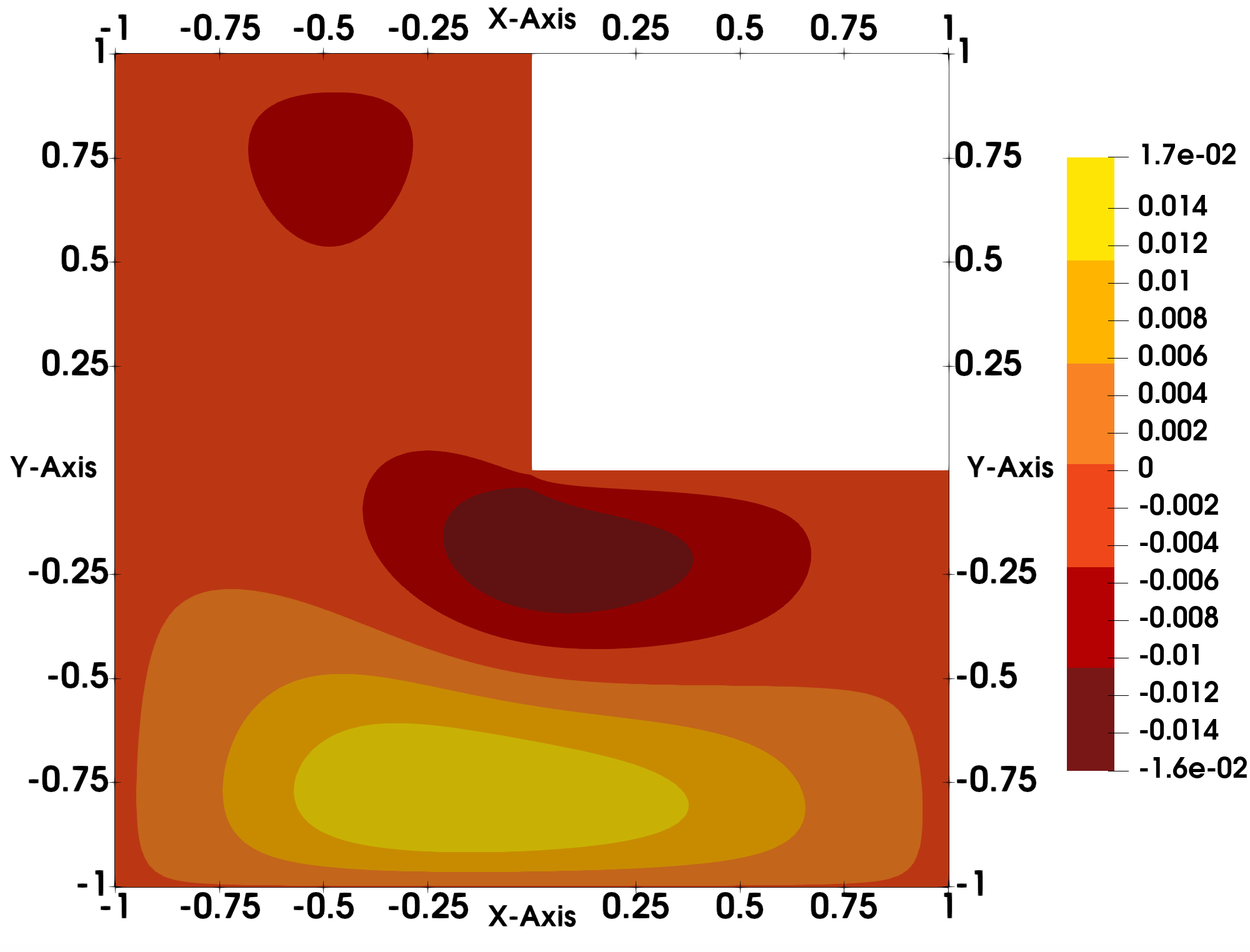}}
	\subfloat[$\w_{h2}$]{\includegraphics[scale=0.115]{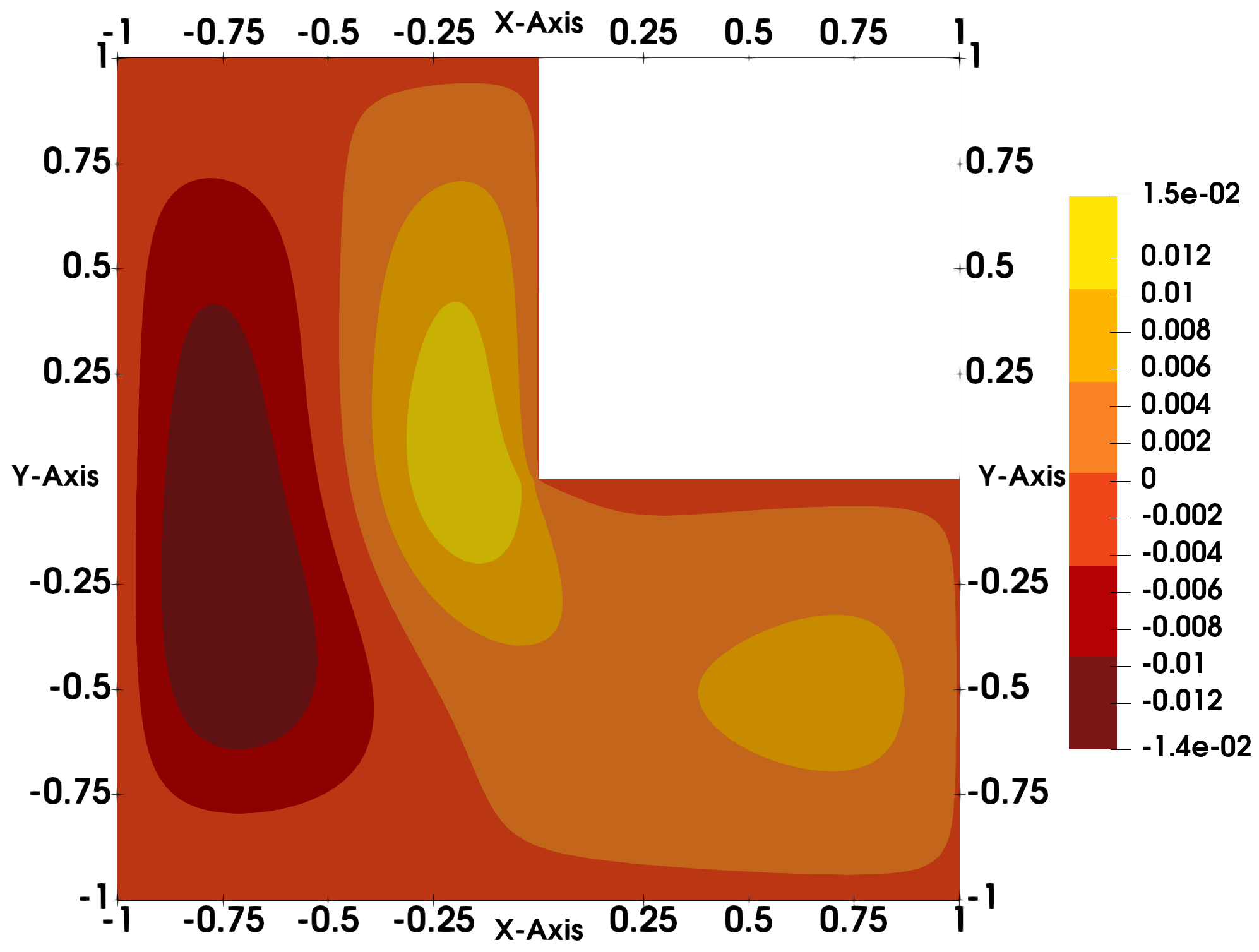}} 
	\subfloat[$\vartheta_h$]{\includegraphics[scale=0.12]{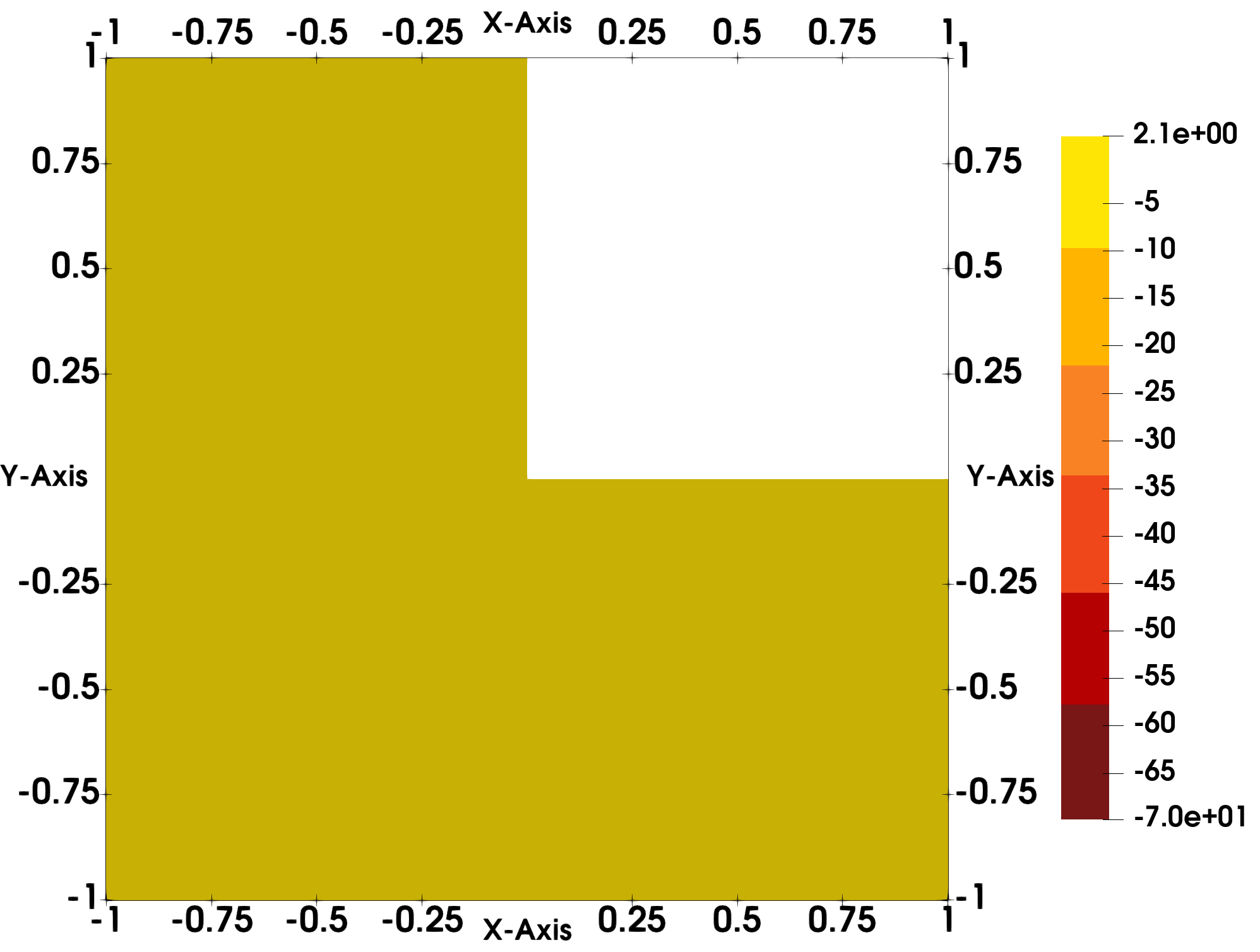}}
	\subfloat[$q_h$]{\includegraphics[scale=0.12]{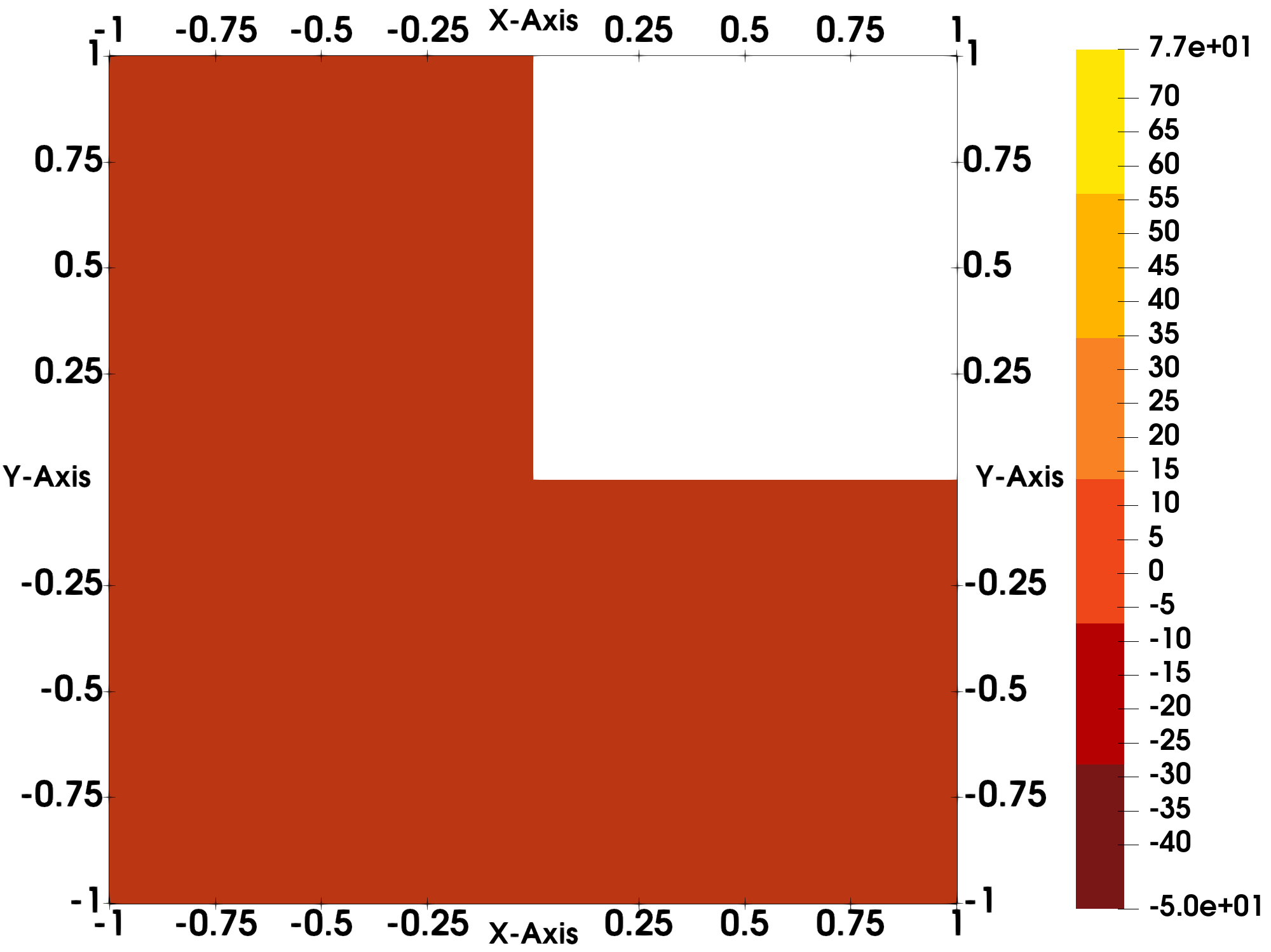}} \\
	\subfloat[$\u_{h1}$]{\includegraphics[scale=0.134]{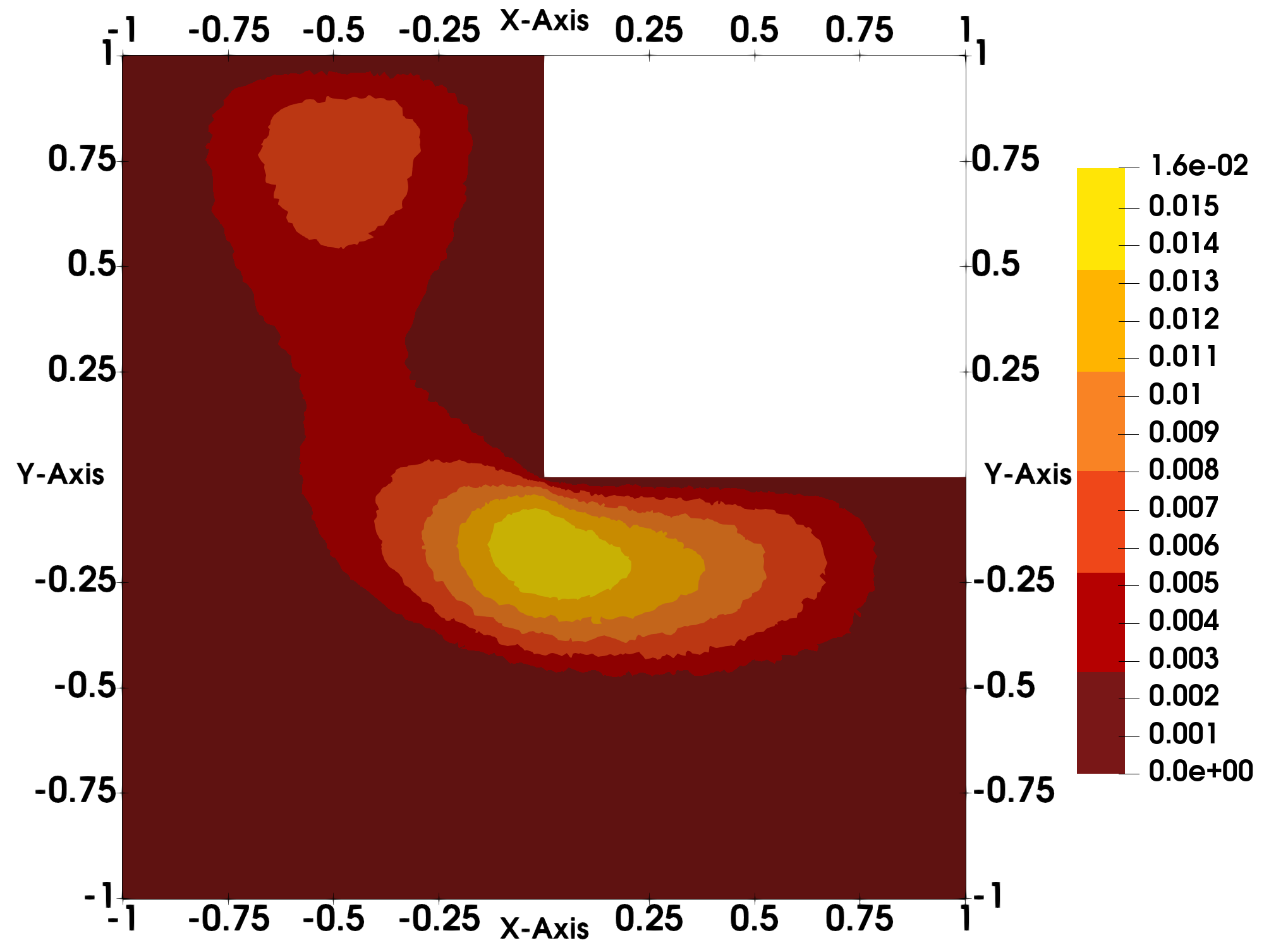}}
	\subfloat[$\u_{h2}$]{\includegraphics[scale=0.136]{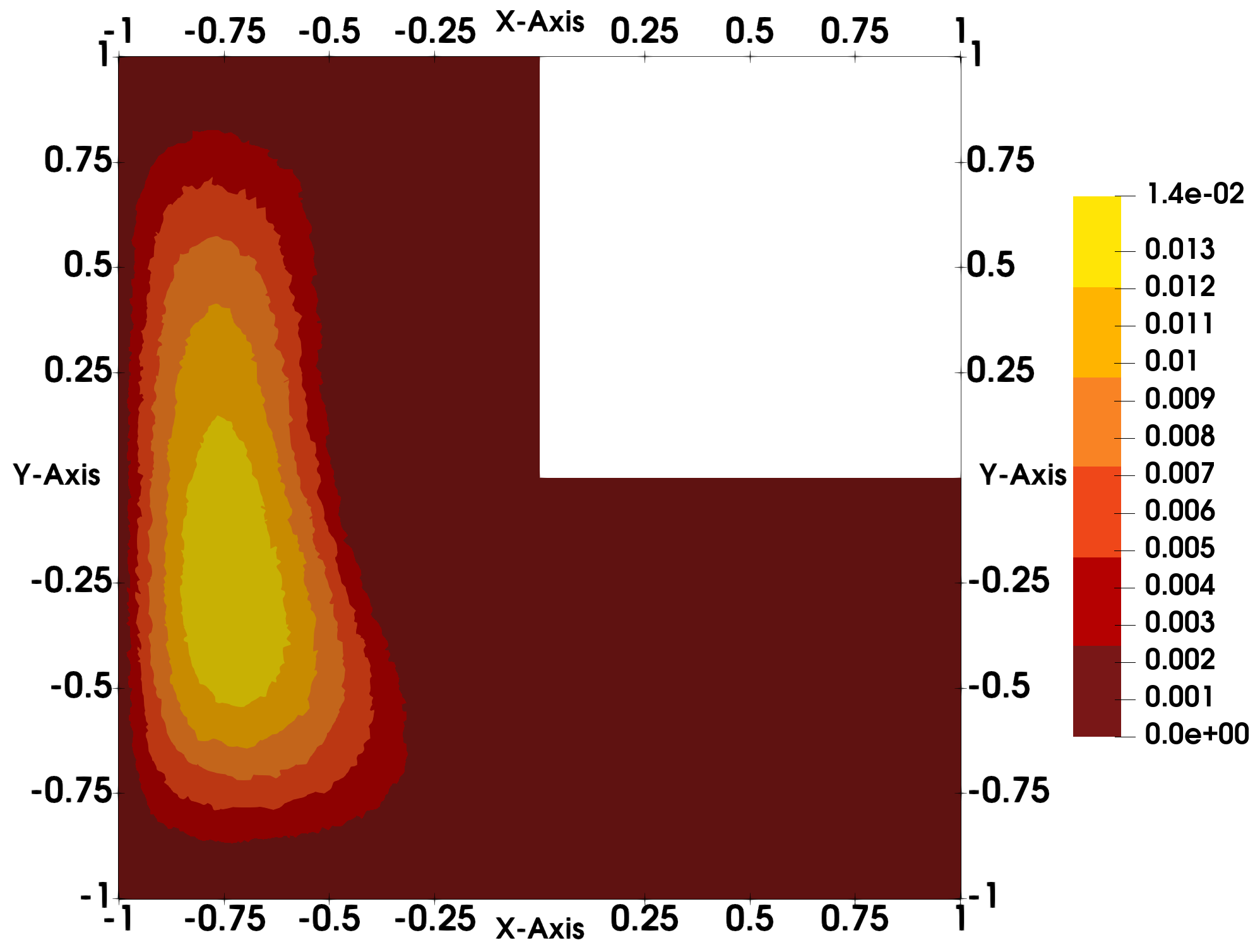}}
	\caption{Plots of numerical solutions of state velocity $(\y_{h1},\y_{h2})$, vorticity $(\kappa_h)$, pressure $(p_h)$, co-state velocity $(\w_{h1},\w_{h2})$, vorticity $(\vartheta_h)$, pressure $(q_h)$, and control $(\u_{h1},\u_{h2})$, respectively, for L-shaped domain Example~\ref{Example 5.3.}.} \label{FIGURE 8}
\end{figure}
\begin{figure}
	\centering
	\subfloat[$\y_{h1}$]{\includegraphics[scale=0.14]{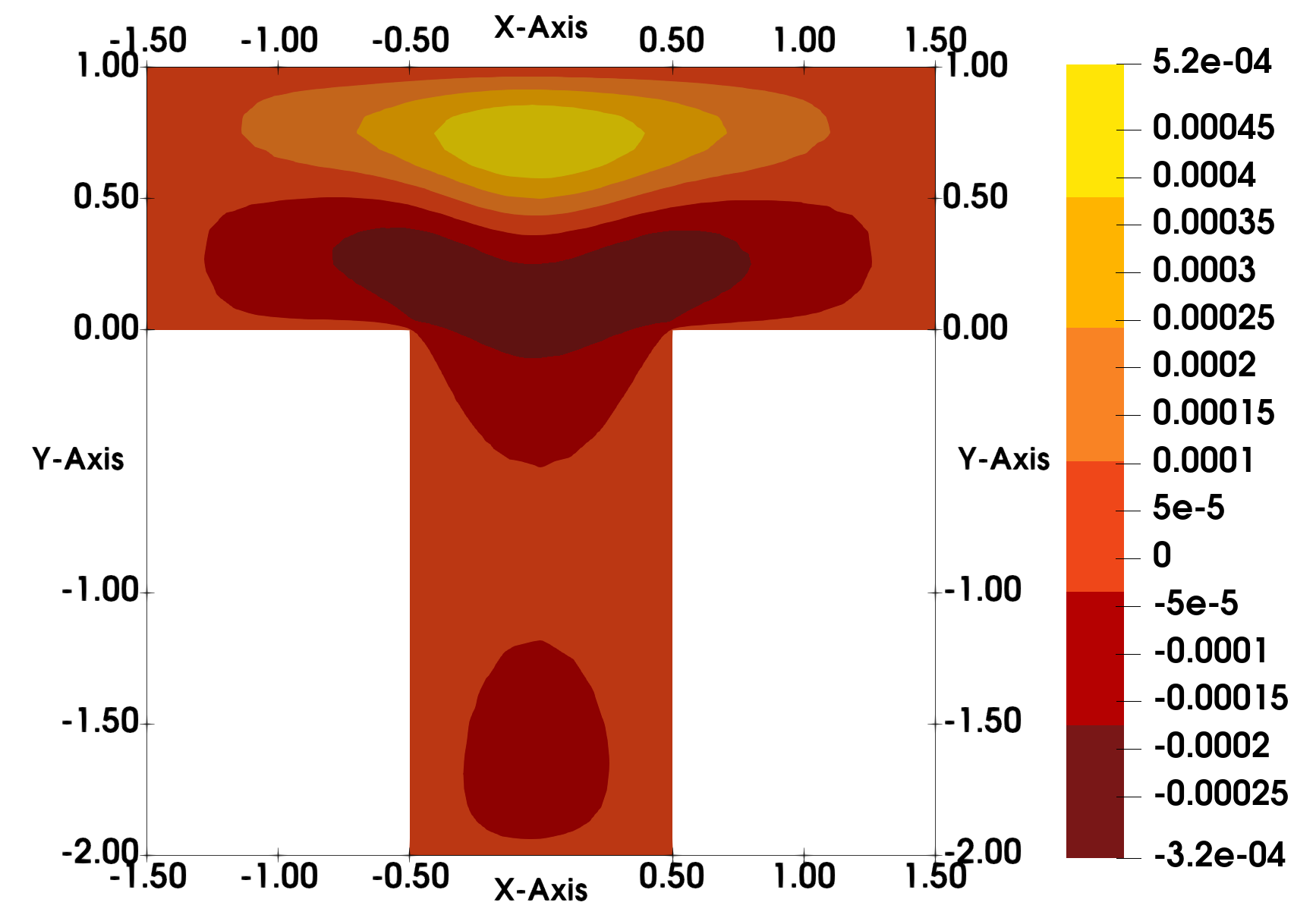}}
	\subfloat[$\y_{h2}$]{\includegraphics[scale=0.132]{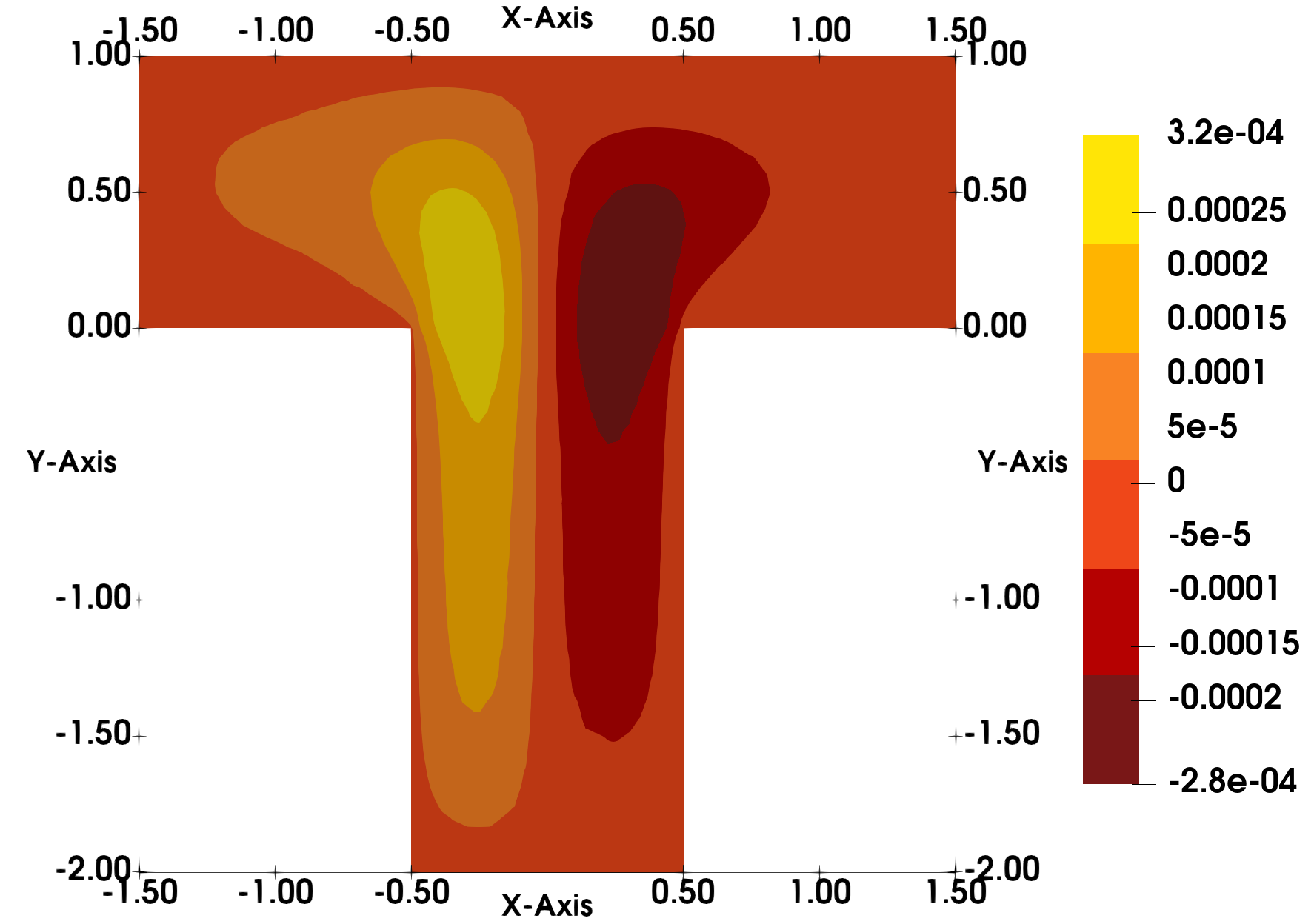}} 
	\subfloat[$\omega_h$]{\includegraphics[scale=0.132]{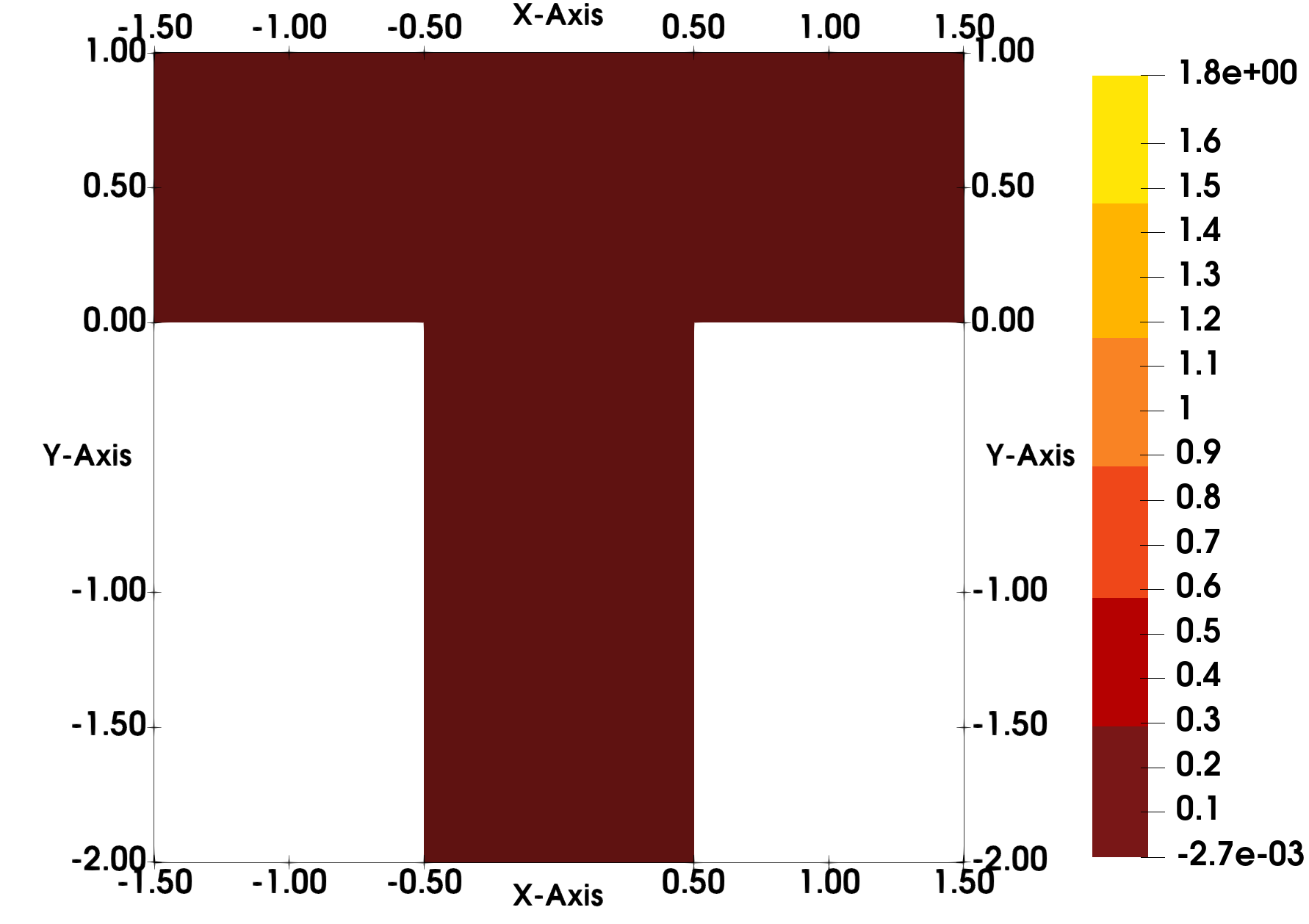}}
	\subfloat[$p_h$]{\includegraphics[scale=0.135]{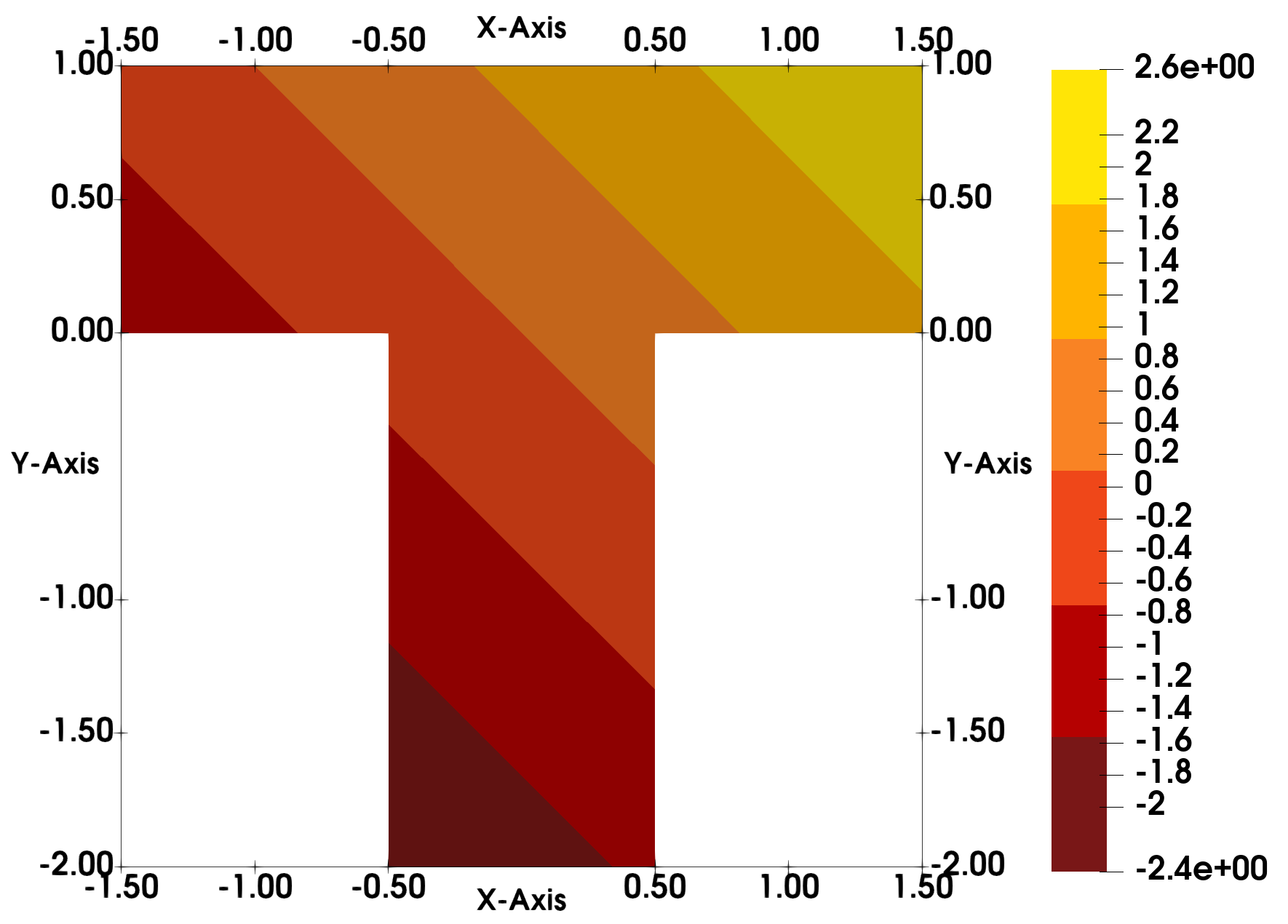}} \\
	\subfloat[$\w_{h1}$]{\includegraphics[scale=0.132]{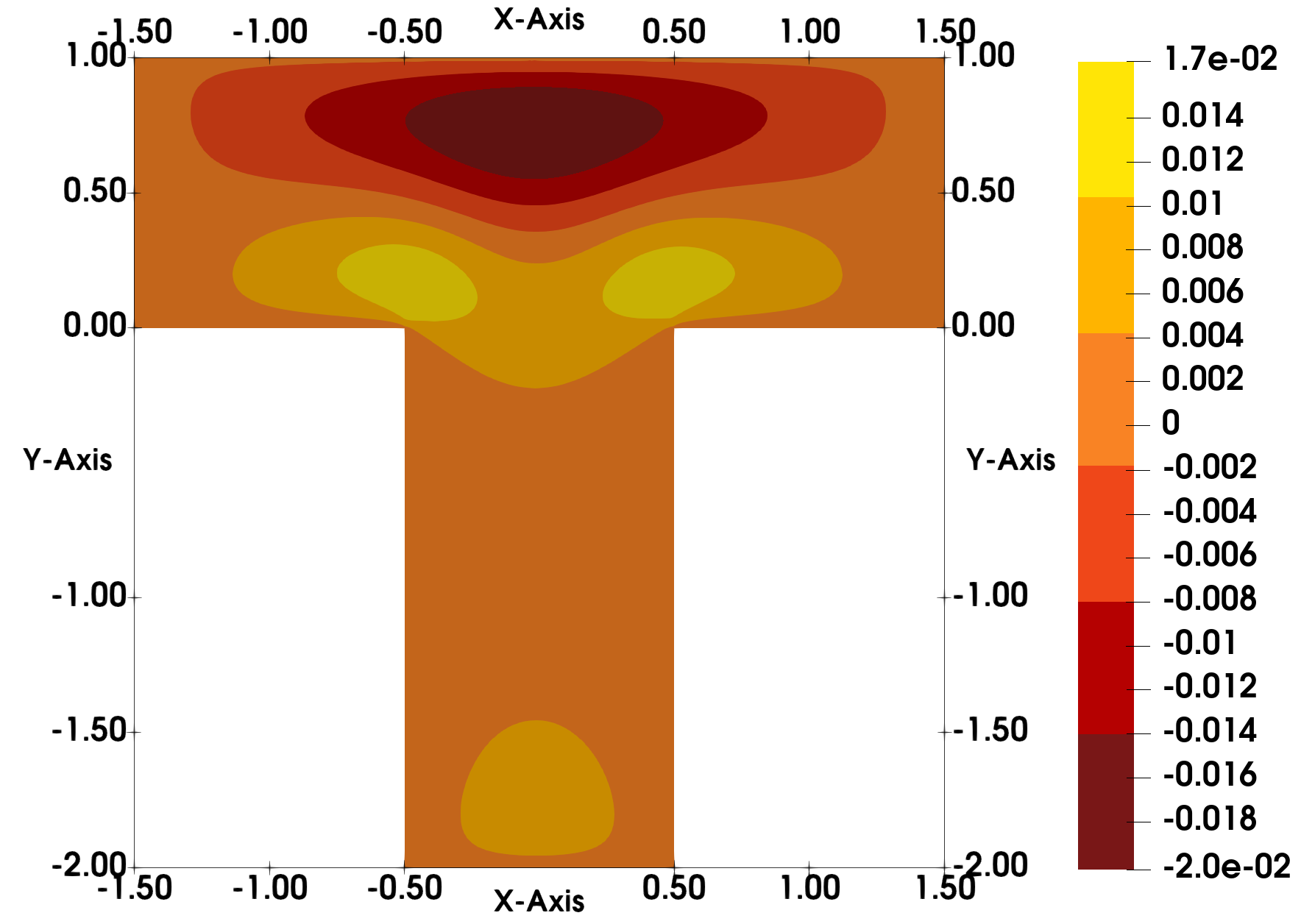}}
	\subfloat[$\w_{h2}$]{\includegraphics[scale=0.132]{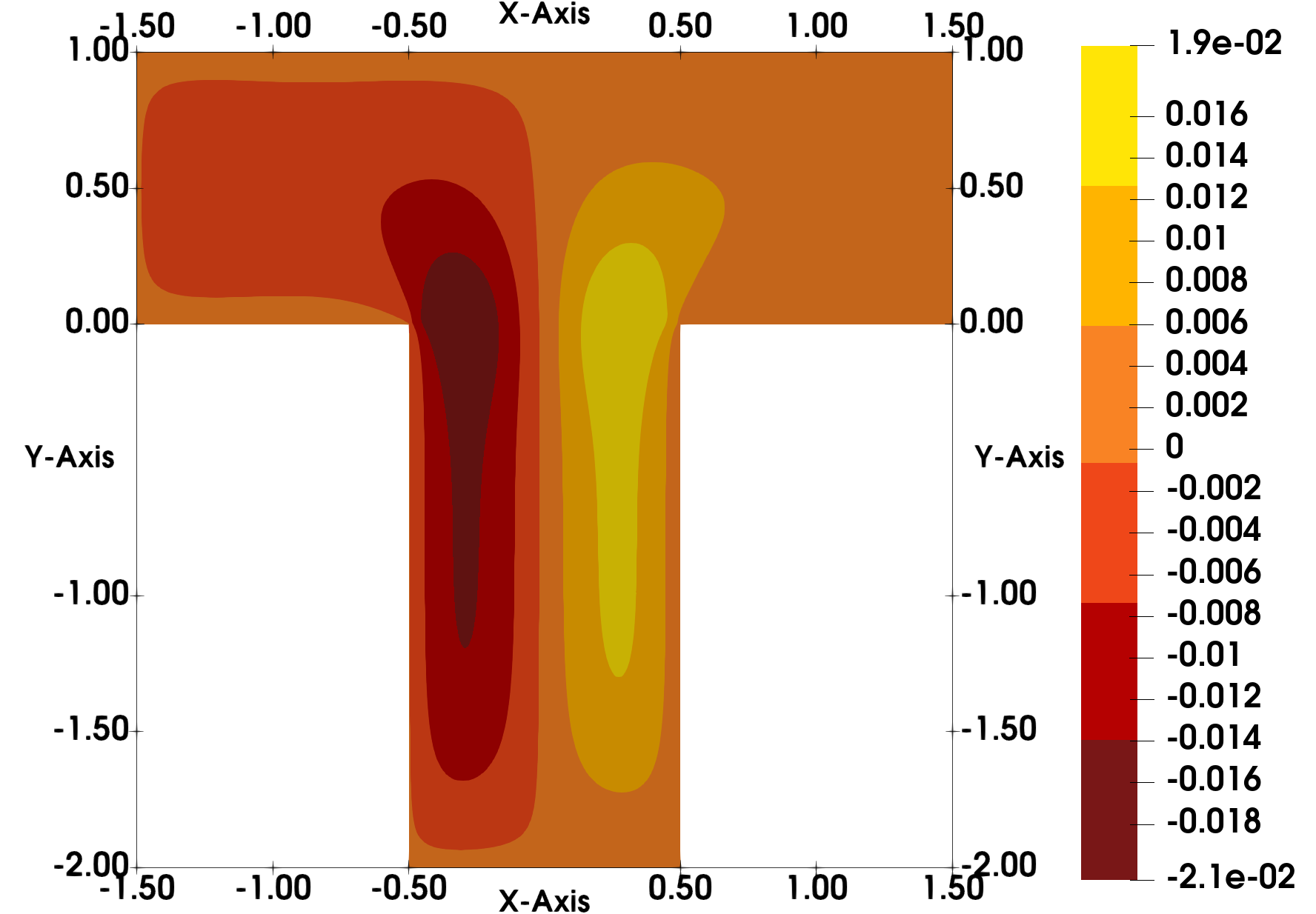}}
	\subfloat[$\vartheta_h$]{\includegraphics[scale=0.135]{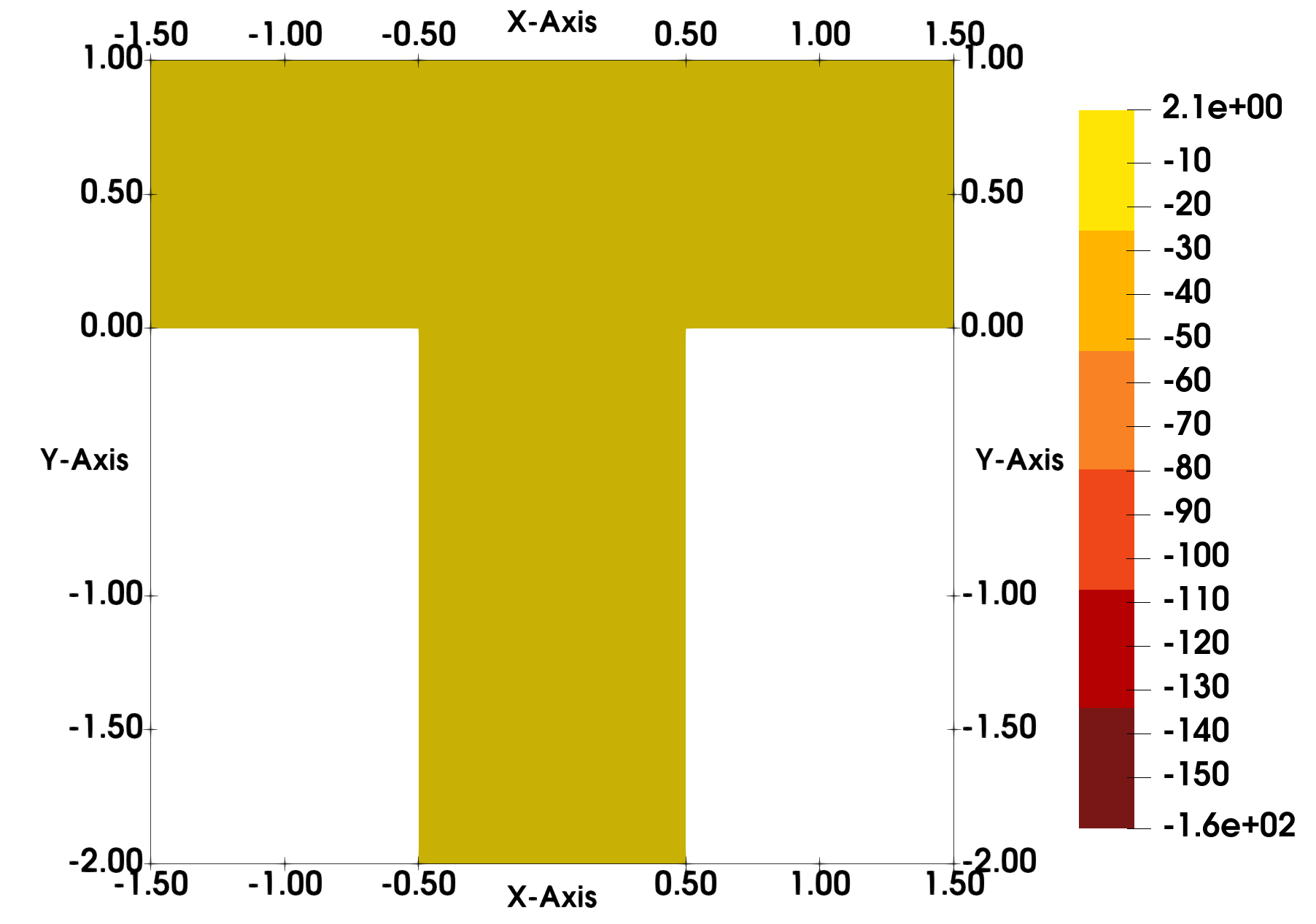}}
	\subfloat[$q_h$]{\includegraphics[scale=0.137]{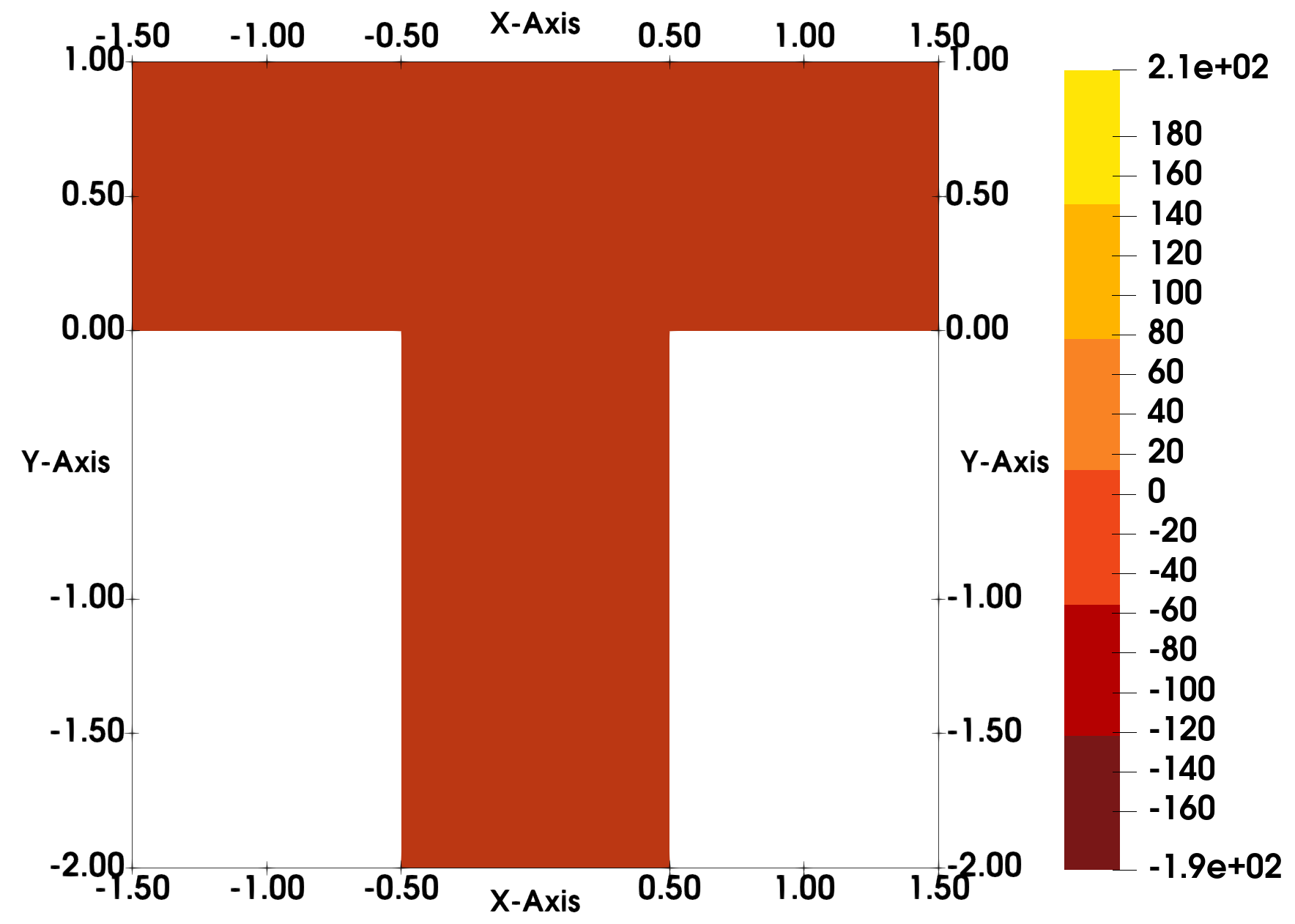}}\\
	\subfloat[$\u_{h1}$]{\includegraphics[scale=0.1420]{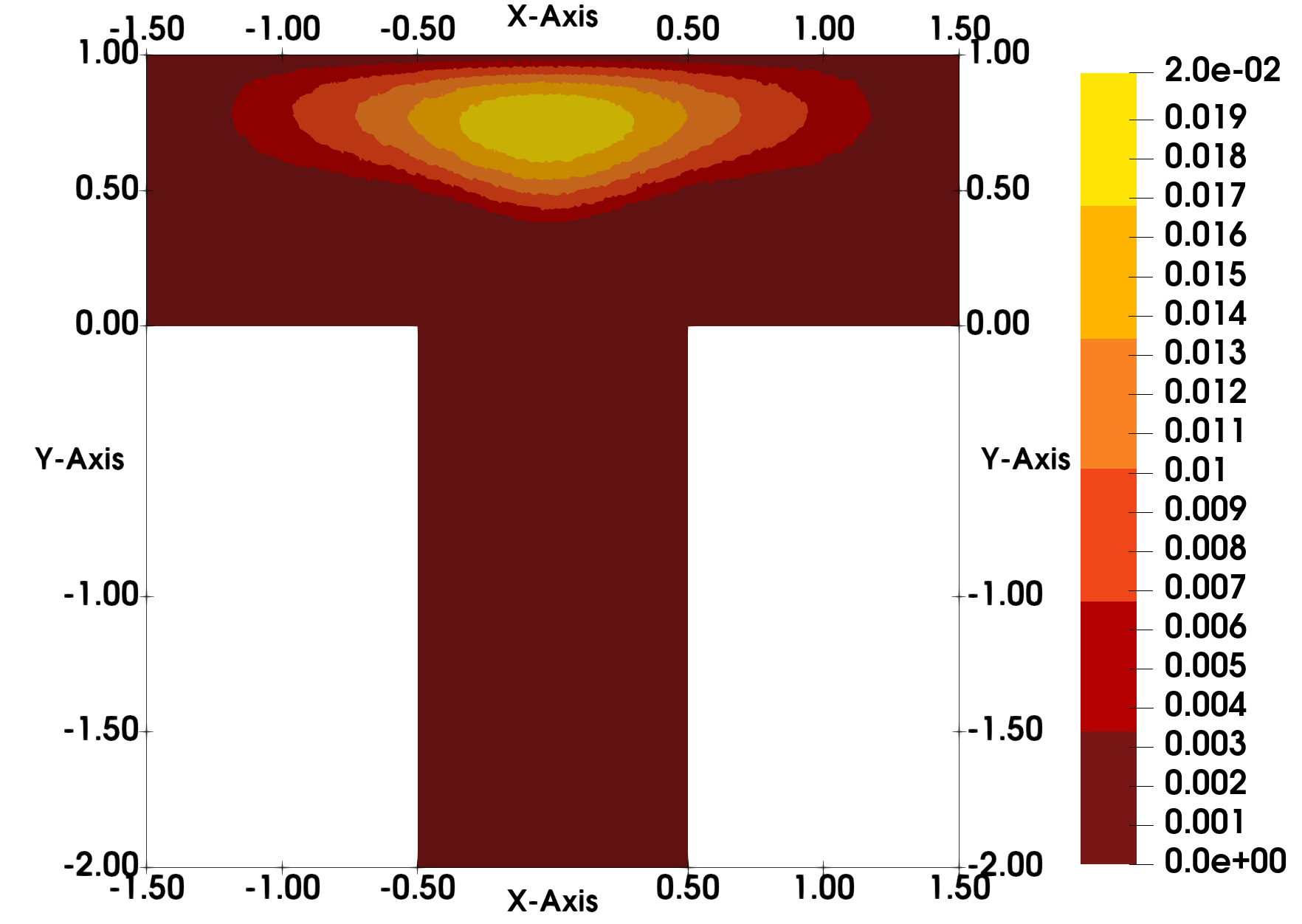}}
	\subfloat[$\u_{h2}$]{\includegraphics[scale=0.1420]{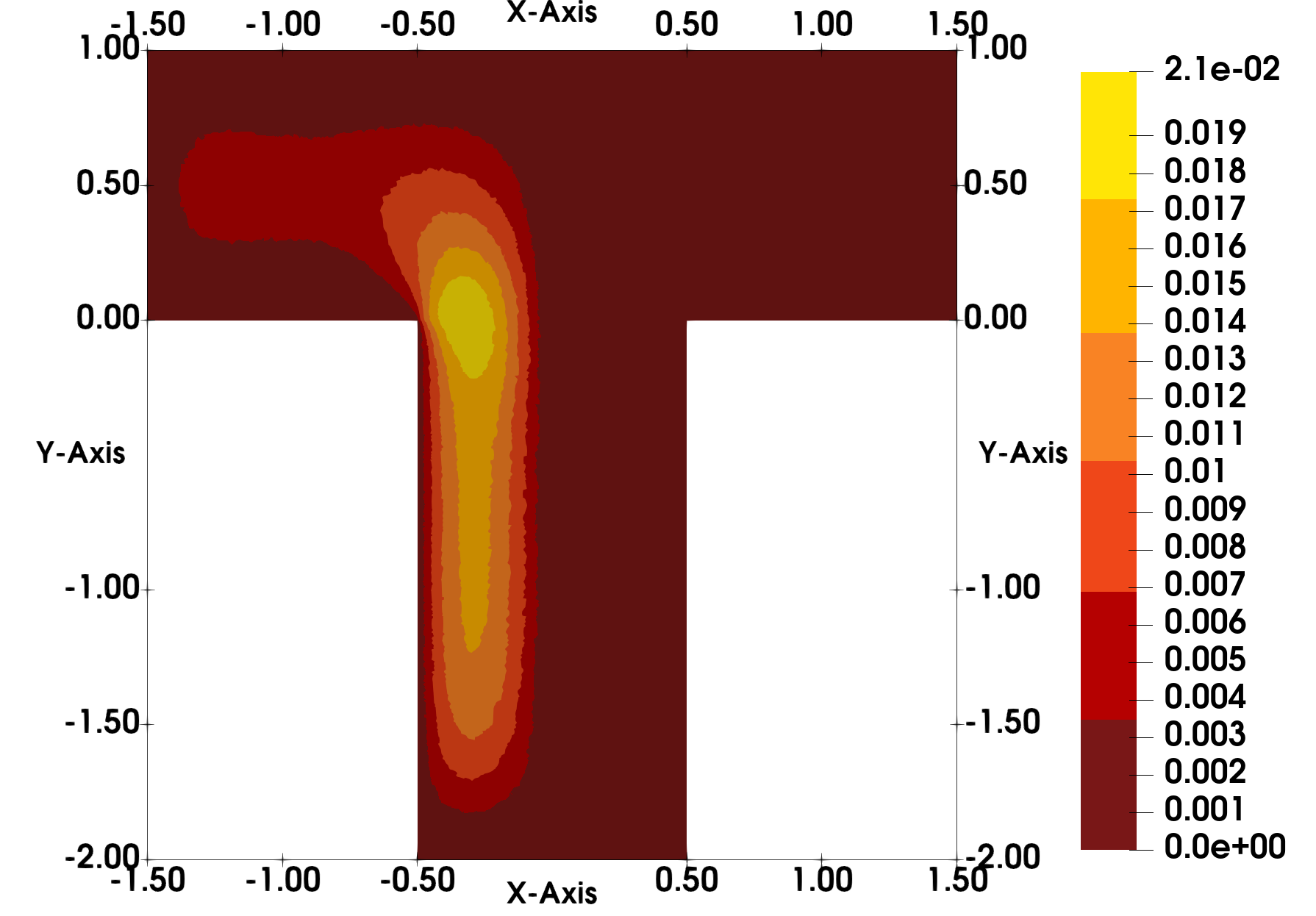}}
	\caption{Plots of numerical solutions of state velocity $(\y_{h1},\y_{h2})$, vorticity $(\kappa_h)$, pressure $(p_h)$, co-state velocity $(\w_{h1},\w_{h2})$, vorticity $(\vartheta_h)$, pressure $(q_h)$, and control $(\u_{h1},\u_{h2})$, respectively, for T-shaped domain Example~\ref{Example 5.3.}.} \label{FIGURE 9}
\end{figure}
	\subsection{Rectangular pipe flow with a circular hole \cite[Section~5]{MAMARA}}\label{Example 5.4.}
	Now, we consider a more realistic problem, where we investigate flow within a rectangular pipe $[0,2] \times [0,0.41]$ featuring an obstruction (a circular hole) centered at $(0.2,0.2)$ with a radius of $0.1$. Pressure is enforced at both the inlet and outlet boundaries, with $\y = 0$ on the boundary. The coefficients are set as $\nu = 0.1 + 0.9 x_1 x_2$, $\boldsymbol{\beta} = (1,1)^{T}$, and $\sigma = 100$. Control bounds are set as $\mathbf{a}=(0,0)^{T}$ and $\mathbf{b}=(0.1,0.1)^{T}$. The source function, desired velocity, and vorticity are the same as in the previous example.
	Although, the precise solutions to this problem are not known. We start with an initial mesh of 1094 elements and use an adaptive refinement technique. This procedure seeks to improve resolution, especially in the regions near the boundary and the circular hole, as Figure~\ref{FIGURE 10} illustrates. We find an optimal rate of decay in the indicators $\boldsymbol{\eta_{CG}}$ and $\boldsymbol{\eta_{DG}}$ as the mesh is sufficiently refined. The improved signs indicate that the adaptive refinement efficiently caught the solution's characteristics. Figure~\ref{FIGURE 12} depicts plots of the numerical solution, highlighting the accuracy and resolution achieved by the adaptive refinement approach.
		 \begin{figure}
			\centering
			\subfloat[Initial mesh]{\includegraphics[scale=0.572]{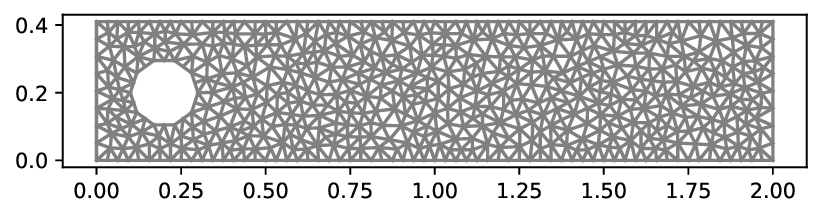}} 
			\subfloat[28668 DOF]{\includegraphics[scale=0.572]{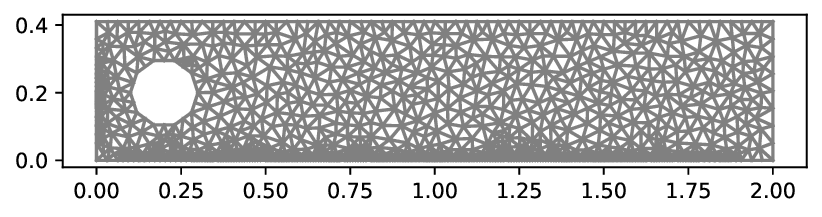}}\\
			\subfloat[44788 DOF]{\includegraphics[scale=0.572]{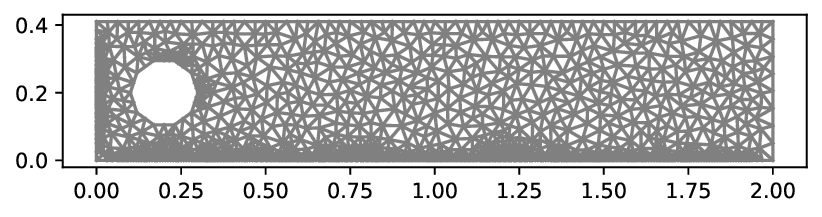}} 
			\subfloat[79884 DOF]{\includegraphics[scale=0.572]{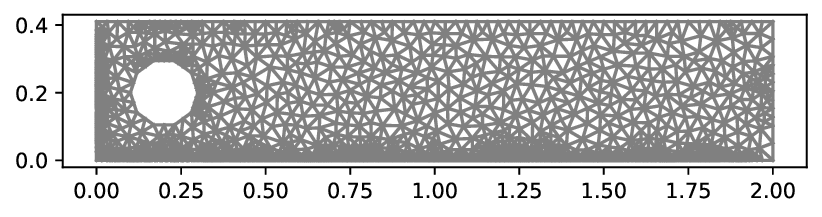}}
			\caption{Adaptively refined meshes showing refinement near boundary and obstacle for Example~\ref{Example 5.4.}.} \label{FIGURE 10}
		\end{figure}
		\begin{figure}
			\centering
			\subfloat[$\y_{h1}$]{\includegraphics[scale=0.192]{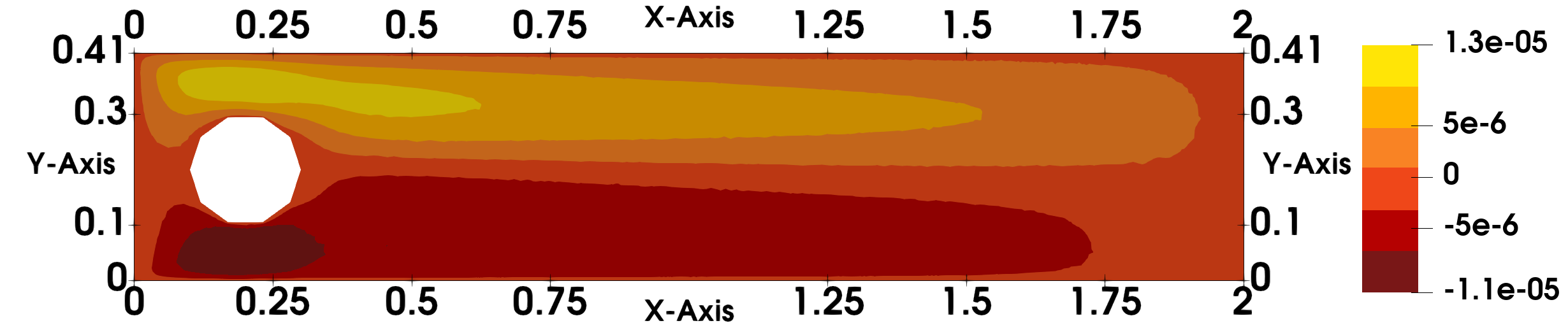}}
			\subfloat[$\y_{h2}$]{\includegraphics[scale=0.172]{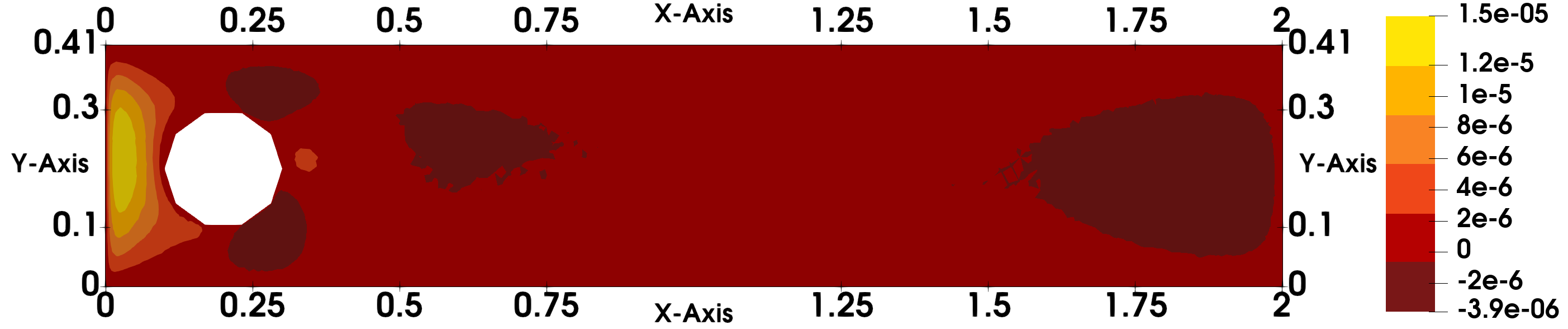}} \
			\subfloat[$\omega_h$]{\includegraphics[scale=0.172]{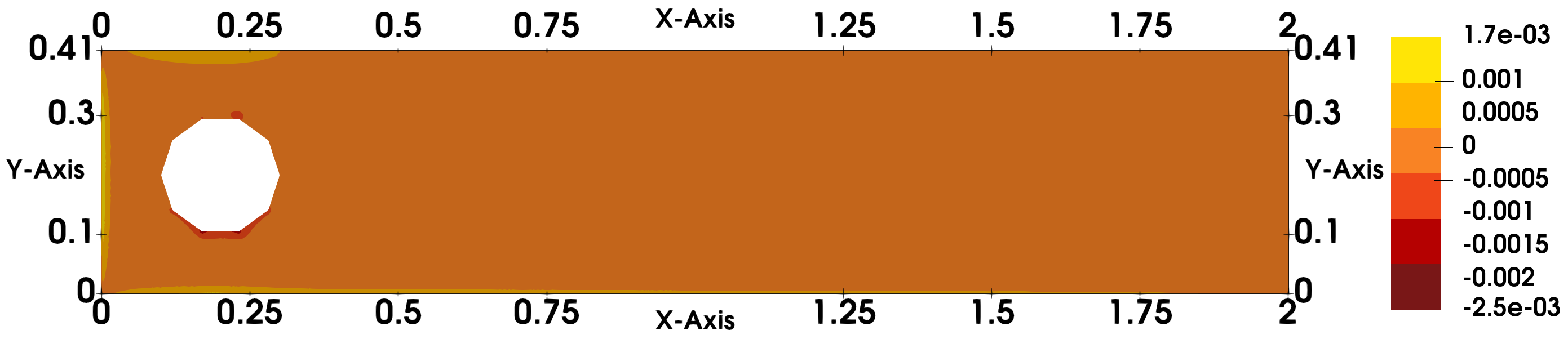}} 
			\subfloat[$p_h$]{\includegraphics[scale=0.172]{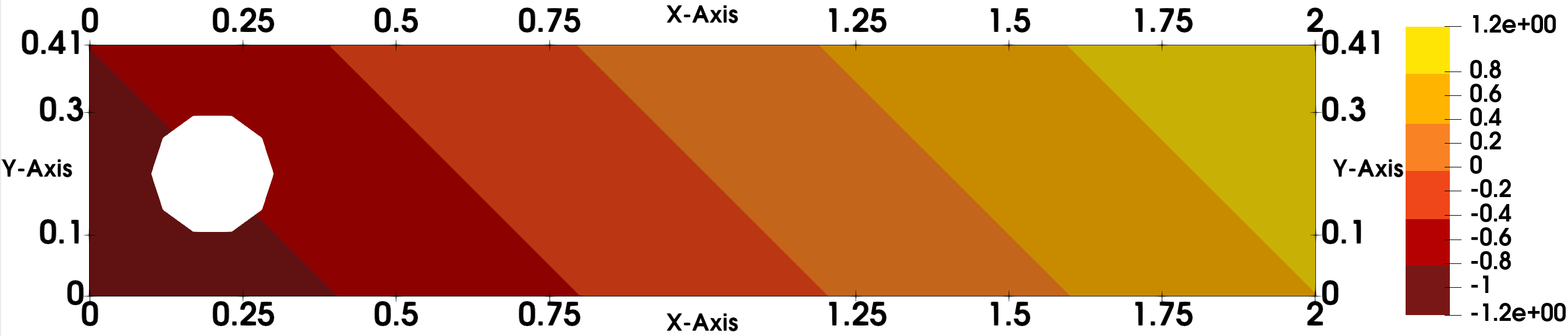}}\\
			\subfloat[$\w_{h1}$]{\includegraphics[scale=0.192]{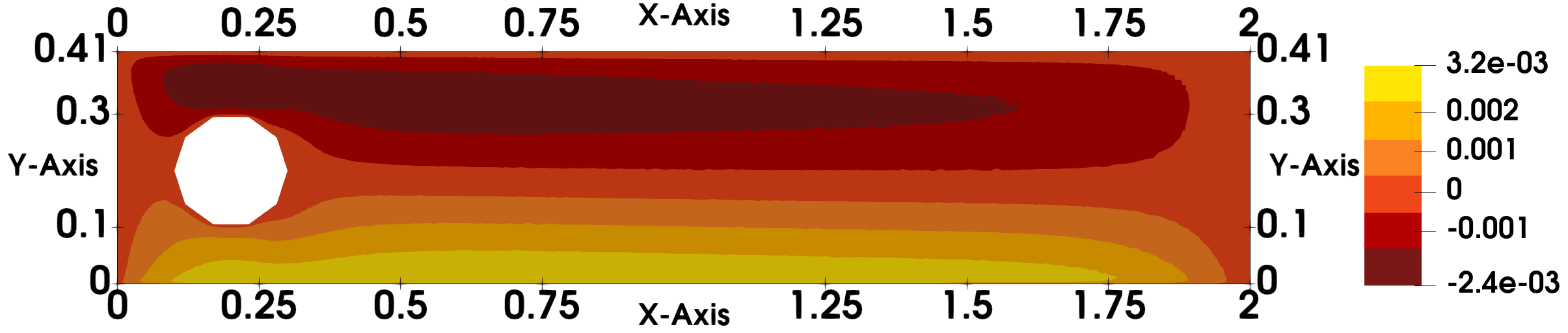}}
			\subfloat[$\w_{h2}$]{\includegraphics[scale=0.172]{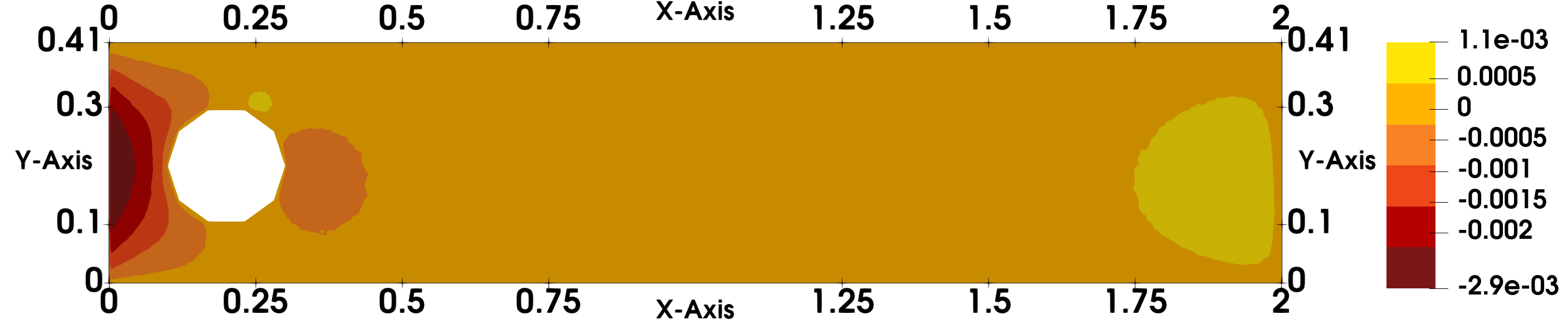}} \\
			\subfloat[$\vartheta_h$]{\includegraphics[scale=0.147]{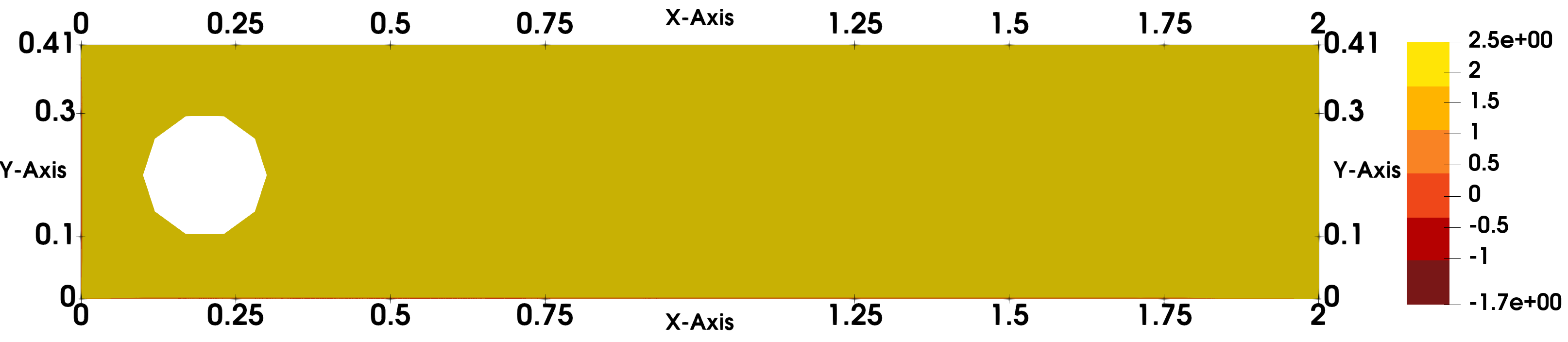}} 
			\subfloat[$q_h$]{\includegraphics[scale=0.172]{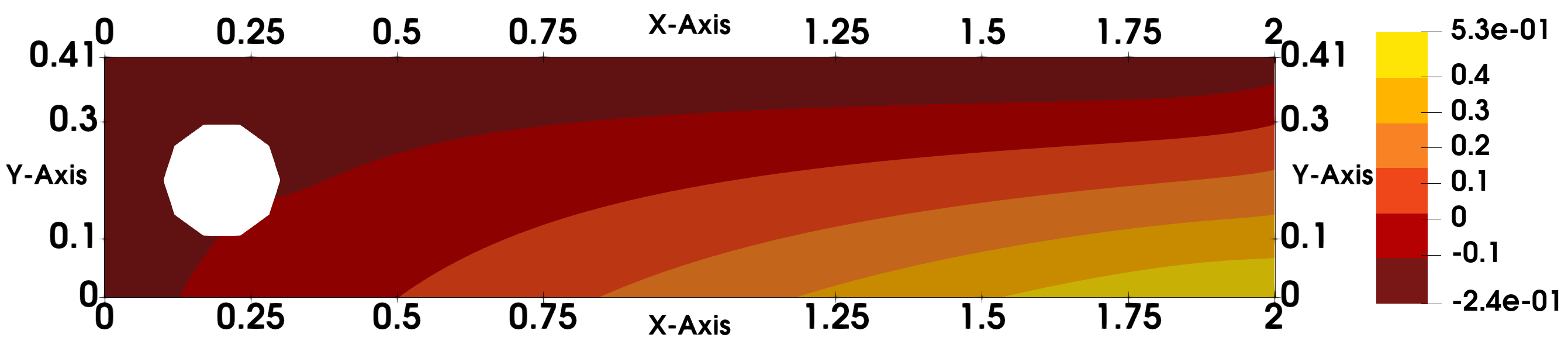}}\\
			\subfloat[$\u_{h1}$]{\includegraphics[scale=0.172]{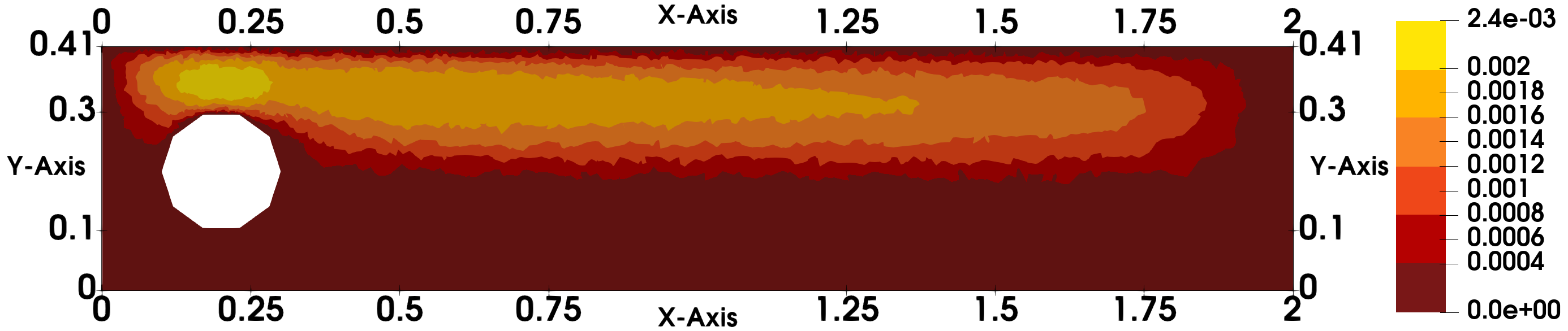}}
			\subfloat[$\u_{h2}$]{\includegraphics[scale=0.172]{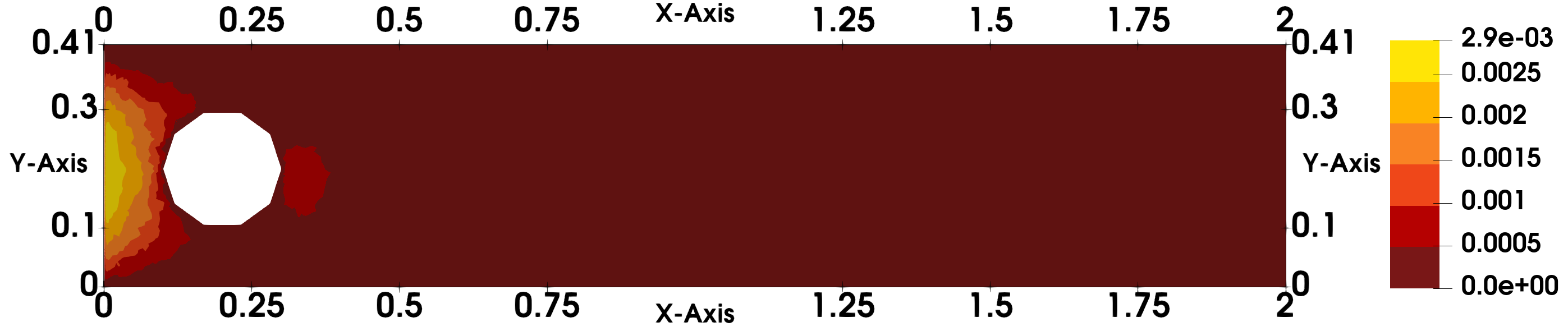}}
			\caption{Plots of numerical solutions of state velocity $(\y_{h1},\y_{h2})$, vorticity $(\kappa_h)$, pressure $(p_h)$, co-state velocity $(\w_{h1},\w_{h2})$, vorticity $(\vartheta_h)$, pressure $(q_h)$, and control $(\u_{h1},\u_{h2})$, respectively, for Example~\ref{Example 5.4.}.} \label{FIGURE 12}
		\end{figure}
	\subsection{3-D convergence test}\label{Example 5.5.}
	This numerical test aims to evaluate the method's accuracy in the 3D scenario. Let $\Omega = (0, 1)^3$, and $\f$, $\y_d$, and $\boldsymbol{\omega}_d$ are chosen such that the exact solutions are:
	\begin{align*}
		\y(x_1,x_2,x_3) &= \w(x_1,x_2,x_3) = \textbf{curl} \big((x_1 x_2 x_3(1-x_1) (1-x_2) (1-x_3))^{2}\big), \\
		\boldsymbol{\omega}(x_1,x_2,x_3) &= \boldsymbol{\vartheta}(x_1,x_2,x_3) = \textbf{curl} \big(\y\big), \quad \quad p(x_1,x_2,x_3)  = q(x_1,x_2,x_3) = 1 - x_1^{2} - x_2^{2} - x_3^{2}.
	\end{align*}
	The remaining coefficients and control bounds are choosen as: $\nu = \nu_0 + (\nu_1-\nu_0) x_1^{2} x_2^{2} x_3^{2},\ \boldsymbol{\beta}= \y,\ \sigma = 1000,\ \mathbf{a}=(-0.1,-0.1,-0.1)^{T}$ and $\mathbf{b}=(0.25,0.25,0.25)^{T}$, where $\nu_0 = 0.1$ and $\nu_1= 1$. After successive uniform refinements, both the conforming ($k=1$) and non-conforming ($k=0$) schemes demonstrate optimal convergence rates across state, co-state, and control variables. These convergence behaviors are vividly illustrated in Figures~\ref{FIGURE 13}. Additionally, Figure~\ref{FIGURE 15} showcases the numerical solutions (streamline plots) for the state and co-state variables, offering a comprehensive depiction of the model's behavior under both schemes.
	\begin{figure}
		\centering
		\subfloat[CG scheme $(k=1)$]{\includegraphics[scale=0.212]{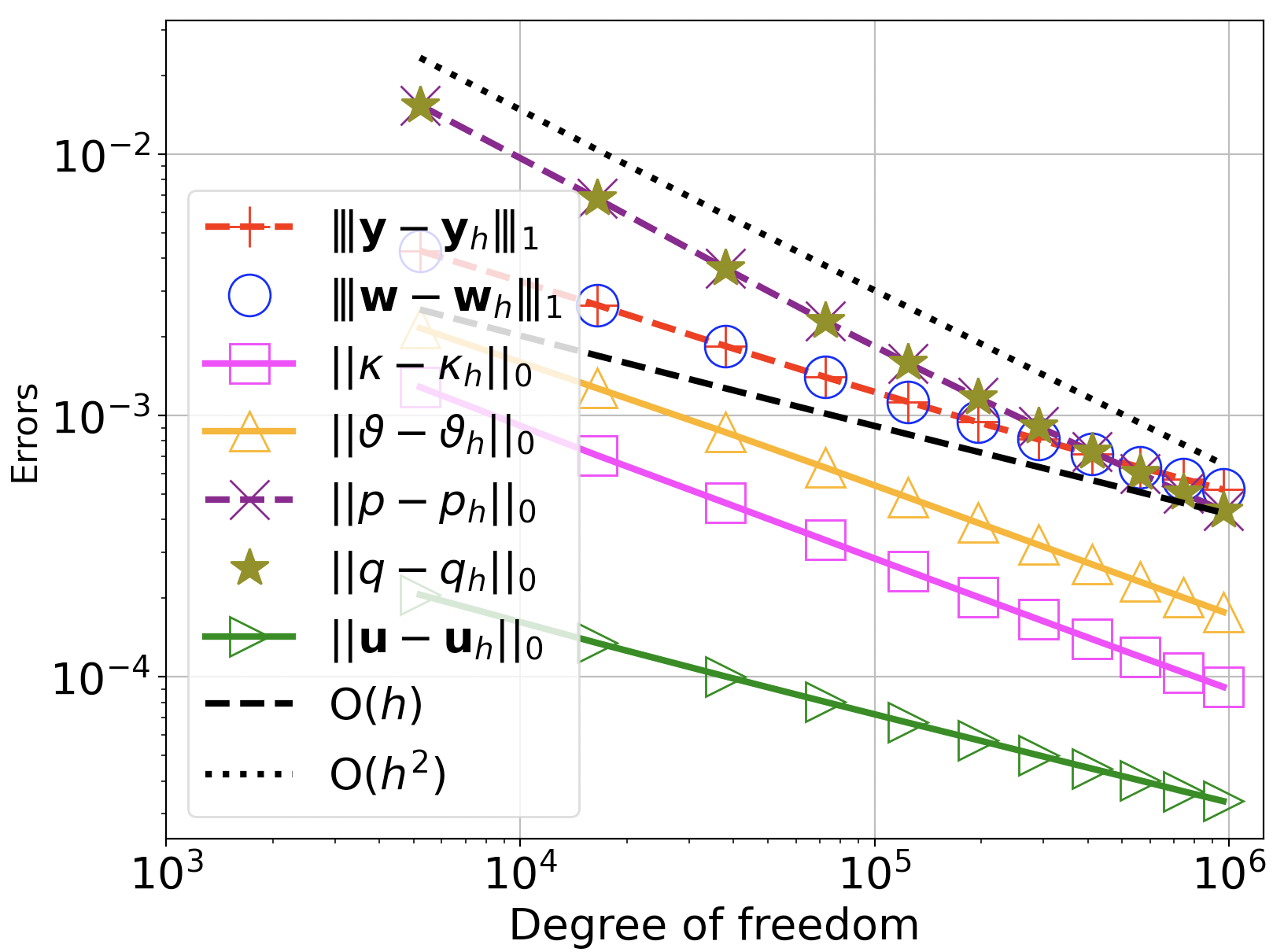}}
		\subfloat[DG scheme $(k=0)$]{\includegraphics[scale=0.212]{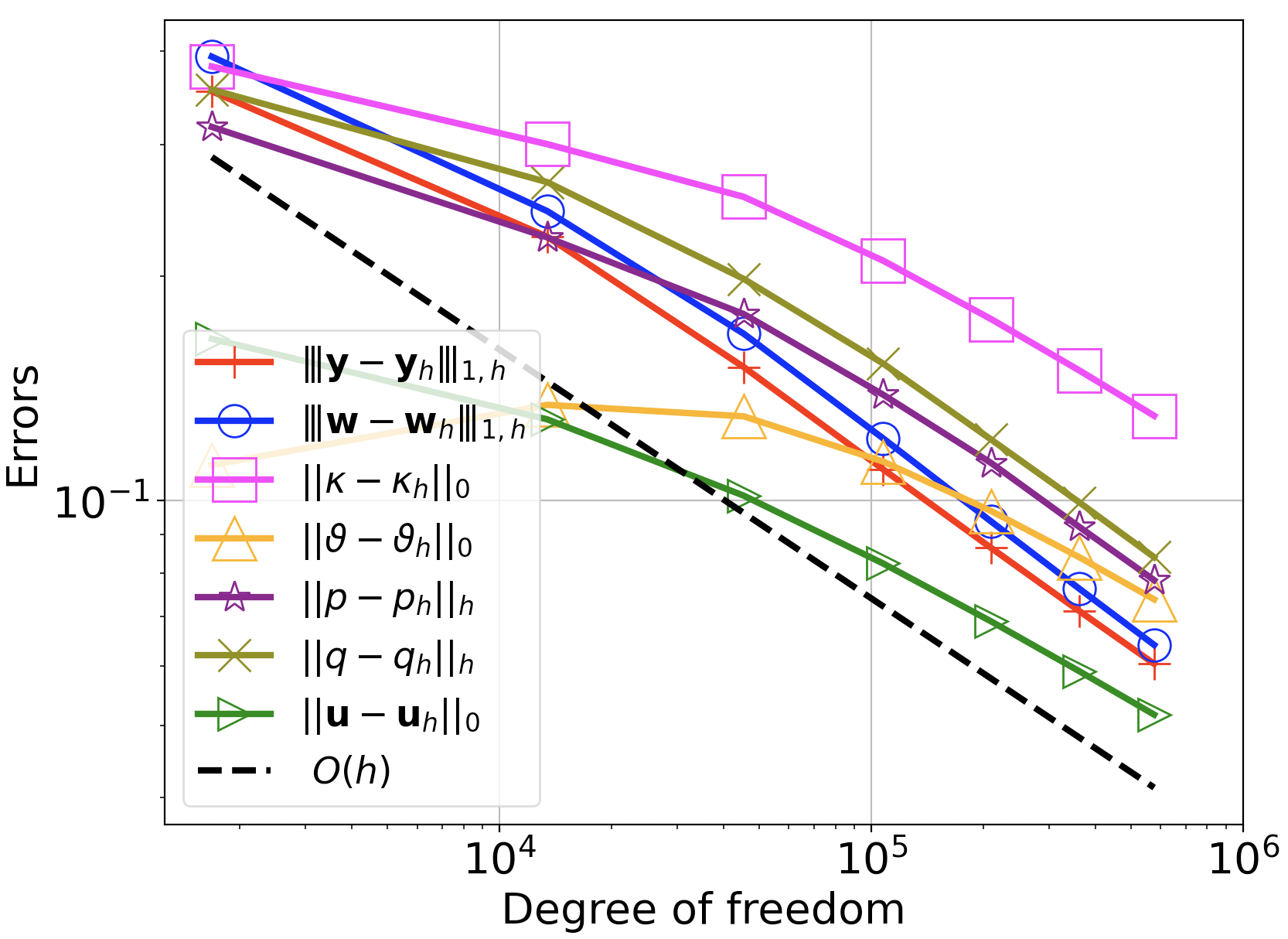}} \\
		\subfloat[Indicator-Total error (CG)]{\includegraphics[scale=0.158]{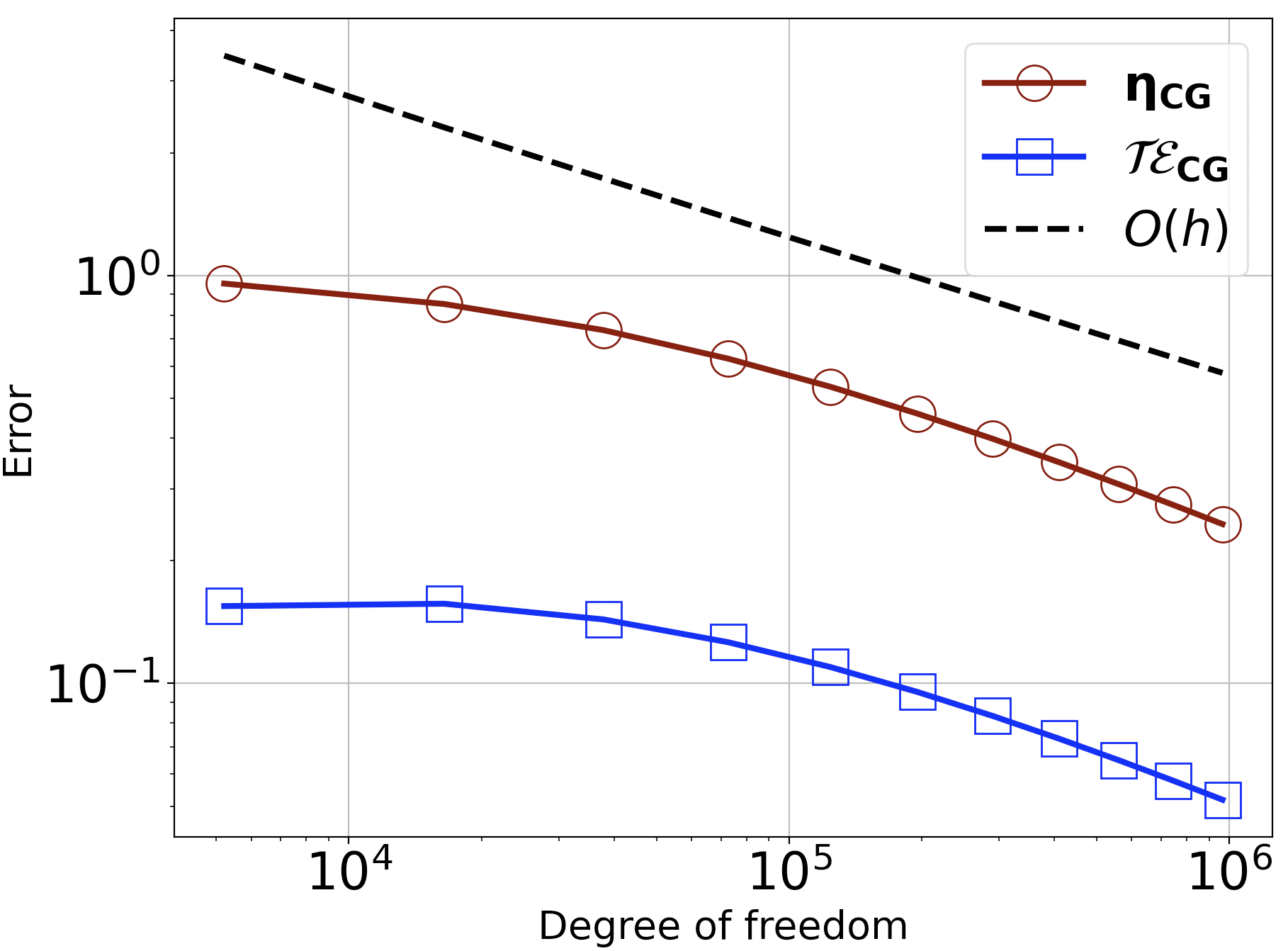}}
		\subfloat[Indicator-Total error (DG)]{\includegraphics[scale=0.158]{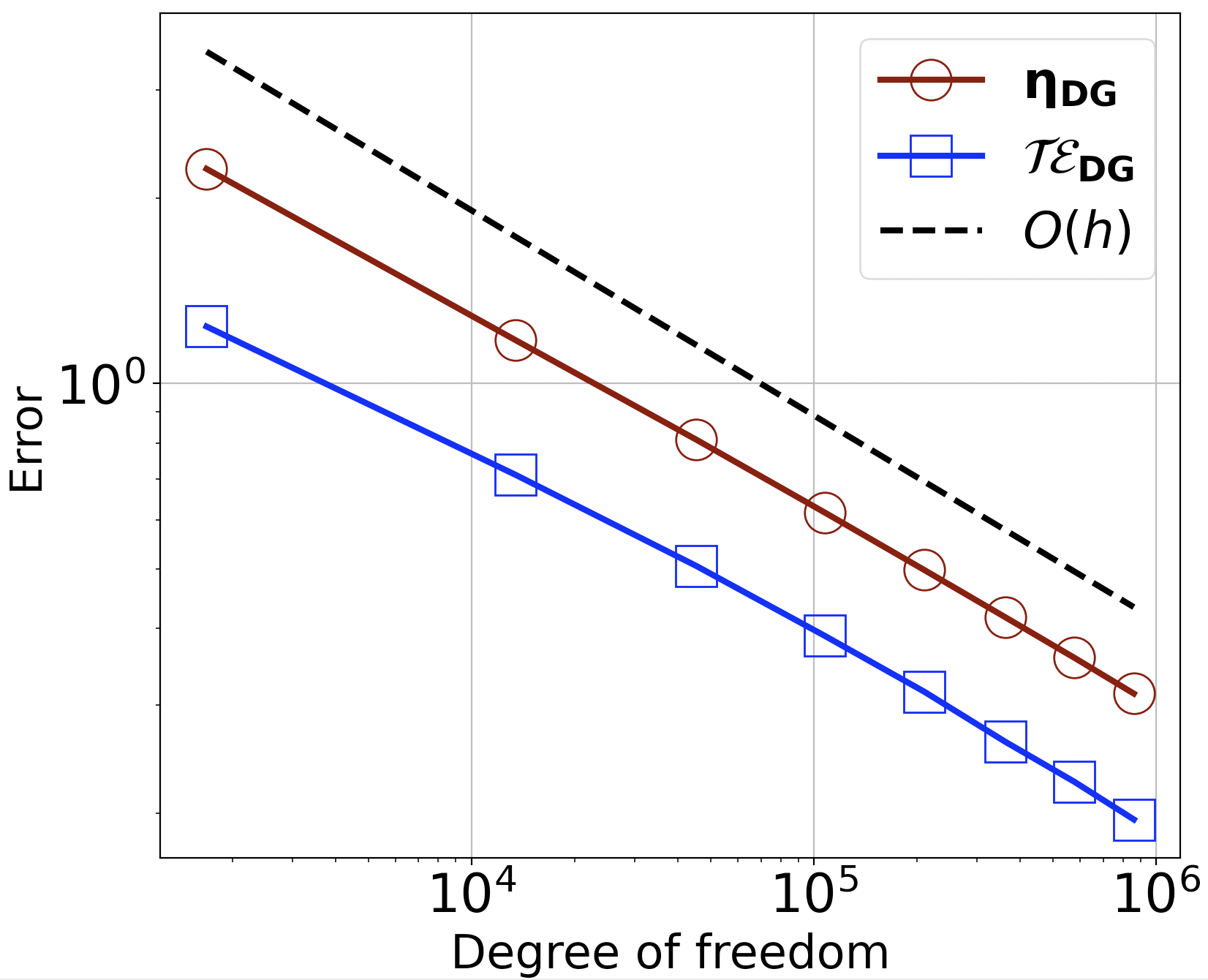}}
		\subfloat[Efficiency (CG-DG)]{\includegraphics[scale=0.162]{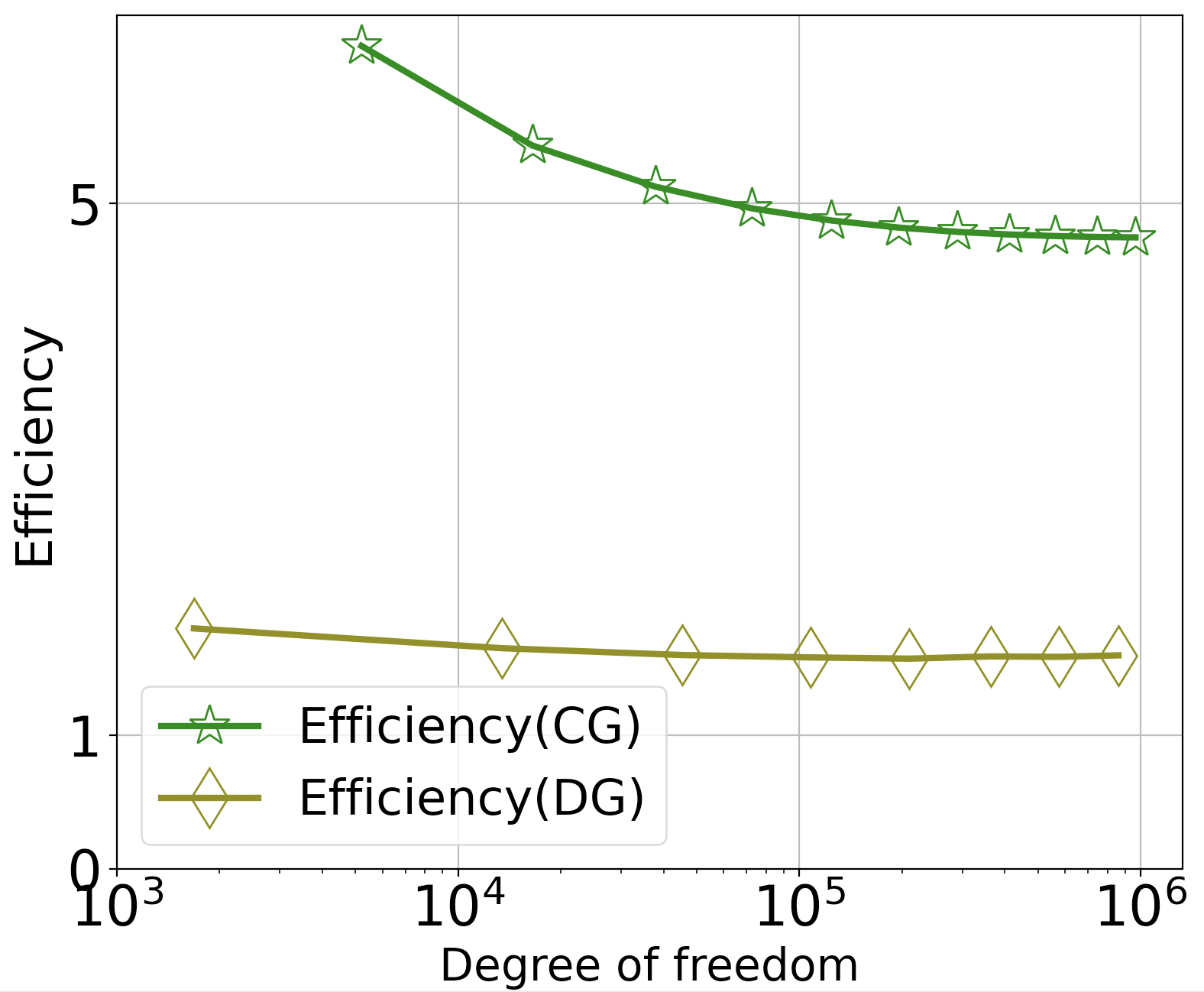}} 
		\caption{Convergence plots for the (a) CG scheme (b) DG scheme (c) Indicator- Total error (CG) (d) Indicator- Total error (DG) and (e) Efficiency for Example~\ref{Example 5.5.}.} \label{FIGURE 13}
	\end{figure}
\begin{figure}
	\centering
	\subfloat[$\y_h$]{\includegraphics[scale=0.155]{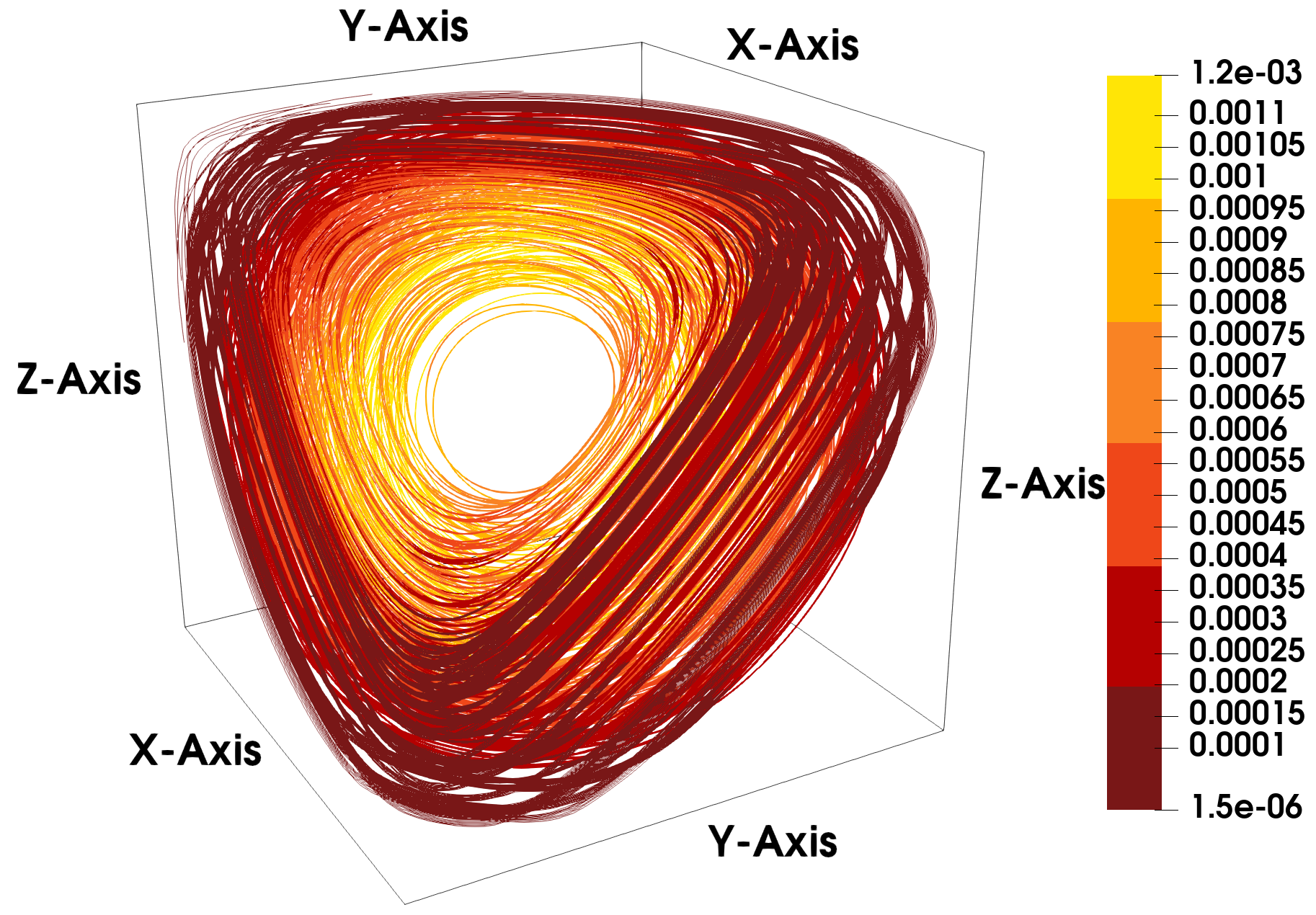}}
	\subfloat[$\boldsymbol{\omega}_h$]{\includegraphics[scale=0.155]{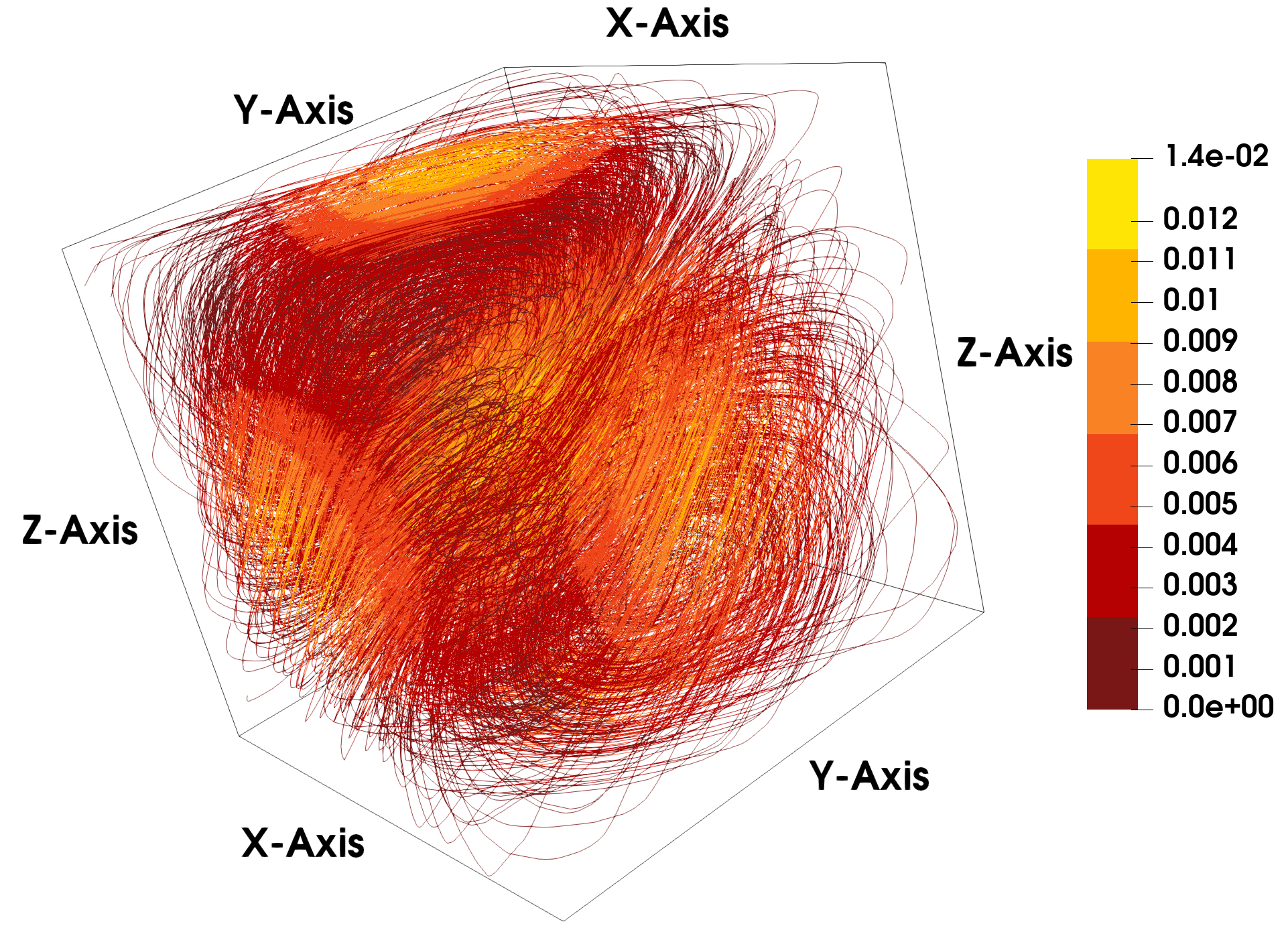}} 
	\subfloat[$p_h$]{\includegraphics[scale=0.123]{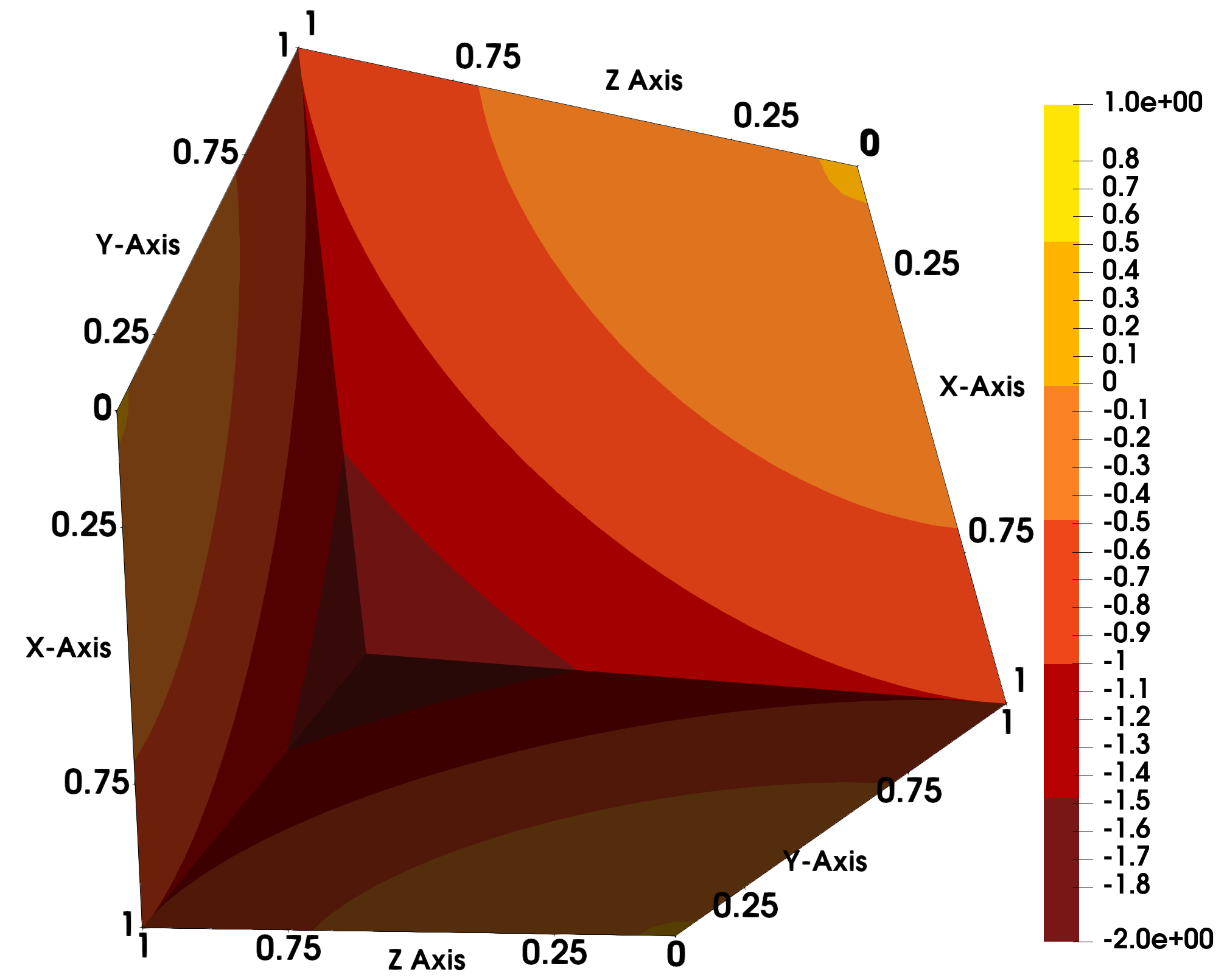}} \\
	\subfloat[$\w_h$]{\includegraphics[scale=0.155]{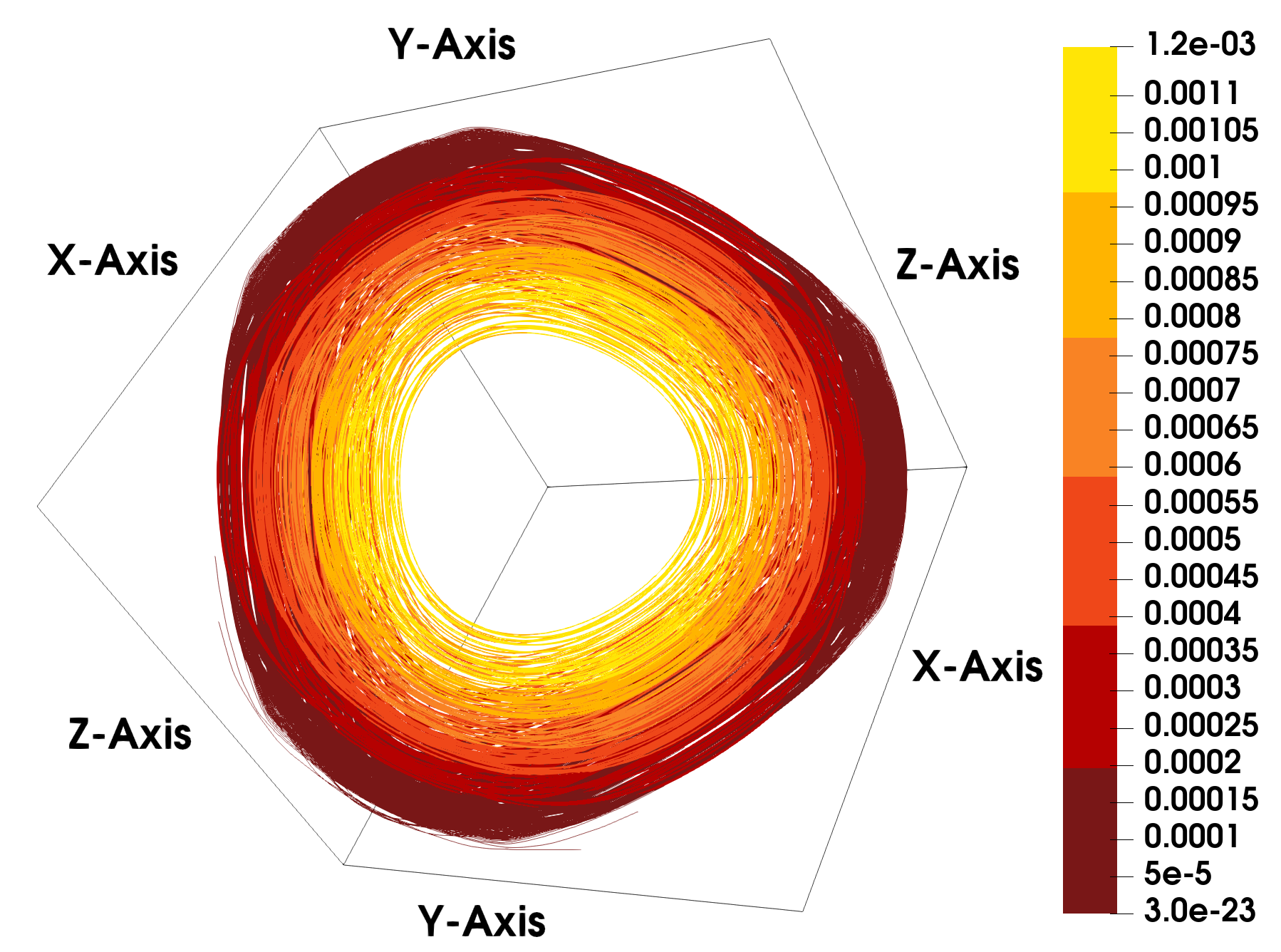}}
	\subfloat[$\boldsymbol{\vartheta}_h$]{\includegraphics[scale=0.155]{ws3dv1.png}} 
	\subfloat[$\u_{h}$]{\includegraphics[scale=0.125]{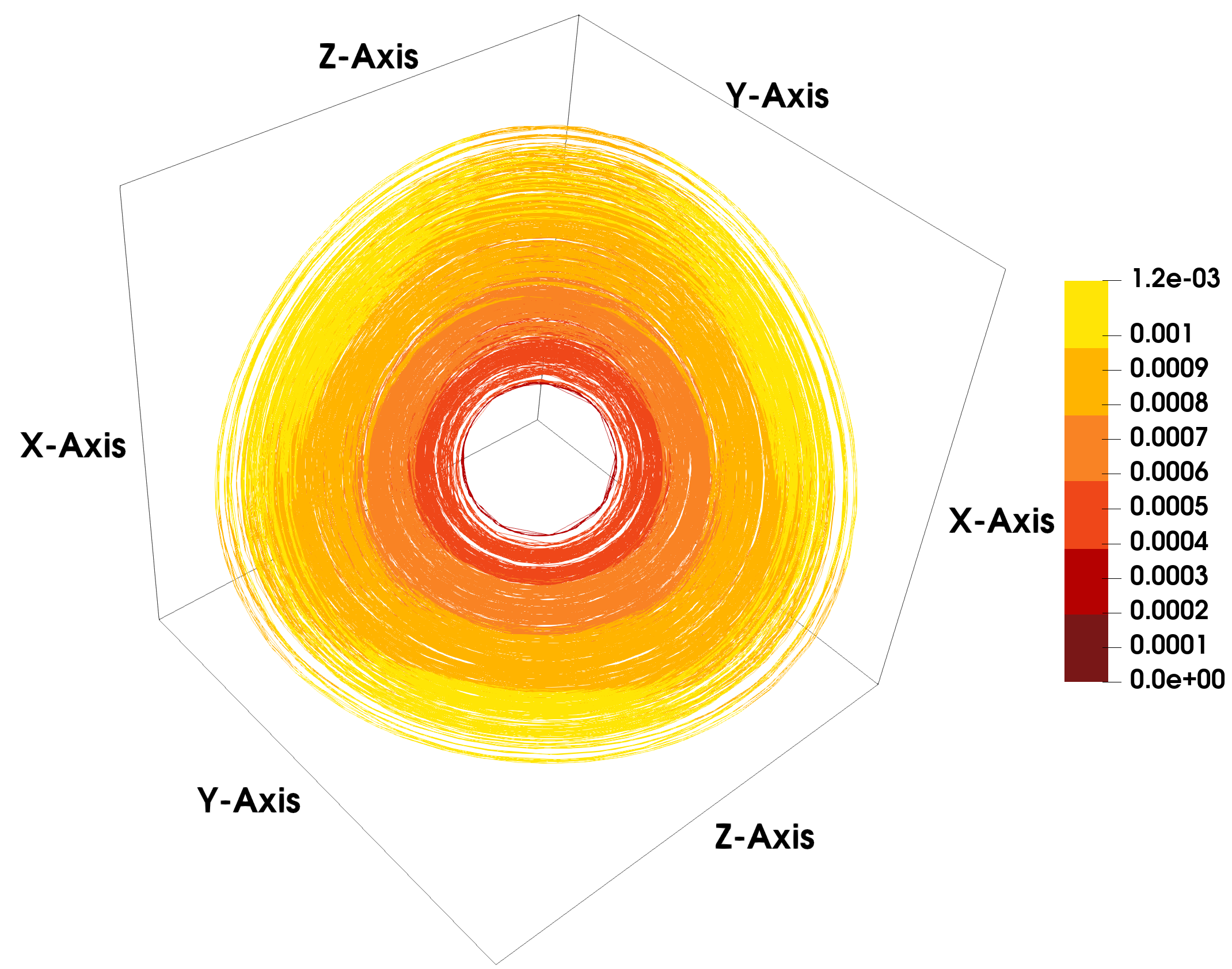}}
	\caption{Approximate solutions showing streamlines of state velocity $(\y_{h})$, vorticity $(\boldsymbol{\kappa}_h)$, pressure $(p_h)$, co-state velocity $(\w_{h})$, vorticity $(\boldsymbol{\vartheta}_h)$,
	and control $(\u_{h})$, respectively, for Example~\ref{Example 5.5.}.} \label{FIGURE 15}
\end{figure}
\begin{remark}
	In the conforming scheme, the velocity converges at the optimal rate, verifying the predictions of Theorem \ref{2.4.1.4}. The approximations for vorticity and pressure demonstrates superconvergence.
\end{remark}
\section{Conclusion}
In the present work, we propose an optimally convergent conforming augmented mixed finite element method and a discontinuous Galerkin (DG) method for the discretization of the velocity-vorticity-pressure formulation of distributed optimal control problems governed by generalized Oseen equations with non-constant viscosity. Some key features of the proposed schemes include the liberty to choose different Stokes inf-sup stable finite element families, direct and accurate access to vorticity (without applying postprocessing), and flexibility in handling Dirichlet boundary conditions for velocity. We establish optimal a priori error estimates and reliable and efficient a posteriori error estimators. Numerical experiments showcase the efficacy of the a posteriori error estimator, validating its performance for both convex and non-convex domains. This study lays the groundwork for exploring similar formulations for tackling challenges in the context of optimal control problems governed by Navier-Stokes equations. The methodologies and insights gained here can be instrumental in advancing the understanding and control of fluid dynamics in more intricate scenarios.\\
\noindent 
\textbf{Declarations:}
The authors declare that they have no conflicts of interest.
\bibliographystyle{siam}
\bibliography{reference}
\end{document}